\newcommand{\R}[0]{\mathbb R}
\newcommand{\Rn}[0]{\mathbb{R}^n}
\newcommand{\Ds}[0]{\mathcal D}
\newcommand{\N}[0]{\mathbb N}
\newtheorem{Th}{Theorem}[chapter]
\newtheorem{Lemma}{Lemma}[chapter]
\newtheorem{Prop}[Lemma]{Proposition}
\newtheorem{Coro}[Th]{Corollary}
\newtheorem{Def}{Definition}[chapter]
\newtheorem{Rem}{Remark}[chapter]
\newtheorem{Example}{Example}[chapter]
\begin{document}

\begin{center}

{\LARGE On the Well-posedness of the Incompressible Euler Equation} \\

\vspace{2cm}

{\large \textbf{Dissertation}} \\

\vspace{.3cm}

zur \\

\vspace{.3cm}

Erlangung der naturwissenschaftlichen Doktorw\"urde \\

(Dr. sc. nat.) \\

\vspace{.3cm}

vorgelegt der \\

\vspace{.3cm}

Mathematisch-naturwissenschaftlichen Fakult\"at \\

\vspace{.3cm}

der \\

\vspace{.3cm}

Universit\"at Z\"urich \\

\vspace{.3cm}

von \\

\vspace{1cm}

{\Large \textbf{Hasan Inci}} \\

\vspace{1cm}

von \\

\vspace{.3cm}

Z\"urich ZH \\

\vspace{\stretch{1}}

Promotionskomitee \\

\vspace{.5cm}

Prof. Dr. Thomas Kappeler (Vorsitz) \\

Prof. Dr. Camillo De Lellis \\

\vspace{1cm}

Z\"urich, 2012

\end{center}

\chapter*{}
\begin{center}
\textbf{Zusammenfassung}
\end{center}

In dieser Arbeit beweisen wir, dass sich die homogene inkompressible Euler-Gleichung der Hydrodynamik auf den Sobolev-R\"aumen $H^s(\R^n)$, $n \geq 2$ und $s > n/2+1$, als Geod\"aten-Gleichung auf einer unendlich dimensionalen Mannigfaltigkeit beschreiben l\"asst. Als Anwendung dieser geometrischen Formulierung beweisen wir, dass die L\"osungs-Abbildung der inkompressiblen Euler-Gleichung nirgends lokal uniform stetig und nirgends differenzierbar von den Anfangswerten abh\"angt. 

\vspace{4cm}

\begin{center}
\textbf{Abstract}
\end{center}

In this thesis we prove that the homogeneous incompressible Euler equation of hydrodynamics on the Sobolev spaces $H^s(\R^n)$, $n \geq 2$ and $s > n/2+1$, can be expressed as a geodesic equation on an infinite dimensional manifold. As an application of this geometric formulation we prove that the solution map of the incompressible Euler equation, associating intial data in $H^s(\R^n)$ to the corresponding solution at time $t > 0$, is nowhere locally uniformly continuous and nowhere differentiable.

\tableofcontents

\chapter{Introduction}\label{section_introduction}

The initial value problem for the Euler equation in dimension $n \geq 2$ of a incompressible, homogeneous ideal fluid is given by
\begin{eqnarray}
\nonumber
\partial_t u + (u \cdot \nabla) u &=& - \nabla p \\
\label{E}
\operatorname{div} u &=& 0 \\
\nonumber
u(0) &=& u_0  
\end{eqnarray}
where $u(t,x)=(u_1,\ldots,u_n)$ is the velocity field at $(t,x) \in \R \times \R^n$, $p(t,x)$ is the (time-dependent) pressure and $\nabla$ the gradient operator in $\R^n$. The operator $u \cdot \nabla=\sum u_k \partial_k$ acts componentwise on $u$ and $\operatorname{div} u=\sum \partial_k u_k$ is the divergence of $u$. The initial value $u_0$ is assumed to be divergence-free. Equation \eqref{E} was introduced by Euler \cite{euler} in 1757. The first equation in \eqref{E} reflects Newton's second law, i.e. the left side describes the acceleration of the fluid particles, whereas the right side the acting force. The second equation says that the fluid flow is incompressible.\\ \\
An important question for a PDE such as \eqref{E}, is, whether it is ''well-posed''. In \cite{hadamard,hadamard2} Hadamard discusses the following properties for various problems such as initial value problems or boundary value problems arising in physics:\\
\begin{tabular}{l}
 (1) existence of solutions; (2) uniqueness of solutions; (3) continuous dependence \\ 
 of solutions on the data.
\end{tabular}
\newline
Today it is common to say that such a problem is well-posed in the sense of Hadamard if it has properties (1), (2) and (3). To give a precise meaning one has to choose appropriate function spaces and specify the notion of solution. Once this is done, well-posedness says that, there is a unique continuous map, called ''solution map'', mapping the ''data'' to the solution. For initial value problems there is an additional distinction between ''local'' resp. ''global'' well-posedness. If it is known that the time of existence for all solutions can be extended to arbitrarily large times, then the problem is said to be globally well-posed. Otherwise the problem is called locally well-posed.\\ \\
A lot of research activity is concerned with investigating regularity properties of solution maps such as differentiability, $C^k$-smoothness, analyticity, local Lipschitz continuity, local H\"older continuity or uniform continuity on bounded sets. In fact, in some research, property (3) in the definition of well-posedness is replaced by the following stronger condition (see e.g. \cite{ill_kpv})
\begin{itemize}
 \item[(3')] the solution map is uniformly continuous on bounded sets; 
\end{itemize}
In this thesis we will use the notion of ''well-posedness'' always in the sense of Hadamard. In the cases where the solution map is $C^k$-smooth $(k \geq 1)$ or analytic, one calls the problem $C^k$-well-posed or analytically well-posed (see e.g. \cite{tao_book}). If not then the problem is called $C^k$-ill-posed resp. analytically ill-posed. If a contraction mapping argument is used to get the solution, then the solution map is at least Lipschitz. As a consequence, lack of the Lipschitz property of the solution map means that one cannot use a straightforward contraction mapping argument to establish existence of solutions.\\
More recently there has been a lot of research activity in establishing that solution maps in certain function spaces have poor regularity properties. As a first example we mention a discussion of such a result in \cite{kato_burgers}, for the inviscid Burgers' equation
\begin{equation}\label{burgers_eq}
 \partial_t u + u \partial_x u=0,\; u(0)=u_0, \quad t \in \R,\; x \in \R.
\end{equation}
Kato \cite{kato_burgers} proves that for no $0 < \alpha \leq 1$ and no $t > 0$ the solution map of equation \eqref{burgers_eq}, $u_0 \mapsto u(t)$, is locally $\alpha$-H\"older continuous in the Sobolev space $H^s(\R), s \geq 2$. A further example is given by the mKdV equation
\begin{equation}\label{mkdv_eq}
 \partial_t u + \partial_x^3 u + u^2 \partial_x u=0,\; u(0)=u_0, \quad t \in \R,\; x \in \R.
\end{equation}
Whereas it is not known whether \eqref{mkdv_eq} is well-posed in $H^s(\R)$ for $s < 1/4$, Kenig, Ponce and Vega \cite{ill_kpv} prove that in any case the solution map of equation \eqref{mkdv_eq} does not have property (3') in these spaces. A similar result holds for the focusing cubic NLS \cite{ill_kpv}. Another example is the Benjamin-Ono equation
\begin{equation}\label{bm_eq}
\partial_t u + H \partial_x^2 u + u \partial_x u = 0,\; u(0)=u_0, \quad t \in \R,\;x \in \R
\end{equation}
where $H$ denotes the Hilbert transform. It is known that \eqref{bm_eq} is well-posed in $H^s(\R^n)$, $s \geq 1$ -- see \cite{tao}. On the other hand Koch and Tzvetkov \cite{ill_benjamin_ono} show that the solution map of equation \eqref{bm_eq}, doesn't have property (3') for these spaces. They also show that the solution map of \eqref{bm_eq} cannot have property (3') for $H^s(\R)$, $0 < s < 1$. A similar result holds for the Camassa-Holm equation \cite{camassa_holm}. Finally let us mention the periodic KdV-equation
\begin{equation}\label{kdv_eq}
 \partial_t u + \partial_x^3 u + 6u \partial_x u = 0,\; u(0)=u_0, \quad t \in \R,\;x \in \mathbb T
\end{equation}
for initial data $u_0$ with average $0$. In \cite{kdv_kpv} it is shown, that equation \eqref{kdv_eq} is well-posed and satisfies (3') in $H_0^s(\mathbb T)$ with $s \geq -1/2$. On the other hand, in \cite{i_team} it is proven that the solution map of \eqref{kdv_eq} does not have property (3') in $H^s_0(\mathbb T)$, $-2 \leq s < -1/2$. Moreover in \cite{kappeler}, Kappeler and Topalov show that \eqref{kdv_eq} is well-posed in $H^s_0(\mathbb T)$ for any $-1 \leq s < -1/2$. 
\\ \\
One of the main results of this thesis concerns the regularity of the solution map of \eqref{E}. To state our result we have to recall the notion of locally uniform continuity (see e.g. \cite{izzo}).

\begin{Def}\label{def_locally_uniform}
We say that a map $f:X \to Y$ between metric spaces $X,Y$ is locally uniformly continuous at a point $x \in X$ if there is a neighborhood $U \subseteq X$ of $x$ so that the restriction $\left. f \right|_U$ is uniformly continuous. If it is locally uniformly continuous at every point of $X$, we say that $f$ is locally uniformly continuous. If at no point of $X$, $f$ is locally uniformly continuous, we say that $f$ is nowhere locally uniformly continuous. Equivalently, $f$ is nowhere locally uniformly continuous if on no open nonempty subset of $X$ it is uniformly continuous.
\end{Def}

Examples of locally uniformly continuous functions are easy to find. For any locally compact metric space $X$, every continuous map $f:X \to Y$, $Y$ a metric space, is locally uniformly continuous. Clearly any locally Lipschitz map $f:X \to Y$ between metric spaces $X,Y$ is locally uniformly continuous. It is also easy to find an example of a real valued locally uniformly continuous map $f:H \to \R$ on a Hilbert space -- necessarily of infinite dimension -- with the property that there is a bounded subset on which $f$ is not uniformly continuous. 
\begin{Example}\label{ex_not3}
Let $(H,\langle \cdot,\cdot \rangle)$ be a Hilbert space with an orthonormal basis $(e_k)_{k \geq 1}$. For $x \in H$ and $r > 0$ we define the elementary function $\psi_{x,r}:H \to \R$ by
\[
 \psi_{x,r}(y)=\begin{cases} e^{-\frac{r^2}{r^2-||y-x||^2}}, \quad & ||y-x|| < r \\ 0, \quad & ||y-x|| \geq r \end{cases}
\]
where $||x||=\langle x,x \rangle^{1/2}$ is the induced norm in $H$. Note that $\psi_{x,r}$ is a $C^\infty$-function with support in the ball of radius $r$ centered at $x \in H$. With this elementary functions we construct the following function 
\[
 \Phi:H \to \R, \quad y \mapsto \sum_{k \geq 1} \psi_{e_k,\frac{1}{2k}}(y).
\]
As the $\psi$'s appearing in the latter sum have pairwise disjoint supports, $\Phi$ defines a $C^\infty$-function. In particular it is locally uniformly continuous. But one easily sees that $\Phi$ is not uniformly continuous in the ball of radius $2$, centered at $0$. 
\end{Example}

In contrast nowhere locally uniformly continuous functions are not so easy to find, despite the fact there are many of them (see \cite{izzo} for a precise statement). The following example of a nowhere locally uniformly continuous function is due to Izzo \cite{izzo}.

\begin{Example}\label{ex_not_uniform}
The function $f:\ell^2 \to \R$ defined by
\[
 x=(x_1, x_2, \ldots) \mapsto f(x)=\sum_{k=1}^\infty x_k^2 \cos(k x_k)
\]
is continuous but nowhere locally uniformly continuous. Here $\ell^2$ is the real Hilbert space of square-summable real sequences.
\end{Example}

Another example of a nowhere locally uniformly continuous function is given by the composition map in the groups $\Ds^s(\R^n)$ of orientation preserving diffeomorphisms of Sobolev class $H^s$ -- see Section \ref{functional_setting} for the definition of $\Ds^s(\R^n)$ and other spaces of maps on $\R^n$. This example plays an important role for the proof of the main theorems of this thesis. It will be discussed in Section \ref{functional_setting}.\\ \\
Before stating our results concerning the regularity of the solution map of equation \eqref{E}, let us review the main known facts regarding well-posedness of \eqref{E}. The first local well-posedness results of \eqref{E} were established for solutions with values in H\"older spaces $C^{k,\alpha}$,$\;k \geq 1$ and $0 < \alpha < 1$, by Lichtenstein \cite{lichtenstein} and Gunter \cite{gunter}. In space dimension $n=2$, Wolibner \cite{wolibner} and H\"older \cite{hoelder} proved global well-posedness for solutions in such spaces. In space dimension $n \geq 3$ global well-posedness is still a major open problem.\\ \\
The function spaces for equation \eqref{E} considered in this thesis are the Sobolev spaces $H^s(\R^n;\R^n)$, $s > n/2+1$, resp. their subspaces $H^s_\sigma(\R^n;\R^n)$ consisting of divergence-free maps -- see Section \ref{functional_setting} for more details. The solutions of \eqref{E} considered in this thesis are elements in $C^0\big([0,T];H^s_\sigma(\R^n;\R^n)\big)$, $T > 0$ -- see Section \ref{section_well_posedness} for the notion of solution used. Here $C^0\big([0,T];H^s_\sigma(\R^n;\R^n)\big)$ is the space of continuous curves from $[0,T]$ to $H^s_\sigma(\R^n;\R^n)$ with the supremum norm. Results for such solutions were first established by Ebin and Marsden \cite{ebin}. They proved that on compact manifolds of dimension $n \geq 2$, possibly with boundary, equation \eqref{E} is locally well-posed in the Sobolev spaces $H^s_\sigma$, $s > n/2+1$. For Sobolev spaces on $\R^n$, the analog result is due to Kato, and reads as follows. 

\begin{Th}\label{th_kato}\cite{kato}
Let $n \geq 2$ and $s > n/2+1$. Then equation \eqref{E} is locally well-posed in the Sobolev space $H^s_\sigma(\R^n;\R^n)$, i.e. for any $w \in H^s_\sigma(\R^n;\R^n)$ there is $T > 0$ and a neighborhood $U \subseteq H^s_\sigma(\R^n;\R^n)$ of $w$ such that for any $u_0 \in U$ there exists a unique solution of \eqref{E} in $C^0\big([0,T];H^s_\sigma(\R^n;\R^n)\big)$ such that the solution map
\[
 E:U \to C^0\big([0,T];H^s_\sigma(\R^n;\R^n)\big),\quad u_0 \mapsto [t \mapsto u(t;u_0)]
\]
is continuous.
\end{Th}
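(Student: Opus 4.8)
The plan is to recast \eqref{E} as an ordinary differential equation — in fact the geodesic equation of a weak right-invariant Riemannian metric — on the topological group $\Ds^s_\sigma(\R^n)$ of volume-preserving diffeomorphisms of Sobolev class $H^s$, and then to apply the Picard--Lindel\"of theorem for smooth vector fields on Banach (here Hilbert) manifolds. This is the route of Ebin--Marsden \cite{ebin} adapted to $\R^n$, and it is the formulation developed in the later chapters of this thesis. (Kato's original argument instead proceeds through energy estimates and a mollified successive-approximation scheme; I will not pursue that here.)

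First I would pass to Lagrangian coordinates. Given a solution $u \in C^0\big([0,T];H^s_\sigma(\R^n;\R^n)\big)$ of \eqref{E}, let $\eta(t)$ be its flow, the solution of $\dot\eta(t)=u(t)\circ\eta(t)$, $\eta(0)=\id$; since $\operatorname{div}u=0$, each $\eta(t)$ is volume-preserving, hence lies in $\Ds^s_\sigma(\R^n)$. Differentiating once more and substituting \eqref{E}, one finds that $\eta$ satisfies a second-order equation $\ddot\eta = S(\eta,\dot\eta)$ on $T\Ds^s_\sigma(\R^n)$: writing the pressure in terms of $u$ via $\Delta p = -\operatorname{tr}\big((\nabla u)^2\big)$ and pulling everything back by $\eta$, the term $-\nabla p$ becomes precisely the second fundamental form of $\Ds^s_\sigma(\R^n)$ inside $\Ds^s(\R^n)$ relative to the $L^2$ metric; equivalently, $\ddot\eta$ is the $L^2$-orthogonal projection of the free Lagrangian acceleration onto $T_\eta\Ds^s_\sigma(\R^n)$.

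The decisive step — and the main obstacle — is to prove that the geodesic spray $S$ is a smooth (indeed real-analytic) section of $TT\Ds^s_\sigma(\R^n) \to T\Ds^s_\sigma(\R^n)$, even though in Eulerian coordinates the quadratic nonlinearity $u \mapsto \mathbb P\big((u\cdot\nabla)u\big)$, with $\mathbb P$ the Leray projector, loses one derivative and is therefore not even $C^1$ as a map $H^s_\sigma(\R^n;\R^n)\to H^s_\sigma(\R^n;\R^n)$. This is the Ebin--Marsden cancellation: the conjugation $v \mapsto R_\eta\big(A(R_{\eta^{-1}}v)\big)$ of a (pseudo)differential operator $A$ by right translation $R_\eta:v\mapsto v\circ\eta$ depends smoothly on $\eta\in\Ds^s(\R^n)$, although $R_\eta$ itself is only continuous in $\eta$; the derivatives falling on $\eta^{-1}$ are exactly compensated by those in $A$. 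Making this precise requires that $\Ds^s(\R^n)$ is a smooth Hilbert manifold and a topological group, that composition $\Ds^s(\R^n)\times\Ds^s(\R^n)\to\Ds^s(\R^n)$ is continuous and becomes smooth in its first argument when the second is held fixed at higher regularity, that $\Ds^s_\sigma(\R^n)$ is a smooth submanifold, and a careful analysis of how $\mathbb P$ and $\nabla\Delta^{-1}$ transform under conjugation by $R_\eta$ — this bookkeeping is where the gain that offsets the derivative loss is extracted.

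Granting smoothness of $S$, the Picard--Lindel\"of theorem on Banach manifolds gives, for each initial datum $(\id,u_0)\in T_{\id}\Ds^s_\sigma(\R^n) = H^s_\sigma(\R^n;\R^n)$, a time $T>0$, a neighborhood $U$ of $u_0$, and a smooth local flow $(t,u_0)\mapsto\big(\eta(t;u_0),\dot\eta(t;u_0)\big)$; $T$ may be taken uniform on $U$ since the flow domain is open. Undoing the change of variables, $u(t;u_0):=\dot\eta(t;u_0)\circ\eta(t;u_0)^{-1}$ is a solution of \eqref{E} in $C^0\big([0,T];H^s_\sigma(\R^n;\R^n)\big)$, and it is the unique one by uniqueness for the ODE (every solution of \eqref{E} generates, via its flow, a solution of $\ddot\eta = S(\eta,\dot\eta)$ with the same initial data). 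Finally, continuity of $E:u_0\mapsto[t\mapsto u(t;u_0)]$ follows because $u_0\mapsto\big(\eta(t;u_0),\dot\eta(t;u_0)\big)$ is smooth while inversion $\eta\mapsto\eta^{-1}$ and right composition on $\Ds^s(\R^n)$ are continuous; composing the smooth map with these continuous operations yields a continuous $E$. It is precisely the failure of inversion and composition to be any better than continuous that blocks smoothness — and, as the main theorems of this thesis show, even local uniform continuity — of $E$ itself.
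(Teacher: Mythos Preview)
Your outline is correct in spirit and follows the same overall Lagrangian strategy the paper uses: pass to the flow, recognize a geodesic equation with a smooth (in fact real-analytic) spray, invoke Picard--Lindel\"of, and recover continuity of the solution map from continuity of composition and inversion on $\Ds^s(\R^n)$. Existence, uniqueness, and continuous dependence are obtained exactly as you describe.

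There is, however, one substantive difference in how the paper executes the plan. You propose to work directly on the volume-preserving submanifold $\Ds^s_\mu(\R^n)$ and to analyze the conjugated operators $R_\eta \mathbb P R_\eta^{-1}$ and $R_\eta \nabla\Delta^{-1} R_\eta^{-1}$. On $\R^n$ this runs into the difficulty that $\Delta:H^s(\R^n)\to H^{s-2}(\R^n)$ is not Fredholm, so $\Delta^{-1}$ is not a bounded operator between these spaces and the Leray projector does not have the clean pseudodifferential calculus it enjoys on a compact manifold; this is precisely why Cantor's treatment of Arnold's formalism on $\R^n$ resorted to weighted Sobolev spaces. The paper sidesteps the issue by \emph{not} restricting to $\Ds^s_\mu(\R^n)$ at all. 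Instead it introduces an extended equation \eqref{RE} on all of $H^s(\R^n;\R^n)$, with the pressure replaced by an explicit quadratic form $B(u)=B_1(u)+B_2(u)$ built from the Fourier cut-off $\chi(D)$; the point of the split is that $\chi(D)\Delta^{-1}\partial_i\partial_k$ and $\Delta^{-1}(1-\chi(D))$ are each bounded with the right mapping properties. The spray $\Gamma_\varphi(v,v)=R_\varphi\nabla B(R_\varphi^{-1}v)$ is then shown to be real analytic on the full group $\Ds^s(\R^n)$ (Proposition~\ref{prop_analytic_spray}), and a separate Gronwall argument (Lemma~\ref{lemma_closedness}) shows that solutions of \eqref{RE} with divergence-free initial data remain divergence-free, which recovers \eqref{E}. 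This buys two things: one never needs the submanifold structure of $\Ds^s_\mu(\R^n)$ (that is proved afterwards, as Theorem~\ref{th_submanifold}, using the exponential map just constructed), and the analyticity proof can exploit the entire-function space $H^\infty_\Xi(\R^n)$ via $\chi(D)$, which is where the conjugation estimates become tractable. Your route would work too, but you would have to confront the $\Delta^{-1}$ issue head-on rather than absorb it into the definition of $B$.
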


As an aside we mention that results on weak solutions of \eqref{E} are only a few. For $n=2$, Yudovich \cite{yudovich} proved existence and uniqueness of weak solutions with bounded vorticity (see Section \ref{section_proof_main} for the notion of vorticity) and existence of solutions with vorticity in $L^p$, $p >1$. In \cite{delort} existence of weak solutions for $n=2$ was established whose vorticities are positive measures. Non uniqueness results of weak solutions were shown in \cite{delellis,scheffer,shnirelman}.\\ \\
For any given $T >0$, denote by $E_T$ the time $T$ solution map in $H^s_\sigma(\R^n;\R^n)$, $s > n/2+1$. More precisely, let $U_T \subseteq H^s_\sigma(\R^n;\R^n)$ be the set of all $u_0 \in H^s_\sigma(\R^n;\R^n)$ such that the solution of \eqref{E} with initial value $u_0$ exists longer then time $T$ and by $E_T$ the map
\begin{equation}\label{ET_def}
 E_T:U_T \to H^s_\sigma(\R^n;\R^n),\quad u_0 \mapsto u(T;u_0).
\end{equation}
It turns out that $U_T$ is open -- see Section \ref{section_exponential_map}. Moreover it is star-shaped with respect to $0$. Indeed, we have the following scaling property for equation \eqref{E}: If $u(t)$ is a solution of \eqref{E} in $H^s_\sigma(\R^n;\R^n)$, then for any $\lambda > 0$, 
\begin{equation}\label{scaling}
w(t):=\lambda u(\lambda t)
\end{equation}
is also a solution to \eqref{E}. This shows that $U_T$ is star-shaped with respect to $0$ and we have
\[
 U_{\lambda T} = \frac{1}{\lambda} U_T
\]
for any $\lambda > 0$. One of the main results of this thesis is the following

\begin{Th}\label{th_not_uniform}
For any $n \geq 2$, $T > 0$ and $s > n/2+1$ the map
\[
 E_T:U_T \to H^s_\sigma(\R^n;\R^n),\quad u_0 \mapsto u(T;u_0)
\]
is nowhere locally uniformly continuous.
\end{Th}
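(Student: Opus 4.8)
The plan is to exploit the geometric reformulation of \eqref{E} developed in the preceding chapters: the Euler flow is the geodesic flow of the right-invariant $L^2$-metric on the group $\Ds^s_\mu(\Rn)$ of volume-preserving $H^s$-diffeomorphisms, and the associated exponential map $\exp$ at $\id$ is $C^\infty$. Writing $\eta_t(u_0)=\exp(tu_0)$ for the geodesic issuing from $\id$ with initial velocity $u_0$ (the geometric content of the scaling \eqref{scaling}), the solution of \eqref{E} is recovered in Eulerian coordinates as
\[
 E_T(u_0)=u(T;u_0)=\dot\eta_T(u_0)\circ\eta_T(u_0)^{-1},\qquad \dot\eta_T(u_0)=D\exp(Tu_0)[u_0].
\]
Since $\exp$ is smooth, the Lagrangian data map $u_0\mapsto\big(\eta_T(u_0),\dot\eta_T(u_0)\big)$ is a $C^\infty$ map from $H^s_\sigma(\Rn;\Rn)$ into $T\Ds^s_\mu(\Rn)$; because $D\exp(0)=\id$, the map $u_0\mapsto\eta_T(u_0)$ is near $0$ a $C^\infty$-diffeomorphism onto an open neighbourhood of $\id$ in $\Ds^s_\mu(\Rn)$, and more generally a local diffeomorphism at every $u_0$ at which $D\exp(Tu_0)$ is an isomorphism (i.e.\ away from conjugate points). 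Thus $E_T$ factors as a smooth, hence locally Lipschitz, map followed by the \emph{reconstruction map} $(\zeta,w)\mapsto w\circ\zeta^{-1}$, which is a composition-and-inversion map on $\Ds^s_\mu(\Rn)$ and, as such, nowhere locally uniformly continuous --- the diffeomorphism-group example of Section \ref{functional_setting}. The theorem asserts that the smooth Lagrangian factor cannot repair this pathology; the heart of the proof is to show that it does not.

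By Definition \ref{def_locally_uniform} it suffices to rule out that $E_T$ is uniformly continuous on some nonempty open $V\subseteq U_T$, so I argue by contradiction and fix $u_0^*\in V$. Suppose first that $u_0^*\neq 0$ and is not a conjugate point. Put $\zeta^*=\eta_T(u_0^*)$ and $w^*=\dot\eta_T(u_0^*)=E_T(u_0^*)\circ\zeta^*$; by the change of variables effected by the volume-preserving $\zeta^*$ and the conservation of energy for \eqref{E}, $\|w^*\|_{L^2}=\|u_0^*\|_{L^2}>0$, so $w^*\neq 0$. Shrinking $V$, the chart $\eta_T|_V\colon V\to\eta_T(V)$ is a $C^\infty$-diffeomorphism with locally Lipschitz inverse onto an open neighbourhood of $\zeta^*$; setting $W:=\dot\eta_T\circ(\eta_T|_V)^{-1}$ (smooth, $W(\zeta^*)=w^*$), the identity $E_T\big((\eta_T|_V)^{-1}(\zeta)\big)=W(\zeta)\circ\zeta^{-1}$ would force $\zeta\mapsto W(\zeta)\circ\zeta^{-1}$ to be uniformly continuous on a neighbourhood of $\zeta^*$. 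To contradict this I would transport, through the bi-Lipschitz chart $(\eta_T|_V)^{-1}$, the high-frequency construction that underlies the non-uniform continuity of the composition map in Section \ref{functional_setting}: one produces pairs $\zeta_k,\tilde\zeta_k$ in a fixed neighbourhood of $\zeta^*$, obtained by superposing on $\zeta^*$ a common high-frequency perturbation (of $H^s$-size not tending to $0$) and two low-frequency perturbations that differ and whose $H^s$-size tends to $0$, so that $\|\zeta_k-\tilde\zeta_k\|_{H^s}\to 0$, while the interaction of the high- and low-frequency parts under inversion and composition with the essentially fixed nonzero $w^*$ keeps $\|W(\zeta_k)\circ\zeta_k^{-1}-W(\tilde\zeta_k)\circ\tilde\zeta_k^{-1}\|_{H^s}\not\to 0$ (the smooth factor $W$, being locally Lipschitz, does not interfere). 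Pulling $\zeta_k,\tilde\zeta_k$ back through $(\eta_T|_V)^{-1}$ yields $v_k,\tilde v_k\in V$ with $\|v_k-\tilde v_k\|_{H^s}\to 0$ and $\|E_T(v_k)-E_T(\tilde v_k)\|_{H^s}\not\to 0$, the desired contradiction.

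It remains to handle the degenerate cases and to name the main obstacle. If $u_0^*$ is a conjugate point, $D\eta_T(u_0^*)$ is no longer an isomorphism and one cannot freely prescribe $\zeta_k$; one then runs the same high-frequency construction within the range of $D\eta_T(u_0^*)$ and must check that the Jacobi flow along the geodesic from $u_0^*$ does not damp the frequencies involved. If $u_0^*=0$, the reduction above is vacuous since $w^*=0$, and one instead uses the expansion of $E_T$ near $0$ furnished by the smoothness of $u_0\mapsto\big(\eta_T(u_0),\dot\eta_T(u_0)\big)$ together with $\eta_T(0)=\id$, $\dot\eta_T(0)=0$, tracing the failure of uniform continuity to that of inversion $\zeta\mapsto\zeta^{-1}$ on a neighbourhood of $\id$ and transporting it to a neighbourhood of $0$ via the local diffeomorphism $\eta_T=\exp(T\,\cdot)$. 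The principal obstacle throughout is that the reconstruction map $(\zeta,w)\mapsto w\circ\zeta^{-1}$ is not differentiable --- it loses one derivative in $\zeta$ --- so the perturbation argument cannot be carried out at the linearised level: the high-frequency construction must be run with the full non-linear composition, all remainder terms (from $W$, from the inversion, and from the chart $\eta_T$) controlled uniformly over the family, and one must verify that the linearised Lagrangian flow is transverse to this loss of derivative. This transversality is exactly what guarantees that the non-uniform continuity of composition genuinely survives in $E_T$, so that at no point of $U_T$ does the pathology wash out.
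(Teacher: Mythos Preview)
Your reduction correctly identifies the factorization $E_T(u_0)=\dot\eta_T(u_0)\circ\eta_T(u_0)^{-1}$ and the moral source of the pathology in Theorem~\ref{lemma_composition_not_uniform}. But the central step is asserted, not proved. In Theorem~\ref{lemma_composition_not_uniform} the non-uniform continuity of $(f,\varphi)\mapsto f\circ\varphi^{-1}$ is produced by a $k$-dependent bump $\delta f_k$ in the \emph{first} argument, with $\|\delta f_k\|_s$ of fixed size and support shrinking to a point, while the second argument undergoes a vanishing shift. In your setup the first argument is $W(\zeta)$, slaved to $\zeta$ by a smooth (hence locally Lipschitz) map: if $\|\zeta_k-\tilde\zeta_k\|_s\to 0$ then $\|W(\zeta_k)-W(\tilde\zeta_k)\|_s\to 0$, and for any \emph{fixed} $f\in H^s$ continuity of composition gives $\|f\circ\zeta_k^{-1}-f\circ\tilde\zeta_k^{-1}\|_s\to 0$ as well. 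So you must show that a localized high-frequency perturbation of the initial data creates, in $\dot\eta_T(u_{0,k})$, a $k$-dependent localized bump of definite $H^s$-size whose support is genuinely displaced by the small difference between $\zeta_k^{-1}$ and $\tilde\zeta_k^{-1}$. The sentence ``the interaction of the high- and low-frequency parts under inversion and composition with the essentially fixed nonzero $w^*$ keeps $\|\cdot\|_{H^s}\not\to 0$'' asserts exactly this without argument; the closing ``transversality'' clause names the difficulty but does not resolve it. The conjugate-point case is left open, and at $u_0^*=0$ your reduction to inversion is vacuous: there $W(\zeta)\to 0$ as $\zeta\to\id$, so there is nothing of definite size for $\zeta^{-1}$ to act on.

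What the paper supplies is precisely the missing mechanism: the vorticity conservation law of Proposition~\ref{prop_conserved}, which gives an \emph{explicit} formula \eqref{conserved_different} for $\Omega\big(E_T(u_0)\big)$ as a conjugated push-forward of $\Omega(u_0)$ by the flow $\exp(Tu_0)$. One perturbs the initial data (not $\zeta$) by a localized $w_k$ with $\|w_k\|_s=R/4$ and shrinking support near a far-away point $x^\ast$, and by a further $v_k\to 0$; the whole question then reduces to showing that the two flows $\exp(u_{0,k})$ and $\exp(\tilde u_{0,k})$ carry $\operatorname{supp}w_k$ to \emph{disjoint} sets. That is handled by Lemma~\ref{lemma_dexp}, which uses the Biot--Savart representation and Lemma~\ref{lemma_almost_identity} to bound $|d_{\bar u_0}\exp(v)(x^\ast)|$ from below for suitable $v$ supported far from $\operatorname{supp}\bar u_0$. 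This argument works at every point of $U_T$ (no local-diffeomorphism hypothesis on $\eta_T$, no special treatment of $0$), and yields the quantitative bound $\limsup_k\|E_T(u_{0,k})-E_T(\tilde u_{0,k})\|_s\ge C_\ast R$ with $C_\ast$ independent of $R$ --- the feature that also drives Theorem~\ref{th_not_differentiable}.
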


\noindent
An immediate corollary is

\begin{Coro}\label{coro_not_smooth}
For any $n \geq 2$, $T > 0$ and $s > n/2+1$ the map
\[
 E_T:U_T \to H^s_\sigma(\R^n;\R^n)
\]
is nowhere locally Lipschitz, hence nowhere $C^1$-smooth.
\end{Coro}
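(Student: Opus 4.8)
The plan is to derive this statement purely formally from Theorem \ref{th_not_uniform}, using two elementary implications valid for any map between open subsets of Banach spaces: being $C^1$-smooth at a point implies being locally Lipschitz at that point, and being locally Lipschitz at a point implies being locally uniformly continuous at that point. Contraposing this chain over all points of $U_T$ and feeding in the conclusion of Theorem \ref{th_not_uniform} gives the corollary. So the corollary is indeed ''immediate'' in the sense that all the substance sits in Theorem \ref{th_not_uniform}.

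First I would record the trivial step: if $f|_U$ is Lipschitz on some neighborhood $U$ of a point $x_0$, then $f|_U$ is in particular uniformly continuous, so $f$ is locally uniformly continuous at $x_0$. Hence a map that fails to be locally uniformly continuous at $x_0$ cannot be locally Lipschitz at $x_0$. Applying this at every point and invoking Theorem \ref{th_not_uniform}, we conclude that $E_T$ is nowhere locally Lipschitz.

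Next I would upgrade this to the $C^1$ assertion via the standard fact that a $C^1$ map (Fr\'echet differentiable with continuous derivative) $f$ between open subsets of Banach spaces is locally Lipschitz: if $Df$ is continuous at $x_0 \in U_T$, choose $\rho > 0$ with $B(x_0,\rho) \subseteq U_T$ and $\| Df(x) \|_{\mathrm{op}} \leq L := \| Df(x_0) \|_{\mathrm{op}} + 1$ for all $x \in B(x_0,\rho)$; since the ball is convex, the mean value inequality yields $\| f(x) - f(y) \| \leq L \, \| x - y \|$ for all $x,y \in B(x_0,\rho)$, so $f$ is Lipschitz near $x_0$. Thus $C^1$-smoothness of $E_T$ at any point would force local Lipschitz continuity there, contradicting the previous paragraph. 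Therefore $E_T$ is nowhere $C^1$-smooth.

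There is no real obstacle here; the only point deserving a word of care is that ''$C^1$'' is understood in the Fr\'echet sense (the ambient Banach space being $H^s_\sigma(\R^n;\R^n)$), which is what makes the mean value inequality applicable and is in any case the standard meaning in this context.
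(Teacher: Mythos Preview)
Your argument is correct and is precisely what the paper intends: the corollary is stated without proof as ``an immediate corollary'' of Theorem~\ref{th_not_uniform}, and your chain of implications (local Lipschitz $\Rightarrow$ local uniform continuity, $C^1$ $\Rightarrow$ local Lipschitz via the mean value inequality on a convex ball) is exactly the standard reasoning behind that remark.
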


A further result of the thesis is concerned with the differentiability of $E_T$.

\begin{Th}\label{th_not_differentiable}
For any $n \geq 2$, $T > 0$ and $s > n/2+1$ the map
\[
 E_T:U_T \to H^s_\sigma(\R^n;\R^n)
\]
is nowhere differentiable.
\end{Th}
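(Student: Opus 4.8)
The plan is to exploit the Lagrangian (geodesic) reformulation built up in the earlier chapters, according to which $E_T(u_0)=\dot\eta(T;u_0)\circ\eta(T;u_0)^{-1}$, where $t\mapsto\eta(t;u_0)$ is the associated flow in $\Ds^s(\R^n)$ (staying in the volume-preserving subgroup) and — crucially — the \emph{Lagrangian data map} $u_0\mapsto\bigl(\eta(T;u_0),\dot\eta(T;u_0)\bigr)$ is $C^\infty$ into $T\Ds^s(\R^n)$, this being exactly the smoothness of the geodesic spray. Thus $E_T$ is the composite of a $C^\infty$-map into a tangent bundle with the ''un-twisting'' operation $(\varphi,V)\mapsto V\circ\varphi^{-1}$, and all the difficulty is concentrated in this last step, which loses one derivative. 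By the scaling identity $E_{\lambda T}(w)=\lambda^{-1}E_T(\lambda w)$ one may also fix $T$ once and for all.

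Assume $E_T$ is differentiable at some $u_0\in U_T$, and test differentiability along a segment $\varepsilon\mapsto u_0+\varepsilon v$, $v\in H^s_\sigma(\R^n;\R^n)$. Writing $\phi_\varepsilon:=\eta(T;u_0+\varepsilon v)$ and $\psi_\varepsilon:=\dot\eta(T;u_0+\varepsilon v)$, I would split
\[
\frac{E_T(u_0+\varepsilon v)-E_T(u_0)}{\varepsilon}
=\underbrace{\frac{(\psi_\varepsilon-\psi_0)\circ\phi_\varepsilon^{-1}}{\varepsilon}}_{\text{(I)}}
+\underbrace{\frac{\psi_0\circ\phi_\varepsilon^{-1}-\psi_0\circ\phi_0^{-1}}{\varepsilon}}_{\text{(II)}}.
\]
Term (I) converges in $H^s$ to $\dot\psi_0\circ\phi_0^{-1}\in H^s$, by smoothness of the Lagrangian data map together with the continuity of composition $H^s\times\Ds^s\to H^s$. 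For term (II) I would use $\psi_0=u(T;u_0)\circ\phi_0$ and the standard loss-of-a-derivative behaviour of composition and inversion in the diffeomorphism groups to compute that (II) converges, at the level of $H^{s-1}$, to $-\,Du(T;u_0)\cdot h(v)$, where $h(v):=\bigl(\partial_\varepsilon\eta(T;u_0+\varepsilon v)|_{\varepsilon=0}\bigr)\circ\phi_0^{-1}\in H^s_\sigma$ and $v\mapsto h(v)$ is bounded linear. Since (I) does converge in $H^s$, the full difference quotient can converge in $H^s$ only if $Du(T;u_0)\cdot h(v)\in H^s$; so differentiability of $E_T$ at $u_0$ forces $Du(T;u_0)\cdot h(v)\in H^s$ for every $v\in H^s_\sigma$.

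This necessary condition fails generically. For instance the direction $v=u_0$ gives, via the scaling of the flow, $h(u_0)=T\,u(T;u_0)$ and hence the constraint $\bigl(u(T;u_0)\cdot\nabla\bigr)u(T;u_0)\in H^s$, which is false whenever $u(T;u_0)\in H^s\setminus H^{s+1}$ is ''rough'' with no accidental cancellation — and by persistence of regularity along the (reversible) Euler flow this is the case precisely when $u_0$ is not locally of class $H^{s+1}$. The main obstacle is to upgrade ''generically'' to ''everywhere'': at special data (e.g. steady states, where $(u_0\cdot\nabla)u_0=-\nabla p_0$ is in fact smooth) one must instead produce, by a careful choice, some other $v$ violating the condition; and at exceptionally regular initial data $u_0\in H^{s+1}$ (iteratively $u_0\in\bigcap_k H^{s+k}$) the condition does hold for all $v$, so $E_T$ is Gâteaux differentiable there and no single direction suffices. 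In that case the contradiction must come from the failure of \emph{uniformity} of the Taylor remainder over the unit sphere of $H^s_\sigma$: I would feed in an explicit family of high-frequency, spatially localized divergence-free perturbations $v_N$ with $\|v_N\|_{H^s}=1$ and scales $\varepsilon_N\to0$, and bound below the $H^s$-size of $E_T(u_0+\varepsilon_N v_N)-E_T(u_0)-\varepsilon_N\,DE_T(u_0)v_N$ by exploiting that right composition by the rough diffeomorphism $\eta(T;u_0+\varepsilon_N v_N)^{-1}$ is not uniformly continuous (the phenomenon behind the discussion of $\Ds^s(\R^n)$ in Section~\ref{functional_setting} and Izzo's Example~\ref{ex_not_uniform}). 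Turning the heuristic ''loss of a derivative'' into such a genuine lower bound is the delicate point; granting it, the bound contradicts Fréchet differentiability at $u_0$ and finishes the proof.
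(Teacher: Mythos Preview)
Your directional-derivative strategy has a genuine gap precisely where you acknowledge it. The case split ``rough $u_0$ versus smooth $u_0$'' does not close: for $u_0\in\bigcap_k H^{s+k}$ you yourself note that all Gâteaux derivatives exist in $H^s$, and you reduce the problem to producing a family $v_N$ with a quantitative lower bound on the Fréchet remainder, but then write ``turning the heuristic `loss of a derivative' into such a genuine lower bound is the delicate point; granting it, \dots''. That is exactly the content of the theorem, and it is not proved here. Even the ``generic'' branch is incomplete: from $v=u_0$ you obtain the necessary condition $(u(T;u_0)\cdot\nabla)u(T;u_0)\in H^s$, but by the Euler equation this equals $-\nabla p-\partial_t u$ with $\nabla p\in H^s$, so the condition is $\partial_t u(T)\in H^s$, and you have no argument ruling this out beyond ``no accidental cancellation''. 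Steady states are only the most obvious counterexamples; there is no mechanism here that handles an arbitrary $u_0$.

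The paper bypasses the case distinction entirely by never testing along a single direction. Its proof of Theorem~\ref{th_not_differentiable} is a two-line corollary of a \emph{quantitative} estimate established inside the proof of Proposition~\ref{prop_nonuniform}: for every $u_0$ there exist $R_\ast,C_\ast>0$, with $C_\ast$ independent of $R$, so that for every $0<R\le R_\ast$ one can find two sequences $(u_{0,k}),(\tilde u_{0,k})\subset B_R(u_0)$ with $\|u_{0,k}-\tilde u_{0,k}\|_s\to 0$ yet $\|E_1(u_{0,k})-E_1(\tilde u_{0,k})\|_s\ge C_\ast R$. Differentiability at $u_0$ would give, for $R$ small, a remainder bound $\|\mathcal R(u_0,h)\|_s\le\tfrac{C_\ast}{4}\|h\|_s$; writing $E_1(u_{0,k})-E_1(\tilde u_{0,k})$ through the linear part plus two remainders then yields $\limsup_k\|E_1(u_{0,k})-E_1(\tilde u_{0,k})\|_s\le\tfrac{C_\ast}{2}R$, a contradiction. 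The whole weight is in manufacturing the pairs: one approximates $u_0$ by $\bar u_0\in C^\infty_{\sigma,c}$, places compactly supported divergence-free bumps $w_k$ with $\|w_k\|_s=R/4$ in shrinking balls around a point $x^\ast$ far from $\operatorname{supp}\bar u_0$, and takes $u_{0,k}=\bar u_0+w_k$, $\tilde u_{0,k}=u_{0,k}+v_k$ with $v_k\to 0$. Using the vorticity conservation law~\eqref{conserved}, Biot--Savart, and Lemma~\ref{lemma_dexp} one shows that $\exp(u_{0,k})$ and $\exp(\tilde u_{0,k})$ carry $\operatorname{supp}w_k$ to \emph{disjoint} sets, so the transported vorticities add rather than cancel in $H^{s-1}$, giving the lower bound linear in $R$. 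This is exactly the ``high-frequency, spatially localized'' construction you gesture at in your last paragraph, but carried out concretely; the linearity in $R$ of the bound (with a constant depending only on $u_0$, not on $R$) is the ingredient you are missing and is what makes the argument work uniformly over all $u_0$, smooth or not.
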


Note that Theorem \ref{th_not_differentiable} is not implied by Theorem \ref{th_not_uniform}. Indeed consider the function $f$ given in Example \ref{ex_not_uniform}. One easily verifies that $f$ is differentiable at $0 \in \ell^2$ with derivative $d_0 f=0$. Hence differentiability in a point doesn't imply that the function is locally uniformly continuous in that point.

In the following we give a brief description of the method we want to use to prove Theorem \ref{th_not_uniform} and Theorem \ref{th_not_differentiable}. This method doesn't only apply to the incompressible Euler equation, but works for a whole class of equations arising as geodesic equations on the diffeomorphism groups $\Ds^s(\R^n)$, including Burgers' equation, the Camassa-Holm equation \cite{ch_geodesic} and its generalization to higher dimensions, the Degasperis-Procesi equation \cite{dp_geodesic}, the averaged Euler equation \cite{ae_geodesic} and the hyperelastic rod wave equation -- see \cite{geodesic_equation}. The key point is that the symmetries of the geodesic equations on $\Ds^s(\R^n)$ give rise to a conservation law involving the composition map of $\Ds^s(\R^n)$
\[
 \mu:\Ds^s(\R^n) \times \Ds^s(\R^n),\quad (\varphi,\psi) \mapsto \varphi \circ \psi.
\]
One can show that the composition map is continuous (see e.g. \cite{composition}), but nowhere locally uniformly continuous and nowhere differentiable (see Lemma \ref{lemma_composition_not_uniform}). The formulation of \eqref{E} in terms of geodesic flows, the above mentioned conservation law as well as these properties of the composition map are the key ingredients for the proofs of Theorem \ref{th_not_uniform} and Theorem \ref{th_not_differentiable}.\\ \\
Before we state further results of this thesis, we have to make some comments on the formulation of \eqref{E} in terms of geodesic flows, referred to as ''Arnold's formalism'' in the sequel. To do that, let us first explain the Lagrangian formulation of the equation \eqref{E}. Recall that the equation \eqref{E} is referred to as the Eulerian description of the incompressible Euler equation. That is, one fixes a point $x \in \R^n$ in space and observes the velocity $u(t,x)$ of the fluid at $x$ as time passes. Instead of fixing the position in space one can single out a fluid ''particle'' and follow it. More precisely, let's take a fluid particle which is located at time 0 at $x \in \R^n$. At time $t$ the particle is, let's say, at position $\varphi(t,x) \in \R^n$. The description of the fluid flow in terms of the $\varphi$ variable is called the Lagrangian description. The relation between the Eulerian and the Lagrangian description is given by
\[
 \partial_t \varphi(t,x) = u(t,\varphi(t,x)),\quad \varphi(0,x)=x
\]
,i.e. $\varphi$ is the flow of the vector field $u$. The incompressibility condition $\operatorname{div}u=0$ reads in Lagrangian coordinates as $\det(d\varphi) \equiv 1$ (see Lemma \ref{det_derivative}). In our setting it turns out that these $\varphi$'s are elements in $\Ds^s_\mu(\R^n)$, the space of volume-preserving diffeomorphisms in $\Ds^s(\R^n)$. Therefore solutions of \eqref{E} expressed in Lagrangian coordinates are continuous curves in $\Ds^s_\mu(\R^n)$.\\
The advantage of the Lagrangian description is that it leads to an ODE formulation of \eqref{E}. This approach was already used by Lichtenstein \cite{lichtenstein} and Gunter \cite{gunter} to get local well-posedness of \eqref{E}. Later Arnold \cite{arnold} observed that, when expressed in Lagrangian coordinates, equation \eqref{E} becomes a geodesic equation on $\Ds^s(\R^n)$ with an appropriate choice of the metric. The idea is the following: Consider $\Ds^s(\R^n)$ and its submanifold $\Ds_\mu^s(\R^n)$. The tangent space of $\Ds_\mu^s(\R^n)$ at $\operatorname{id} \in \Ds_\mu(\R^n)$ consists of divergence-free vector fields. At an arbitrary $\psi \in \Ds^s_\mu(\R^n)$, the tangent space $T_\psi \Ds^s_\mu(\R^n)$ consists of vector fields of the form $v \circ \psi$ where $v$ is a divergence-free vector field. Moreover for any $C^2$-curve $\varphi$, its second derivative $\partial_t^2 \varphi(t)$ can be canonically identified with an element of $T_{\varphi(t)} \Ds^s(\R^n)$. We endow $\Ds^s(\R^n)$ with the $L^2$-metric 
\[
 \langle u,v \rangle_\varphi := \langle u,v \rangle_{L^2} \quad \varphi \in \Ds^s(\R^n), \; u,v \in T_\varphi \Ds^s(\R^n) 
\]
and $\Ds^s_\mu(\R^n)$ with the induced metric. A curve $\varphi:[0,T] \to \Ds^s_\mu(\R^n)$, $T > 0$, is a $L^2$-geodesic in $\Ds^s_\mu(\R^n)$ if $\partial_t^2 \varphi \in T_\varphi \Ds^s(\R^n)$ is $L^2$-orthogonal to $T_\varphi \Ds^s_\mu(\R^n) \subseteq T_\varphi \Ds^s(\R^n)$. By the Helmholtz decomposition (see e.g. \cite{majda}) we know that elements which are $L^2$-orthogonal to $T_\varphi \Ds^s_\mu(\R^n)$ are vector fields of the form $-(\nabla p) \circ \varphi$. Thus we arrive at
\[
 \partial_t^2 \varphi = \partial_t \big( u \circ \varphi) = \partial_t u \circ \varphi + \big((u \cdot \nabla) u\big) \circ \varphi = -(\nabla p) \circ \varphi
\]
giving the first equation in \eqref{E}. The property that $\varphi$ is volume-preserving implies the second equation in \eqref{E}. We will show in Section \ref{section_exponential_map} that the above equation leads to the following ODE
\[
 \partial_t \varphi = v,\quad \partial_t v = B(v \circ \varphi^{-1}) \circ \varphi
\]
and $B$ is given by \eqref{B_formula}.\\ \\

The discussion above shows that the problem \eqref{E}, expressed in Lagrangian coordinates, can be formulated as finding the stationary points of the action functional
\[
 \mathcal A_0^T[\varphi] = \frac{1}{2} \int_0^T ||\partial_t \varphi ||_{L^2}^2 dt
\]
for $\varphi:[0,T] \to \Ds^s_\mu(\R^n)$. Arnold presented these ideas in a formal way. Later they were made mathematically rigorous by the work of Ebin and Marsden \cite{ebin}, proving that Arnold's formalism works for Sobolev spaces over compact oriented manifolds. More precisely, they showed that for a compact oriented Riemannian manifold $(M,g)$ of dimension $n \geq 2$, possibly with boundary, one can give a differential structure (i.e. a manifold structure) to $\Ds^s(M)$, the group of orientation-preserving diffeomorphisms of $M$, which are of Sobolev class $s$, $s > n/2 +1$. For the corresponding volume element $\mu$ one has the submanifold $\Ds^s_\mu(M)$ consisting of elements in $\Ds^s(M)$ which are volume-preserving. They proved that the $L^2$-geodesics of $\Ds^s_\mu(M)$ are generated by a smooth vector field on the tangent bundle $T\Ds_\mu^s(M)$, obtained by restricting the corresponding vector field on $T\Ds^s(M)$ to $T\Ds_\mu^s(M)$, thus getting an ODE description of the solutions similarly as Lichtenstein \cite{lichtenstein} and Gunter \cite{gunter}. In this way they proved a local well-posedness theory for the initial value problem \eqref{E} in Sobolev spaces $H^s_\sigma$, $s > n/2+1$. In \cite{cantor}, Cantor worked out Arnold's formalism for $M=\R^n$. But instead of using the usual Sobolev spaces $H^s(\R^n;\R^n)$ he worked in weighted Sobolev spaces and needed to choose weights $w,w'$ so that the Laplacian $\Delta$ is a linear isomorphism, $\Delta:H^s_w(\R^n) \to H^{s-2}_{w'}(\R^n)$. In contrast, $\Delta:H^s(\R^n) \to H^{s-2}(\R^n)$ is not Fredholm, which makes it harder to verify Arnold's formalism for these spaces.\\
After these preliminary remarks we can now state the two remaining main results of this thesis. Theorem \ref{th_lagrangian} below, says that Arnold's formalism works for the Sobolev spaces $H^s_\sigma(\R^n;\R^n)$, $s>n/2+1$. 

\begin{Th}\label{th_lagrangian}
Let $s > n/2+1$ with $n \geq 2$ and let $B$ be the quadratic form given as in \eqref{B_formula}.\\
(i) The map 
\[
 V:\Ds^s(\R^n) \times H^s(\R^n;\R^n) \to H^s(\R^n;\R^n) \times H^s(\R^n;\R^n),\quad (\varphi,v) \mapsto \big(v,\nabla B(v \circ \varphi^{-1}) \circ \varphi \big) 
\]
is a real analytic vector field. Hence the initial value problem
\begin{equation}\label{E_ivp}
 (\partial_t \varphi, \partial_t v) = \big(v,\nabla B(v \circ \varphi^{-1})\circ \varphi\big),\quad \varphi(0)=\operatorname{id},\;v(0)=u_0 \in H^s(\R^n;\R^n)
\end{equation}
is locally analytically well-posed.\\
(ii) For any $u_0 \in H^s_\sigma(\R^n;\R^n)$ the solution $\varphi$ of \eqref{E_ivp} gives rise to a solution of \eqref{E} by the formula $u=\partial_t \varphi \circ \varphi^{-1}$.
\end{Th}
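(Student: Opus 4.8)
The plan for part~(i) is to reduce the whole statement to the real analyticity of the map $V$: once $V$ is known to be real analytic on the open subset $\Ds^s(\R^n)\times H^s(\R^n;\R^n)$ of the affine Banach space $\bigl(\operatorname{id}+H^s(\R^n;\R^n)\bigr)\times H^s(\R^n;\R^n)$, the local analytic well-posedness of \eqref{E_ivp} is the analytic version of the Picard--Lindel\"of theorem for ordinary differential equations in Banach spaces, proved by the method of Cauchy majorants (see e.g.\ \cite{tao_book}), which also yields analytic dependence on the initial datum and on $t$. Writing $V=(\mathrm{pr}_2,W)$ with $W(\varphi,v)=\nabla B(v\circ\varphi^{-1})\circ\varphi$, the first component is linear, hence analytic, so the whole problem is the analyticity of $W$. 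The naive factorisation ``form $v\circ\varphi^{-1}$, apply $\nabla B$, compose with $\varphi$'' is useless, because neither $\varphi\mapsto(\,\cdot\,)\circ\varphi^{-1}$ nor $\varphi\mapsto(\,\cdot\,)\circ\varphi$ is even $C^1$ (Lemma~\ref{lemma_composition_not_uniform}). Instead I would unroll $W$ by the chain rule so that every composition with $\varphi^{\pm1}$ that meets a differentiation disappears: using $\bigl(D(f\circ\varphi^{-1})\bigr)\circ\varphi=Df\cdot(D\varphi)^{-1}$ together with the fact (a direct consequence of \eqref{B_formula}) that $\nabla B(u)$ is built from pointwise products of $u$ and its first derivatives followed by Fourier multipliers of order $\le 0$, every term of $W(\varphi,v)$ becomes a finite sum of pointwise products of $v$, $Dv$ and the entries of $(D\varphi)^{-1}$, acted on by a single nonlocal operator of the conjugated form $R_\varphi\circ\mathcal R\circ R_{\varphi^{-1}}$ with $\mathcal R$ a fixed multiplier of order $\le 0$.

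I would then assemble the analyticity of this unrolled expression from three facts, each standard in this circle of ideas. First, $\varphi\mapsto(D\varphi)^{-1}$ is real analytic from $\Ds^s(\R^n)$ into $I+\bigl(H^{s-1}(\R^n)\bigr)^{n\times n}$: it is the composition of the bounded affine map $\varphi\mapsto D\varphi$ with matrix inversion, which is analytic on the set of matrix fields taking everywhere invertible values, because $H^{s-1}(\R^n)$ is a Banach algebra for $s-1>n/2$ and $t\mapsto 1/t$ is analytic away from $0$, while the determinant of a $\Ds^s$-diffeomorphism is bounded away from zero. Second, the pointwise multiplications $H^s\times H^{s-1}\to H^{s-1}$ and $H^{s-1}\times H^{s-1}\to H^{s-1}$ are bounded bilinear maps, hence real analytic. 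Third, the conjugated operator $\mathcal R_\varphi:=R_\varphi\circ\mathcal R\circ R_{\varphi^{-1}}$ depends real analytically on $\varphi$ as a bounded operator between the relevant Sobolev spaces; I would obtain this by identifying $\mathcal R_\varphi$ with a piece of the solution operator of an elliptic problem pulled back along $\varphi$, whose coefficients are polynomial in the entries of $(D\varphi)^{-1}$ (hence analytic in $\varphi$, by the first fact), and then invoking that the inversion map on bounded operators is analytic on the open set of invertible ones. Composing, $W$ is real analytic and (i) follows.

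The main obstacle is exactly the third point, and it is the difficulty already flagged in the introduction: on $\R^n$ the Laplacian is not Fredholm, so one cannot simply invert the pulled-back elliptic operator in the $H^s$-scale. I would deal with this by exploiting the precise form of \eqref{B_formula}, namely that the nonlocal factor $\mathcal R$ is applied to a quantity carrying enough divergence structure that the corresponding multiplier is genuinely of order $\le 0$ on all of $\R^n$, low frequencies included, and then by checking --- uniformly on a neighbourhood of any prescribed diffeomorphism --- that $\mathcal R_\varphi$ is a bounded operator depending analytically on $\varphi$ (an alternative being to trade $\Delta$ for the genuinely invertible $1-\Delta$ plus explicit, analytically controlled lower-order corrections). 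Keeping the low-frequency estimates and the analytic dependence on $\varphi$ simultaneously under control is the technical heart of the argument.

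For part~(ii), given $u_0\in H^s_\sigma(\R^n;\R^n)$, let $(\varphi,v)$ be the solution of \eqref{E_ivp} on some interval $[0,T]$ provided by~(i) and set $u:=v\circ\varphi^{-1}=\partial_t\varphi\circ\varphi^{-1}$; since composition $H^s\times\Ds^s\to H^s$ is continuous, $u\in C^0\bigl([0,T];H^s(\R^n;\R^n)\bigr)$. First I would show that $\varphi$ stays volume-preserving: using Lemma~\ref{det_derivative} one derives a linear transport-type equation for $\det(D\varphi)$ driven by $\operatorname{div}u$, and since $\det(D\varphi)(0)=1$, i.e.\ $\operatorname{div}u(0)=\operatorname{div}u_0=0$, uniqueness for this linear equation forces $\det(D\varphi(t))\equiv1$, equivalently $\operatorname{div}u(t)\equiv0$, on $[0,T]$. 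Next, differentiating $v=u\circ\varphi$ in $t$ and using the second equation of \eqref{E_ivp} gives $(\partial_tu)\circ\varphi+\bigl((u\cdot\nabla)u\bigr)\circ\varphi=\nabla B(u)\circ\varphi$; precomposing with $\varphi^{-1}$ yields $\partial_tu+(u\cdot\nabla)u=\nabla B(u)$, and since $\operatorname{div}u\equiv0$ the right-hand side equals $-\nabla p$ with $p$ the pressure determined by $\Delta p=-\operatorname{div}\bigl((u\cdot\nabla)u\bigr)$ --- this is precisely what \eqref{B_formula} is designed to give. Together with $\operatorname{div}u\equiv0$, $u(0)=u_0$, and $u\in C^0\bigl([0,T];H^s_\sigma(\R^n;\R^n)\bigr)$ (continuity and divergence-freeness having just been shown), this exhibits $u$ as a solution of \eqref{E}.
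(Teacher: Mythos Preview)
Your strategy for (i) is essentially the paper's, and the decomposition into pointwise products of $v$, $Dv$, $(D\varphi)^{-1}$ followed by a conjugated nonlocal operator $R_\varphi\mathcal R R_{\varphi^{-1}}$ is exactly how the paper proceeds. The conjugation-plus-inversion argument you sketch matches the paper's treatment of the high-frequency piece $\Delta^{-1}\bigl(1-\chi(D)\bigr)$ (Lemma~\ref{lemma_b2_analytic}): one writes it as $-\chi(D)+A^{-1}$ with $A=\chi(D)+\Delta\bigl(1-\chi(D)\bigr)$ genuinely invertible $H^s\to H^{s-2}$, checks that $R_\varphi A R_{\varphi^{-1}}$ is analytic in $\varphi$ (a polynomial computation in the entries of $(D\varphi)^{-1}$), and invokes analyticity of operator inversion. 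Where your plan is incomplete is the low-frequency piece $\chi(D)\Delta^{-1}\partial_i\partial_k$: its symbol $\chi(\xi)\xi_i\xi_k/|\xi|^2$ is bounded but singular at $\xi=0$, so neither the inversion trick nor your ``$1-\Delta$ plus corrections'' alternative dissolves the singularity --- the correction you would need is again a conjugated order-zero multiplier with the same low-frequency problem. The paper handles this by a genuinely different mechanism (Lemma~\ref{lemma_b1_analytic}): $\chi(D)$ maps into the space $H^\infty_\Xi(\R^n)$ of band-limited, hence entire, functions, on which composition $(f,\varphi)\mapsto f\circ\varphi$ is real analytic with infinite radius (Proposition~\ref{prop_analytic_composition}, by Taylor expansion), and $\varphi\mapsto\bigl[f\mapsto\chi(D)(f\circ\varphi^{-1})\bigr]$ is shown analytic by a weak-analyticity duality argument (Corollary~\ref{coro_composition_inverse}). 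This entire-function trick is the idea your outline is missing.

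For (ii) there is a real gap. The equation Lemma~\ref{det_derivative} gives is $\partial_t\det(D\varphi)=(\operatorname{div}u)\circ\varphi\cdot\det(D\varphi)$, whose solution with initial value $1$ is $\exp\bigl(\int_0^t(\operatorname{div}u)\circ\varphi\,d\tau\bigr)$; knowing only $\operatorname{div}u(0)=0$ does \emph{not} force this to stay $\equiv1$, since the coefficient $\operatorname{div}u(t)$ is itself unknown for $t>0$. Your appeal to ``uniqueness for this linear equation'' is circular: it presupposes $\operatorname{div}u\equiv0$, which is exactly the claim. The paper instead first derives $\partial_tu+(u\cdot\nabla)u=\nabla B(u)$ (as you do) and then invokes Proposition~\ref{prop_alternative}, whose substance is Lemma~\ref{lemma_closedness}: taking the divergence of \eqref{RE} yields the evolution equation~\eqref{u_derivative} for $\operatorname{div}u$, which is nonlinear and involves the nonlocal operator $\chi(D)$, and an $L^2$ energy estimate together with Gronwall gives $\|\operatorname{div}u(t)\|_{L^2}\equiv0$. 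Only after this does $\nabla B(u)=-\nabla p$ hold and the identification with a solution of~\eqref{E} follow.
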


\noindent
In Ebin and Marsden \cite{ebin} it was shown that for any compact oriented Riemannian manifold $M$, possibly with boundary, the group $\Ds_\mu^s(M)$ is a smooth submanifold of $\Ds^s(M)$. Theorem \ref{th_submanifold} below states an analogous result for $\R^n$.

\begin{Th}\label{th_submanifold}
Let $s > n/2+1$ with $n \geq 2$. Then $\Ds^s_\mu(\R^n)$ is a closed real analytic submanifold of $\Ds^s(\R^n)$.
\end{Th}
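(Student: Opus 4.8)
The plan is to realize $\Ds^s_\mu(\R^n)$ as the zero set of a real analytic submersion $G$ between Banach manifolds and then invoke the regular value (submersion) theorem. By Lemma~\ref{det_derivative} we have $\Ds^s_\mu(\R^n)=\{\varphi\in\Ds^s(\R^n):\det(d\varphi)\equiv 1\}$, so the natural candidate is $G(\varphi):=\det(d\varphi)-1$. Writing $\varphi=\id+f$ with $f\in H^s(\R^n;\R^n)$, $G(\varphi)$ is a polynomial without constant term in the entries of $df\in H^{s-1}(\R^n)$, and since $s-1>n/2$ the space $H^{s-1}(\R^n)$ is a Banach algebra, so a priori $G(\varphi)\in H^{s-1}(\R^n)$. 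The naive target $H^{s-1}(\R^n)$ does not work, however: as the introduction indicates through the non-Fredholmness of the Laplacian on $\R^n$, the operator $\operatorname{div}\colon H^s(\R^n;\R^n)\to H^{s-1}(\R^n)$ fails to be onto, the obstruction being at low frequencies. The fix is to take as target the smaller Banach space $\mathcal E:=\operatorname{div}(H^s(\R^n;\R^n))$, normed so that $\operatorname{div}$ restricts to an isometry of the complementary subspace $Q(H^s(\R^n;\R^n))$ onto $\mathcal E$; here $Q$ is the bounded projection with matrix Fourier symbol $\xi\xi^{T}/|\xi|^2$ coming from the Helmholtz decomposition, with kernel the divergence-free fields $H^s_\sigma(\R^n;\R^n)$. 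Concretely $\mathcal E=\{h\in H^{s-1}(\R^n):\int_{|\xi|\le 1}|\widehat h(\xi)|^2|\xi|^{-2}\,d\xi<\infty\}$ up to equivalence of norms, the inclusion $\mathcal E\hookrightarrow H^{s-1}(\R^n)$ is continuous, and by construction $\operatorname{div}\colon H^s(\R^n;\R^n)\to\mathcal E$ is a bounded linear surjection whose kernel $H^s_\sigma(\R^n;\R^n)$ splits, with complement $Q(H^s(\R^n;\R^n))$.

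The first real step is to check that $G$ maps $\Ds^s(\R^n)$ into $\mathcal E$, i.e. that $\det(d\varphi)-1$ has the required low-frequency decay and not merely $H^{s-1}$-regularity. Here I would use the null-Lagrangian structure of the Jacobian: expanding $\det(I+df)-1=\sum_{k=1}^{n}\sigma_k(df)$ into sums of principal minors, each such minor is a Jacobian determinant of some of the $f_i$ in some of the variables $x_j$, hence, because the rows of a cofactor matrix are divergence free, it equals $\operatorname{div}F$ for a field $F$ which is a product of one factor $f_i\in H^s$ with cofactor entries in $H^{s-1}$, so $F\in H^{s-1}(\R^n;\R^n)\subseteq L^2(\R^n;\R^n)$. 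Thus $\det(d\varphi)-1$ lies in $H^{s-1}(\R^n)\cap\operatorname{div}(H^{s-1}(\R^n;\R^n)\cap L^2)$, and a short Fourier argument, writing $\widehat{\operatorname{div}F}=i\xi\cdot\widehat F$ so that $|\widehat{\operatorname{div}F}(\xi)|/|\xi|\le|\widehat F(\xi)|$ and the low-frequency integral is controlled by $\|F\|_{L^2}^2$, identifies this intersection with $\mathcal E=\operatorname{div}(H^s(\R^n;\R^n))$ and shows each homogeneous piece $f\mapsto\sigma_k(df)$ is bounded $H^s(\R^n;\R^n)\to\mathcal E$. Consequently $G\colon\Ds^s(\R^n)\to\mathcal E$ is a polynomial map of degree $\le n$ with bounded homogeneous components, in particular real analytic.

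Next I would use the group structure to reduce the submersion property to the single point $\id$. From $\det(d(\psi\circ\varphi))=(\det(d\psi)\circ\varphi)\cdot\det(d\varphi)$ one obtains, for $\varphi\in\Ds^s_\mu(\R^n)$, the equivariance $G(\psi\circ\varphi)=G(\psi)\circ\varphi$; moreover $h\mapsto h\circ\varphi$ is a Banach isomorphism of $\mathcal E$, using the change-of-variables identity $(\operatorname{div}F)\circ\varphi=\operatorname{div}((d\varphi)^{-1}(F\circ\varphi))$ valid for volume-preserving $\varphi$ together with the Fourier characterization of $\mathcal E$, and right translation $R_\varphi\colon\psi\mapsto\psi\circ\varphi$ is a real analytic diffeomorphism of $\Ds^s(\R^n)$ preserving $\Ds^s_\mu(\R^n)$. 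By Jacobi's formula $d_{\id}G(w)=\operatorname{tr}(dw)=\operatorname{div}w$, which is exactly the split surjection $\operatorname{div}\colon H^s(\R^n;\R^n)\to\mathcal E$ above; differentiating the equivariance (and using that right translation acts transitively on $\Ds^s_\mu(\R^n)$) gives $d_\varphi G=R_\varphi\circ\operatorname{div}\circ R_{\varphi^{-1}}$ for every $\varphi\in G^{-1}(0)$, again a split surjection, with kernel $R_\varphi(H^s_\sigma(\R^n;\R^n))$ and complement $R_\varphi(Q(H^s(\R^n;\R^n)))$. Finally, the real analytic regular value theorem for maps between Banach manifolds yields that $G^{-1}(0)=\Ds^s_\mu(\R^n)$ is a real analytic submanifold of $\Ds^s(\R^n)$, locally modeled on $H^s_\sigma(\R^n;\R^n)$; it is closed since $G$ is continuous and $\{0\}\subseteq\mathcal E$ is closed.

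I expect the main obstacle to be the target space $\mathcal E$: proving that $\operatorname{div}$ is a split surjection onto it and, more delicately, that $\det(d\varphi)-1$ genuinely lands in $\mathcal E$ rather than merely in $H^{s-1}(\R^n)$. Both rest on the divergence (null-Lagrangian) structure of the Jacobian combined with the low-frequency Fourier estimates; note that the variable-coefficient operator $w\mapsto\operatorname{tr}((d\varphi)^{-1}dw)$ does not visibly map $H^s$ into $\mathcal E$, which is precisely why one reduces to $\varphi=\id$ via the equivariance, or equivalently why one writes tangent vectors at $\varphi$ in the form $v\circ\varphi$. The remaining ingredients, namely the real analytic manifold structure on $\Ds^s(\R^n)$, analyticity of right translations, the composition estimates and the Banach-algebra property of $H^{s-1}(\R^n)$, are supplied by Section~\ref{functional_setting} and are routine here.
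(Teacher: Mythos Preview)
Your approach is correct and genuinely different from the paper's. The paper explicitly notes (Lemma~\ref{lemma_not_surjective}) that $\operatorname{div}\colon H^s\to H^{s-1}$ is not onto and then \emph{abandons} the regular-value strategy: instead it uses the exponential map of Section~\ref{section_exponential_map} as a local chart, proving via a Gronwall estimate on $\|\operatorname{div}u(t)\|_{s-1}$ along the geodesic flow (Lemma~\ref{lemma_linear_u} and the proof of Proposition~\ref{prop_local_manifold}) that $\exp$ restricted to a small ball maps $H^s_\sigma$ bijectively onto $\Ds^s_\mu\cap\exp(\tilde U)$, and then right-translates. Thus the paper's proof depends on Theorem~\ref{th_lagrangian} and the whole ODE machinery for the Euler equation.

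Your route is more algebraic and entirely independent of the exponential map: you repair the target by replacing $H^{s-1}$ with $\mathcal E=\operatorname{div}(H^s)$, and the crucial point---that $\det(d\varphi)-1$ actually lands in $\mathcal E$---follows from the Piola/null-Lagrangian identity writing each principal minor of $df$ as $\operatorname{div}F$ with $F\in H^{s-1}\subset L^2$, which gives the needed low-frequency bound $\int_{|\xi|\le 1}|\hat h|^2|\xi|^{-2}\,d\xi\le\|F\|_{L^2}^2$. The equivariance argument, together with the pullback identity $(\operatorname{div}F)\circ\varphi=\operatorname{div}\big((d\varphi)^{-1}(F\circ\varphi)\big)$ for volume-preserving $\varphi$ (and $(d\varphi)^{-1}(F\circ\varphi)\in H^{s-1}\subset L^2$), shows $R_\varphi$ is an isomorphism of $\mathcal E$ and reduces the split-surjectivity of $d_\varphi G$ to the identity. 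This is cleaner and self-contained; the paper's approach, on the other hand, has the advantage that it simultaneously exhibits $\exp$ as an explicit analytic chart for $\Ds^s_\mu$ (Corollary~\ref{coro_exp_restricted}), which you would have to argue separately.
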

\vspace{0.4cm}
\noindent
{\em Related work:}
\noindent
In the case $s > n/2+1$, {\em Theorem \ref{th_not_uniform}} improves on a recent result of Himonas and Misiolek \cite{himonas}, saying that property (3') does not hold for $E_T$ for any $T > 0$. More precisely, Himonas and Misiolek construct a pair of sequences of solutions $(u_k)_{k \geq 1},(\tilde u_k)_{k \geq 1}$ to \eqref{E} with the following property: For all $s > 0$ 
\begin{itemize}
\item[(i)] $\big(u_k(0)\big)_{k \geq 1}$ and $\big(\tilde u_k(0)\big)_{k \geq 1}$ are bounded in $H^s_\sigma(\R^n;\R^n)$ with
\[
 \lim_{k \to \infty} ||u_k(0)-\tilde u_k(0)||_s =0.
\]
\end{itemize}
\noindent
and there is a constant $C_s > 0$ so that
\begin{itemize}
\item[(ii)] for all $0 < t < 1$ 
\[
 \liminf_{k \geq 1} ||u_k(t)-\tilde u_k(t)||_s \geq C_s \sin t.
\]
\end{itemize}
As a consequence one concludes that for any $T > 0$, $E_T$ has not property (3').\\ \\
\noindent
A result analogous to {\em Theorem \ref{th_lagrangian}(i)} was proved by Serfati \cite{serfati} for $C^k$-spaces over $\R^n$ and by Shnirelman (\cite{shnirelman}, Theorem 2.1) for Sobolev spaces $H^s$, $s > 5/2$, on $\R^3/\mathbb Z^3$.\\ \\
\noindent
{\em Outline:}  
In Section \ref{functional_setting} we introduce some further notation and define the function spaces used in the sequel. In Section \ref{section_well_posedness} we dicuss the notion of solution of \eqref{E} used in this thesis and the corresponding concept of well-posedness. We conclude the chapter with Section \ref{section_eulerian_formulation} giving an alternative Eulerian formulation for equation \eqref{E} suitable for our purpose. In Section \ref{section_geodesic_equation} we show that the alternative formulation of \eqref{E} of Section \ref{section_eulerian_formulation}, written in Lagrangian coordinates, can be interpreted as a geodesic equation on $\Ds^s(\R^n)$. Chapter \ref{section_lagrangian_formulation} ends with Section \ref{section_exponential_map}, where it is shown that the geodesics can be described by an analytic exponential map. This shows Theorem \ref{th_lagrangian} and thus proves that Arnold's formalism works for the Sobolev space $H^s_\sigma(\R^n;\R^n)$. In Section \ref{section_local_wellposedness} we reprove Theorem \ref{th_kato} using Arnold's formalism. Theorem \ref{th_not_uniform} and Theorem \ref{th_not_differentiable}, are proved in Section \ref{section_proof_main}. Finally the proof of Theorem \ref{th_submanifold} is given in Section \ref{section_submanifold}. Appendix \ref{appendix_analyticity} reviews the notion of analyticity in real Banach spaces. In Appendix \ref{integration} we prove a result on the integration of $H^s$-vector fields. Finally in Appendix \ref{auxiliary} we have collected some auxiliary lemmas.

\chapter{Eulerian formulation}\label{eulerian_formulation}

\section{Diffeomorphism groups}\label{functional_setting}

Throughout this section assume that $n \geq 1$. For $s \geq 0$ we denote by $H^s(\R^n)$ the Sobolev space of order $s$. We can describe $H^s(\R^n)$ as the following subspace of $L^2(\R^n) \equiv L_\R^2(\R^n)$
\[
 H^s(\R^n) = \{ f \in L^2(\R^n) \; | \; (1+|\xi|^2)^{s/2} |\hat f(\xi)| \in L^2(\R^n) \}
\]
where $\hat f$ denotes the Fourier transform of $f$. Recall that for an $L^1$-function $g$ its Fourier transform is the complex-valued function
\[
 \hat g(\xi) = \frac{1}{(2\pi)^{n/2}} \int_{\R^n} g(x) e^{-ix \cdot \xi}\;dx, \quad \xi \in \R^n
\]
where $x \cdot \xi=x_1 \xi_1 + \cdots + x_n \xi_n$ denotes the Euclidean inner product in $\R^n$. The Fourier transform can also be defined on $L^2(\R^n)$ with values in $L_{\mathbb C}^2(\R^n)$ (see e.g. \cite{majda}). Sometimes we will also write $\mathcal F(g)$ for the Fourier transform $\hat g$. Taking the real part of the complex inner product
\[
\langle f, g \rangle_s:= \Re \int_{\R^n} (1+|\xi|^2)^s \hat f(\xi) \overline{\hat g(\xi)}\;d\xi, \quad f,g \in H^s(\R^n)
\]
makes $H^s(\R^n)$ into a real Hilbert space. We refer to \cite{composition} (among other possible references) for a detailed discussion of these Hilbert spaces and related spaces. In particular we recall that for $s > n/2$, $H^s(\R^n)$ is a Banach-algebra under pointwise multiplication. Often we will need the following stronger result -- see e.g. \cite{composition} for a proof.
\begin{Lemma}\label{lemma_multiplication}
For $s > n/2$ and $0 \leq s' \leq s$, pointwise multiplication
\[
 H^{s'}(\R^n) \times H^s(\R^n) \to H^{s'}(\R^n), \quad (f,g) \mapsto f \cdot g
\]
is continuous.
\end{Lemma}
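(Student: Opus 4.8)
The plan is to reduce the statement to the single bilinear estimate
\[
\| f g \|_{s'} \;\le\; C\,\| f \|_{s'}\,\| g \|_s ,\qquad f\in H^{s'}(\R^n),\ g\in H^{s}(\R^n),
\]
with a constant $C=C(n,s,s')$, where $\|\cdot\|_\sigma$ denotes the $H^\sigma$-norm. Indeed, since $g\in H^s\subset L^\infty$ the product $fg$ is at least in $L^2$, and once the displayed estimate is known, continuity of the jointly bilinear multiplication map follows at once from $f_1g_1-f_2g_2=(f_1-f_2)g_1+f_2(g_1-g_2)$ together with the triangle inequality. So from now on I would only aim at the displayed inequality.

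First I would pass to the Fourier side. With the notation $\langle\xi\rangle:=(1+|\xi|^2)^{1/2}$ and the identity $\widehat{fg}=(2\pi)^{-n/2}\,\hat f*\hat g$ one gets, pointwise in $\xi$,
\[
\langle\xi\rangle^{s'}\,|\widehat{fg}(\xi)|\;\le\; C\int_{\R^n}\langle\xi\rangle^{s'}\,|\hat f(\xi-\eta)|\,|\hat g(\eta)|\;d\eta ,
\]
and it remains to bound the $L^2_\xi$-norm of the right-hand side by $C\|f\|_{s'}\|g\|_s$. I would split the $\eta$-integral into the region $A=\{|\xi-\eta|\ge|\eta|\}$ and the region $B=\{|\eta|>|\xi-\eta|\}$, using the elementary inequalities: on $A$, $\langle\xi\rangle\le 2\langle\xi-\eta\rangle$; on $B$, $\langle\xi\rangle\le 2\langle\eta\rangle$ and $\langle\eta\rangle\ge\langle\xi-\eta\rangle$.

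On $A$, since $s'\ge 0$ I would estimate $\langle\xi\rangle^{s'}\le 2^{s'}\langle\xi-\eta\rangle^{s'}$, which bounds the $A$-contribution by $C\,\big(F*|\hat g|\big)(\xi)$ with $F:=\langle\cdot\rangle^{s'}|\hat f|\in L^2$, $\|F\|_{L^2}=\|f\|_{s'}$. Since $s>n/2$ we have $\langle\cdot\rangle^{-s}\in L^2(\R^n)$, hence by Cauchy--Schwarz $\|\hat g\|_{L^1}\le\|\langle\cdot\rangle^{-s}\|_{L^2}\,\|g\|_s$, and Young's inequality $\|a*b\|_{L^2}\le\|a\|_{L^2}\|b\|_{L^1}$ gives the desired bound for the $A$-part. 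On $B$ I would write $\langle\xi\rangle^{s'}\le 2^{s'}\langle\eta\rangle^{s'}=2^{s'}\langle\eta\rangle^{s'-s}\langle\eta\rangle^{s}$ and, using $s'-s\le 0$ and $\langle\eta\rangle\ge\langle\xi-\eta\rangle$ on $B$, bound $\langle\eta\rangle^{s'-s}\le\langle\xi-\eta\rangle^{s'-s}$; since $\langle\zeta\rangle^{s'-s}|\hat f(\zeta)|=\langle\zeta\rangle^{-s}F(\zeta)$, the $B$-contribution is $\le C\,\big((WF)*G\big)(\xi)$ with $G:=\langle\cdot\rangle^{s}|\hat g|\in L^2$, $\|G\|_{L^2}=\|g\|_s$, and $W:=\langle\cdot\rangle^{-s}\in L^2$ (again because $s>n/2$). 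As $\|WF\|_{L^1}\le\|W\|_{L^2}\|F\|_{L^2}\le C\|f\|_{s'}$, Young's inequality $\|a*b\|_{L^2}\le\|a\|_{L^1}\|b\|_{L^2}$ closes the $B$-part, and adding the two pieces yields the bilinear estimate.

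The hard part -- and the only place where the assumptions $s>n/2$ and $s'\le s$ are genuinely used -- is the region $B$, i.e.\ the frequency regime in which the product inherits its high frequency from $g$. There one can afford to keep only the weight $\langle\eta\rangle^{s}$ on $\hat g$, so the surplus weight $\langle\eta\rangle^{s'-s}$ must be transferred to $\hat f$; this is harmless precisely because $s'-s\le 0$ makes it a negative power, which splits as $\langle\xi-\eta\rangle^{-s}\langle\xi-\eta\rangle^{s'}$, and because $s>n/2$ guarantees that $\langle\cdot\rangle^{-s}$ is square-integrable, so it can be absorbed into an $L^1$-factor through Cauchy--Schwarz. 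Everything else -- the reduction to the bilinear estimate and the treatment of the region $A$ -- is routine.
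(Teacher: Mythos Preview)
Your proof is correct. The frequency-splitting argument on the Fourier side is the standard and cleanest route to this product estimate: the decomposition into $A=\{|\xi-\eta|\ge|\eta|\}$ and $B=\{|\eta|>|\xi-\eta|\}$ is exactly what isolates which factor carries the high frequency, and your handling of region $B$ --- transferring the negative-power weight $\langle\eta\rangle^{s'-s}$ onto $\hat f$ and then absorbing $\langle\cdot\rangle^{-s}\in L^2$ via Cauchy--Schwarz --- is precisely the step that uses both hypotheses $s'\le s$ and $s>n/2$.

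As for comparison with the paper: the paper does not give a proof of this lemma at all, but simply refers the reader to \cite{composition}. So there is no ``paper's own proof'' to compare against. Your self-contained argument is a welcome addition; it is essentially the textbook approach (a special case of the more general tame-product or Kato--Ponce type estimates) and would be perfectly suitable as an inline proof here.
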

The space of smooth functions with compact support is denoted by $C_c^\infty(\R^n)$. Note that $C_c^\infty(\R^n)$ is a dense subspace of $H^s(\R^n)$. Further we denote by $C^k_0(\R^n)$ the space of $C^k$-functions on $\R^n$ which vanish at infinity with all their derivatives up to order $k$. Endowed with the norm
\[
 ||f||_{C^k} = \max_{|\alpha| \leq k} ||\partial^\alpha f||_{L^\infty}:=\max_{|\alpha| \leq k} \sup_{x \in R^n} |\partial^\alpha f(x)|
\]
it is a Banach space. Here we used the multi-index notation, i.e. we denote for a multi-index $\alpha=(\alpha_1, \hdots , \alpha_n ) \in  \mathbb Z_{\geq 0}^n$ its length by $|\alpha|$,
\[
 |\alpha| = \alpha_1 + \ldots + \alpha_n
\]
and by $\partial^\alpha$ the differential operator
\[
 \partial^\alpha f = \frac{\partial^{|\alpha|} f}{\partial^{\alpha_1}_1 \cdots \partial^{\alpha_n}_n}.
\]
In some situations we will also need the space $H^s_{\operatorname{loc}}(\R^n)$. It is the space of functions $f:\R^n \to \R$ such that $f \chi$ is in $H^s(\R^n)$ for any $\chi \in C_c^\infty(\R^n)$. For $s > n/2+k$ we have the Sobolev imbedding
\begin{equation}\label{sobolev_imbedding}
 H^s(\R^n) \hookrightarrow C^k_0(\R^n).
\end{equation}
We denote by $H^s(\R^n;\R^n)$, $s \geq 0$, the Sobolev spaces, 
\[
 H^s(\R^n;\R^n)=\big\{ f=(f_1,\ldots,f_n) \; \big| \; f_k \in H^s(\R^n), \quad 1 \leq k \leq n \big\}  
\]
with the corresponding norm
\[
 ||f||_s = \left(||f_1||_s^2 + \ldots + ||f_n||_s^2 \right)^{1/2}
\]
and inner product
\[ 
 \langle f,g \rangle_s = \sum_{j=1}^n \langle f_i,g_i \rangle_s.
\]
Similarly, by $C^k_0(\R^n;\R^n)$, $k \geq 0$, we denote the following space of maps
\[
 C_0^k(\R^n;\R^n)=\big\{ f=(f_1,\ldots,f_n) \; \big| \; f_j \in C^k_0(\R^n), \quad 1 \leq j \leq n \big\}
\]
with the norm 
\[
 ||f||_{C^k}=\max_{1 \leq j \leq n} ||f_j||_{C^k}.
\]
For any $s>n/2+1$, $\Ds^s(\R^n)$ denotes the following space of maps on $\R^n$,
\[
 \Ds^s(\R^n) := \left\{ \varphi:\R^n \to \R^n \; \big| \; \varphi - \operatorname{id} \in H^s(\R^n;\R^n) \mbox{ and } \det(d_x \varphi) > 0, \quad \forall x \in \R^n \right\}
\]
where $\operatorname{id}:\R^n \to \Rn$ is the identity map. In view of the imbedding \eqref{sobolev_imbedding}, $x \mapsto d_x\varphi$ is continuous and hence $\Ds^s(\R^n)$ well-defined. As $\varphi \in \Ds^s(\R^n)$ is of the form $\operatorname{id}+H^s(\R^n;\R^n)$ we get from the Sobolev imbedding \eqref{sobolev_imbedding} that $\varphi$ satisfies
\[ 
\lim_{|x| \to \infty} |\varphi(x)|=\infty. 
\]
From \cite{palais} we then conclude that $\varphi$ is a $C^1$-diffeomorphism. The imbedding \eqref{sobolev_imbedding} shows also that $\Ds^s(\R^n) - \operatorname{id} \subseteq H^s(\R^n;\R^n)$ is an open subset. Therefore $\Ds^s(\R^n)$ has naturally the structure of a (analytic) Hilbert manifold (see Appendix \ref{appendix_analyticity}). Furthermore one can prove that $\Ds^s(\R^n)$ is connected -- see \cite{milnor}. In his thesis \cite{cantor_thesis}, Cantor showed that $\Ds^s(\R^n)$ is a topological group where the group operation is given by composition -- cf also \cite{composition} for a proof. In the sequel we often use the following regularity result for right translation -- see e.g. \cite{composition} for a proof.

\begin{Lemma}\label{lemma_continuous_right_translation}
For any $\varphi \in \Ds^s(\R^n)$ and any $0 \leq s' \leq s$
\[
 R_\varphi:H^{s'}(\R^n) \to H^{s'}(\R^n),\quad f \mapsto R_\varphi f:= f \circ \varphi
\]
is a continuous linear isomorphism. Its inverse is given by
\[
 R_\varphi^{-1}=R_{\varphi^{-1}}:H^{s'}(\R^n) \to H^{s'}(\R^n), \quad f \mapsto R_\varphi^{-1} f = f \circ \varphi^{-1}.
\]
\end{Lemma}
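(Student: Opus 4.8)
\emph{Proof idea.} My plan is to peel off the easy structural parts and isolate the one analytic estimate that carries the weight. Linearity of $R_\varphi$ is clear, so the statement splits into: (a) for every $\psi\in\Ds^s(\R^n)$ and every $0\le s'\le s$, the map $R_\psi$ is a \emph{bounded} operator on $H^{s'}(\R^n)$; and (b) $R_\varphi$ and $R_{\varphi^{-1}}$ are two-sided inverses. Part (b) is a consequence of (a): by Cantor's theorem \cite{cantor_thesis} the set $\Ds^s(\R^n)$ is a group under composition, so $\varphi^{-1}\in\Ds^s(\R^n)$ and $R_{\varphi^{-1}}$ is bounded on $H^{s'}(\R^n)$ too; on the dense subspace $C_c^\infty(\R^n)\subseteq H^{s'}(\R^n)$ one has the plain identities $R_\varphi R_{\varphi^{-1}}f=(f\circ\varphi^{-1})\circ\varphi=f=R_{\varphi^{-1}}R_\varphi f$, which then extend to all of $H^{s'}(\R^n)$ because every operator in sight is bounded there. (One should also note that, since $\varphi$ is a $C^1$-diffeomorphism whose Jacobian is bounded away from $0$ and $\infty$ — see below — it is bi-Lipschitz and maps null sets to null sets, so the bounded extension of $R_\varphi$ genuinely coincides with pointwise composition.) Hence everything reduces to claim (a).

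For (a), I would first record the Jacobian bounds. As $\varphi-\operatorname{id}\in H^s(\R^n;\R^n)$ with $s>n/2+1$, the entries of $d\varphi-I$ lie in $H^{s-1}(\R^n)$, hence in $C^0_0(\R^n)$ by the imbedding \eqref{sobolev_imbedding}; so $d\varphi(x)\to I$ as $|x|\to\infty$, and together with continuity and $\det d_x\varphi>0$ this produces constants $0<c\le\det d_x\varphi\le C$ as well as $\|d\varphi\|_{L^\infty}<\infty$, and likewise for $\varphi^{-1}$. The change of variables formula then gives $\|f\circ\varphi\|_{L^2}^2=\int_{\R^n}|f|^2\,|\det d\varphi^{-1}|\le c^{-1}\|f\|_{L^2}^2$, i.e.\ boundedness on $H^0=L^2$. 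Next, by complex interpolation of Bessel-potential spaces, $H^{s'}(\R^n)=[L^2(\R^n),H^s(\R^n)]_{s'/s}$ for $0\le s'\le s$; since $R_\varphi$ is one fixed linear map, once it is shown bounded on the top space $H^s(\R^n)$ it is automatically bounded on every intermediate $H^{s'}(\R^n)$. Thus the whole lemma comes down to the single estimate $\|f\circ\varphi\|_s\le K(\varphi)\,\|f\|_s$.

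To prove this top estimate I would, when $s\in\N$, induct on $k=0,1,\dots,s$ and show $\|f\circ\varphi\|_k\le P_k(\|\varphi-\operatorname{id}\|_s)\,\|f\|_k$ for suitable continuous $P_k$ (with $k=0$ the change of variables above): for $k\ge1$, differentiate by the chain rule, $\partial_j(f\circ\varphi)=\sum_\ell\big((\partial_\ell f)\circ\varphi\big)\,\partial_j\varphi_\ell$, write $\partial_j\varphi_\ell=\delta_{j\ell}+w_{j\ell}$ with $w_{j\ell}\in H^{s-1}(\R^n)$, apply the inductive hypothesis to $\partial_\ell f$ (of order $k-1\le s-1$), and multiply by the $w_{j\ell}$ using that $s-1>n/2$ — so $H^{s-1}(\R^n)$ is a Banach algebra imbedded in $L^\infty(\R^n)$ — together with the multiplication estimate of Lemma \ref{lemma_multiplication}; a Leibniz/Fa\`a di Bruno bookkeeping distributes the derivatives among the factors. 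When $s\notin\N$ this has to be supplemented, because the top space is itself of fractional order: one needs a Moser-type (fractional Leibniz / commutator) estimate bounding $\|f\circ\varphi\|_s$ in terms of $\|f\|_s$ and $\|d\varphi-I\|_{s-1}$, most transparently derived from a Littlewood--Paley paraproduct decomposition of $f\circ\varphi$. This fractional endpoint estimate is the only genuinely analytic step — change of variables, interpolation, density and the group property are all soft — and it is the place where I would either reproduce the argument of \cite{composition} or quote it directly.
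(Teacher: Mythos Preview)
Your outline is correct, and in fact it goes beyond what the paper does: the paper does not prove this lemma at all but simply refers the reader to \cite{composition}. Your reduction to a single top-order estimate via the $L^2$ change-of-variables bound and complex interpolation $H^{s'}=[L^2,H^s]_{s'/s}$ is a standard and valid route, your integer-$s$ induction using the chain rule together with Lemma~\ref{lemma_multiplication} is fine, and you correctly isolate the fractional endpoint estimate as the only genuinely analytic step --- which is exactly the content supplied by \cite{composition}. Since both you and the paper ultimately defer to that reference for the key inequality, there is no substantive difference in approach; you have just made the structure explicit.
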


As already mentioned in the introduction, the composition map is an example of a map which is nowhere locally uniformly continuous and nowhere differentiable. More precisely, we consider a set-up similar to the one used in the proof of Theorem \ref{th_not_uniform} and Theorem \ref{th_not_differentiable} and show the following result.

\begin{Th}\label{lemma_composition_not_uniform}
For any $n \geq 1$ and any $s > n/2+1$
\[
 \nu: H^s(\R^n) \times \Ds^s(\R^n) \to H^s(\R^n),\quad (f,\varphi) \mapsto R_\varphi^{-1} f:=f \circ \varphi^{-1}
\]
is continuous but nowhere locally uniformly continuous and nowhere differentiable.
\end{Th}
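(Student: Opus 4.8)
The plan is to establish the three properties of $\nu$ separately, but to isolate at the outset the single mechanism that drives both the failure of local uniform continuity and the failure of differentiability: composition with a diffeomorphism "costs" one derivative in a way that no shift of the argument $f$ can compensate. First I would recall that continuity of $\nu$ is already available (the map $(f,\varphi)\mapsto f\circ\varphi^{-1}$ is continuous on $H^s\times\Ds^s$ by \cite{composition}, and inversion $\varphi\mapsto\varphi^{-1}$ is continuous on $\Ds^s$ by \cite{cantor_thesis}). So the content is entirely in the negative statements.

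For the failure of local uniform continuity at an arbitrary point $(f_0,\varphi_0)$, the idea is to work near $(f_0,\id)$ after a harmless right-translation (using Lemma \ref{lemma_continuous_right_translation} to conjugate the whole picture by $\varphi_0$), and to produce, inside any prescribed neighborhood, two sequences $(f_k,\psi_k)$ and $(f_k,\tilde\psi_k)$ with $\psi_k-\tilde\psi_k\to 0$ in $H^s$ but $\|f_k\circ\psi_k^{-1}-f_k\circ\tilde\psi_k^{-1}\|_s$ bounded away from zero. The natural construction uses highly oscillatory building blocks: take $\psi_k = \id + \varepsilon\,b(x)\,\sin(k\,x\cdot e)\, e /k$ type bumps (suitably cut off and made into genuine elements of $\Ds^s$, with the amplitude scaled in $k$ so the $H^s$-norm of $\psi_k-\id$ stays small and $\psi_k-\tilde\psi_k\to 0$), while choosing $f_k$ so that $\nabla f_k$ has an oscillatory component at the same frequency $k$; then the chain rule gives $f_k\circ\psi_k^{-1}-f_k\circ\tilde\psi_k^{-1}\approx \nabla f_k\cdot(\psi_k^{-1}-\tilde\psi_k^{-1})$, and the top-order $H^s$ seminorm of this product does not vanish because the frequency-$k$ content of $\nabla f_k$ resonates with the order-$k$ derivatives one must take. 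Scaling these perturbations to be arbitrarily small lets us place the bad pair in any open set, giving "nowhere." I would present this with $f_0=0$ first (so $f_k$ is literally the oscillatory bump) and then note that adding a fixed $f_0$ only changes lower-order terms.

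For nowhere differentiability, fix $(f_0,\varphi_0)$ and suppose $\nu$ were differentiable there; again conjugate to reduce to $\varphi_0=\id$. The candidate derivative in the $\varphi$-direction, computed formally, is $d_{(f_0,\id)}\nu(g,w) = g - (\nabla f_0)\cdot w$ — but the term $(\nabla f_0)\cdot w$ only lands in $H^{s-1}$, not $H^s$, unless $f_0\in H^{s+1}$; and even for smooth $f_0$ one checks the remainder is not $o(\|(g,w)\|)$. Concretely, take $g=0$ and let $w=w_k$ be the oscillatory bumps above with $\|w_k\|_s\to 0$; differentiability would force
\[
 \frac{\|\,f_0\circ(\id+w_k)^{-1}-f_0+(\nabla f_0)\cdot w_k\,\|_s}{\|w_k\|_s}\to 0,
\]
and I would show the left side stays bounded below (for a suitable choice of $f_0$-dependent frequencies, or by testing against a fixed $f_0$ with infinitely many Fourier modes and diagonalizing) because the quadratic term in the Taylor expansion of $f_0\circ(\id+w_k)^{-1}$, namely something like $\tfrac12 (\nabla^2 f_0)[w_k,w_k]$ minus $(\nabla f_0)\cdot(\nabla w_k)\cdot w_k$, has $H^s$-norm comparable to $\|w_k\|_s$ rather than $\|w_k\|_s^2$ once the loss of a derivative in $\nu$ is accounted for. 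The same oscillatory family thus kills differentiability in the $\varphi$-slot, hence kills joint differentiability.

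The main obstacle I anticipate is making the oscillatory perturbations $\psi_k$ honest elements of $\Ds^s(\R^n)$ — they must be globally defined orientation-preserving $C^1$-diffeomorphisms, i.e. $\psi_k-\id\in H^s$ with $\det d\psi_k>0$ everywhere — while simultaneously (a) keeping $\|\psi_k-\id\|_s$ as small as we like, (b) arranging $\|\psi_k-\tilde\psi_k\|_s\to 0$, and (c) guaranteeing the resonance lower bound on $\|f_k\circ\psi_k^{-1}-f_k\circ\tilde\psi_k^{-1}\|_s$ survives after passing from $\psi_k$ to $\psi_k^{-1}$ (which requires controlling the inverse, e.g. via $\psi_k^{-1}=\id-(\psi_k-\id)+O(\|\psi_k-\id\|^2)$ in $H^s$). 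Getting the bookkeeping of these competing smallness/non-smallness requirements right — in particular choosing the amplitude as a fixed multiple of $k^{-s}$ times a small parameter, and localizing in $x$ with a fixed cutoff so everything stays in $H^s$ — is where the real work lies; the chain-rule estimates and the Moser/Sobolev-algebra bounds (Lemma \ref{lemma_multiplication}) needed to expand $f_k\circ\psi_k^{-1}$ are then routine.
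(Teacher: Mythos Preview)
Your approach to non-uniform continuity via oscillatory bumps is different from the paper's but could in principle be made to work; the paper instead uses a \emph{disjoint-support} trick. It approximates $f_\bullet$ by a compactly supported $\tilde f_\bullet$, picks a point $x^\ast$ far from $\operatorname{supp}\tilde f_\bullet$, sets $\varphi_k=\varphi_\bullet+\tfrac{1}{k}\delta\varphi$ with $\delta\varphi$ a fixed bump near $x^\ast$, and chooses $\delta f_k$ with $\|\delta f_k\|_s=R/2$ supported in a ball so small that $\varphi_k(\operatorname{supp}\delta f_k)$ and $\varphi_\bullet(\operatorname{supp}\delta f_k)$ are disjoint. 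Then $\|\delta f_k\circ\varphi_k^{-1}-\delta f_k\circ\varphi_\bullet^{-1}\|_s$ equals the \emph{sum} of the two norms, giving a clean lower bound $R/C$ with $C$ independent of $R$. This is more elementary than your resonance computation and avoids all the bookkeeping you flag as the main obstacle.

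Your non-differentiability argument, however, has a genuine gap. You test differentiability at $(f_0,\id)$ by setting $g=0$ and perturbing only in the $\varphi$-direction, claiming the quotient
\[
\frac{\|\,f_0\circ(\id+w_k)^{-1}-f_0+(\nabla f_0)\cdot w_k\,\|_s}{\|w_k\|_s}
\]
stays bounded below. But for $f_0\in C_c^\infty(\R^n)$ --- and the theorem must cover such $f_0$ since it asserts \emph{nowhere} differentiability --- the map $\varphi\mapsto f_0\circ\varphi^{-1}$ is $C^\infty$ from $\Ds^s$ to $H^s$ (this is the standard ``extra regularity on the left gives smoothness of composition'' fact from \cite{composition}), so this quotient \emph{does} tend to zero and no contradiction arises. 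The failure of differentiability of $\nu$ is a genuinely \emph{joint} phenomenon: you must perturb $f$ and $\varphi$ simultaneously. The paper does exactly this: the sequences $p_k=(\tilde f_\bullet+\delta f_k,\varphi_k)$ and $\tilde p_k=(\tilde f_\bullet+\delta f_k,\varphi_\bullet)$ both sit in $B_R((f_\bullet,\varphi_\bullet))$, differ by $o(1)$, yet $\|\nu(p_k)-\nu(\tilde p_k)\|_s\ge R/C$ with $C$ independent of $R$. Since a differentiable map would force the remainder to be $\le \tfrac{1}{4C}\|h\|$ for small $\|h\|$, one gets $\limsup\|\nu(p_k)-\nu(\tilde p_k)\|_s\le R/(2C)$, a contradiction. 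The linearity of the lower bound in $R$ (coming from $\|\delta f_k\|_s=R/2$) is what makes the same construction kill both uniform continuity and differentiability at once; your separate treatment of the two statements misses this economy and, as written, does not cover smooth base points.
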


Before we prove Theorem \ref{lemma_composition_not_uniform} we sketch the idea of the proof in the case $n=1$ at a point $(f_\bullet,\operatorname{id}) \in H^s(\R) \times \Ds^s(\R)$ with $f_\bullet \in C_c^\infty(\R)$ as in Figure \ref{fig1}. Choose $R_\ast > 0$ so small that $\operatorname{id}+g \in \Ds^s(\R)$ for any $g \in H^s(\R)$ with $||g||_s < R_\ast$. To see that $\nu$ is not uniformly continuous on
\[
 B_R\big((f_\bullet,\operatorname{id})\big):=\big\{(f_\bullet+f,\operatorname{id}+g) \;\big|\; f,g \in H^s(\R),\; ||f||_s, ||g||_s < R \big\}
\]
for any $0 < R \leq R_\ast$ we argue as follows: fix an interval $I \subseteq \R$ outside of the support of $f_\bullet$, as shown in Figure \ref{fig1}. For some $N \geq 1$, to be chosen later, construct a sequence $(\varphi_k)_{k \geq N} \subseteq \Ds^s(\R)$ with $\varphi_k \to \operatorname{id}$ in $\Ds^s(\R^n)$ and $||\varphi_k-\operatorname{id}||_s < R$ such that $\varphi_k$ restricted to the interval $I$ is a {\em translation to the right by $\frac{1}{k}$} and $\varphi_k$ restricted to the support of $f_\bullet$ is the {\em identity}. We take $N$ so large, that this is possible. For $k \geq N$ choose $\delta f_k \in C_c^\infty(\R)$ with support given by an interval $I_k$ contained in $I$, of length smaller than $\frac{1}{k}$, and with $||\delta f_k||_s=R/2$. Define the sequences $(p_k)_{k \geq N},(\tilde p_k)_{k \geq N} \subseteq H^s(\R) \times \Ds^s(\R)$ by
\[
 p_k=(f_\bullet+\delta f_k,\operatorname{id}) \quad \mbox{and} \quad \tilde p_k=(f_\bullet+\delta f_k,\varphi_k).
\]
Then $\delta f_k \circ \varphi_k^{-1}$ is the right translate of $\delta f_k$ and the supports of $\delta f_k$ and $\delta f_k \circ \varphi_k^{-1}$ are disjoint -- see Figure \ref{fig2}. As $\varphi_k$ is the identity on the support of $f_\bullet$ one has $f_\bullet=f_\bullet \circ \varphi_k^{-1}$. Therefore
\[
 ||\nu(p_k)-\nu(\tilde p_k)||_s=||\delta f_k - \delta f_k \circ \varphi_k^{-1}||_s = 2 ||\delta f_k||_s=R.
\]
But by contruction $||p_k-\tilde p_k||_s=||\varphi_k-\operatorname{id}||_s \to 0$ as $k \to \infty$. This shows that $\nu$ is not uniformly continuous on $B_R\big((f_\bullet,\operatorname{id})\big)$.

\begin{figure}
\psfrag{x}{$x$}
\psfrag{I}{$I$}
\psfrag{g}{$\operatorname{graph}(f_\bullet)$}
\center
\includegraphics[scale=0.5]{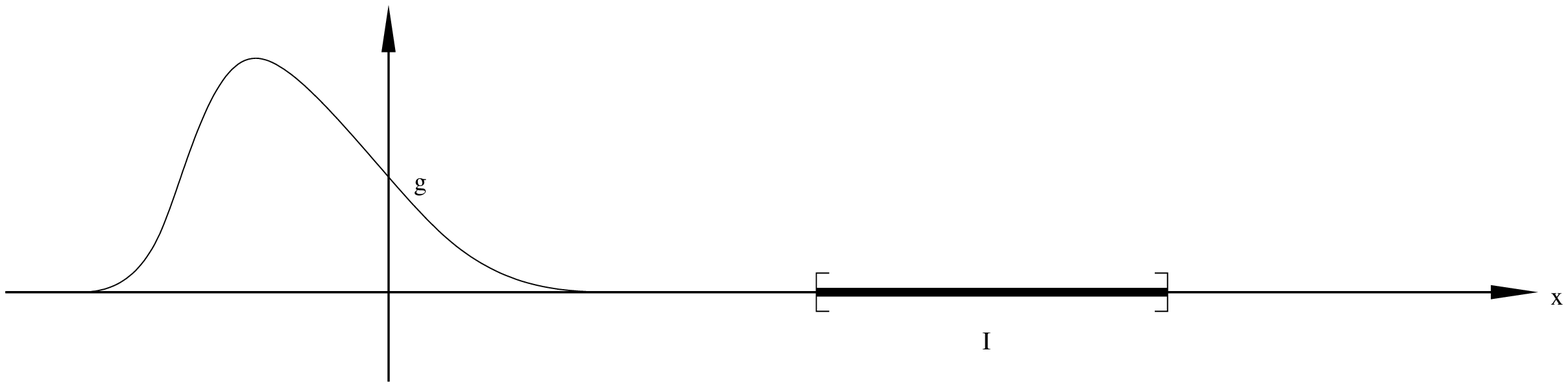}
\caption{The graph of $\nu\big((f_\bullet,\operatorname{id})\big)=f_\bullet$}
\label{fig1}
\end{figure}

\begin{figure}
\psfrag{Ik}{$I_k$}
\psfrag{x}{$x$}
\psfrag{k}{$\frac{1}{k}$}
\psfrag{R}{$\operatorname{graph}(\delta f_k)$}
\psfrag{Q}{$\operatorname{graph}(\delta f_k \circ \varphi_k^{-1})$}
\center
\includegraphics[scale=0.5]{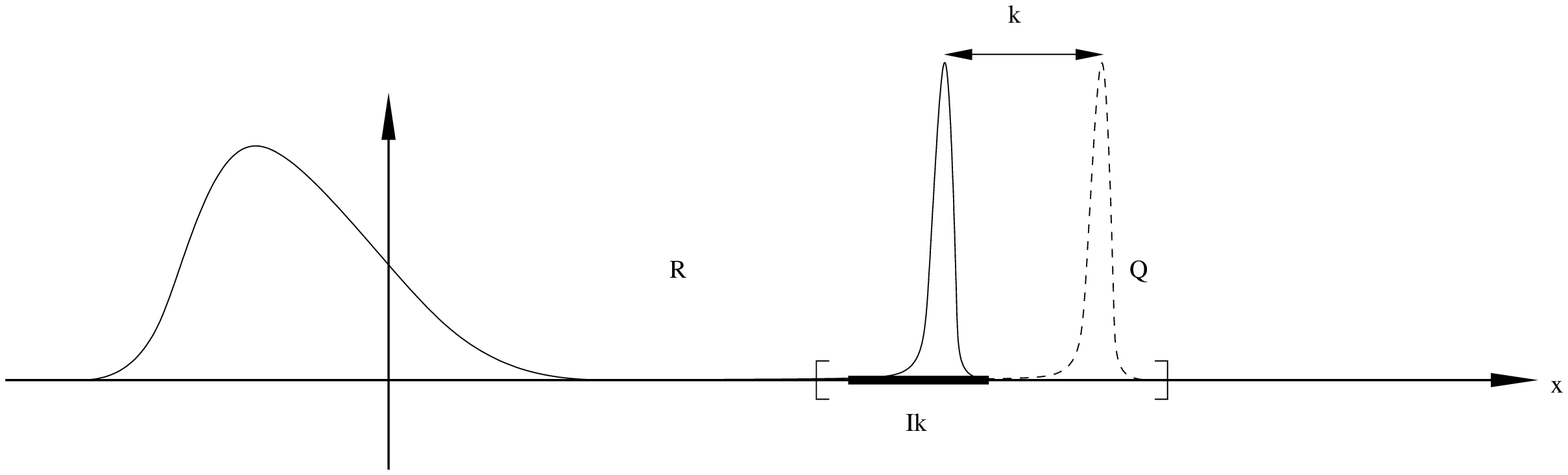}
\caption{The graphs of $\nu\big((f_k,\operatorname{id})\big)$ and $\nu\big((f_k,\varphi_k)\big)$}
\label{fig2}
\end{figure}

\begin{proof}[Proof of Theorem \ref{lemma_composition_not_uniform}]
The continuity of $\nu$ is proved in \cite{composition}. To prove that $\nu$ is nowhere locally uniformly continuous, choose $f_\bullet \in H^s(\R^n)$ and $\varphi_\bullet \in \Ds^s(\R^n)$ arbitrarily. We will show that there exists $R_\ast > 0$ such that $\nu$ is not uniformly continuous on the ball $B_R\big((f_\bullet,\varphi_\bullet)\big)$ in $H^s(\R^n) \times \Ds^s(\R^n)$ for any $0 < R \leq R_\ast$ where 
\[
 B_R\big((f_\bullet,\varphi_\bullet)\big)=\big\{ (f,\varphi) \in H^s(\R^n) \times \Ds^s(\R^n) \;\big|\; ||(f-f_\bullet,\varphi-\varphi_\bullet)||_s < R \big\}
\]
and $||(f-f_\bullet,\varphi-\varphi_\bullet)||_s=\max\{||f-f_\bullet||_s,||\varphi-\varphi_\bullet||_s\}$. By Lemma \ref{lemma_linear_growth} there is $R_0 > 0$, with $B_{R_0}(\varphi_\bullet) \subseteq \Ds^s(\R^n)$ where
\[
 B_{R_0}(\varphi_\bullet):=\big\{ \varphi_\bullet + f \; \big|\; f \in H^s(\R^n;\R^n) \mbox{ with } ||f||_s < R_0 \big\}
\]
and a constant $C >0$ such that
\begin{equation}\label{ineq_up_down}
\frac{1}{C} ||f||_s \leq ||f \circ \varphi||_s \leq C ||f||_s, \quad \forall f \in H^s(\R^n),\;\forall \varphi \in B_{R_0}(\varphi_\bullet).
\end{equation}
By the Sobolev imbedding \eqref{sobolev_imbedding} there exist $0 < R_1 \leq R_0$ and $L > 0$ so that
\begin{equation}\label{less_than_one}
 |\varphi(x) - \varphi_\bullet(x)| < 1,\quad \forall\varphi \in B_{R_1}(\varphi_\bullet),\quad \forall x \in \R^n
\end{equation} 
and
\begin{equation}\label{same_lipschitz}
 |\varphi(x)-\varphi(y)| < L |x-y|,\quad \forall\varphi \in B_{R_1}(\varphi_\bullet),\; \forall x,y \in \R^n.
\end{equation}
Set $R_\ast=R_1$ and choose $0 < R \leq R_\ast$ arbitrarily. By the density of $C_c^\infty(\R^n)$ in $H^s(\R^n)$ choose $\tilde f_\bullet \in C_c^\infty(\R^n) \cap B_{R/4}(f_\bullet)$. Denote by $K \subseteq \R^n$ its support, i.e. $K=\operatorname{supp} \tilde f_\bullet$ and let
\[
 K' = \big\{ x \in \R^n \; \big| \; \operatorname{dist}\big(x,\varphi_\bullet(K)\big) \leq 1 \big\}
\]
where $\operatorname{dist}\big(x,\varphi_\bullet(K)\big)=\inf_{y \in \varphi_\bullet(K)} |x-y|$ is the distance of $x$ to the set $\varphi_\bullet(K)$. Note that $K$ and $K'$ are compact subsets of $\R^n$. By \eqref{less_than_one} we have $\varphi(K) \subseteq K'$ for any $\varphi \in B_R(\varphi_\bullet)$. As $\varphi_\bullet$ is of the form $\operatorname{id}+H^s(\R^n;\R^n)$ we conclude by the Sobolev imbedding $H^s(\R^n) \hookrightarrow C_0^0(\R^n)$ that
\[
 \lim_{|x| \to \infty} |\varphi_\bullet(x)| = \infty.
\]  
Choose $x^\ast \in \R^n$ with $\operatorname{dist}\big(\varphi_\bullet(x^\ast),K'\big) > 2$. By \eqref{less_than_one} we have 
\begin{equation}\label{far_away}
\operatorname{dist}\big(\varphi(x^\ast),K'\big) > 1,\quad \forall \varphi \in B_R(\varphi_\bullet). 
\end{equation}
Take $\delta \varphi \in C_c^\infty(\R^n;\R^n)$ with $||\delta \varphi||_s = R/2$, $\operatorname{supp} \delta \varphi \subseteq B_1(x^\ast)$ and $0 < M:=|\delta \varphi(x^\ast)| < 1$. Define the sequence $(\varphi_k)_{k \geq 1} \subseteq B_R(\varphi_\bullet)$ by
\[
 \varphi_k := \varphi_\bullet + \frac{1}{k} \delta \varphi.
\]
For any $k \geq 1$ let $\delta_k:=\frac{M}{2kL}$, choose $\delta f_k \in C_c^\infty(\R^n)$ with $||\delta f_k||_s = R/2$ and $\operatorname{supp} \delta f_k \subseteq B_{\delta_k} (x^\ast)$ and define
\[
 f_k = \tilde f_\bullet + \delta f_k \in B_R(f_\bullet).
\]
Consider the two sequences $(p_k)_{k \geq 1}, (\tilde p_k)_{k \geq 1} \subseteq B_R\big((f_\bullet,\varphi_\bullet)\big)$ given by
\[
 p_k=(f_k,\varphi_k) \quad \mbox{and} \quad \tilde p_k=(f_k,\varphi_\bullet), \qquad k \geq 1.
\]
Then $||p_k - \tilde p_k||_s \to 0$ as $k \to \infty$. On the other hand we have by the linearity of the right translation
\[
 \nu(p_k)=f_k \circ \varphi_k^{-1}=\tilde f_\bullet \circ \varphi_k^{-1} + \delta  f_k \circ \varphi_k^{-1}
\]
and $\nu(\tilde p_k)$ is given by a similar expression. Then
\begin{eqnarray*}
 ||\nu(p_k)-\nu(\tilde p_k)||_s &=& ||\tilde f_\bullet \circ \varphi_k^{-1} - \tilde f_\bullet \circ \varphi_\bullet^{-1} + \delta f_k \circ \varphi_k^{-1} - \delta f_k \circ \varphi_\bullet^{-1}||_s \\
&\geq& ||\delta f_k \circ \varphi_k^{-1} - \delta f_k \circ \varphi_\bullet^{-1}||_s.
\end{eqnarray*}
where the last inequality follows from the fact that the supports of $\tilde f_\bullet \circ \varphi_k^{-1} - \tilde f_\bullet \circ \varphi_\bullet^{-1}$ and $\delta f_k \circ \varphi_k^{-1} - \delta f_k \circ \varphi_\bullet^{-1}$ are disjoint. Indeed $\tilde f_\bullet \circ \varphi_k^{-1}$ and $\tilde f_\bullet \circ \varphi_\bullet^{-1}$ both are supported in $K'$ and hence
\[
 \operatorname{supp} (\tilde f_\bullet \circ \varphi_k^{-1} - \tilde f_\bullet \circ \varphi_\bullet^{-1}) \subseteq \varphi_k(K) \cup \varphi_\bullet(K) \subseteq K'.
\]
On the other hand $\delta f_k \circ \varphi_k^{-1}$ and $\delta f_k \circ \varphi_\bullet^{-1}$ are both supported in $\R^n \setminus K'$ as by \eqref{same_lipschitz},\eqref{far_away} and the fact that $L \cdot \delta_k < 1$, $k \geq 1$, we have
\begin{eqnarray*}
 \operatorname{supp}(\delta f_k \circ \varphi_k^{-1}) &\subseteq& \varphi_k\big(B_{\delta_k}(x^\ast)\big) \subseteq B_1\big(\varphi_k(x^\ast)\big),\\
\operatorname{supp}(\delta f_k \circ \varphi_\bullet^{-1}) &\subseteq& \varphi_\bullet\big(B_{\delta_k}(x^\ast)\big) \subseteq B_1\big(\varphi_\bullet(x^\ast)\big)
\end{eqnarray*}
and hence
\[
 \operatorname{supp} (\delta f_k \circ \varphi_k^{-1}-\delta f_k \circ \varphi_\bullet^{-1}) \subseteq  \R^n \setminus K'.
\]
We claim that in addition, $\delta f_k \circ \varphi_k^{-1}$ and $\delta f_k \circ \varphi_\bullet^{-1}$ have disjoint supports. Note that
\begin{equation}\label{property1}
 |\varphi_k(x^\ast)-\varphi_\bullet(x^\ast)|=\frac{1}{k} |\delta \varphi(x^\ast)| = \frac{M}{k}.
\end{equation}
By the Lipschitz-property \eqref{same_lipschitz} we have
\begin{equation}\label{property2}
 \varphi\big(B_{\delta_k}(x^\ast)\big) \subseteq B_{\frac{M}{2k}}\big(\varphi(x^\ast)\big),\quad \forall\varphi \in B_R(\varphi_\bullet).
\end{equation}
Combining \eqref{property1} and \eqref{property2}, one sees that
\[
 \varphi_k\big(B_{\delta_k}(x^\ast)\big) \cap \varphi_\bullet\big(B_{\delta_k}(x^\ast)\big) = \emptyset,\quad \forall k \geq 1
\]
showing the claim. Thus we get by \eqref{ineq_up_down}
\begin{eqnarray*}
 ||\delta f_k \circ \varphi_k^{-1} - \delta f_k \circ \varphi_\bullet^{-1}||_s &=& ||\delta f_k \circ \varphi_k^{-1}||_s + ||\delta f_k \circ \varphi_\bullet^{-1}||_s \\
 &\geq& \frac{2}{C} ||\delta f_k||_s = \frac{1}{C} R.
\end{eqnarray*}
In summary, for any $0 < R \leq R_\ast$ we found sequences $(p_k)_{k \geq 1},(\tilde p_k)_{k \geq 1} \subseteq B_R\big((f_\bullet,\varphi_\bullet)\big)$ with
\[
 ||p_k - \tilde p_k||_s \to 0 \quad \mbox{as} \quad k \to \infty
\]
and
\begin{equation}\label{linear_estimate}
 ||\nu(p_k) - \nu(\tilde p_k)||_s \geq \frac{1}{C} R,\quad \forall k \geq 1.
\end{equation}
As $f_\bullet \in H^s(\R^n)$ and $\varphi_\bullet \in \Ds^s(\R^n)$ are arbitrary we get that $\nu$ is nowhere locally uniformly continuous. \\
Note that the constant in \eqref{ineq_up_down}, appearing in \eqref{linear_estimate}, is independent of $0 < R \leq R_\ast$. Moreover the lower bound of \eqref{linear_estimate} is linear in $R$ and independent of the chosen sequences $(p_k)_{k \geq 1}, (\tilde p_k)_{k \geq 1} \subseteq B_R\big((f_\bullet,\varphi_\bullet)\big)$. This is the key ingredient for the second claim of Proposition \ref{lemma_composition_not_uniform} which we now prove. We argue by contradiction and assume that $\nu$ is differentiable in $(f_\bullet,\varphi_\bullet)$. Then the remainder term $\mathcal R(f_\bullet,\varphi_\bullet,\delta f,\delta \varphi)$ in
\[
 \nu(f_\bullet + \delta f,\varphi_\bullet + \delta \varphi)=\nu(f_\bullet,\varphi_\bullet) + d_{(f_\bullet,\varphi_\bullet)} \nu (\delta f,\delta \varphi) + \mathcal R(f_\bullet,\varphi_\bullet,\delta f,\delta \varphi)
\]
can be bounded by
\[
 ||\mathcal R(f_\bullet,\varphi_\bullet,\delta f,\delta \varphi)||_s \leq \frac{1}{4C} ||(\delta f,\delta \varphi)||_s
\]
for all $(\delta f,\delta \varphi) \in H^s(\R^n) \times H^s(\R^n;\R^n)$ with $||(\delta f,\delta \varphi)||_s \leq R$ and $0 < R \leq R_\ast$ sufficiently small. For such a $R$ choose sequences
\[
 p_k=(f_k,\varphi_k) \quad \mbox{and} \quad \tilde p_k=(f_k,\varphi_\bullet),\qquad k \geq 1
\]
as above. Then $\nu(p_k) - \nu(\tilde p_k)$ is given by
\begin{multline*}
d_{(f_\bullet,\varphi_\bullet)} \nu (\delta f_k,\delta \varphi_k) - d_{(f_\bullet,\varphi_\bullet)} \nu (\delta f_k,0)  + \mathcal R(f_\bullet,\varphi_\bullet,\delta f_k,\delta \varphi_k) - \mathcal R(f_\bullet,\varphi_\bullet,\delta f_k,0)\\
= d_{(f_\bullet,\varphi_\bullet)} \nu (0,\delta \varphi_k) + \mathcal R(f_\bullet,\varphi_\bullet,\delta f_k,\delta \varphi_k) - \mathcal R(f_\bullet,\varphi_\bullet,\delta f_k,0).
\end{multline*}
As $\delta \varphi_k \to 0$ for $k \to \infty$, $\limsup_{k \geq 1} ||\nu(p_k) - \nu(\tilde p_k)||_s$ is bounded by
\[
 \limsup_{k \geq 1} ||\mathcal R(f_\bullet,\varphi_\bullet,\delta f_k,\delta \varphi_k)||_s + \limsup_{k \geq 1} ||\mathcal R(f_\bullet,\varphi_\bullet,\delta f_k,0)||_s \leq \frac{1}{2C} R
\]
which contradicts \eqref{linear_estimate} showing that $\nu$ is not differentiable at $(f_\bullet,\varphi_\bullet)$. As $(f_\bullet,\varphi_\bullet) \in H^s(\R^n) \times \Ds^s(\R^n)$ was arbitrary, we get that $\nu$ is nowhere differentiable.
\end{proof}

\noindent
In view of the second equation in \eqref{E} we will also need to consider the subspace $H_\sigma^s(\R^n,\R^n) \subseteq H^s(\R^n;\R^n)$, $s \geq 0$, consisting of divergence-free vector fields, i.e. 
\[
 H_\sigma^s(\R^n;\R^n) = \{ u \in H^s(\R^n;\R^n) \; | \; \operatorname{div} u = 0\}.
\] 
Here $\operatorname{div}$ is in the sense of weak derivatives (see \cite{composition}). As for any $s \geq 1$ the operator $\operatorname{div}:H^s(\R^n;\R^n) \to H^{s-1}(\R^n)$ is continuous, $H^s_\sigma(\R^n;\R^n)$ is a closed subspace of $H^s(\R^n;\R^n)$. Actually it is easy to see that $H^s_\sigma(\R^n;\R^n)$ is a closed subspace of $H^s(\R^n;\R^n)$ for any $s \geq 0$. Introduce
\begin{equation}\label{smooth_V}
 C^\infty_{\sigma,c}(\R^n;\R^n) = \big\{ f \in C_c^\infty(\R^n;\R^n) \; \big| \; \operatorname{div} f = 0 \big\}=H^s_\sigma(\R^n;\R^n) \cap C_c^\infty(\R^n;\R^n).
\end{equation}
In Lemma \ref{lemma_dense} we prove that $C^\infty_{\sigma,c}(\R^n;\R^n)$ is dense in $H^s_\sigma(\R^n;\R^n)$. We will also need the subset $\Ds^s_\mu(\R^n)$ of $\Ds^s(\R^n)$ consisting of volume preserving diffeomorphisms, i.e. for any $s > n/2+1$
\[
 \Ds^s_\mu(\R^n) = \big\{\varphi \in \Ds^s(\R^n) \,\big| \,\det(d_x \varphi) = 1, \quad \forall x \in \R^n \big\}. 
\]
Note that $\Ds^s_\mu(\R^n)$ is a subgroup of $\Ds^s(\R^n)$. Theorem \ref{th_submanifold}, proved in Section \ref{section_submanifold}, says that $\Ds^s_\mu(\R^n)$ is a closed real analytic submanifold of $\Ds^s(\R^n)$.\\
Finally to define our notion of solutions of \eqref{E}, for any given $T > 0$, we introduce the spaces $C^k\big([0,T];H^s(\R^n;\R^n)\big)$ resp. $C^k\big([0,T];\Ds^s(\R^n)\big)$, $k \geq 0$. These are the spaces of $C^k$-curves from the closed interval $[0,T]$ to $H^s(\R^n;\R^n)$ resp. $\Ds^s(\R^n)$. On $C^k\big([0,T];H^s(\R^n;\R^n)\big)$ we define the norm
\[
 ||f||_{k,s}=\max_{0 \leq j \leq k} \sup_{t \in [0,T]} ||\partial_t^j f(t)||_s
\]
and on $C^k\big([0,T];\Ds^s(\R^n)\big)$ we consider the metric induced by this norm, i.e. for any $\varphi, \psi \in \Ds^s(\R^n)$
\[
 \operatorname{dist}(\varphi,\psi) = \max_{0 \leq j \leq k} \sup_{t \in [0,T]} ||\partial_t^j \big(\varphi(t)-\psi(t)\big)||_s.
\]
One can show in a straight forward way that $C^k\big([0,T];H^s(\R^n;\R^n)\big)$ is a Banach space and that $C^k\big([0,T];\Ds^s(\R^n)\big)$ can be naturally identified with an open subset of $C^k\big([0,T];H^s(\R^n;\R^n)\big)$. Furthermore it is convenient to introduce the vector spaces
\begin{multline*}
 C^k\big([0,T);H^s(\R^n;\R^n)\big) \\
 = \big\{ f:[0,T) \to H^s(\R^n;\R^n) \;\big|\; \left. f \right|_{[0,\tau]} \in C^k\big([0,\tau];H^s(\R^n;\R^n),\; \forall 0 < \tau < T\big\},
\end{multline*}
\[
 C^\infty\big([0,T);H^s(\R^n;\R^n)\big) = \cap_{k \geq 0} C^k\big([0,T);H^s(\R^n;\R^n)\big)
\]
and
\[
 C^\infty\big([0,T];H^s(\R^n;\R^n)\big) = \cap_{k \geq 0}\; C^k\big([0,T];H^s(\R^n;\R^n)\big).
\]
The sets $C^k\big([0,T);\Ds^s(\R^n)\big)$, $C^\infty\big([0,T);\Ds^s(\R^n)\big)$ and $C^\infty\big([0,T];\Ds^s(\R^n)\big)$ are defined in a similar way.

\section{Well-posedness}\label{section_well_posedness}

In this section we give further explanation on the issue of well-posedness for equation \eqref{E} in the Sobolev space $H^s_\sigma(\R^n;\R^n)$, $s >n/2+1$. Throughout this section we assume that $n \geq 2$. First we need to specify the notion of solution used. Given $T>0$, $s > n/2+1$ and $u_0 \in H^s_\sigma(\R^n;\R^n)$ we say that the pair $(u,p)$ is a solution to \eqref{E} on $[0,T]$, if
\begin{itemize}
\item[(S1)] $u \in C^0\big([0,T];H^s_\sigma(\R^n;\R^n)\big)$ with $u(0)=u_0$
\item[(S2)] $p(t) \in H^{s+1}_{\operatorname{loc}}(\R^n)$ for all $t \in [0,T]$ with $\nabla p \in C^0\big([0,T];H^s(\R^n;\R^n)\big)$
\end{itemize}
and
\begin{itemize}
\item[(S3)] for all $t \in [0,T]$
\begin{equation}\label{integral_E}
 u(t)=u_0 + \int_0^t -\nabla p(\tau) - \big(u(\tau) \cdot \nabla \big) u(\tau) \,d\tau.
\end{equation}
\end{itemize}
Note that in view of (S1), (S2) and the Banach algebra property of $H^{s-1}(\R^n)$ the integrand in (S3) is an element of $C^0\big([0,T];H^{s-1}(\R^n;\R^n)\big)$. Thus one would expect that instead of (S2), one should ask the following weaker statement to hold
\begin{itemize}
\item[(S2')] $p(t) \in H^s_{\operatorname{loc}}(\R^n)$ for all $t \in [0,T]$ with $\nabla p \in C^0\big([0,T];H^{s-1}(\R^n;\R^n)\big)$.
\end{itemize}
But the following lemma of Kato \cite{kato} says that it is natural to assume (S2). 

\begin{Lemma}\label{nabla_p}
Assume that $(u,p)$ satisfies (S1),(S2') and (S3). Then (S2) holds.
\end{Lemma}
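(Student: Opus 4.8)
The plan is to take the divergence of the equation $\partial_t u + (u\cdot\nabla)u = -\nabla p$ that is encoded in (S3), thereby deriving an elliptic equation for the pressure, and then to use elliptic regularity to upgrade $\nabla p$ from being $C^0([0,T];H^{s-1})$ to being $C^0([0,T];H^s)$. Concretely, since $\operatorname{div} u(t) = 0$ for all $t$ (this is part of (S1), as $u(t)\in H^s_\sigma$), differentiating the integral identity \eqref{integral_E} formally in $t$ and applying $\operatorname{div}$ kills both $\partial_t u$ and $\nabla p \mapsto \Delta p$... more precisely one obtains, for each fixed $t$,
\[
 \Delta p(t) = -\operatorname{div}\big((u(t)\cdot\nabla)u(t)\big) = -\sum_{j,k=1}^n \partial_j u_k(t)\,\partial_k u_j(t),
\]
using $\operatorname{div} u = 0$ to rewrite $\operatorname{div}((u\cdot\nabla)u) = \sum_{j,k}\partial_j(u_k\partial_k u_j) = \sum_{j,k}\partial_j u_k\,\partial_k u_j$. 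The right-hand side lies in $H^{s-1}(\R^n)$ by Lemma~\ref{lemma_multiplication} (each factor $\partial_j u_k \in H^{s-1}$ and $s-1 > n/2$, so $H^{s-1}$ is a Banach algebra), and it depends continuously on $t$.

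The key analytic step is then: if $q := p(t)$ satisfies $\Delta q = F$ with $F \in H^{s-1}(\R^n)$, and we already know $\nabla q \in H^{s-1}(\R^n;\R^n)$, then in fact $\nabla q \in H^s(\R^n;\R^n)$, with $\|\nabla q\|_s \le C\|F\|_{s-1}$. On the Fourier side this is transparent: $\widehat{\partial_j q}(\xi) = -i\xi_j \widehat{F}(\xi)/|\xi|^2$, so $|\xi|\,|\widehat{\partial_j q}(\xi)| \le |\widehat{F}(\xi)|$ for $\xi \neq 0$, which controls the top-order part of the $H^s$ norm of $\partial_j q$ by the $H^{s-1}$ norm of $F$; the low-frequency part of $\nabla q$ is already controlled by the hypothesis $\nabla q \in H^{s-1}$. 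One has to be slightly careful that the multiplier $\xi_j/|\xi|^2$ is singular at the origin, which is exactly why one cannot conclude $q\in H^{s+1}$ globally but only $q \in H^{s+1}_{\operatorname{loc}}$ and $\nabla q \in H^s$; multiplying by a cutoff $\chi\in C_c^\infty$ and using that $\nabla q$ is already a genuine $L^2$-based function handles the local statement. Finally, continuity in $t$ of $\nabla p(t) \in H^s$ follows from continuity of $F(t) = -\sum_{j,k}\partial_j u_k(t)\partial_k u_j(t)$ in $H^{s-1}$ (a consequence of $u \in C^0([0,T];H^s)$ and the bilinear continuity in Lemma~\ref{lemma_multiplication}) together with the bound $\|\nabla p(t) - \nabla p(t')\|_s \le C\|F(t)-F(t')\|_{s-1}$ from the elliptic estimate applied to the difference.

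The main obstacle I anticipate is the rigorous justification of the identity $\Delta p(t) = -\operatorname{div}((u(t)\cdot\nabla)u(t))$ starting only from the \emph{integral} form (S3) under the weak hypothesis (S2'): one cannot simply differentiate \eqref{integral_E} in $t$ without first knowing $u$ is differentiable in $t$. The clean way around this is to avoid time-differentiation entirely: apply $\operatorname{div}$ directly to the vector identity \eqref{integral_E}, getting $0 = \operatorname{div} u(t) = \operatorname{div} u_0 + \int_0^t \operatorname{div}\big(-\nabla p(\tau) - (u(\tau)\cdot\nabla)u(\tau)\big)\,d\tau = -\int_0^t \big(\Delta p(\tau) + \operatorname{div}((u(\tau)\cdot\nabla)u(\tau))\big)\,d\tau$ for all $t$, where $\operatorname{div}$ is taken in the sense of distributions and commutes with the Bochner integral. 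Since this vanishes for every $t\in[0,T]$ and the integrand is continuous in $t$ with values in $H^{s-2}_{\operatorname{loc}}$, differentiating the scalar-valued integral gives $\Delta p(t) = -\operatorname{div}((u(t)\cdot\nabla)u(t))$ in $\mathcal{D}'(\R^n)$ for each $t$, which is all the elliptic regularity argument needs. With that identity in hand the remaining steps are the routine Fourier-multiplier estimates sketched above.
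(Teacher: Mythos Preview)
Your proposal is correct and follows essentially the same route as the paper: apply $\operatorname{div}$ to the integral identity \eqref{integral_E}, use $\operatorname{div} u = 0$ to obtain $-\Delta p = \sum_{j,k}\partial_j u_k\,\partial_k u_j \in H^{s-1}$, and then invoke elliptic regularity to conclude that the second derivatives of $p$ lie in $H^{s-1}$, which combined with (S2') gives $\nabla p \in H^s$. The only difference is cosmetic: the paper cites ``properties of the Poisson equation'' from Gilbarg--Trudinger for the estimate $\|\partial_\ell\partial_m p\|_{s-1}\le C\|\Delta p\|_{s-1}$, whereas you spell out the Fourier-multiplier argument directly; and you are more explicit about justifying the passage from the integral identity to the pointwise-in-$t$ equation $\Delta p(t)=-\operatorname{div}((u(t)\cdot\nabla)u(t))$ and about the continuity in $t$, points the paper leaves implicit.
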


\begin{proof}
Taking the divergence of both sides of the identity \eqref{integral_E} by differentiating under the integral sign, one gets for any $0 \leq t \leq T$
\[
 0 = \int_0^t \left(-\Delta p - \operatorname{div}\big((u \cdot \nabla) u\big)\right)\;d\tau.
\]
Hence, in $H^{s-2}(\R^n)$,
\[
 -\Delta p - \operatorname{div}\big((u \cdot \nabla) u\big) = 0.
\]
As $\operatorname{div} u=0$, one gets
\[
 \operatorname{div}\big((u \cdot \nabla) u\big) = \sum_{1 \leq j,k \leq n} \partial_j u_k \partial_k u_j + u_k \partial_k \partial_j u_j = \sum_{1 \leq j,k \leq n} \partial_j u_k \partial_k u_j.
\]
Hence
\begin{equation}\label{poisson_eq}
-\Delta p = \sum_{1 \leq j,k \leq n} \partial_j u_k \partial_k u_j
\end{equation}
is in $H^{s-1}(\R^n)$. From the properties of the Poisson equation (see e.g. \cite{gilbarg})
\[
 ||\partial_\ell \partial_q p||_{s-1} \leq C ||\sum_{1 \leq j,k \leq n} \partial_j u_k \partial_k u_j||_{s-1} \leq C' ||u||_s^2,\quad \forall 1 \leq \ell,q \leq n.
\]
Combined with assumption (S2'), it follows that (S2) holds.
\end{proof}

\noindent
Actually much more can be said. Namely, consider the Leray-projector (see \cite{majda})
\[
 P:H^{s-1}(\R^n;\R^n) \to H^{s-1}_\sigma(\R^n;\R^n)
\]
given by the $L^2$-orthogonal projection. Then by Corollary \ref{coro_nablap}, $P \nabla p = 0$. Assume that $u$ satisfies (S1)-(S3). Applying the linear $L^2$-projection $1-P$ to equation \eqref{integral_E} we then get for any $0 \leq t \leq T$
\[
0 = \int_0^t \left((1-P)\big((u \cdot \nabla) u\big)+\nabla p\right) \;d\tau.
\]
Hence
\begin{equation}\label{nabla_eq}
\nabla p = -(1-P) \big((u \cdot \nabla) u\big)
\end{equation}
in $C^0\big([0,T];H^{s-1}(\R^n;\R^n)\big)$. Therefore $||\nabla p||_{s-1}$ is bounded up to a constant by
\[
 ||\big((u \cdot \nabla) u||_{s-1} \leq C_1 ||u||_s^2.
\]
for some $C_1 > 0$. Combined with \eqref{poisson_eq} there is $C_2 > 0$ such that $||\nabla p||_s \leq C_2 ||u||_s^2$. This estimate for $||\nabla p||_s$ is crucial for rewriting the PDE \eqref{E} as an ODE. The relation \eqref{nabla_eq} shows also that $\nabla p$ is uniquely determined by $u$.

\section{Eulerian coordinates}\label{section_eulerian_formulation}

Inspired by an approach pioneered by Ebin-Marsden \cite{ebin} and Chemin \cite{chemin} we want to describe in this section a system of equations in $H^s(\R^n;\R^n)$, $s > n/2+1$, whose flow leaves $H_\sigma^s(\R^n;\R^n)$ invariant and describes when restricted to $H^s_\sigma(\R^n;\R^n)$ solutions to the Euler equation \eqref{E}, thus giving an alternative formulation of \eqref{E}. Throughout this section we assume $s > n/2 +1$ and $n \geq 2$.\\ \\
To motivate our approach let us argue formally. Assume that $t \mapsto u(t)$ satisfies $\operatorname{div} u = 0$ and
\begin{equation}\label{first_equation}
 \partial_t u + (u \cdot \nabla)u = -\nabla p.
\end{equation}
Taking the divergence on both sides in \eqref{first_equation}
\begin{equation}\label{laplace_p}
-\Delta p = \operatorname{div}\big((u \cdot \nabla)u\big) 
\end{equation}
One would like to write $-\nabla p=\nabla \Delta^{-1} \left(\operatorname{div}\big((u\cdot \nabla)u\big)\right)$ where $\Delta^{-1}$ is the Fourier multiplier operator with multiplier $-|\xi|^{-2}$. Using $\operatorname{div} u = 0$, one has
\begin{equation}\label{first_version}
\operatorname{div}\big((u \cdot \nabla)u\big)=\sum_{1 \leq i,k \leq n} \partial_i u_k \partial_k u_i
\end{equation}
or
\begin{equation}\label{second_version}
\operatorname{div}\big((u \cdot \nabla) u\big) = \sum_{1 \leq i,k \leq n} \partial_{i} \partial_k (u_i u_k).
\end{equation}
Note that for any $1 \leq i,k \leq n$, $\Delta^{-1} \partial_i \partial_k$ is a Fourier multiplier operator with multiplier $\frac{\xi_i \xi_k}{|\xi|^2}$ showing that $\nabla \Delta^{-1} \partial_\ell \partial_k$ defines a linear operator from $H^s(\R^n)$ to $H^{s-1}(\R^n)$. On the other hand $\nabla p \in H^s(\R^n;\R^n)$ leading to a mismatch of the Sobolev indices in the identity
\[
 -\nabla p = \nabla \Delta^{-1} \sum_{1 \leq i,k \leq n} \partial_i \partial_k (u_i u_k).
\]
Note that the expression on the right-hand side of \eqref{first_version} is in $H^{s-1}(\R^n)$ whereas the one on the right-hand side of \eqref{second_version} is in $H^{s-2}(\R^n)$. To represent $\Delta p$ we will use a combination of these two expressions involving a cut-off Fourier multiplier operator. More precisely let $\chi$ be the indicator function of the closed unit ball in $\R^n$, i.e. $\chi(\xi)=1$ for $|\xi| \leq 1$ and $\chi(\xi)=0$ otherwise. The following lemma describes some properties of the Fourier multiplier operator $\chi(D)$. Recall that for $f \in L^2(\R^n)$ the map $\chi(D)$ is given by
\[
 \chi(D) f =\mathcal F^{-1}\big(\chi(\xi) \hat f(\xi)\big)
\]
where $\mathcal F^{-1}$ denotes the inverse Fourier transform. Clearly $\chi(D)$ defines a bounded linear operator on $L_{\mathbb C}^2(\R^n)$.

\begin{Lemma}\label{properties_chi}
The operator $\chi(D)$ leaves $L^2(\R^n)$ invariant and is $L^2$-symmetric, i.e. for all $f,g \in L^2(\R^n)$
\[
 \int_{\R^n} \chi(D)f(x) \cdot g(x) \,dx = \int_{\R^n} f(x) \cdot \chi(D) g(x) \,dx.
\]
In addition for any $s,s' \geq 0$
\[
 ||\chi(D) f||_{s+s'} \leq 2^{s'/2} ||f||_s, \quad \forall f \in H^s(\R^n)
\]
\end{Lemma}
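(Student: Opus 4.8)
The plan is to reduce all three assertions to elementary properties of the Fourier transform on $L^2(\R^n)$, using only that $\chi$ is real-valued, even (i.e.\ $\chi(-\xi)=\chi(\xi)$), bounded by $1$, and supported in the closed unit ball. Since $|\chi(\xi)\hat f(\xi)|\leq|\hat f(\xi)|$, Plancherel's theorem already shows that $\chi(D)$ is a bounded operator on $L^2_{\mathbb C}(\R^n)$; the content of the first claim is that it preserves the real subspace $L^2(\R^n)$.

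For that, I would use the standard criterion that $f\in L^2(\R^n)$ is real-valued if and only if $\hat f(-\xi)=\overline{\hat f(\xi)}$ for almost every $\xi$. Because $\chi$ is real-valued and even, the function $\xi\mapsto\chi(\xi)\hat f(\xi)$ inherits this Hermitian symmetry whenever $\hat f$ has it, and hence $\chi(D)f=\mathcal F^{-1}\big(\chi(\xi)\hat f(\xi)\big)$ is again real-valued. This proves that $\chi(D)$ maps $L^2(\R^n)$ into itself. For the $L^2$-symmetry, let $f,g\in L^2(\R^n)$; by the previous step $\chi(D)f$ and $\chi(D)g$ are real-valued, so both sides of the asserted identity are well-defined real numbers, and applying Plancherel gives
\[
 \int_{\R^n}\chi(D)f(x)\cdot g(x)\,dx=\int_{\R^n}\chi(\xi)\hat f(\xi)\,\overline{\hat g(\xi)}\,d\xi.
\]
Since $\chi(\xi)\in\R$ we may rewrite $\chi(\xi)\overline{\hat g(\xi)}=\overline{\chi(\xi)\hat g(\xi)}$, and a second application of Plancherel identifies the right-hand side with $\int_{\R^n} f(x)\cdot\chi(D)g(x)\,dx$, as claimed.

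For the quantitative bound I would argue directly in frequency space. By the definition of the $H^{s+s'}$-norm and the fact that $\chi$ is the indicator of $\{|\xi|\leq 1\}$,
\[
 ||\chi(D)f||_{s+s'}^2=\int_{|\xi|\leq 1}(1+|\xi|^2)^{s+s'}|\hat f(\xi)|^2\,d\xi.
\]
On the domain of integration $(1+|\xi|^2)^{s'}\leq 2^{s'}$, so the integrand is at most $2^{s'}(1+|\xi|^2)^{s}|\hat f(\xi)|^2$; extending the integral back to all of $\R^n$ yields $||\chi(D)f||_{s+s'}^2\leq 2^{s'}||f||_s^2$, and taking square roots gives the stated inequality. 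There is no serious obstacle in this lemma: the only step requiring a moment's thought is the parity of $\chi$, which is exactly what prevents $\chi(D)$ from mapping a real-valued function to a genuinely complex one; the symmetry is Plancherel applied twice, and the norm estimate is a pointwise comparison of Fourier multipliers followed by enlarging the domain of integration.
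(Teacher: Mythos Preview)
Your proof is correct and follows essentially the same approach as the paper: the paper verifies the reality of $\chi(D)f$ by an explicit inversion-formula computation (conjugating and substituting $\xi\mapsto-\xi$), which is precisely the ``Hermitian symmetry'' criterion you invoke, and the symmetry and norm estimate arguments are identical to yours via Plancherel and the support condition on $\chi$.
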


\begin{proof}
The first claimed statement says that for any $f \in L^2(\R^n)$, $\chi(D)f$ is real valued. By the inversion formula for the Fourier transform
\[
 \chi(D)f(x) = \frac{1}{(2\pi)^{n/2}} \int_{\R^n} \chi(\xi) \hat f(\xi) e^{ix \cdot \xi} \,d\xi.
\]
Taking the complex conjugate and using that $\overline{\hat f(\xi)}=\hat f(-\xi)$ as $f$ is real valued, we get
\[
 \overline{\chi(D)f(x)} = \frac{1}{(2\pi)^{n/2}} \int_{\R^n} \chi(\xi) \hat f(-\xi) e^{-ix \cdot \xi}\,d\xi.
\]
By the change of variable $\xi \mapsto -\xi$, it then follows that
\[
 \overline{\chi(D)f(x)} = \frac{1}{(2\pi)^{n/2}} \int_{\R^n} \chi(\xi) \hat f(\xi) e^{ix \cdot \xi} \,d\xi=\chi(D)f(x)
\]
showing that $\chi(D)f(x)$ is real-valued. By Plancherel's theorem (see e.g. \cite{majda}), for all $f,g \in L^2(\R^n)$
\begin{multline*}
 \int_{\R^n} \chi(D)f(x) \cdot g(x) \, dx = \int_{\R^n} \chi(\xi)\hat f(\xi) \overline{\hat g(\xi)} \,d\xi \\
= \int_{\R^n} \hat f(\xi) \overline{\chi(\xi) \hat g(\xi)} \,d\xi = \int_{\R^n} f(x) \cdot \chi(D) g(x) \, dx 
\end{multline*}
proving the $L^2$-symmetry. Finally for any $s,s' \geq 0$ and $f \in H^s(\R^n)$
\[
 ||\chi(D) f||^2_{s+s'} = \int_{\R^n} (1+|\xi|^2)^{s+s'} \chi^2(\xi) |\hat f(\xi)|^2 \,d\xi \leq 2^{s'} \int_{\R^n} (1+|\xi|^2)^s |\hat f(\xi)|^2 \,d\xi = 2^{s'} ||f||^2_s  
\]
where the inequality holds due to the fact that $\chi(\xi)$ is supported in the unit ball.
\end{proof}

In view of the quadratic expressions in the components of $u$ in the formulas of \eqref{first_version} and \eqref{second_version} we want to define the gradient field $-\nabla p$ as $\nabla B(u)$ where $B(u)$ is a continuous quadratic form
\[
 B:H^s(\R^n;\R^n) \to H^{s+1}(\R^n),\quad v \mapsto B(v).
\]
Similar as in \cite{chemin}, we define $B$ as a sum of two quadratic forms $B_1+B_2$. By \eqref{laplace_p}-\eqref{second_version} 
\[
 -\Delta p = \chi(D) \sum_{1 \leq i,k \leq n} \partial_i \partial_k (u_i u_k) + \big(1-\chi(D)\big) \sum_{1 \leq i,k \leq n} \partial_i u_k \partial_k u_i.
\]
Denote by $P_{ik}$ the Fourier multiplier operator
\[
 P_{ik}=\chi(D) \Delta^{-1} \partial_i \partial_k.
\]
Its Fourier multiplier is given by $\chi(\xi) \xi_i \xi_k /|\xi|^2$ and by Lemma \ref{properties_chi}, $P_{ik}$ is a bounded linear operator from $H^s(\R^n)$ to $H^{s+1}(\R^n)$. In view of the Banach algebra property of $H^s(\R^n)$ we see that
\begin{multline*}
 B_1:H^s(\R^n;\R^n) \to H^{s+1}(\R^n),\\
 v=(v_j)_{1 \leq j \leq n} \mapsto \sum_{1 \leq i,k \leq n} P_{ik}(v_i v_k)=\sum_{1 \leq i,k \leq n} \chi(D) \Delta^{-1} \partial_i \partial_k (v_i v_k)
\end{multline*}
is a continuous quadratic form. To define $B_2(v)$ we introduce for $(f,g) \in H^s(\R^n) \times H^s(\R^n)$
\[
 Q_{ik}(f,g) = \Delta^{-1} \big( 1- \chi(D)\big) (\partial_i f \partial_k g),\quad 1 \leq i,k \leq n
\]
where $\Delta^{-1} \big(1-\chi(D)\big)$ is again a Fourier multiplier operator with multiplier $-|\xi|^{-2} \big(1-\chi(\xi)\big)$. Note that it defines a bounded linear operator,
\begin{equation}\label{two_regular}
 \Delta^{-1} \big(1-\chi(D)\big):H^{s-1}(\R^n) \to H^{s+1}(\R^n).
\end{equation}

\noindent
Indeed for any $f \in H^{s-1}(\R^n)$
\begin{multline*}
||\Delta^{-1}\big(1-\chi(D)\big) f||_{s+1}^2 =  \int_{\R^n} (1+|\xi|^2)^{s+1} |\xi|^{-4} \big(1-\chi(\xi)\big)^2 |\hat f(\xi)|^2 \,d\xi \\
\leq 4 \int_{\R^n} (1+|\xi|^2)^{s-1} |\hat f(\xi)|^2 \,d\xi = 4 ||f||_{s-1}^2.
\end{multline*}
as $(1+|\xi|^2)^2 \leq 4 |\xi|^4$ for $|\xi| \geq 1$. By the Banach algebra property of $H^{s-1}(\R^n)$ we see that 
\begin{multline*}
 B_2:H^s(\R^n;\R^n) \to H^{s+1}(\R^n;\R^n),\\
 v=(v_j)_{1 \leq j \leq n} \mapsto \sum_{1 \leq i,k \leq n} Q_{ik}(v_k,v_i)=\sum_{1 \leq i,k \leq n} \Delta^{-1} \big(1-\chi(D)\big) (\partial_i v_k \partial_k v_i)
\end{multline*}
is continuous. Finally we define $B$ as the sum of $B_1$ and $B_2$,
\begin{multline}\label{B_formula}
 B:H^s(\R^n;\R^n) \to H^{s+1}(\R^n),\\
v \mapsto B_1(v) + B_2(v) = \sum_{i,k=1}^n \chi(D) \Delta^{-1} \partial_i \partial_k (v_i v_k) + \Delta^{-1}\big(1-\chi(D)\big) (\partial_i v_k \partial_k v_i)
\end{multline}
and introduce the following initial value problem
\begin{equation}\label{RE}
 \partial_t u + (u \cdot \nabla)u=\nabla B(u),\quad u(0)=u_0.
\end{equation}
We point out that instead of a pressure term, \eqref{RE} contains $B(u)$. A continuous curve $u:[0,T] \to H^s(\R^n;\R^n)$, $T>0$, is called a (local in time) solution to \eqref{RE} with initial value $u_0 \in H^s(\R^n;\R^n)$ if
\begin{equation}\label{RE_int}
u(t) = u_0 + \int_0^t \nabla B\big(u(\tau)\big) - \big(u(\tau) \cdot \nabla\big) u(\tau) \,d\tau, \quad \forall t \in [0,T].
\end{equation}
Note that the integrand in \eqref{RE_int} is in $C^0\big([0,T];H^{s-1}(\R^n;\R^n)\big)$ due to the Banach algebra property of $H^{s-1}(\R^n)$.\\ 
We finish this section by showing that \eqref{RE} gives an alternative formulation of \eqref{E}. First we prove

\begin{Lemma}\label{lemma_dont_miss}
Let $(u,p)$ satisfy (S1)-(S3). Then $u$ is a solution to \eqref{RE_int}.
\end{Lemma}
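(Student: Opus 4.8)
The plan is to reduce the statement to the pointwise identity
\[
 \nabla B\big(u(t)\big) = -\nabla p(t) \quad\text{in } H^s(\R^n;\R^n),\qquad \forall\, t\in[0,T].
\]
Indeed, \eqref{integral_E} and the defining equation \eqref{RE_int} for a solution of \eqref{RE} have the same left-hand side $u(t)$ and share the term $-(u(\tau)\cdot\nabla)u(\tau)$ under the integral, so once the displayed identity is known, inserting it into \eqref{integral_E} turns it into \eqref{RE_int}. All objects involved sit in the expected spaces: by (S2), $\nabla p\in C^0\big([0,T];H^s(\R^n;\R^n)\big)$, and since $B:H^s(\R^n;\R^n)\to H^{s+1}(\R^n)$ is a continuous quadratic form and $u\in C^0\big([0,T];H^s_\sigma(\R^n;\R^n)\big)$, also $\nabla B(u)\in C^0\big([0,T];H^s(\R^n;\R^n)\big)$.

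Fix $t$ and write $v:=u(t)\in H^s_\sigma(\R^n;\R^n)$ and $q:=p(t)\in H^{s+1}_{\operatorname{loc}}(\R^n)$. First I would record the Poisson equation for the pressure: applying the bounded operator $\operatorname{div}:H^{s-1}(\R^n;\R^n)\to H^{s-2}(\R^n)$ to \eqref{integral_E}, using that it commutes with the (Bochner) integral and that $\operatorname{div}u\equiv 0$, and then differentiating the resulting identity in $t$, one obtains $-\Delta q=\operatorname{div}\big((v\cdot\nabla)v\big)$ in $H^{s-2}(\R^n)$ — this is exactly the computation carried out in the proof of Lemma \ref{nabla_p} (cf. \eqref{poisson_eq}).

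Next I would compute $\Delta B(v)$ directly from \eqref{B_formula}. Since $\chi(D)$, $\Delta$ and $\Delta^{-1}$ are Fourier multiplier operators and $\Delta\Delta^{-1}=\operatorname{id}$, one has $\Delta B_1(v)=\chi(D)\sum_{i,k}\partial_i\partial_k(v_iv_k)$ and $\Delta B_2(v)=\big(1-\chi(D)\big)\sum_{i,k}\partial_i v_k\,\partial_k v_i$, the relevant mapping properties being supplied by the Banach-algebra property of $H^s(\R^n)$ and $H^{s-1}(\R^n)$, by Lemma \ref{lemma_multiplication} and by Lemma \ref{properties_chi}. The key algebraic fact is that, because $\operatorname{div}v=0$,
\[
 g:=\operatorname{div}\big((v\cdot\nabla)v\big)=\sum_{1\le i,k\le n}\partial_i v_k\,\partial_k v_i=\sum_{1\le i,k\le n}\partial_i\partial_k(v_iv_k)\in H^{s-1}(\R^n),
\]
cf. \eqref{first_version}--\eqref{second_version}. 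To justify this at $H^s$-regularity I would note that all three expressions are continuous quadratic maps $H^s_\sigma(\R^n;\R^n)\to H^{s-2}(\R^n)$ which coincide on the dense subspace $C^\infty_{\sigma,c}(\R^n;\R^n)$ (Lemma \ref{lemma_dense}), where the equalities follow from the Leibniz rule and $\operatorname{div}v=0$. Consequently $\Delta B(v)=\chi(D)g+\big(1-\chi(D)\big)g=g=\operatorname{div}\big((v\cdot\nabla)v\big)$.

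Combining the last two steps gives $\Delta\big(q+B(v)\big)=0$, and the vector field $w:=\nabla\big(q+B(v)\big)=\nabla p(t)+\nabla B(v)$ lies in $H^s(\R^n;\R^n)\subseteq L^2(\R^n;\R^n)$, is curl-free (being a gradient) and divergence-free (since $\operatorname{div}w=\Delta(q+B(v))=0$); a one-line Fourier-transform argument then forces $w=0$ (alternatively, one may invoke the $L^2$-Helmholtz decomposition, i.e. that $P\nabla p=0$). Hence $\nabla p(t)=-\nabla B\big(u(t)\big)$ for every $t\in[0,T]$, and substituting into \eqref{integral_E} yields \eqref{RE_int}. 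The main obstacle is the rigorous justification, at $H^s$-regularity, of the Fourier-multiplier identities for $\Delta B_1$, $\Delta B_2$ and of the algebraic identity for $g$; both are handled by the multiplication lemmas together with a density argument, and the concluding vanishing step is routine.
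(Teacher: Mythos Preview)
Your proof is correct and follows essentially the same route as the paper's: both compute $\Delta B(u)$ from \eqref{B_formula}, use $\operatorname{div}u=0$ to reduce it to $\sum_{i,k}\partial_i u_k\,\partial_k u_i$, compare with the Poisson equation \eqref{poisson_eq} for $p$, and conclude that $\nabla B(u)+\nabla p$ is a harmonic (equivalently, curl- and divergence-free) element of $L^2$, hence zero. The only cosmetic differences are that the paper phrases the vanishing step as ``harmonic and in $H^s$'' rather than via the Helmholtz/Fourier argument, and it does not spell out the density justification for the algebraic identity.
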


\begin{proof}[Proof of Lemma \ref{lemma_dont_miss}]
By \eqref{poisson_eq} one has in $H^{s-1}(\R^n)$
\[
 -\Delta p = -\operatorname{div} (\nabla p)= \sum_{i,k=1}^n \partial_i u_k \partial_k u_i.
\]
\noindent
On the other hand using \eqref{B_formula}
\begin{eqnarray*}
\label{div_B_splitted}
\operatorname{div} \nabla B(u)&=&\Delta B(u) = \sum_{i,k=1}^n \chi(D) \big(\partial_i \partial_k (u_i u_k)\big) + \big(1-\chi(D)\big) (\partial_i u_k \partial_k u_i) \\
\noalign{\noindent and hence using that $\operatorname{div} u =0$}
\label{div_B}
 \operatorname{div} \nabla B(u) &=& \sum_{i,k=1}^n \partial_i u_k \partial_k u_i. 
\end{eqnarray*}
Thus $-\Delta p=\Delta B(u)$. Therefore each component of $\nabla B(u)+\nabla p$ is harmonic. By (S2), $\nabla p$ is in $H^s(\R^n;\R^n)$ and by \eqref{B_formula}, so is $\nabla B(u)$. Therefore we have actually $\nabla B(u)=-\nabla p$. Combined with (S3) it follows that $u$ satisfies \eqref{RE_int}.
\end{proof}

Now let us prove the converse of Lemma \ref{lemma_dont_miss}.

\begin{Lemma}\label{lemma_closedness}
Let $u_0 \in H_\sigma^s(\R^n;\R^n)$ and assume that $u \in C^0\big([0,T];H^s(\R^n;\R^n)\big)$ is a solution to \eqref{RE_int} for some $T>0$ with initial value $u_0$. Then
\[
 u(t) \in H_\sigma^s(\R^n;\R^n), \quad \forall t \in [0,T]
\]
and $\big(u,-B(u)\big)$ satisfies (S1)-(S3).
\end{Lemma}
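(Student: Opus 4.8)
The plan is to split the assertion into two parts: (a) $\operatorname{div}u(t)=0$ for every $t\in[0,T]$, and (b) with the choice $p:=-B(u)$, the pair $(u,p)$ satisfies (S1)--(S3). Part (b) is almost pure bookkeeping once (a) is known. Indeed, $B$ is a continuous quadratic form $H^s(\R^n;\R^n)\to H^{s+1}(\R^n)$ (see \eqref{B_formula}) and $\nabla:H^{s+1}(\R^n)\to H^s(\R^n;\R^n)$ is bounded linear, so $p=-B(u)\in C^0\big([0,T];H^{s+1}_{\operatorname{loc}}(\R^n)\big)$ and $\nabla p=-\nabla B(u)\in C^0\big([0,T];H^s(\R^n;\R^n)\big)$, giving (S2); (S1) follows because $H^s_\sigma(\R^n;\R^n)$ is a closed subspace of $H^s(\R^n;\R^n)$, $u\in C^0\big([0,T];H^s(\R^n;\R^n)\big)$ by hypothesis, $u(0)=u_0$, and (a) gives $u(t)\in H^s_\sigma(\R^n;\R^n)$; and (S3) is exactly \eqref{RE_int} rewritten using $-\nabla p=\nabla B(u)$, with the integrand in $C^0\big([0,T];H^{s-1}(\R^n;\R^n)\big)$ by the Banach algebra property of $H^{s-1}(\R^n)$. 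So the real content is part (a).

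For (a), set $w(t):=\operatorname{div}u(t)$. From \eqref{RE_int} and the continuity of $v\mapsto\nabla B(v)-(v\cdot\nabla)v$ from $H^s$ into $H^{s-1}$, the curve $u$ is $C^1$ into $H^{s-1}(\R^n;\R^n)$ with $\partial_t u=\nabla B(u)-(u\cdot\nabla)u$; hence $w\in C^0\big([0,T];H^{s-1}(\R^n)\big)\cap C^1\big([0,T];H^{s-2}(\R^n)\big)$ and, taking divergences,
\[
\partial_t w=\Delta B(u)-\operatorname{div}\big((u\cdot\nabla)u\big).
\]
Using \eqref{B_formula} and $\Delta\Delta^{-1}=\operatorname{id}$ one gets $\Delta B(u)=\sum_{i,k}\chi(D)\,\partial_i\partial_k(u_iu_k)+\big(1-\chi(D)\big)\big(\partial_iu_k\,\partial_ku_i\big)$; combining this with the elementary identities
\[
\sum_{i,k}\partial_i\partial_k(u_iu_k)=\sum_{i,k}\partial_iu_k\,\partial_ku_i+2(u\cdot\nabla)w+w^2,\qquad \operatorname{div}\big((u\cdot\nabla)u\big)=\sum_{i,k}\partial_iu_k\,\partial_ku_i+(u\cdot\nabla)w
\]
collapses everything to the linear-plus-quadratic equation
\[
\partial_t w=\chi(D)\big[2(u\cdot\nabla)w+w^2\big]-(u\cdot\nabla)w,\qquad w(0)=0 .
\]
Since $n\ge2$ forces $s>2$, we have $H^{s-2}(\R^n)\hookrightarrow L^2(\R^n)$, so $w\in C^1\big([0,T];L^2(\R^n)\big)$ and I would run an $L^2$ energy estimate: $\tfrac12\tfrac{d}{dt}\|w\|_{L^2}^2=\langle w,\partial_t w\rangle_{L^2}$. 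Put $K:=\sup_{[0,T]}\|u(t)\|_s$ and $K':=\sup_{[0,T]}\|w(t)\|_{s-1}$, both finite. The transport term integrates by parts with no leftover derivative: $-\langle w,(u\cdot\nabla)w\rangle_{L^2}=\tfrac12\int_{\R^n}(\operatorname{div}u)\,w^2=\tfrac12\int_{\R^n}w^3$, bounded by $\tfrac12\|w\|_{L^\infty}\|w\|_{L^2}^2$. For the $\chi(D)$ terms I use the $L^2$-symmetry of $\chi(D)$ together with $\|\chi(D)f\|_{L^2}\le\|f\|_{L^2}$ and $\|\chi(D)f\|_{H^1}\le\sqrt2\,\|f\|_{L^2}$ (Lemma \ref{properties_chi}): writing $\langle w,\chi(D)[\cdots]\rangle_{L^2}=\langle\chi(D)w,\cdots\rangle_{L^2}$ and $(u\cdot\nabla)w=\operatorname{div}(wu)-w^2$, one more integration by parts turns the dangerous factor into $\langle\nabla[\chi(D)w],wu\rangle_{L^2}$, and all resulting terms are $\lesssim(K+K')\,\|w\|_{L^2}^2$ by Hölder, the Sobolev embedding \eqref{sobolev_imbedding} and the multiplicative estimate of Lemma \ref{lemma_multiplication}. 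Altogether $\tfrac{d}{dt}\|w(t)\|_{L^2}^2\le C(K,K')\,\|w(t)\|_{L^2}^2$ on $[0,T]$ with $w(0)=0$, so Gronwall gives $w\equiv0$, i.e. $u(t)\in H^s_\sigma(\R^n;\R^n)$ for all $t$.

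The step I expect to be the genuine obstacle is the energy estimate, and inside it the apparent loss of one derivative in $(u\cdot\nabla)w$ and in $\chi(D)[(u\cdot\nabla)w]$: $w$ is only controlled in $H^{s-1}$, so $\nabla w$ cannot be afforded at the $L^2$ level. The resolution is exactly that at the $L^2$ level every occurrence of $\nabla w$ integrates by parts — directly for the transport term (producing $\int w^3$), and, after using the $L^2$-symmetry of $\chi(D)$ plus $(u\cdot\nabla)w=\operatorname{div}(uw)-w^2$, for the smoothed term — so one only ever needs $w\in L^2$ and the a priori bounds $K$, $K'$. (A similar Gronwall estimate in $H^{s-2}$ does not close, because there the transport term would require commutator estimates.) The only technical care needed is justifying the differentiation of $\|w(t)\|_{L^2}^2$ and the boundary-term-free integrations by parts on $\R^n$; this follows from $w\in C^1\big([0,T];L^2(\R^n)\big)$, from the fact that $\chi(D)w$ and its derivatives lie in every $H^\sigma(\R^n)$, and from a density argument.
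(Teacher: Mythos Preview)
Your proof is correct and follows essentially the same approach as the paper: derive the evolution equation $\partial_t w=\chi(D)\big[2(u\cdot\nabla)w+w^2\big]-(u\cdot\nabla)w$ for $w=\operatorname{div}u$, then close an $L^2$ energy estimate by integrating by parts and using the $L^2$-symmetry and smoothing of $\chi(D)$, and conclude via Gronwall. The organization of the integrations by parts differs only cosmetically (you write $(u\cdot\nabla)w=\operatorname{div}(wu)-w^2$ before integrating, the paper integrates directly), and your treatment of (S1)--(S3) matches as well.
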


\begin{proof}[Proof of Lemma \ref{lemma_closedness}]
To show that $\operatorname{div} u(t)=0$ for any $0 \leq t \leq T$ it suffices to prove that $\partial_t \operatorname{div} u = 0$. Applying $\operatorname{div}$ to \eqref{RE_int} and using the assumption $\operatorname{div} u_0=0$ one gets
\begin{equation}\label{div_int}
\operatorname{div}u = \int_0^t \sum_{i,k=1}^n \operatorname{div}\nabla B(u)  - \operatorname{div} \big((u \cdot \nabla)u\big)\, d\tau.
\end{equation}
By \eqref{B_formula}
\[
 \operatorname{div} \nabla B(u) = \Delta B(u) = \sum_{i,k=1}^n \chi(D) \big(\partial_i \partial_k (u_i u_k)\big) + \big(1-\chi(D)\big) (\partial_i u_k \partial_k u_i).
\]
Note that
\[
\sum_{i,k=1}^n \partial_i \partial_k (u_i u_k) = 2 (u \cdot \nabla) \operatorname{div} u + (\operatorname{div} u)^2 + \sum_{i,k=1}^n \partial_i u_k \partial_k u_i.
\]
Therefore
\begin{multline}
\chi(D) \sum_{i,k=1}^n \partial_i \partial_k (u_i u_k) + \big(1-\chi(D)\big) \sum_{i,k=1}^n \partial_i u_k \partial_k u_i \\
= \chi(D) 2 (u \cdot \nabla) \operatorname{div} u + \chi(D) (\operatorname{div} u)^2 + \sum_{i,k=1}^n \partial_i u_k \partial_k u_i.
\end{multline}
Furthermore
\[
 -\operatorname{div}\big( (u \cdot \nabla) u\big) = -(u \cdot \nabla)\operatorname{div} u - \sum_{i,k=1}^n \partial_i u_k \partial_k u_i.
\]
Substituting the two identities above into \eqref{div_int} one gets in $L^2$ (as $n \geq 2$, one has $s > n/2+1 \geq 2$)
\begin{equation}\label{u_derivative}
 \partial_t \operatorname{div} u = \chi(D)\big( 2 (u \cdot \nabla)\operatorname{div} u + (\operatorname{div} u)^2\big) - (u \cdot \nabla) \operatorname{div}u.
\end{equation}
Denoting by $\langle \cdot,\cdot \rangle_{L^2}$ the inner product $\int_{\R^n} f g\,dx$ for two real valued $L^2$-functions we have
\begin{equation}\label{L2_derivative}
 \frac{1}{2} \partial_t \langle \operatorname{div}u,\operatorname{div} u \rangle_{L^2} = 2I + II - III
\end{equation}
where
\begin{eqnarray*}
 I &=& \langle \operatorname{div} u,\chi(D) \big((u \cdot \nabla)\operatorname{div}u\big)\rangle_{L^2}\\ 
II &=& \langle \operatorname{div} u,\chi(D) (\operatorname{div} u)^2\rangle_{L^2}\\
III &=& \langle \operatorname{div} u, (u \cdot \nabla) \operatorname{div} u\rangle_{L^2}
\end{eqnarray*}
We now estimate the terms on the right-hand side of \eqref{L2_derivative} seperately for each fixed $0 \leq t \leq T$. To estimate the term $III$ we integrate by parts
\begin{eqnarray*}
 \langle \operatorname{div}u, (u \cdot \nabla) \operatorname{div} u \rangle_{L^2} &=& -\langle \sum_{k=1}^n \partial_k (u_k \operatorname{div} u), \operatorname{div}u \rangle_{L^2}\\
 &=& -\langle (\operatorname{div}u)^2,\operatorname{div}u \rangle_{L^2} - \langle \operatorname{div}u, (u \cdot \nabla) \operatorname{div}u \rangle_{L^2}.
\end{eqnarray*}
Thus we get
\begin{equation}\label{integ_by_parts}
 \langle \operatorname{div}u, (u \cdot \nabla)\operatorname{div}u \rangle_{L^2} = -\frac{1}{2} \langle (\operatorname{div}u)^2, \operatorname{div}u \rangle_{L^2}.
\end{equation}
Using the imbedding $H^{s-1}(\R^n) \hookrightarrow C_0^0(\R^n)$ it follows from \eqref{integ_by_parts} that 
\[
 |(\operatorname{div} u, (u \cdot \nabla) \operatorname{div} u)_{L^2}| \leq \frac{1}{2} ||\operatorname{div}u||_{L^\infty} ||\operatorname{div} u||^2_{L^2}.
\]
To estimate the term $I$ use the $L^2$-symmetry of $\chi(D)$, established in Lemma \ref{properties_chi}, to get 
\begin{multline*}
 \langle \operatorname{div}u, \chi(D) (u \cdot \nabla)\operatorname{div} u \rangle_{L^2} = \langle \chi(D) \operatorname{div}u , (u \cdot \nabla)\operatorname{div}u \rangle_{L^2}\\
 = -\langle \sum_{k=1}^n \partial_k (u_k \chi(D) \operatorname{div} u), \operatorname{div} u \rangle_{L^2}\\
 = -\langle (\operatorname{div}u) \chi(D) \operatorname{div} u, \operatorname{div} u \rangle_{L^2} 
 - \langle (u \cdot \nabla) \chi(D) \operatorname{div}u, \operatorname{div}u \rangle_{L^2}.
\end{multline*}
Hence 
\[
 2I + II = -\langle \chi(D)\operatorname{div} u,(\operatorname{div}u)^2\rangle_{L^2} - 2 \langle (u \cdot \nabla) \chi(D) \operatorname{div} u,\operatorname{div} u\rangle_{L^2}
\]
or
\begin{eqnarray*}
|2I+II| &\leq & ||\chi(D) \operatorname{div} u||_{L^2} ||\operatorname{div}u||_{L^\infty} ||\operatorname{div} u||_{L^2} + 2 ||(u \cdot \nabla) \chi(D) \operatorname{div} u||_{L^2} ||\operatorname{div} u||_{L^2}\\
&\leq & ||\operatorname{div} u||_{L^\infty} ||\operatorname{div} u||_{L^2}^2 + 2 ||(u \cdot \nabla) \chi(D) \operatorname{div} u||_{L^2} ||\operatorname{div} u||_{L^2}.
\end{eqnarray*}
Using Lemma \ref{properties_chi} and the imbedding $H^s(\R^n) \hookrightarrow C_0^0(\R^n)$ once more leads to
\[
 ||(u \cdot \nabla )\chi(D) \operatorname{div} u||_{L^2} \leq \sqrt{2} ||u||_{L^\infty} ||\operatorname{div}u||_{L^2}.
\]
Summarizing the above inequalities, we conclude that for any $0 \leq t \leq T$
\[
 \partial_t ||\operatorname{div}u||_{L^2}^2 \leq 3\big(||\operatorname{div}u||_{L^\infty} + ||u||_{L^\infty}\big) ||\operatorname{div}u||_{L^2}^2.
\]
Using the imbedding $H^s(\R^n) \hookrightarrow C_0^1(\R^n)$ we conclude that
\[
 C:= \sup_{0 \leq t \leq T} \big(||\operatorname{div} u(t)||_{L^\infty} + ||u(t)||_{L^\infty}\big) < \infty.
\]
As $\operatorname{div}u(0)=0$ we then get by Gronwall's inequality (see e.g. \cite{majda}) that $\operatorname{div}u(t)=0$ for all $t \in [0,T]$.
\end{proof}

We combine the results proved in Lemma \ref{lemma_dont_miss} and Lemma \ref{lemma_closedness} in the following

\begin{Prop}\label{prop_alternative}
For any solution $u \in C^0\big([0,T];H^s(\R^n;\R^n)\big)$ with $u(0)=u_0 \in H^s_\sigma(\R^n;\R^n)$ of \eqref{RE_int} the pair $\big(u,-B(u))$ satisfies (S1)-(S3). Conversely any pair $(u,p)$ satisfying (S1)-(S3) gives rise to a solution of \eqref{RE_int}.
\end{Prop}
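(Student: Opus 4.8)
The plan is to assemble the proposition directly from Lemma \ref{lemma_dont_miss} and Lemma \ref{lemma_closedness}, with only a small amount of bookkeeping concerning the regularity of the pressure candidate $-B(u)$. Since both halves of the statement have essentially been proved already, the task is to package them and to check that $-B(u)$ qualifies as an admissible pressure in the sense of (S2).

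For the first assertion, suppose $u \in C^0\big([0,T];H^s(\R^n;\R^n)\big)$ solves \eqref{RE_int} with $u(0)=u_0 \in H^s_\sigma(\R^n;\R^n)$. First I would invoke Lemma \ref{lemma_closedness}, which yields $u(t) \in H^s_\sigma(\R^n;\R^n)$ for every $t \in [0,T]$; in particular (S1) holds, and $u_0 = u(0)$. To verify (S2) for $p := -B(u)$, recall from \eqref{B_formula} that $B\colon H^s(\R^n;\R^n) \to H^{s+1}(\R^n)$ is a continuous quadratic form, so composing with the continuous curve $t \mapsto u(t)$ gives $t \mapsto B\big(u(t)\big) \in C^0\big([0,T];H^{s+1}(\R^n)\big)$; hence $p(t) \in H^{s+1}(\R^n) \subseteq H^{s+1}_{\operatorname{loc}}(\R^n)$ and $\nabla p = -\nabla B(u) \in C^0\big([0,T];H^s(\R^n;\R^n)\big)$, which is exactly (S2). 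Finally (S3) is obtained from \eqref{RE_int} by substituting $-\nabla p = \nabla B(u)$. In fact Lemma \ref{lemma_closedness} already asserts that $\big(u,-B(u)\big)$ satisfies (S1)--(S3), so this direction amounts to a citation once the mapping properties of $B$ are recalled.

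For the converse, let $(u,p)$ satisfy (S1)--(S3). Then Lemma \ref{lemma_dont_miss} applies verbatim and gives that $u$ solves \eqref{RE_int}. One may add, using the harmonicity argument inside the proof of that lemma (or relation \eqref{nabla_eq}), that necessarily $\nabla p = -\nabla B(u)$, so the pressure in any (S1)--(S3) solution coincides with the one produced in the first assertion; this is not needed for the statement but makes the equivalence transparent.

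Since both directions reduce to results established in Section~\ref{section_eulerian_formulation}, there is no genuine obstacle. The only point requiring a line of care is the time-continuity and the $H^{s+1}_{\operatorname{loc}}$-membership of $-B(u)$, which follow from the boundedness of $B \colon H^s \to H^{s+1}$ together with the elementary fact that a continuous quadratic form maps continuous curves to continuous curves.
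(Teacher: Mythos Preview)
Your proposal is correct and follows exactly the paper's approach: the proposition is stated there as a direct combination of Lemma~\ref{lemma_dont_miss} and Lemma~\ref{lemma_closedness}, with no separate proof given. The only thing you add is an explicit check of (S2) for $p=-B(u)$ via the continuity of $B:H^s\to H^{s+1}$, which is harmless since Lemma~\ref{lemma_closedness} already asserts that $(u,-B(u))$ satisfies (S1)--(S3).
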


\begin{Rem}\label{rem_alternative}
In view of Proposition \ref{prop_alternative}, in the sequel, we study equation \eqref{RE} rather than \eqref{E}.
\end{Rem}

\chapter{Lagrangian formulation}\label{section_lagrangian_formulation}

\section{Geodesic equation on $\Ds^s(\R^n)$}\label{section_geodesic_equation}

In this section we express the alternative formulation of the Euler equation \eqref{RE} in Lagrangian coordinates due to Arnold \cite{arnold}. It turns out that in such coordinates \eqref{RE} takes the form of a geodesic equation on $\Ds^s(\R^n)$, $s > n/2 +1$. This will be of use in the proof of Theorem \ref{th_lagrangian}. Throughout this section we assume that $n \geq 2$ and $s > n/2+1$.\\
To motivate Arnold's approach let us make the following formal calculations. Let $u$ be a solution of \eqref{RE}. Then the corresponding flow $\varphi$, given by $\partial_t \varphi=u \circ \varphi$, satisfies
\begin{equation}\label{derivation_geodesic}
 \partial_t^2 \varphi = \partial_t (u \circ \varphi) = \partial_t u \circ \varphi +\left[ (u \cdot \nabla)u \right] \circ \varphi = R_\varphi \left(\nabla B(\partial_t \varphi \circ \varphi^{-1})\right).
\end{equation}
Recall that the geodesics on a Riemannian manifold of dimension $d$ satisfy in local coordinates the following second-order ODE (see e.g. \cite{lang})
\[
 \ddot x_i = - \sum_{j,k=1}^d \Gamma_{jk}^i(x) \dot x_k \dot x_j, \quad i=1,\ldots,d
\]
where $\Gamma_{jk}^i$ are the Christoffel symbols. Note that the right-hand side of the latter equation is a quadratic form in $\dot x_k$, $1 \leq k \leq d$. In analogy, we write \eqref{derivation_geodesic} as
\begin{equation}\label{geodesic_eq}
\partial_t^2 \varphi = \Gamma_\varphi(\partial_t \varphi,\partial_t \varphi)
\end{equation}
where
\begin{equation}\label{christoffel_map}
\Gamma_\varphi(v,v) = R_\varphi \left( \nabla B(v \circ \varphi^{-1})\right).
\end{equation}

\noindent
The goal of this section is to show that $\Gamma$, defined by \eqref{christoffel_map} on appropriate Sobolev spaces, defines a real analytic map in $\varphi$ and $v$. For the notion of analyticity in real Banach spaces, we refer the reader to Appendix \ref{appendix_analyticity}. \\
We start with the definition of the following subspace of $L^2(\R^n)$ which will be very important in the sequel,
\[
 H_\Xi^\infty(\R^n) = \big\{ f \in L^2(\R^n) \; \big| \; \operatorname{supp} \hat f \subseteq \Xi \big\}
\]
where $\Xi=\{ \xi \in \R^n \;|\; |\xi| \leq 1 \}$ is the closed unit ball in $\R^n$. The following lemma describes some properties of $H_\Xi^\infty(\R^n)$.

\begin{Lemma}\label{lemma_hb_space}
The space $H_\Xi^\infty(\R^n)$ is a closed subspace of $L^2(\R^n)$. It consists of entire functions. Furthermore we have
\[
 H_\Xi^\infty(\R^n) \subseteq \cap_{s \geq 0} H^s(\R^n)
\]
with
\begin{equation}\label{hb_estimate}
 ||f||_s \leq 2^{s/2} ||f||_{L^2}  
\end{equation}
for all $f \in H_\Xi^\infty(\R^n)$.
\end{Lemma}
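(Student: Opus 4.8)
The plan is to handle the three assertions — closedness, analyticity of the elements, and the Sobolev inclusion with estimate \eqref{hb_estimate} — in that order, since the inclusion is essentially immediate once one has set things up, the closedness follows from continuity of the Fourier transform, and only the analyticity requires a genuine argument.

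First I would prove \eqref{hb_estimate} together with the inclusion. For $f \in H_\Xi^\infty(\R^n)$ the Fourier transform $\hat f$ is supported in $\Xi$, so $(1+|\xi|^2)^{s} \leq 2^{s}$ on $\operatorname{supp}\hat f$, whence
\[
 \|f\|_s^2 = \int_{\R^n} (1+|\xi|^2)^s |\hat f(\xi)|^2\,d\xi \leq 2^s \int_{\R^n} |\hat f(\xi)|^2\,d\xi = 2^s \|f\|_{L^2}^2,
\]
using Plancherel's theorem. This gives both $f \in H^s(\R^n)$ for every $s \geq 0$ and the stated bound. For closedness, note that $\mathcal F: L^2(\R^n) \to L^2_{\mathbb C}(\R^n)$ is a continuous linear isomorphism, and $H_\Xi^\infty(\R^n) = \mathcal F^{-1}\big(\{ g \in L^2_{\mathbb C}(\R^n) \;|\; g = 0 \text{ a.e. on } \R^n \setminus \Xi \}\big)$; the set on the right is a closed subspace of $L^2_{\mathbb C}(\R^n)$ (it is the kernel of the bounded multiplication operator $g \mapsto (1-\chi)g$, with $\chi$ the indicator of $\Xi$), and its preimage under the continuous map $\mathcal F$ is therefore closed.

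The substantive point is that every $f \in H_\Xi^\infty(\R^n)$ extends to an entire function on $\mathbb C^n$ — a Paley–Wiener type statement. Since $\hat f \in L^2(\Xi)$ has compact support, it lies in $L^1(\R^n)$ by Cauchy–Schwarz, so the Fourier inversion integral
\[
 f(x) = \frac{1}{(2\pi)^{n/2}} \int_{\Xi} \hat f(\xi)\, e^{i x \cdot \xi}\,d\xi
\]
is an absolutely convergent integral representing $f$ (almost everywhere, hence defining the continuous representative). The plan is to replace $x \in \R^n$ by $z \in \mathbb C^n$ in this formula: for $z = x + iy$ one has $|e^{i z \cdot \xi}| = e^{-y \cdot \xi} \leq e^{|y|}$ for $\xi \in \Xi$, so the integral converges absolutely and uniformly for $z$ in compact subsets of $\mathbb C^n$. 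Entireness then follows by the standard argument: the integrand is holomorphic in $z$ for each fixed $\xi$, the domination just noted lets one apply Morera's theorem (or differentiation under the integral sign) coordinate by coordinate, so $z \mapsto f(z)$ is holomorphic on all of $\mathbb C^n$. Restricting to $\R^n$ recovers $f$, which is the claim.

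I expect the main obstacle to be purely expository rather than mathematical: one must be careful to note that elements of $H_\Xi^\infty(\R^n)$ are a priori only $L^2$-equivalence classes, and the statement ``consists of entire functions'' should be read as ``each such class has a (unique) representative that is the restriction of an entire function.'' The argument for this is the Paley–Wiener reasoning sketched above, and the only thing to check with care is the interchange of limit/derivative with the integral, which is justified by the uniform domination $|\hat f(\xi) e^{iz\cdot\xi}| \leq |\hat f(\xi)| e^{|y|}$ on $\Xi$ with $\hat f \in L^1(\Xi)$. Everything else is routine.
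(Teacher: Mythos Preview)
Your proposal is correct and follows essentially the same approach as the paper: the Sobolev estimate is proved by the identical computation, the entire-function claim via the Fourier inversion integral over the compact set $\Xi$ extended to complex arguments, and closedness via continuity of the Fourier transform. Your argument is in fact more detailed than the paper's --- you spell out the $L^1$ bound on $\hat f$ via Cauchy--Schwarz and the domination needed for holomorphy, and you phrase closedness as a preimage-of-kernel argument, whereas the paper just checks it directly on sequences --- but the substance is the same.
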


\begin{proof}
Let $(f_k)_{k \geq 1} \subseteq H_\Xi^\infty(\R^n)$ with $f_k \to f$ in $L^2(\R^n)$. Then by Plancherel's theorem, $\hat f_k \to \hat f$ in $L^2(\R^n)$. Hence for any compact subset $A \subseteq \R^n \setminus \Xi$ we have
\[
 0 = \int_A |\hat f_k| \to \int_A |\hat f| \quad \mbox{as }  k \to \infty.
\]
Hence $\int_A |\hat f| = 0$, showing that $f \in H_\Xi^\infty(\R^n)$. By the inversion formula for the Fourier transform we have
\[
 f(x) = \frac{1}{(2\pi)^{n/2}} \int_\Xi e^{ix \cdot \xi} \hat f(\xi) \,d\xi.
\]
As the domain of integration $\Xi$ is compact, $f$ is well-defined for any $x \in \mathbb C$ and is an entire function. The inequality \eqref{hb_estimate} follows from
\begin{equation*}
 ||f||_s^2 = \int_\Xi (1+|\xi|^2)^s |\hat f(\xi)|^2 \,d\xi \leq 2^s \int_\Xi |\hat f(\xi)|^2 \,d\xi = 2^s ||f||_{L^2}^2.
\end{equation*}
\end{proof}

\begin{Rem}\label{rem_vector_H_K}
Lemma \ref{lemma_hb_space} shows in particular that $\big(H_\Xi^\infty(\R^n),\langle \cdot,\cdot \rangle_{L^2}\big)$ is a Hilbert space. In the sequel we will also use the vector valued analog of $H_\Xi^\infty(\R^n)$
\[
 H_\Xi^\infty(\R^n;\R^n):=\big\{ f=(f_1,\ldots,f_n) \;\big|\; f_k \in H_\Xi^\infty(\R^n),\quad 1\leq k \leq n \big\}
\]
with norm $||f||_{L^2}=\big(||f_1||_{L^2}^2 + \cdots + ||f_n||_{L^2}^2\big)^{1/2}$.
\end{Rem}

\noindent
We have the following regularity property for the composition map

\begin{Prop}\label{prop_analytic_composition}
The composition map 
\[
 H_\Xi^\infty(\R^n) \times \Ds^s(\R^n) \to H^s(\R^n),\quad (f,\varphi) \mapsto f \circ \varphi
\]
is real analytic with radius of convergence $R=\infty$.
\end{Prop}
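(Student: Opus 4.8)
The plan is to realise the map $(f,\varphi)\mapsto f\circ\varphi$ in a neighbourhood of an arbitrary point $(f_\bullet,\varphi_\bullet)\in H_\Xi^\infty(\R^n)\times\Ds^s(\R^n)$ as the sum of a power series in the increments $(g,w):=(f-f_\bullet,\varphi-\varphi_\bullet)\in H_\Xi^\infty(\R^n)\times H^s(\R^n;\R^n)$ whose homogeneous components have norms decaying faster than any geometric sequence; in view of the notion of analyticity recalled in Appendix \ref{appendix_analyticity} this yields real analyticity with $R=\infty$ in one stroke. Since by Lemma \ref{lemma_hb_space} every $h\in H_\Xi^\infty(\R^n)$ is the restriction to $\R^n$ of an entire function on $\mathbb C^n$, and since by the Sobolev embedding $H^s(\R^n)\hookrightarrow C^0_0(\R^n)$ the maps $\varphi_\bullet$ and $w$ take well-defined values at every point, the natural candidate for the expansion of $h\circ(\varphi_\bullet+w)$ is the Taylor series
\[
 \sum_{\alpha\in\Z^n}\frac{1}{\alpha!}\,\big((\partial^\alpha h)\circ\varphi_\bullet\big)\cdot w^\alpha,\qquad w^\alpha:=w_1^{\alpha_1}\cdots w_n^{\alpha_n}.
\]
By linearity of the composition in the first slot it suffices to control this series for $h=f_\bullet$ (giving the part of the expansion not involving $g$) and for $h=g$ (giving the part linear in $g$), and then add.

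The point on which everything hinges is a bound on $\partial^\alpha h$ that is \emph{uniform in $\alpha$}. Since $\widehat{\partial^\alpha h}(\xi)=(i\xi)^\alpha\hat h(\xi)$ is again supported in the closed unit ball $\Xi$, and $|\xi^\alpha|\le|\xi|^{|\alpha|}\le 1$ there, one has $\partial^\alpha h\in H_\Xi^\infty(\R^n)$ with $\|\partial^\alpha h\|_{L^2}\le\|h\|_{L^2}$, whence by \eqref{hb_estimate}
\[
 \|\partial^\alpha h\|_s\le 2^{s/2}\|h\|_{L^2}\qquad\text{for every }\alpha\in\Z^n.
\]
Because $\varphi_\bullet\in\Ds^s(\R^n)$, Lemma \ref{lemma_continuous_right_translation} provides a constant $C_\bullet=C_\bullet(\varphi_\bullet)$ with $\|F\circ\varphi_\bullet\|_s\le C_\bullet\|F\|_s$, so $\|(\partial^\alpha h)\circ\varphi_\bullet\|_s\le C_\bullet2^{s/2}\|h\|_{L^2}$ for all $\alpha$; and the Banach-algebra property of $H^s(\R^n)$ (available since $s>n/2+1>n/2$) gives $\|w^\alpha\|_s\le c_s^{|\alpha|-1}\|w\|_s^{|\alpha|}$ with $c_s$ the algebra constant. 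Combining these, the $\alpha$-th term is bounded by $\frac{C_\bullet2^{s/2}\|h\|_{L^2}}{\alpha!}\,c_s^{|\alpha|}\,\|w\|_s^{|\alpha|}$, and since $\sum_{\alpha\in\Z^n}\frac{t^{|\alpha|}}{\alpha!}=e^{nt}$ for every $t\ge 0$ by the multinomial theorem, the series converges absolutely in $H^s(\R^n)$ for \emph{every} $w\in H^s(\R^n;\R^n)$, with the sum of the norms at most $C_\bullet2^{s/2}\|h\|_{L^2}\,e^{nc_s\|w\|_s}$. It is exactly this uniform Fourier localisation to $\Xi$, defeating the factorial growth, that forces $R=\infty$.

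Next I would identify the $H^s$-sum of the series with the genuine composition. For each fixed $x\in\R^n$ the entire function $h$ has the everywhere-convergent expansion $h(\varphi_\bullet(x)+w(x))=\sum_\alpha\frac{1}{\alpha!}(\partial^\alpha h)(\varphi_\bullet(x))\,w(x)^\alpha$; on the other hand the partial sums of the series converge in $H^s(\R^n)$, hence along a subsequence almost everywhere, to its $H^s$-sum, and the two limits must coincide a.e. Thus $h\circ(\varphi_\bullet+w)\in H^s(\R^n)$ and is represented by the series. Applying this with $h=f_\bullet$ and with $h=g$ and summing, the map $(g,w)\mapsto(f_\bullet+g)\circ(\varphi_\bullet+w)$ is the sum of a power series in $w$ alone and a power series in $(g,w)$ that is linear in $g$; regrouping both by total degree in $(g,w)$ produces bounded homogeneous polynomials $P_m$ on $H_\Xi^\infty(\R^n)\times H^s(\R^n;\R^n)$ whose norms, by the estimates above, are bounded by a constant multiple of $(nc_s)^{m-1}/(m-1)!$, so that $\limsup_m\|P_m\|^{1/m}=0$. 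Hence the composition map is real analytic with $R=\infty$.

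The main obstacle is precisely the step just highlighted: making the entire-function Taylor series of $h$ converge in the Sobolev norm $\|\cdot\|_s$, not merely pointwise. It is overcome cheaply because $\operatorname{supp}\hat h\subseteq\Xi$ forces all derivatives of $h$ to obey the \emph{single} bound $\|\partial^\alpha h\|_s\le 2^{s/2}\|h\|_{L^2}$, independent of $\alpha$; everything else — checking that the regrouped sums are the Taylor polynomials of the map, and the bookkeeping with the Banach-algebra and right-translation constants — is routine.
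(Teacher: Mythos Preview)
Your proof is correct and follows essentially the same route as the paper: Taylor-expand the entire function $h\in H_\Xi^\infty(\R^n)$, use the Fourier localisation $\operatorname{supp}\hat h\subseteq\Xi$ to bound all $\|\partial^\alpha h\|_s$ uniformly, and combine with the Banach-algebra property of $H^s$ and the multinomial identity $\sum_{|\alpha|=k}\frac{1}{\alpha!}=\frac{n^k}{k!}$ to obtain factorially decaying homogeneous terms. The only cosmetic differences are that the paper expands around $\varphi_\bullet=\operatorname{id}$ (which suffices since $R=\infty$, cf.\ Remark \ref{rem_radius_infty}) and packages the linearity in $f$ by viewing the map as $\Ds^s(\R^n)\to\mathcal L\big(H_\Xi^\infty(\R^n),H^s(\R^n)\big)$, thereby avoiding your final regrouping step.
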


Recall from Section \ref{functional_setting} that the differential structure of $\Ds^s(\R^n)$ is given by identifying it with the open set $\Ds^s(\R^n) - \operatorname{id} \subseteq H^s(\R^n;\R^n)$.

\begin{proof}
 Let $f \in H_\Xi^\infty(\R^n)$. From Lemma \ref{lemma_hb_space} we know that $f$ is an entire function and hence admits a power series expansion for any $x,y \in \R^n$
\[
 f(x+y)=\sum_{|\alpha| \geq 0} \frac{1}{\alpha !} \partial^\alpha f(x) y^\alpha
\]
where we use the multi-index notation, i.e. for a multi-index $\alpha=(\alpha_1,\ldots,\alpha_n) \in \mathbb Z^n_{\geq 0}$, 
\[
 \partial^\alpha f(x) = \partial_1^{\alpha_1} \cdots \partial_n^{\alpha_n} f(x),\quad \alpha!=\alpha_1! \cdots \alpha_n !
\]
and for $y=(y_1,\ldots,y_n) \in \R^n$, $y^\alpha= y_1^{\alpha_1} \cdots y_n^{\alpha_n}$. For the derivative $\partial^\alpha f$ we have from \eqref{hb_estimate}
\[ 
 ||\partial^\alpha f||_s \leq ||f||_{s+|\alpha|} \leq 2^{(s+|\alpha|)/2} ||f||_{L^2}.
\]
Writing $\varphi = \operatorname{id} + g$, $g=(g_1,\ldots,g_n) \in H^s(\R^n;\R^n)$ we have with the notation $g^\alpha(x)=g_1^{\alpha_1}(x) \cdots g_n^{\alpha_n}(x)$, pointwise for all $x \in \R^n$
\[
 f\big(\varphi(x)\big) = f\big(x+g(x)\big) = \sum_{|\alpha| \geq 0} \frac{1}{\alpha!} \partial^\alpha f(x) g^\alpha(x)
\]
or formally as an identity in $\mathcal L\big(H_\Xi^\infty(\R^n),H^s(\R^n)\big)$
\begin{equation}\label{power_series}
 f \mapsto f \circ \varphi \equiv f \mapsto \sum_{k \geq 0} Q_k(g)(f)
\end{equation}
where $Q_k(g)$ is a linear differential operator of order $k$ whose coefficients are homogeneous polynomials in the components of $g \in H^s(\R^n;\R^n)$, $Q_k(g)=\sum_{|\alpha|=k} \frac{1}{\alpha!} g^\alpha \partial^\alpha$, acting on functions $f \in H_\Xi^\infty(\R^n)$ as
\[
 Q_k(g)(f) = \sum_{|\alpha|=k} \frac{1}{\alpha!} g^\alpha \partial^\alpha f.
\]
Note that $Q_k(g):H_\Xi^\infty(\R^n) \to H^s(\R^n)$ is a bounded linear map. Indeed we have by the Banach algebra property of $H^s(\R^n)$ for any multi-index $\alpha$ with $|\alpha|=k$
\[
 ||g^\alpha \partial^\alpha f||_s \leq C^k ||g||_s^k ||\partial^\alpha f||_s \leq C^k ||g||_s^k ||f||_{s+k}
\]
and hence by \eqref{hb_estimate}
\[
 ||Q_k(g)(f)||_s \leq \left(\sum_{|\alpha|=k} \frac{1}{\alpha!}\right) C^k 2^{(s+k)/2} ||f||_{L^2} ||g||_s^k  = \frac{1}{k!} n^k C^k 2^{(2+k)/2} ||f||_{L^2} ||g||_s^k.
\]
Here we used that by the multinomial theorem,
\[
 \sum_{|\alpha|=k} \frac{k!}{\alpha!} = (1+\ldots+1)^k = n^k.
\]
Recall from Appendix \ref{appendix_analyticity}, \eqref{radius_condition},
\[
 ||Q_k|| = \sup_{\mbox{\scriptsize $\begin{array}{c} ||g||_s \leq 1 \\ f \in H_\Xi^\infty(\R^n)\\ ||f||_{L^2} \leq 1 \end{array}$}} ||Q_k(g)(f)||_s.
\]
Altogether we have proved that $||Q_k|| \leq \frac{1}{k!} n^k C^k 2^{(2+k)/2}$ leading to 
\[
 \sup_{k \geq 0} ||Q_k|| r^k < \infty
\]
for all $r > 0$. Therefore the series \eqref{power_series} has convergence radius $R=\infty$. Now the pointwise limit $f \circ \varphi$ and the $H^s$-limit $\sum_{k \geq 0} Q_k(g)(f)$ must coincide. Thus we see that
\[
 \Phi:\Ds^s(\R^n) \to \mathcal L\big(H_\Xi^\infty(\R^n),H^s(\R^n)\big),\quad \varphi \mapsto \big[f \mapsto f \circ \varphi = \left(\sum_{k \geq 0} Q_k(g)\right) (f)\big]
\]
is real analytic. Again we identify here $\varphi$ with $g=\varphi - \operatorname{id}$. Finally this shows also that
\[
 H_\Xi^\infty(\R^n) \times \Ds^s(\R^n),\quad (f,\varphi) \mapsto \Phi(\varphi)(f) 
\]
is analytic as evaluation is an analytic operation.
\end{proof}

\begin{Rem}\label{rem_radius_infty}
The proof shows that the composition map in Proposition \ref{prop_analytic_composition}, which apriori is merely defined for $g=\varphi - \operatorname{id}$ in $\Ds^s(\R^n)-\operatorname{id} \subseteq H^s(\R^n;\R^n)$, can be extended analytically to all of $H^s(\R^n;\R^n)$ and that the resulting map is real analytic with convergence radius $R=\infty$. 
\end{Rem}

Recall from Section \ref{section_eulerian_formulation} that $\chi(D)$ is the Fourier multiplier operator with Fourier multiplier $\chi(\xi)$ given by the characteristic function of the unit ball $\Xi$ in $\R^n$.

\begin{Coro}\label{coro_composition_inverse}
For any $0 \leq s' \leq s$, the map
\[
 \Ds^s(\R^n) \to \mathcal L\big(H^{s'}(\R^n),H_\Xi^\infty(\R^n)\big),\quad \varphi \mapsto \left[ f \mapsto \chi(D) (f \circ \varphi^{-1})\right]
\]
is real analytic with radius of convergence $R=\infty$.
\end{Coro}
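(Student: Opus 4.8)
\emph{Plan.} The natural attempt -- post-composing the analytic composition map of Proposition \ref{prop_analytic_composition} with $\varphi \mapsto \varphi^{-1}$ -- fails, because inversion on $\Ds^s(\R^n)$ is not known to be analytic (not even differentiable). The plan is instead to pass to the transpose: since $\chi(D)$ is $L^2$-symmetric and $\big(H_\Xi^\infty(\R^n),\langle\cdot,\cdot\rangle_{L^2}\big)$ is a Hilbert space (Remark \ref{rem_vector_H_K}), the operator $f \mapsto \chi(D)(f\circ\varphi^{-1})$ is determined by its pairings against $H_\Xi^\infty(\R^n)$, and a change of variables rewrites those pairings in terms of $g \mapsto g\circ\varphi$ -- analytic by Proposition \ref{prop_analytic_composition} -- and the Jacobian $\det(d\varphi)$, which is polynomial in $\varphi$. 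The one genuine idea is this reduction to the transpose; everything after it is routine.

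\emph{Step 1: the transpose identity.} Fix $\varphi\in\Ds^s(\R^n)$. For $f\in H^{s'}(\R^n)$ one has $f\circ\varphi^{-1}\in H^{s'}(\R^n)\subseteq L^2(\R^n)$ by Lemma \ref{lemma_continuous_right_translation}, so $\chi(D)(f\circ\varphi^{-1})\in H_\Xi^\infty(\R^n)$. For any $g\in H_\Xi^\infty(\R^n)$ we have $\chi(D)g=g$ because $\operatorname{supp}\hat g\subseteq\Xi$; combining the $L^2$-symmetry of $\chi(D)$ from Lemma \ref{properties_chi} with the change of variables $x=\varphi(y)$ -- valid since $\varphi$ is a $C^1$-diffeomorphism of $\R^n$ with $\det(d\varphi)>0$ -- I obtain
\begin{equation}\label{transpose_identity}
\langle\chi(D)(f\circ\varphi^{-1}),g\rangle_{L^2}=\langle f\circ\varphi^{-1},g\rangle_{L^2}=\langle f,(g\circ\varphi)\det(d\varphi)\rangle_{L^2},\qquad\forall g\in H_\Xi^\infty(\R^n).
\end{equation}
Since $H_\Xi^\infty(\R^n)$ is a Hilbert space, \eqref{transpose_identity} characterizes $\chi(D)(f\circ\varphi^{-1})$ uniquely.

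\emph{Step 2: analyticity of $\varphi\mapsto S_\varphi$.} Put $S_\varphi g:=(g\circ\varphi)\det(d\varphi)$ for $g\in H_\Xi^\infty(\R^n)$. Writing $\varphi=\operatorname{id}+h$ with $h\in H^s(\R^n;\R^n)$, the entries of $d\varphi-\operatorname{id}$ lie in $H^{s-1}(\R^n)$, and $\det(d\varphi)-1$ is a polynomial (of degree $\leq n$, with no constant term) in these entries; since $s-1>n/2$, $H^{s-1}(\R^n)$ is a Banach algebra, so $\varphi\mapsto\det(d\varphi)$ is a polynomial -- hence real analytic with $R=\infty$ -- map $\Ds^s(\R^n)\to H^{s-1}(\R^n)$. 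By Lemma \ref{lemma_multiplication} the multiplication $H^{s-1}(\R^n)\times H^s(\R^n)\to H^{s-1}(\R^n)$ is continuous bilinear, so $a\mapsto[b\mapsto ab]$ is bounded linear $H^{s-1}(\R^n)\to\mathcal L\big(H^s(\R^n),H^{s-1}(\R^n)\big)$. Composing this with $\varphi\mapsto\det(d\varphi)$, and composing (via the continuity of operator composition) with the real analytic map $\Phi:\varphi\mapsto[g\mapsto g\circ\varphi]$, $\Ds^s(\R^n)\to\mathcal L\big(H_\Xi^\infty(\R^n),H^s(\R^n)\big)$ of radius $R=\infty$ constructed in the proof of Proposition \ref{prop_analytic_composition}, the calculus of real analytic maps between Banach spaces (Appendix \ref{appendix_analyticity}) shows that
\[
\mathcal S:\Ds^s(\R^n)\to\mathcal L\big(H_\Xi^\infty(\R^n),H^{s-1}(\R^n)\big),\qquad\varphi\mapsto S_\varphi=\big[\,g\mapsto(g\circ\varphi)\det(d\varphi)\,\big]
\]
is real analytic with $R=\infty$.

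\emph{Step 3: assembling.} I would finish by introducing two $\varphi$-independent bounded linear maps. First, $\Theta:\mathcal L\big(H_\Xi^\infty(\R^n),H^{s-1}(\R^n)\big)\to\mathcal L\big(H^{s'}(\R^n),\big(H_\Xi^\infty(\R^n)\big)'\big)$ given by $(\Theta A)(f):=\big[g\mapsto\langle f,Ag\rangle_{L^2}\big]$; it is well defined and bounded because $|\langle f,Ag\rangle_{L^2}|\leq\|f\|_{L^2}\|Ag\|_{L^2}\leq\|f\|_{s'}\,\|A\|\,\|g\|_{L^2}$, using $\|\cdot\|_{L^2}\leq\|\cdot\|_{s'}$ and $\|\cdot\|_{L^2}\leq\|\cdot\|_{s-1}$ (note $s-1>0$ since $s>n/2+1\geq2$, so these inclusions hold). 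Second, $\widehat J:B\mapsto J\circ B$, where $J:\big(H_\Xi^\infty(\R^n)\big)'\to H_\Xi^\infty(\R^n)$ is the Riesz isomorphism of the Hilbert space $\big(H_\Xi^\infty(\R^n),\langle\cdot,\cdot\rangle_{L^2}\big)$. By \eqref{transpose_identity}, for every $\varphi$ the element $\chi(D)(f\circ\varphi^{-1})\in H_\Xi^\infty(\R^n)$ is exactly the Riesz representative of $g\mapsto\langle f,S_\varphi g\rangle_{L^2}$, i.e.\ the operator $f\mapsto\chi(D)(f\circ\varphi^{-1})$ equals $\big(\widehat J\circ\Theta\big)(S_\varphi)=\big(\widehat J\circ\Theta\circ\mathcal S\big)(\varphi)$. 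Since $\widehat J\circ\Theta$ is bounded linear and $\mathcal S$ is real analytic with $R=\infty$ by Step 2, the map in the statement is real analytic with $R=\infty$. The main obstacle is conceptual -- recognizing that one must trade the (non-smooth) inversion for the transpose identity \eqref{transpose_identity}; the rest is bookkeeping with products and compositions of analytic Banach-space maps.
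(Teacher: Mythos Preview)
Your proof is correct and follows essentially the same approach as the paper: both hinge on the transpose identity \eqref{transpose_identity}, reducing the problem to the analyticity of $\varphi\mapsto (g\circ\varphi)\det(d\varphi)$ via Proposition \ref{prop_analytic_composition} and the polynomial nature of $\det(d\varphi)-1$. The only cosmetic difference is that the paper invokes the weak-analyticity criterion (Proposition \ref{prop_weak_analytic}) and then Lemma \ref{lemma_linear}, whereas you spell out the Riesz representation explicitly via the bounded linear maps $\Theta$ and $\widehat J$; one small slip to fix is that $\det(d\varphi)$ itself is not in $H^{s-1}(\R^n)$ (it tends to $1$ at infinity), so in Step 2 you should write $S_\varphi g=(g\circ\varphi)+(g\circ\varphi)\big(\det(d\varphi)-1\big)$ and carry the analytic map $\varphi\mapsto\det(d\varphi)-1\in H^{s-1}(\R^n)$.
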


\begin{proof}
 First we consider
\begin{equation}\label{joint_composition}
 H^{s'}(\R^n) \times \Ds^s(\R^n) \to H_\Xi^\infty(\R^n),\quad (f,\varphi) \mapsto \chi(D)(f \circ \varphi^{-1})
\end{equation}
and show that it is weakly analytic -- see Remark \ref{rem_weakly_analytic}. Choose any $g \in H_\Xi^\infty(\R^n)$ and consider
\begin{equation}\label{analytic_pairing}
 \langle \chi(D) (f \circ \varphi^{-1}),g \rangle_{L^2}
\end{equation}
Note that $\chi(D)g=g$. As by Lemma \ref{properties_chi},
\[
 \langle \chi(D)(f\circ \varphi^{-1}),g \rangle_{L^2} = \langle f \circ \varphi^{-1}, \chi(D) g \rangle_{L^2}
\]
it then follows after a change of variable of integration $y=\varphi^{-1}(x)$ that
\begin{equation}\label{weak_integral}
 \int_{\R^n} f\circ \varphi^{-1} \cdot g \,dx = \int_{\R^n}f \cdot g \circ \varphi \cdot \det(d_y \varphi)\, dy.
\end{equation}
By Proposition \ref{prop_analytic_composition} it follows that
\[
 \Ds^s(\R^n) \to H^{s'}(\R^n),\quad \varphi \mapsto g \circ \varphi
\]
is real analytic with convergence radius $R=\infty$. In addition $\Ds^s(\R^n) \to H^{s-1}(\R^n)$, $\varphi \mapsto \det(d_x \varphi) -1$ is also real analytic with radius of convergence $R=\infty$, since it is a polynomial. Altogether one then concludes that the expression on the right-hand side of \eqref{weak_integral} is real analytic in $(f,\varphi)$ with radius of convergence $R=\infty$. As $g$ was arbitrary, we conclude from Proposition \ref{prop_weak_analytic} that \eqref{joint_composition} is real analytic with radius of convergence $R=\infty$. As the map \eqref{joint_composition} is linear in $f$ the claim of the corollary follows from Lemma \ref{lemma_linear}. 
\end{proof}

\noindent
Here and in the following for any real Hilbert spaces $X$ and $Y$ we denote by $L^2(X;Y)$ the space of continuous bilinear forms on $X$ with values in $Y$. For $B$ defined in \eqref{B_formula} let $\tilde B(f,g)=\frac{1}{4}\big(B(f+g)-B(f-g)\big)$. Note that $\tilde B$ is in $L^2\big(H^s(\R^n;\R^n);H^{s+1}(\R^n)\big)$. Similarly, denote by $\tilde B_1$, $\tilde B_2$ the symmetric bilinear forms corresponding to $B_1$ resp. $B_2$. Note that $\tilde B=\tilde B_1 + \tilde B_2$. Now we state the main proposition of this section.

\begin{Prop}\label{prop_analytic_spray}
The map
\begin{eqnarray*}
 \Gamma: \Ds^s(\R^n) &\to& L^2\big(H^s(\R^n;\R^n);H^s(\R^n;\R^n)\big)\\
 \varphi &\mapsto& \left[ (f,g) \mapsto R_\varphi \nabla \tilde B (f \circ \varphi^{-1},g \circ \varphi^{-1}) \right]
\end{eqnarray*}
is real analytic.
\end{Prop}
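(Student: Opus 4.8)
The plan is to use the decomposition $\tilde B=\tilde B_1+\tilde B_2$ from Section~\ref{section_eulerian_formulation} and to prove real analyticity of the two corresponding summands of $\Gamma$ separately; since real analyticity is stable under sums, under composition, and under postcomposition with continuous multilinear maps (Appendix~\ref{appendix_analyticity}), this suffices. Explicitly, it is enough to show that for $j=1,2$ the map
\[
 \Gamma^{(j)}:\Ds^s(\R^n)\to L^2\big(H^s(\R^n;\R^n);H^s(\R^n;\R^n)\big),\qquad
 \varphi\mapsto\Big[(f,g)\mapsto R_\varphi\,\nabla\tilde B_j\big(f\circ\varphi^{-1},g\circ\varphi^{-1}\big)\Big],
\]
is real analytic. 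Two facts are used throughout: pointwise multiplication $H^{s'}(\R^n)\times H^{s'}(\R^n)\to H^{s'}(\R^n)$ is continuous bilinear for $s'>n/2$ (Lemma~\ref{lemma_multiplication}), and $(f_i\circ\varphi^{-1})(g_k\circ\varphi^{-1})=(f_ig_k)\circ\varphi^{-1}$, which lets one multiply first and compose with $\varphi^{-1}$ afterwards.

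For $\Gamma^{(1)}$ the cut-off $\chi(D)$ does the work. Since $P_{ik}=\chi(D)\Delta^{-1}\partial_i\partial_k$ and Fourier multipliers commute, polarization gives
\[
 \tilde B_1\big(f\circ\varphi^{-1},g\circ\varphi^{-1}\big)=\sum_{i,k}\Delta^{-1}\partial_i\partial_k\,\chi(D)\Big(\tfrac12(f_ig_k+f_kg_i)\circ\varphi^{-1}\Big).
\]
By Corollary~\ref{coro_composition_inverse} (with $s'=s$) the map $\varphi\mapsto\big[h\mapsto\chi(D)(h\circ\varphi^{-1})\big]$ is real analytic from $\Ds^s(\R^n)$ into $\mathcal L\big(H^s(\R^n),H_\Xi^\infty(\R^n)\big)$ with radius of convergence $R=\infty$. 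Precomposing with the continuous bilinear map $(f,g)\mapsto\tfrac12(f_ig_k+f_kg_i)$ and postcomposing with $\Delta^{-1}\partial_i\partial_k$ and $\nabla$ — both of which map $H_\Xi^\infty(\R^n)$ boundedly into itself, their Fourier multipliers being bounded and supported in $\Xi$, so one may estimate in the $L^2$-norm and use \eqref{hb_estimate} — and summing over $i,k$, one obtains that $\varphi\mapsto\big[(f,g)\mapsto\nabla\tilde B_1(f\circ\varphi^{-1},g\circ\varphi^{-1})\big]$ is real analytic into $L^2\big(H^s(\R^n;\R^n);H_\Xi^\infty(\R^n;\R^n)\big)$. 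Finally one applies $R_\varphi$: by Proposition~\ref{prop_analytic_composition} the map $\varphi\mapsto\big[u\mapsto u\circ\varphi\big]$ is real analytic from $\Ds^s(\R^n)$ into $\mathcal L\big(H_\Xi^\infty(\R^n;\R^n),H^s(\R^n;\R^n)\big)$, and since $(A,\beta)\mapsto A\circ\beta$ is continuous bilinear from $\mathcal L\big(H_\Xi^\infty,H^s\big)\times L^2\big(H^s;H_\Xi^\infty\big)$ into $L^2\big(H^s;H^s\big)$, the map $\Gamma^{(1)}$ is real analytic.

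For $\Gamma^{(2)}$ the inner expression is genuinely $H^s$-valued rather than band-limited, so Proposition~\ref{prop_analytic_composition} does not apply directly, and this is where the actual work lies. First I would eliminate $\varphi^{-1}$ from inside the derivatives by the chain rule: for $h\in H^s(\R^n)$ one has $\partial_i(h\circ\varphi^{-1})=\big(\sum_\ell(\partial_\ell h)\,[(d\varphi)^{-1}]_{\ell i}\big)\circ\varphi^{-1}$, and by Cramer's rule $[(d\varphi)^{-1}]_{\ell i}$ is a cofactor polynomial in the entries of $d\varphi$ divided by $\det d\varphi$. Since $\det d_x\varphi>0$ for all $x$ and $d_x\varphi\to\operatorname{id}$ as $|x|\to\infty$ by \eqref{sobolev_imbedding}, $\det d\varphi$ is bounded away from $0$, so $[(d\varphi)^{-1}]_{\ell i}-\delta_{\ell i}\in H^{s-1}(\R^n)$ depends real analytically on $\varphi$ (a polynomial in $d\varphi$ composed with the real analytic map $a\mapsto(1+a)^{-1}$ on the relevant open subset of the Banach algebra $H^{s-1}(\R^n)$). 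Hence, writing $\tilde B_2(v,w)=\sum_{i,k}\Delta^{-1}(1-\chi(D))\big(\tfrac12(\partial_iv_k\partial_kw_i+\partial_iw_k\partial_kv_i)\big)$ and inserting $v=f\circ\varphi^{-1}$, $w=g\circ\varphi^{-1}$, each product $\partial_iv_k\partial_kw_i$ equals $c_{ik}\circ\varphi^{-1}$ with
\[
 c_{ik}=c_{ik}(f,g,\varphi)=\Big(\sum_\ell\partial_\ell f_k\,[(d\varphi)^{-1}]_{\ell i}\Big)\Big(\sum_m\partial_m g_i\,[(d\varphi)^{-1}]_{mk}\Big)\in H^{s-1}(\R^n),
\]
which is bilinear in $(f,g)$ and real analytic in $\varphi$ by the Banach algebra property of $H^{s-1}(\R^n)$. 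Thus $\nabla\tilde B_2(f\circ\varphi^{-1},g\circ\varphi^{-1})=\sum_{i,k}T(c_{ik}\circ\varphi^{-1})$ with $T:=\nabla\Delta^{-1}(1-\chi(D))$, and it remains to show that $\varphi\mapsto R_\varphi\,T\,R_{\varphi^{-1}}\in\mathcal L\big(H^{s-1}(\R^n),H^s(\R^n)\big)$ is real analytic.

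This last point is the main obstacle. What makes it tractable is that $\Delta^{-1}(1-\chi(D))$ gains two derivatives, $H^{s-1}\to H^{s+1}$ by \eqref{two_regular}, so that $T:H^{s-1}\to H^s$ is bounded and the one derivative lost in conjugating by $\varphi\in\Ds^s(\R^n)$ is affordable; without this smoothing the conjugation would be hopeless, as composition with a mere $H^s$-diffeomorphism is nowhere differentiable (Theorem~\ref{lemma_composition_not_uniform}). To prove the analyticity of $\varphi\mapsto R_\varphi T R_{\varphi^{-1}}$ I would argue by weak analyticity, exactly as in the proof of Corollary~\ref{coro_composition_inverse}: by Lemma~\ref{lemma_linear} and Proposition~\ref{prop_weak_analytic} it suffices to check, for each fixed $c\in H^{s-1}(\R^n)$ and each $\psi\in C_c^\infty(\R^n)$ (a dense subspace of the dual of $H^s(\R^n)$), that $\varphi\mapsto\langle R_\varphi T(c\circ\varphi^{-1}),\psi\rangle_{L^2}$ is real analytic with $R=\infty$. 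Writing $T$ through its convolution kernel and splitting that kernel into a homogeneous Riesz-type part plus an entire function of exponential type (the Paley--Wiener mechanism underlying Proposition~\ref{prop_analytic_composition}), and then changing variables $y=\varphi(x)$ to absorb all occurrences of $\varphi^{-1}$, this scalar pairing becomes an integral whose integrand is assembled from $c$, $\psi$, $d\varphi$, $(d\varphi)^{-1}$, $\det d\varphi$ and $\varphi(x)-\varphi(z)$ — each of which is polynomial or real analytic in $\varphi$ with infinite radius of convergence — whence the claim follows. Setting $\Gamma=\Gamma^{(1)}+\Gamma^{(2)}$ then completes the proof. The $\Gamma^{(1)}$-part and the Cramer's-rule bookkeeping are routine given the tools already developed; the only genuinely delicate point is the analyticity of the conjugated operator $R_\varphi T R_{\varphi^{-1}}$, and this is where I expect the bulk of the effort to go.
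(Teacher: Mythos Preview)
Your treatment of $\Gamma^{(1)}$ is the same as the paper's Lemma~\ref{lemma_b1_analytic}. The divergence is in $\Gamma^{(2)}$, where your proposal has a real gap.

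The problem is the Riesz part of the kernel. After your change of variables the pairing contains the singular integral
\[
 \int\!\!\int \psi(x)\,K\big(\varphi(x)-\varphi(z)\big)\,c(z)\,\det d\varphi(z)\,dz\,dx,\qquad K(y)=C_n\,y/|y|^n,
\]
and $K$ is \emph{not} analytic at the origin, so the integrand is not ``assembled from polynomial or real analytic'' pieces along the diagonal $x=z$. Showing that a singular integral of this type depends analytically on the diffeomorphism $\varphi$ is a genuine theorem in its own right (one has to differentiate through the singularity, control principal-value limits uniformly in $\varphi$, and then sum a Taylor series in $H^s$); nothing in the tools you cite (Propositions~\ref{prop_analytic_composition}, \ref{prop_weak_analytic}, Corollary~\ref{coro_composition_inverse}) addresses this. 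A secondary issue is that Proposition~\ref{prop_weak_analytic} requires testing against \emph{all} $y\in Y$, not just a dense set; this is fixable via local boundedness, but you would have to supply that boundedness separately.

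The paper sidesteps the singular-integral analysis entirely by an algebraic trick (Lemma~\ref{lemma_b2_analytic}): write
\[
 \Delta^{-1}\big(1-\chi(D)\big)=-\chi(D)+A^{-1},\qquad A:=\chi(D)+\Delta\big(1-\chi(D)\big):H^s\to H^{s-2}.
\]
The point is that $A$ is a second-order differential operator plus band-limited pieces, so $R_\varphi A R_\varphi^{-1}$ is visibly analytic in $\varphi$: the $\chi(D)$ contributions are handled by Corollary~\ref{coro_composition_inverse} and Proposition~\ref{prop_analytic_composition}, and $R_\varphi\Delta R_\varphi^{-1}$ by exactly the chain-rule/Cramer computation you already wrote down. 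Since operator inversion $GL(H^s,H^{s-2})\to GL(H^{s-2},H^s)$ is analytic (Neumann series), $\varphi\mapsto R_\varphi A^{-1}R_\varphi^{-1}$ is analytic, and the rest is bookkeeping. This replaces your hard step --- analyticity of a conjugated Riesz operator --- by the soft fact that the \emph{inverse} of something analytic is analytic.
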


\noindent
We split the proof into two lemmas.

\begin{Lemma}\label{lemma_b1_analytic}
The map
\begin{eqnarray*}
 \Gamma_1:\Ds^s(\R^n) &\to& L^2\big(H^s(\R^n;\R^n);H^s(\R^n;\R^n)\big) \\
 \varphi &\mapsto& \left[(f,g) \mapsto R_\varphi \nabla \tilde B_1(f \circ \varphi^{-1},g \circ \varphi^{-1}) \right]
\end{eqnarray*}
is real analytic.
\end{Lemma}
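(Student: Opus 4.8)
The plan is to exploit that the Fourier cut-off $\chi(D)$ occurring in $B_1$ forces $\tilde B_1(\cdot,\cdot)$, and with it $\nabla\tilde B_1(\cdot,\cdot)$, to take values in the space $H_\Xi^\infty(\R^n)$ of band-limited functions of Lemma \ref{lemma_hb_space}, on which the composition maps of Proposition \ref{prop_analytic_composition} and Corollary \ref{coro_composition_inverse} are available. Everything else is then the standard real-analytic calculus in Banach spaces recalled in Appendix \ref{appendix_analyticity}.

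First I would record the algebraic form of $\tilde B_1$. Since $P_{ik}=P_{ki}=\chi(D)\Delta^{-1}\partial_i\partial_k$, polarizing $B_1$ gives $\tilde B_1(f,g)=\sum_{i,k=1}^n P_{ik}(f_ig_k)$. Hence, writing $u=f\circ\varphi^{-1}$, $w=g\circ\varphi^{-1}$, using $u_iw_k=(f_ig_k)\circ\varphi^{-1}$, and using that $\chi(D)$, $\nabla$, $\Delta^{-1}$, $\partial_i\partial_k$ all commute,
\[
 \nabla\tilde B_1(f\circ\varphi^{-1},g\circ\varphi^{-1})=\sum_{i,k=1}^n L_{ik}\Big(\chi(D)\big((f_ig_k)\circ\varphi^{-1}\big)\Big),\qquad L_{ik}:=\nabla\Delta^{-1}\partial_i\partial_k.
\]
Here $\chi(D)\big((f_ig_k)\circ\varphi^{-1}\big)\in H_\Xi^\infty(\R^n)$ by definition of $\chi(D)$, and $L_{ik}$, viewed on $H_\Xi^\infty(\R^n)$, is a bounded linear map $H_\Xi^\infty(\R^n)\to H_\Xi^\infty(\R^n;\R^n)$: its $j$-th component is the Fourier multiplier $i\,\xi_j\xi_i\xi_k/|\xi|^2$, which is supported in $\Xi$ and bounded there since $|\xi_j|\le|\xi|\le1$ and $|\xi_i\xi_k|\le|\xi|^2$ on $\Xi$. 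In particular $\nabla\tilde B_1(f\circ\varphi^{-1},g\circ\varphi^{-1})\in H_\Xi^\infty(\R^n;\R^n)$, so that applying $R_\varphi$ to it is governed by Proposition \ref{prop_analytic_composition}.

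Then I would assemble $\Gamma_1$ as a finite sum of compositions of real-analytic maps. Fix $i,k$. The bilinear product $m_{ik}:H^s(\R^n;\R^n)\times H^s(\R^n;\R^n)\to H^s(\R^n)$, $(f,g)\mapsto f_ig_k$, is continuous by the Banach algebra property of $H^s(\R^n)$. By Corollary \ref{coro_composition_inverse} (with $s'=s$) the map $\varphi\mapsto\big[h\mapsto\chi(D)(h\circ\varphi^{-1})\big]$ is real analytic from $\Ds^s(\R^n)$ to $\mathcal L\big(H^s(\R^n),H_\Xi^\infty(\R^n)\big)$; post-composing with the fixed bounded operator $L_{ik}$ and pre-composing with the fixed continuous bilinear map $m_{ik}$ preserves analyticity, so $\varphi\mapsto\big[(f,g)\mapsto L_{ik}\chi(D)\big((f_ig_k)\circ\varphi^{-1}\big)\big]$ is real analytic into $L^2\big(H^s(\R^n;\R^n);H_\Xi^\infty(\R^n;\R^n)\big)$. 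By Proposition \ref{prop_analytic_composition} together with Lemma \ref{lemma_linear} (the vector-valued version following componentwise), $\varphi\mapsto R_\varphi$ is real analytic from $\Ds^s(\R^n)$ to $\mathcal L\big(H_\Xi^\infty(\R^n;\R^n),H^s(\R^n;\R^n)\big)$. Since, by the identity above,
\[
 \Gamma_1(\varphi)(f,g)=\sum_{i,k=1}^n R_\varphi\,L_{ik}\chi(D)\big((f_ig_k)\circ\varphi^{-1}\big),
\]
and operator composition $\mathcal L\big(H_\Xi^\infty(\R^n;\R^n),H^s(\R^n;\R^n)\big)\times L^2\big(H^s(\R^n;\R^n);H_\Xi^\infty(\R^n;\R^n)\big)\to L^2\big(H^s(\R^n;\R^n);H^s(\R^n;\R^n)\big)$ is continuous bilinear, and composing real-analytic maps with a continuous bilinear map gives a real-analytic map, each summand is real analytic, and hence so is $\Gamma_1$ as a finite sum.

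The only substantive point — and the one I would regard as the heart of the matter — is the first observation: the cut-off $\chi(D)$ pushes the whole expression into $H_\Xi^\infty$, so that the awkward right-translation $h\mapsto h\circ\varphi^{-1}$ never has to be analyzed directly in $H^s(\R^n)$ but is absorbed into the already-established analyticity statements for band-limited functions. After that, the argument is bookkeeping with the real-analytic calculus.
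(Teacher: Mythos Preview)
Your argument is correct and follows essentially the same route as the paper: both proofs polarize $B_1$ to $\tilde B_1(f,g)=\sum_{i,k}P_{ik}(f_ig_k)$, use Corollary~\ref{coro_composition_inverse} for $\varphi\mapsto\big[h\mapsto\chi(D)(h\circ\varphi^{-1})\big]$ applied to $h=f_ig_k\in H^s$, post-compose with the bounded operator $\nabla\Delta^{-1}\partial_i\partial_k$ on $H_\Xi^\infty$, and finish with Proposition~\ref{prop_analytic_composition} for $R_\varphi$. Your write-up is somewhat more explicit about the bilinear-composition bookkeeping, but the key idea---that the cut-off $\chi(D)$ pushes everything into $H_\Xi^\infty$ where the composition results are available---is exactly the one the paper uses.
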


\begin{proof}
 Note that for $f,g \in H^s(\R^n;\R^n)$
 \[
  \tilde B_1(f,g) = \sum_{i,k=1}^n \chi(D) \Delta^{-1} \partial_i \partial_k (f_i g_k)=\sum_{i,k=1}^n \Delta^{-1} \partial_i \partial_k \chi(D) (f_i g_k)
 \]
where $\Delta^{-1} \partial_i \partial_k$ is the Fourier multiplier operator with symbol $|\xi|^{-2} \xi_i \xi_k$. It follows from Corollary \ref{coro_composition_inverse} that
\[
 \Ds^s(\R^n) \to L^2\big(H^s(\R^n;\R^n);H_\Xi^\infty(\R^n)\big),\quad \varphi \mapsto \left[ (f,g) \mapsto \chi(D) R_\varphi^{-1} (f_i g_k) \right]
\]
is real analytic for any $1 \leq i,k \leq n$. Since 
\[
 \nabla \Delta^{-1} \partial_i \partial_k: H_\Xi^\infty(\R^n) \to H_\Xi^\infty(\R^n;\R^n)
\]
is a continuous linear map one sees that
\[
 \Ds^s(\R^n) \to L^2\big(H^s(\R^n;\R^n);H_\Xi^\infty(\R^n;\R^n)\big), \quad \varphi \mapsto \left[ (f,g) \mapsto \nabla \tilde B_1(f \circ \varphi^{-1},g \circ \varphi^{-1})\right]
\]
is real analytic. Finally we get by Proposition \ref{prop_analytic_composition} that
\[
 \Ds^s(\R^n) \to L^2\big(H^s(\R^n;\R^n);H^s(\R^n;\R^n)\big), \quad \varphi \mapsto \left[ (f,g) \mapsto R_\varphi \nabla \tilde B_1(f \circ \varphi^{-1},g \circ \varphi^{-1})\right]
\]
is real analytic as claimed.
\end{proof}

\begin{Lemma}\label{lemma_b2_analytic}
The map
\begin{eqnarray*}
\Gamma_2:\Ds^s(\R^n) &\to& L^2\big(H^s(\R^n;\R^n);H^s(\R^n;\R^n)\big) \\
\varphi &\mapsto & \left[ (f,g) \mapsto R_\varphi \nabla \tilde B_2(f \circ \varphi^{-1},g \circ \varphi^{-1}) \right]
\end{eqnarray*}
is real analytic.
\end{Lemma}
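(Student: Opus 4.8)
\emph{Proof plan.} The plan is to follow the pattern of Lemma \ref{lemma_b1_analytic}: strip off everything obtainable by composing the analytic building blocks already at hand (differentiation, matrix inversion, multiplication in the Banach algebra $H^{s-1}(\R^n)$, and the analytic composition maps of Proposition \ref{prop_analytic_composition} and Corollary \ref{coro_composition_inverse}), and isolate the one genuinely new ingredient, namely the real analyticity of the conjugation $\varphi\mapsto R_\varphi\,T\,R_\varphi^{-1}$ for the fixed, order $-1$ Fourier multiplier $T:=\nabla\Delta^{-1}(1-\chi(D))$.

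\emph{Step 1 (reduction).} Since $\tilde B_2(v,w)=\Delta^{-1}(1-\chi(D))\operatorname{tr}(dv\,dw)$ (the Jacobians being read off from $\sum_{i,k}\partial_i v_k\partial_k w_i=\operatorname{tr}(dv\,dw)$) and, by the chain rule, $d(f\circ\varphi^{-1})=\big(df\cdot(d\varphi)^{-1}\big)\circ\varphi^{-1}$, one gets for $v=f\circ\varphi^{-1}$, $w=g\circ\varphi^{-1}$ that $\operatorname{tr}(dv\,dw)=Q(f,g;\varphi)\circ\varphi^{-1}$, where
\[
Q(f,g;\varphi):=\operatorname{tr}\big(df\cdot(d\varphi)^{-1}\cdot dg\cdot(d\varphi)^{-1}\big)\in H^{s-1}(\R^n)
\]
(symmetric in $(f,g)$ by cyclicity of the trace). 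Hence $\Gamma_2(\varphi)(f,g)=R_\varphi\,T\,R_\varphi^{-1}\,Q(f,g;\varphi)$. I would then check that $\varphi\mapsto\big[(f,g)\mapsto Q(f,g;\varphi)\big]$ is real analytic from $\Ds^s(\R^n)$ into $L^2\big(H^s(\R^n;\R^n);H^{s-1}(\R^n)\big)$: $f\mapsto df$ is bounded linear $H^s(\R^n;\R^n)\to H^{s-1}(\R^n;\R^{n\times n})$; $\varphi\mapsto(d\varphi)^{-1}-\mathrm{Id}$ is real analytic $\Ds^s(\R^n)\to H^{s-1}(\R^n;\R^{n\times n})$, locally a Neumann series in the Banach algebra $H^{s-1}(\R^n;\R^{n\times n})$ (here $s-1>n/2$, and $\det(d_x\varphi)>0$ together with $d\varphi-\mathrm{Id}\in H^{s-1}\hookrightarrow C^0_0$ gives a positive lower bound on $\det d\varphi$); and products of these factors stay in $H^{s-1}$ by Lemma \ref{lemma_multiplication}. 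Granting the conjugation claim, $\Gamma_2$ is then real analytic as the composition of $\varphi\mapsto Q(\cdot,\cdot;\varphi)$ with $\varphi\mapsto R_\varphi T R_\varphi^{-1}\in\mathcal L\big(H^{s-1}(\R^n),H^s(\R^n;\R^n)\big)$. (If preferred, one may use $R_\varphi\nabla W=(d\varphi)^{-\mathsf T}\nabla(W\circ\varphi)$ to move the gradient out of $T$ and instead conjugate the scalar order $-2$ operator $\Delta^{-1}(1-\chi(D))$; I will work with $T$ directly.)

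\emph{Step 2 (the conjugation).} The operator $T$ is convolution against a kernel $\mathcal K$ that is real analytic on $\R^n\setminus\{0\}$: it is the sum of the homogeneous (degree $-(n-1)$) kernel of $\nabla\Delta^{-1}$ and of $\nabla\mathcal F^{-1}\big(|\xi|^{-2}\chi(\xi)\big)$, the latter being entire since $\xi|\xi|^{-2}\chi(\xi)$ is compactly supported and, $|\xi|^{-1}$ being locally integrable in $\R^n$ for $n\ge2$, lies in $L^1(\R^n)$; one has $|\partial^\alpha\mathcal K(w)|\le C^{|\alpha|}|\alpha|!\,|w|^{-(n-1)-|\alpha|}$ for $0<|w|\le1$, with the far field decaying only like $|w|^{-(n-1)}$. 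Now
\[
\big(R_\varphi T R_\varphi^{-1}h\big)(x)=\int_{\R^n}\mathcal K\big(\varphi(x)-\varphi(z)\big)\,\det(d_z\varphi)\,h(z)\,dz .
\]
Writing $\varphi=\operatorname{id}+g$ with $g\in H^s(\R^n;\R^n)\hookrightarrow C^1_0$, one has $|g(x)-g(z)|\le\|dg\|_{L^\infty}|x-z|$, so when $\|g\|_s$ is small the displacement $\varphi(x)-\varphi(z)=(x-z)+(g(x)-g(z))$ stays well inside the disc of analyticity of $\mathcal K$ about $x-z$, whence
\[
\mathcal K\big(\varphi(x)-\varphi(z)\big)=\sum_{\alpha}\tfrac{1}{\alpha!}\,\partial^\alpha\mathcal K(x-z)\,\big(g(x)-g(z)\big)^\alpha ,
\]
and — this is the crux — the factor $\big(g(x)-g(z)\big)^\alpha$, of size $\le\|dg\|_{L^\infty}^{|\alpha|}|x-z|^{|\alpha|}$, exactly cancels the extra singularity of $\partial^\alpha\mathcal K$, so each term again has the kernel size of an order $-1$ operator. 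Expanding $\big(g(x)-g(z)\big)^\alpha=\sum_{\beta+\gamma=\alpha}\binom{\alpha}{\beta}(-1)^{|\gamma|}g(x)^\beta g(z)^\gamma$ (with $g^\beta,g^\gamma\in H^s(\R^n)$ and $\|g^\beta\|_s\le C^{|\beta|}\|g\|_s^{|\beta|}$ by the Banach-algebra property) one rewrites $R_\varphi T R_\varphi^{-1}$ as a series of operators of the form ``multiplication by $g^\beta$'' $\circ$ ``convolution by $\partial^\alpha\mathcal K$'' $\circ$ ``multiplication by $g^\gamma\det d\varphi$'', which should be a norm-convergent power series in $g=\varphi-\operatorname{id}$ with multilinear terms bounded $H^{s-1}\to H^s$, hence a real analytic map (Appendix \ref{appendix_analyticity}); one then promotes analyticity from a neighborhood of $\operatorname{id}$ to all of $\Ds^s(\R^n)$ by the right-translation argument used throughout.

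\emph{Main obstacle.} The hard part is the last point of Step 2: producing uniform $\mathcal L(H^{s-1},H^s)$ bounds for the terms of this expansion and summing them. Since the far field of $\mathcal K$ (and of each $\partial^\alpha\mathcal K$) decays only like $|w|^{-(n-1)}$, a naive Schur test fails and one must combine the smoothing coming from the factor $\big(g(x)-g(z)\big)^\alpha$ with the $L^2$-boundedness of the underlying order $-1$ multipliers, splitting each kernel into a compactly supported near-diagonal piece (order $-1$, controlled by Riesz-potential estimates) and a smooth far piece. A cleaner route, and the one I would ultimately take, is to prove once and for all the statement ``conjugation by a $\Ds^s(\R^n)$-diffeomorphism of a Fourier multiplier smoothing of order $\ge1$ is real analytic on $\Ds^s(\R^n)$'' and then deduce Lemma \ref{lemma_b2_analytic} from it, applied to $\Delta^{-1}(1-\chi(D))$, together with Step 1 and the identity $R_\varphi\nabla W=(d\varphi)^{-\mathsf T}\nabla(W\circ\varphi)$.
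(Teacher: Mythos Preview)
Your Step~1 reduction is correct and matches the paper in spirit: both proofs reduce to showing that the conjugation $\varphi\mapsto R_\varphi\,\Delta^{-1}(1-\chi(D))\,R_\varphi^{-1}$ is real analytic as a map $\Ds^s(\R^n)\to\mathcal L(H^{s-2},H^s)$ (the paper keeps $\nabla$ separate and handles it by the chain-rule identity $R_\varphi\partial_k R_\varphi^{-1}f=\sum_j(\partial_jf)\,C_{jk}$, which is your $(d\varphi)^{-\mathsf T}\nabla$ observation).

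Your Step~2, however, has the gap you yourself flag, and it is a real one. Once you expand $(g(x)-g(z))^\alpha=\sum_{\beta+\gamma=\alpha}\binom{\alpha}{\beta}(-1)^{|\gamma|}g(x)^\beta g(z)^\gamma$, the individual summands are \emph{not} bounded $H^{s-1}\to H^s$: convolution with $\partial^\alpha\mathcal K$ by itself is an operator of order $|\alpha|-1$, and multiplication by $g^\beta$, $g^\gamma$ does nothing to compensate. The smoothing only survives in the full alternating sum, so you cannot estimate term by term in $\mathcal L(H^{s-1},H^s)$ and then sum; you would need a commutator/paradifferential argument that keeps the difference structure intact, and that is substantially more work than you indicate.

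The paper sidesteps the kernel analysis entirely with a one-line algebraic trick: write
\[
\Delta^{-1}\big(1-\chi(D)\big)=-\chi(D)+A^{-1},\qquad A:=\chi(D)+\Delta\big(1-\chi(D)\big),
\]
so that $A:H^s\to H^{s-2}$ is invertible. Now $R_\varphi A R_\varphi^{-1}$ is a sum of $R_\varphi\chi(D)R_\varphi^{-1}$ (analytic by Proposition~\ref{prop_analytic_composition} and Corollary~\ref{coro_composition_inverse}) and $R_\varphi\Delta R_\varphi^{-1}\cdot R_\varphi(1-\chi(D))R_\varphi^{-1}$; the conjugated Laplacian is a second-order differential operator with coefficients that are polynomials in the entries of $(d\varphi)^{-1}$, hence analytic in $\varphi$ by the Banach-algebra property and Lemma~\ref{analytic_det}. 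Thus $\varphi\mapsto R_\varphi A R_\varphi^{-1}\in\mathcal L(H^s,H^{s-2})$ is real analytic, and since operator inversion $\mathrm{inv}:GL(H^s,H^{s-2})\to GL(H^{s-2},H^s)$ is analytic (Neumann series), so is $\varphi\mapsto R_\varphi A^{-1}R_\varphi^{-1}=(R_\varphi A R_\varphi^{-1})^{-1}$. This delivers exactly the ``conjugation of an order $\le -1$ smoothing operator is analytic'' statement you wanted, with no kernel expansion and no delicate cancellation.
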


\begin{proof}
Note that for $f,g \in H^s(\R^n;\R^n)$
\[
 \tilde B_2(f,g) = \sum_{i,k=1}^n \Delta^{-1} \big(1-\chi(D)\big) (\partial_i f_k \partial_k g_i).
\]
It turns out to be convenient to write $\Delta^{-1}\big(1-\chi(D)\big)$ as $-\chi(D) + \left(\chi(D)+\Delta^{-1}\big(1-\chi(D)\big)\right)$. One can show that $\chi(D)+\Delta^{-1}\big(1-\chi(D)\big)$ maps $H^{s-2}(\R^n)$ to $H^s(\R^n)$ -- cf \eqref{two_regular}. Then
\[
 A:=\chi(D) + \Delta \big(1-\chi(D)\big)
\]
is its inverse and hence $\Delta^{-1}\big(1-\chi(D)\big)=-\chi(D)+A^{-1}$
First we show that
\[
\Ds^s(\R^n) \to \mathcal L\big(H^s(\R^n),H^{s-2}(\R^n)\big),\quad \varphi \mapsto \left[ f \mapsto R_\varphi A (f \circ \varphi^{-1}) \right]
\]
is real analytic. To this end note that by Corollary \ref{coro_composition_inverse} and Proposition \ref{prop_analytic_composition}
\[
 \Ds^s(\R^n) \to \mathcal L\big(H^s(\R^n),H^{s-2}(\R^n)\big),\quad \varphi \mapsto \left[ f \mapsto R_\varphi \chi(D) (f \circ \varphi^{-1}) \right]
\]
is real analytic. To handle the second summand in $A$ we write 
\[
 R_\varphi \Delta \big(1-\chi(D)\big) R_\varphi^{-1} = (R_\varphi \Delta R_\varphi^{-1}) \left(R_\varphi \big(1-\chi(D)\big) R_\varphi^{-1}\right).
\]
As $R_\varphi \big(1-\chi(D)\big) R_\varphi^{-1} = \operatorname{Id}-R_\varphi \chi(D) R_\varphi^{-1}$ one concludes from above that
\[
 \Ds^s(\R^n) \to \mathcal L\big(H^s(\R^n),H^s(\R^n)\big),\quad \varphi \mapsto \left[ f \mapsto R_\varphi \big(1-\chi(D)\big) (f \circ \varphi^{-1})\right]
\]
is real analytic. Next we show that
\[
 \Ds^s(\R^n) \to \mathcal L\big(H^s(\R^n),H^{s-2}(\R^n)\big),\quad \varphi \mapsto \left[ f \mapsto R_\varphi \Delta (f \circ \varphi^{-1})\right]
\]
is also real analytic. Indeed we have for $1 \leq k \leq n$
\[
 R_\varphi \partial_k (f \circ \varphi^{-1}) = R_\varphi \sum_{j=1}^n \partial_j f \circ \varphi^{-1} \cdot C_{jk} \circ \varphi^{-1} = \sum_{j=1}^n \partial_j f \cdot C_{jk} 
\]
where $C$ is the inverse of the Jacobian $d\varphi$, $C=(d\varphi)^{-1}$. Note that $C$ is of the form
\[
 C=\frac{1}{\det(d\varphi)} (c_{st})_{1 \leq s,t \leq n}
\]
where $c_{st}$ are polynomial expressions in terms of the entries in $(d\varphi)$. As $H^{s-1}(\R^n)$ is a Banach algebra, $c_{st}$ are analytic expressions in $\varphi$. By Lemma \ref{analytic_det} division by $\det(d\varphi)$ is an analytic operation. Thus by Lemma \ref{lemma_multiplication},
\[
 \Ds^s(\R^n) \to \mathcal L\big(H^{s'}(\R^n),H^{s'-1}(\R^n)\big),\quad \varphi \mapsto \left[ f \mapsto R_\varphi \partial_k (f \circ \varphi^{-1})\right]
\]
is real analytic for any $1 \leq s' \leq s$. Writing for any $1 \leq \ell,k \leq n$, $R_\varphi \partial_\ell \partial_k (f \circ \varphi^{-1})$ as $(R_\varphi \partial_\ell R_\varphi^{-1}) \big(R_\varphi \partial_k (f \circ \varphi^{-1})\big)$ one iterates the argument above to conclude that
\begin{equation}\label{conjugation}
 \Ds^s(\R^n) \to \mathcal L\big(H^s(\R^n),H^{s-2}(\R^n)\big),\quad \varphi \mapsto \left[ f \mapsto R_\varphi \partial_\ell \partial_k (f \circ \varphi^{-1}) \right]
\end{equation}
is real analytic. Thus we see that
\[
 \Ds^s(\R^n) \to \mathcal L\big(H^s(\R^n),H^{s-2}(\R^n)\big),\quad \varphi \mapsto [f \mapsto R_\varphi \Delta (f \circ \varphi^{-1})]
\]
is real analytic. Summarizing the results so far, we have shown that $\varphi \mapsto R_\varphi A R_\varphi^{-1}$ is real analytic as a map from $\Ds^s(\R^n)$ to $\mathcal L\big(H^s(\R^n),H^{s-2}(\R^n)\big)$. Using Neumann series (see e.g. \cite{dieudonne}) one sees that the inversion operator
\[
 \operatorname{inv}:GL\big(H^s(\R^n),H^{s-2}(\R^n)\big) \to GL\big(H^{s-2}(\R^n),H^s(\R^n)\big),\quad T \mapsto T^{-1}
\]
is also real analytic. Here for any Hilbert spaces $X,Y$, $GL(X,Y)$ denotes the open subset of $\mathcal L(X,Y)$ of all invertible continuous linear operators from $X$ to $Y$. Using that $R_\varphi A^{-1} R_\varphi^{-1} = \operatorname{inv}(R_\varphi A R_\varphi^{-1})$ it then follows that
\[
 \Ds^s(\R^n) \to \mathcal L\big(H^{s-2}(\R^n),H^s(\R^n)\big),\quad \varphi \mapsto [f \mapsto R_\varphi A^{-1} R_\varphi^{-1}(f)]
\]
is real analytic. Recall that $\Delta^{-1}\big(1-\chi(D)\big)=-\chi(D) + A^{-1}$ and hence in view of Corollary \ref{coro_composition_inverse} and Proposition \ref{prop_analytic_composition} it follows that 
\begin{equation}\label{analytic_pseudo_inverse}
\Ds^s(\R^n) \to \mathcal L\big(H^{s-2}(\R^n),H^s(\R^n)\big),\quad \varphi \mapsto [f \mapsto R_\varphi \Delta^{-1} \big(1-\chi(D)\big) R_\varphi^{-1}(f)]
\end{equation}
is real analytic. Now we are ready to prove the statement of the lemma. Since $\nabla$ and $\Delta^{-1}\big(1-\chi(D)\big)$ commute, we can write $\nabla \tilde B_2$ as
\[
 \nabla \tilde B_2(f,g) = \sum_{i,k=1}^n \Delta^{-1} \big(1-\chi(D)\big) \nabla (\partial_i f_k \partial_k g_i).
\]
Write $R_\varphi \nabla \tilde B_2(f\circ \varphi^{-1},g \circ \varphi^{-1})$ in the form
\begin{equation}\label{b2_rewritten}
\sum_{i,k=1}^n  \left( R_\varphi \Delta^{-1} \big(1-\chi(D)\big) R_\varphi^{-1} \right) \circ \left(R_\varphi \nabla \big(\partial_i (f_k \circ \varphi^{-1}) \partial_k (g_i \circ \varphi^{-1})\big)\right).
\end{equation}
By \eqref{conjugation} we have for any $1 \leq i,k \leq n$ that
\begin{eqnarray*}
\Ds^s(\R^n) &\to& L^2\big(H^s(\R^n;\R^n);H^{s-2}(\R^n;\R^n)\big),\\
\varphi &\mapsto& \left[ (f,g) \mapsto R_\varphi \nabla \big(\partial_i (f_k \circ \varphi^{-1}) \partial_k (g_i \circ \varphi^{-1})\big)\right]
\end{eqnarray*}
is real analytic. Combining \eqref{analytic_pseudo_inverse} and \eqref{b2_rewritten} the lemma follows.
\end{proof}

\begin{proof}[Proof of Proposition \ref{prop_analytic_spray}]
As $\Gamma=\Gamma_1 + \Gamma_2$ the claim of the proposition follows from Lemma \ref{lemma_b1_analytic} and Lemma \ref{lemma_b2_analytic}.
\end{proof}

\section{The exponential map}\label{section_exponential_map}

The goal of this section is to prove Theorem \ref{th_lagrangian} with $V$ given by $V(\varphi,v)=\big(v,\Gamma_\varphi(v,v)\big)$ where $\Gamma$ is the Christoffel map given by \eqref{christoffel_map}. Throughout this section we assume that $n \geq 2$ and $s > n/2+1$. Let us recall the geodesic equation from \eqref{geodesic_eq}
\begin{equation}\label{geo_eq}
 \partial_t^2 \varphi = \Gamma_\varphi(\partial_t \varphi,\partial_t \varphi).
\end{equation}
To show Theorem \ref{th_lagrangian} we want to write the solutions of \eqref{geo_eq} as a flow. To this end we write the geodesic equation \eqref{geo_eq} as a first order ODE in $\Ds^s(\R^n) \times H^s(\R^n;\R^n)$
\begin{equation}\label{first_order}
\partial_t \left(\begin{array}{c}\varphi \\ v \end{array}\right) = \left(\begin{array}{c} v \\ \Gamma_\varphi(v,v) \end{array}\right)
\end{equation}
and consider as initial values $\varphi(0)=\operatorname{id} \in \Ds^s(\R^n)$ and $v(0)=v_0 \in H^s(\R^n;\R^n)$. By Proposition \ref{prop_analytic_spray}, the vector field $(\varphi,v) \mapsto \big(v,\Gamma_\varphi(v,v)\big)$ is real analytic. It then follows from the existence and uniqueness theorem of ODE's (cf Proposition \ref{prop_analytic_flow}) that there exists a time $\tau > 0$ and a ball $B_\delta(0) \subseteq H^s(\R^n;\R^n)$ with radius $\delta > 0$ and center $0$ such that \eqref{first_order} has a unique solution on $[0,\tau]$ for all initial values $v_0 \in B_\delta(0)$. For \eqref{first_order} we have the following scaling property: If $(\varphi,v):[0,\tau] \to \Ds^s(\R^n) \times H^s(\R^n;\R^n)$ is a solution to \eqref{first_order} with $(\varphi(0),v(0))=(\operatorname{id},v_0)$ then for all $\lambda > 0$
\begin{equation}\label{scaled_solution}
\big(\varphi(\lambda\cdot),\lambda v(\lambda \cdot)\big):[0,\tau/\lambda] \to \Ds^s(\R^n) \times H^s(\R^n;\R^n)
\end{equation}
is also a solution to \eqref{first_order} with initial condition $(\operatorname{id},\lambda v_0)$. Thus we see that the solutions to \eqref{first_order} exist on $[0,1]$ for all initial values $(\operatorname{id},v_0)$ with $v_0 \in B_{\delta \cdot \tau}(0)$. Let
\begin{equation}\label{uexp_def}
 U^s_{\exp} \subseteq H^s(\R^n;\R^n)
\end{equation}
denote the set of $v_0$ such that the solution to $\eqref{first_order}$ with initial value $\varphi(0)=\operatorname{id}, v(0)=v_0$ exists longer than time $1$. By Proposition \ref{prop_analytic_flow} $U^s_{\exp}$ is open. Moreover from the scaling property \eqref{scaled_solution} we see that $U^s_{\exp}$ is star-shaped in $H^s(\R^n;\R^n)$ with respect to $0$. Now define the flow map, referred to as exponential map,
\begin{equation}\label{def_exp}
 \exp:U^s_{\exp} \to \Ds^s(\R^n),\quad v_0 \mapsto \varphi(1;v_0)
\end{equation}
where here $\big(\varphi(t;v_0),v(t;v_0)\big)$ denotes the solution to \eqref{first_order} with initial values $(\operatorname{id},v_0)$. From Proposition \ref{prop_analytic_spray} and Proposition \ref{prop_analytic_flow} we immediately get
\begin{Prop}\label{exp_analytic}
The exponential map
\[
 \exp:U^s_{\exp} \to \Ds^s(\R^n)
\]
is real analytic.
\end{Prop}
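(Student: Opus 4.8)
The plan is to derive the statement directly from the analytic dependence of solutions of an ODE on their initial data, so that the entire content of the proposition is packaged into Proposition \ref{prop_analytic_spray} together with the analytic ODE theory of the appendix. First I would check that $V(\varphi,v)=\big(v,\Gamma_\varphi(v,v)\big)$ is a real analytic vector field on the open subset $\Ds^s(\R^n)\times H^s(\R^n;\R^n)$ of the Banach space $H^s(\R^n;\R^n)\times H^s(\R^n;\R^n)$ (recall $\Ds^s(\R^n)$ is identified with the open set $\Ds^s(\R^n)-\id$). The first component $(\varphi,v)\mapsto v$ is linear, hence analytic; for the second component, Proposition \ref{prop_analytic_spray} says that $\varphi\mapsto\big[(f,g)\mapsto R_\varphi\nabla\tilde B(f\circ\varphi^{-1},g\circ\varphi^{-1})\big]$ is real analytic as a map into $L^2\big(H^s(\R^n;\R^n);H^s(\R^n;\R^n)\big)$, and $\Gamma_\varphi(v,v)$ is obtained by evaluating this bilinear form on the diagonal. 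Since evaluation of a continuous bilinear form is analytic and composition of analytic maps is analytic, $(\varphi,v)\mapsto\Gamma_\varphi(v,v)$, and therefore $V$, is real analytic.

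Next I would invoke Proposition \ref{prop_analytic_flow}: the local flow of a real analytic vector field on an open subset of a Banach space is real analytic jointly in the time variable and the initial condition on its maximal domain of definition, and this domain is open. Writing $\big(\varphi(t;v_0),v(t;v_0)\big)$ for the solution of \eqref{first_order} with $\big(\varphi(0),v(0)\big)=(\id,v_0)$, this yields that
\[
 (t,v_0)\mapsto\big(\varphi(t;v_0),v(t;v_0)\big)
\]
is real analytic on the open set $\{(t,v_0)\ :\ \text{the solution exists past time } t\}$, which contains $\{1\}\times U^s_{\exp}$ by the very definition of $U^s_{\exp}$ (and $U^s_{\exp}$ is open, as already observed above the statement).

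Finally, $\exp$ is the composition of the analytic map $v_0\mapsto(1,v_0)$, the analytic flow map above, and the analytic projection $(\varphi,v)\mapsto\varphi$ onto the first factor $\Ds^s(\R^n)$. Hence $\exp:U^s_{\exp}\to\Ds^s(\R^n)$ is real analytic.

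I do not expect a genuine obstacle here. The only point worth a second look is that analyticity is needed up to time $t=1$ and not merely in a neighborhood of $t=0$; this is exactly covered by the statement that the flow is analytic on its entire open domain of existence. Should one prefer a self-contained argument, one could instead cover the compact trajectory $\{\varphi(t;v_0^\ast)\ :\ 0\le t\le1\}$ by finitely many pieces on which short-time flows are analytic, write the time-$1$ flow near $v_0^\ast$ as a finite composition of such analytic short-time flow maps, and conclude by stability of analyticity under composition.
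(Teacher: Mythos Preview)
Your proposal is correct and follows exactly the approach of the paper: the paper's proof is the single sentence ``From Proposition \ref{prop_analytic_spray} and Proposition \ref{prop_analytic_flow} we immediately get'' Proposition \ref{exp_analytic}. Your write-up is simply a careful unpacking of that sentence, and your final paragraph correctly anticipates the only subtlety (passing from the local-in-time analyticity stated in Proposition \ref{prop_analytic_flow} to analyticity of the time-$1$ map on all of $U^s_{\exp}$), which the paper leaves implicit.
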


\noindent
The following lemma describes how the map $\exp$ describes the solutions of \eqref{geo_eq}.

\begin{Lemma}\label{lemma_exp}
Consider the second order initial value problem 
\begin{equation}\label{want_to_solve}
 \partial_t^2 \varphi = \Gamma_\varphi(\partial_t \varphi,\partial_t \varphi),\qquad \varphi(0)=\operatorname{id},\;\partial_t \varphi(0)=u_0 \in H^s(\R^n;\R^n).
\end{equation}
Then its solution on the maximal interval of existence is given by
\begin{equation}\label{exp_solution}
 \psi(t):=\exp(t u_0) \quad \mbox{for} \quad 0 \leq t < T^\ast
\end{equation}
where $T^\ast=\sup \{t \,| \, t u_0 \in U^s_{\exp} \}$. 
\end{Lemma}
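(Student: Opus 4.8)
The plan is to identify the curve $t\mapsto \exp(tu_0)$ with the first component of the maximal solution of the first order system \eqref{first_order} and to read off its maximal interval of existence from the scaling property \eqref{scaled_solution}.

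First I would record the (standard) equivalence: a sufficiently regular curve $\varphi$ solves the second order problem \eqref{want_to_solve} if and only if $(\varphi,\partial_t\varphi)$ solves \eqref{first_order} with initial data $(\operatorname{id},u_0)$. Since by Proposition \ref{prop_analytic_spray} the vector field in \eqref{first_order} is real analytic, Proposition \ref{prop_analytic_flow} yields a unique maximal solution, which I denote by $t\mapsto\big(\varphi(t;u_0),v(t;u_0)\big)$, defined on a maximal interval $[0,T_{\max})$ and satisfying $v(t;u_0)=\partial_t\varphi(t;u_0)$; for $u_0=0$ it is the constant curve $(\operatorname{id},0)$ with $T_{\max}=\infty$.

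Next I would apply the scaling property \eqref{scaled_solution}. Fix $t>0$. Taking $v_0=u_0$ and $\lambda=t$ in \eqref{scaled_solution}, the curve $s\mapsto\big(\varphi(ts;u_0),t\,v(ts;u_0)\big)$ solves \eqref{first_order} with initial data $(\operatorname{id},tu_0)$; by uniqueness it is the maximal such solution (applying the scaling with parameter $1/t$ shows no extension is possible beyond $[0,T_{\max}/t)$ without contradicting maximality of the $u_0$-solution), so
\[
 \varphi(s;tu_0)=\varphi(ts;u_0),\qquad v(s;tu_0)=t\,v(ts;u_0),
\]
and the maximal existence time for the initial velocity $tu_0$ equals $T_{\max}/t$. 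In particular $tu_0\in U^s_{\exp}$, i.e.\ this solution exists beyond time $1$, if and only if $T_{\max}/t>1$, i.e.\ $t<T_{\max}$. Together with $0\in U^s_{\exp}$ this gives $\{t\ge 0:\ tu_0\in U^s_{\exp}\}=[0,T_{\max})$, hence $T^\ast=T_{\max}$.

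Finally, for $0<t<T^\ast=T_{\max}$ the definition \eqref{def_exp} of $\exp$ together with the displayed identity gives
\[
 \exp(tu_0)=\varphi(1;tu_0)=\varphi(t;u_0),
\]
while $\exp(0)=\operatorname{id}=\varphi(0;u_0)$. Thus $\psi(t):=\exp(tu_0)=\varphi(t;u_0)$ for $0\le t<T^\ast$, so $\psi$ is the first component of the maximal solution of \eqref{first_order} with data $(\operatorname{id},u_0)$ and therefore solves \eqref{want_to_solve} on its maximal interval of existence $[0,T^\ast)$, as claimed. I expect the only delicate point to be the bookkeeping of domains under the time rescaling $s\mapsto ts$ in \eqref{scaled_solution} — in particular that the rescaled curve is again the \emph{maximal} solution, so that maximal existence times transform as $T_{\max}(tu_0)=T_{\max}(u_0)/t$ — together with handling the endpoint $t=0$ separately; everything else is an immediate consequence of uniqueness for the analytic ODE \eqref{first_order} and the definitions of $U^s_{\exp}$ and $\exp$.
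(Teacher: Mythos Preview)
Your proof is correct and follows essentially the same route as the paper: both use the scaling property \eqref{scaled_solution} to obtain the key identity $\exp(tu_0)=\varphi(1;tu_0)=\varphi(t;u_0)$ and then invoke uniqueness for the analytic ODE. If anything, your treatment of the maximality claim $T^\ast=T_{\max}$ is more explicit than the paper's, which simply asserts it follows from the definition of $T^\ast$.
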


\begin{proof}
Take $\psi$ as given in \eqref{exp_solution}. By the definition of $\exp$ we have
\[
 \psi(t) = \varphi(1;tu_0)
\]
where $\varphi(1;w_0)$, $w_0 \in U^s_{\exp}$, denotes the time $1$ value of $\varphi$ solving \eqref{geo_eq} with $\varphi(0)=\operatorname{id}$ and $\partial_t \varphi(0)=w_0$. By the scaling property \eqref{scaled_solution} we have
\begin{equation}\label{exp_rescaled}
 \psi(t) = \varphi(1;tu_0) = \varphi(t;u_0)
\end{equation}
for all $0 \leq t < T^\ast$. Thus by \eqref{first_order}
\[
 \partial_t^2 \psi(t)=\Gamma_{\psi(t)}\big(\partial_t \psi(t),\partial_t \psi(t)\big)
\]
for all $0 \leq t < T^\ast$ with initial values $\psi(0)=\operatorname{id}$ and $\partial_t \psi(0)=u_0$. By the uniqueness statement of Proposition \ref{prop_analytic_flow} we get that $\psi$ agrees with the solution \eqref{want_to_solve} on $[0,T^\ast)$. From the definition of $T^\ast$ it follows that it is the maximal time of existence for \eqref{want_to_solve}.
\end{proof}

An immediate consequence of \eqref{exp_rescaled} is

\begin{Lemma}\label{d_exp}
The derivative of $\exp$ at $0 \in H^s(\R^n;\R^n)$ is the identity map, 
\[
 d_0 \exp (v) = v,\quad \forall v \in H^s(\R^n;\R^n).
\]
\end{Lemma}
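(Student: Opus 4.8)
The plan is to compute $d_0\exp$ as a directional derivative along rays through the origin, exploiting the rescaling identity \eqref{exp_rescaled} established in the proof of Lemma \ref{lemma_exp}. First I would observe that $0 \in U^s_{\exp}$: the solution of \eqref{first_order} with initial data $(\operatorname{id},0)$ is the stationary curve $(\operatorname{id},0)$, which exists for all time. Since $U^s_{\exp}$ is open (Proposition \ref{prop_analytic_flow}), there is some $\rho > 0$ with $B_\rho(0) \subseteq U^s_{\exp}$, so for a fixed $v \in H^s(\R^n;\R^n)$ the curve $c(t) := \exp(tv)$ is well-defined for all $t$ with $\|v\|_s |t| < \rho$, and $c(0) = \exp(0) = \varphi(1;0) = \operatorname{id}$. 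Under the identification of $\Ds^s(\R^n)$ with the open set $\Ds^s(\R^n) - \operatorname{id} \subseteq H^s(\R^n;\R^n)$, the tangent space at every point is $H^s(\R^n;\R^n)$, so $d_0\exp$ is a continuous linear endomorphism of $H^s(\R^n;\R^n)$ and the statement makes sense.

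Next I would use \eqref{exp_rescaled}, which gives $c(t) = \exp(tv) = \varphi(t;v)$ for small $t \geq 0$, where $\varphi(\cdot;v)$ is the solution of the second order problem \eqref{want_to_solve} with $\partial_t\varphi(0) = v$. Differentiating in $t$ at $t=0$ therefore yields $c'(0^+) = \partial_t \varphi(0;v) = v$. On the other hand, $\exp$ is real analytic on $U^s_{\exp}$ by Proposition \ref{exp_analytic}, hence differentiable at the interior point $0$; writing $c = \exp \circ \gamma$ with $\gamma(t) = tv$, $\gamma(0) = 0$, $\gamma'(0) = v$, the chain rule gives $c'(0) = d_0\exp(v)$, and this full derivative coincides with the one-sided derivative computed above. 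Comparing the two expressions gives $d_0\exp(v) = v$, and since $v$ was arbitrary, $d_0\exp = \operatorname{id}$.

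There is no real obstacle here: the substance is already contained in the rescaling identity \eqref{exp_rescaled} and in the analyticity of $\exp$. The only points requiring a line of care are (i) verifying that $\exp$ is defined on a genuine neighborhood of $0$, so that $d_0\exp$ is meaningful — this follows from $0 \in U^s_{\exp}$ together with openness of $U^s_{\exp}$ — and (ii) the bookkeeping that $\tfrac{d}{dt}\big|_{t=0}\exp(tv)$ equals the prescribed initial velocity $v$, which is precisely what \eqref{exp_rescaled} expresses.
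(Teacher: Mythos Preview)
Your argument is correct and follows essentially the same route as the paper: compute $d_0\exp(v)$ as the directional derivative $\left.\partial_\varepsilon\right|_{\varepsilon=0}\exp(\varepsilon v)$ and read off its value from the rescaling identity \eqref{exp_rescaled}, which gives $\exp(\varepsilon v)=\varphi(\varepsilon;v)$ and hence derivative $v$ at $\varepsilon=0$. You simply spell out a few more details (that $0\in U^s_{\exp}$, openness, the one-sided versus two-sided derivative) than the paper does.
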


\begin{proof}
By the definition of the derivative
\[
 d_0 \exp(v) = \left. \partial_\varepsilon \right|_{\varepsilon=0} \exp(\varepsilon v).
\]
From \eqref{exp_rescaled} it follows that
\[
 \left. \partial_\varepsilon \right|_{\varepsilon=0} \exp(\varepsilon v)=v
\]
showing the claim.
\end{proof}

Now we can give the proof of Theorem \ref{th_lagrangian}.

\begin{proof}[Proof of Theorem \ref{th_lagrangian}]
(i) Note that the vector field $V$ can be expressed in terms of $\Gamma$, $V(\varphi,v)=\big(v,\Gamma_\varphi(v,v)\big)$. Item (i) then follows from Proposition \ref{prop_analytic_spray}.\\
\noindent
(ii) Take $u_0 \in H^s(\R^n;\R^n)$ and consider
\[
 \varphi(t):=\exp(tu_0), \quad 0 \leq t < T^\ast
\]
where $T^\ast=\sup \{t \, | \, t u_0 \in U^s_{\exp}\}$. We claim that $u(t):=(\partial_t \varphi)\big(t,\varphi^{-1}(t)\big)$ is a solution to \eqref{RE}. By Proposition \ref{prop_analytic_flow}, $\varphi$ is in the space $C^\infty\big([0,T^\ast);\Ds^s(\R^n)\big)$ introduced at the end of Section \ref{functional_setting}. Take any $T \in (0,T^\ast)$. By the continuity of the composition and the inversion map we know that $u \in C^0\big([0,T];H^s(\R^n;\R^n)\big)$.  By the Sobolev imbedding \eqref{sobolev_imbedding}, $\varphi, \partial_t \varphi \in C^1\big([0,T] \times \R^n;\R^n)$. It then follows that $u \in C^1([0,T] \times \R^n;\R^n)$. Indeed from the implicit function theorem $\varphi^{-1}(t,x)$, as a solution of the equation $\varphi(t,y)=x$ in $y$, lies in $C^1([0,T] \times \R^n;\R^n)$. Thus by the chain rule,
\[
 u=\partial_t \varphi\big(t,\varphi^{-1}(t)\big) \in C^1([0,T] \times \R^n;\R^n).
\]
Hence pointwise at any $(t,x) \in [0,T] \times \R^n$
\[
 \partial_t^2 \varphi = \partial_t u \circ \varphi + \left[ (u \cdot \nabla) u \right] \circ \varphi = \Gamma_\varphi(\partial_t \varphi,\partial_t \varphi)
\]
or, by the definition \eqref{christoffel_map} of $\Gamma$,
\[
 R_\varphi \big( \partial_t u + (u \cdot \nabla) u \big) = R_\varphi \nabla B(u)
\]
showing that we have pointwise
\[
 \partial_t u + (u \cdot \nabla) u =\nabla B(u).
\]
By the fundamental lemma of calculus it then follows that for any $x \in \R^n$ (dropping the $x$ in the argument)
\begin{equation}\label{pointwise_identity}
u(t) = u_0 + \int_0^t \nabla B(u(\tau)) - (u(\tau) \cdot \nabla) u(\tau) \,d\tau.
\end{equation}
Note that as $u$ is in $C^0\big([0,T];H^s(\R^n;\R^n)\big)$ it follows from \eqref{B_formula} that the integrand lies in $C^0\big([0,T];H^{s-1}(\R^n;\R^n)\big)$. Thus by the definition \eqref{RE_int}, $u$ is a solution to \eqref{RE}, which proves the claim. By Proposition \ref{prop_alternative} we then get that $u$ is a solution to \eqref{E} in the case $u_0 \in H^s_\sigma(\R^n;\R^n)$.
\end{proof}

\chapter{Applications}\label{section_applications}

\section{Local well-posedness of the Euler equation}\label{section_local_wellposedness}

As a first application of the Lagrangian formulation of Chapter \ref{section_lagrangian_formulation} we present an alternative proof of the local well-posedness result of equation \eqref{E} due to Kato \cite{kato}, stated in Theorem \ref{th_kato}. In view of Proposition \ref{prop_alternative} it follows from the following more general result.

\begin{Th}\label{th_general}
 Let $s > n/2+1$ and $n \geq 2$. Then the initial value problem \eqref{RE} is locally well-posed in $H^s(\R^n;\R^n)$, i.e. for any $w \in H^s(\R^n;\R^n)$, there is a neighborhood $W \subseteq H^s(\R^n;\R^n)$ of $w$ and $T > 0$ such that for any $u_0 \in W$, the equation \eqref{RE} has a unique solution $u(\cdot;u_0) \in C^0\big([0,T];H^s(\R^n;\R^n)\big)$ and moreover
\[
 W \to C^0\big([0,T];H^s(\R^n;\R^n)\big),\quad u_0 \mapsto u(\cdot;u_0)
\]
is continuous.
\end{Th}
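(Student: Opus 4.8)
The strategy is to transport the well-posedness from the Lagrangian side (where we have an analytic ODE) to the Eulerian side (equation \eqref{RE}) via the exponential map. First I would recall from Section \ref{section_exponential_map} that the vector field $V(\varphi,v)=\big(v,\Gamma_\varphi(v,v)\big)$ on $\Ds^s(\R^n) \times H^s(\R^n;\R^n)$ is real analytic (Proposition \ref{prop_analytic_spray}), so by the existence-uniqueness theorem for ODEs on Banach spaces (Proposition \ref{prop_analytic_flow}) the flow is defined and smooth on an open set, and by the scaling property \eqref{scaled_solution} the set $U^s_{\exp}$ of initial velocities whose geodesic survives past time $1$ is open and star-shaped about $0$. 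Given $w \in H^s(\R^n;\R^n)$, pick $\lambda>0$ small enough that $\lambda w \in U^s_{\exp}$; by openness of $U^s_{\exp}$ there is a neighborhood $\tilde W$ of $\lambda w$ contained in $U^s_{\exp}$. Using the scaling \eqref{scaled_solution} and \eqref{exp_rescaled}, for every $u_0$ in a suitable neighborhood $W$ of $w$ the geodesic $t\mapsto \varphi(t;u_0)$ exists on a common interval $[0,T]$ with $T>0$ (taking $T$ slightly below $1/\lambda$), and moreover the whole solution curve depends continuously — in fact real-analytically — on $u_0$ as a curve in $\Ds^s(\R^n)$, because the flow map of an analytic vector field is continuous into $C^0\big([0,T];\Ds^s(\R^n)\big)$ (this is part of Proposition \ref{prop_analytic_flow} applied with the time variable, or follows from joint continuity of the flow and compactness of $[0,T]$).

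Next I would set $u(t;u_0) := \partial_t\varphi(t;u_0) \circ \varphi(t;u_0)^{-1}$ and verify it solves \eqref{RE_int}. This is exactly the computation carried out in the proof of Theorem \ref{th_lagrangian}(ii): the regularity $\varphi,\partial_t\varphi \in C^1([0,T]\times\R^n;\R^n)$ from the Sobolev imbedding \eqref{sobolev_imbedding}, the implicit function theorem for $\varphi^{-1}$, the chain rule giving $\partial_t u + (u\cdot\nabla)u = \nabla B(u)$ pointwise, and then the fundamental theorem of calculus yielding the integral equation \eqref{RE_int}; one checks via \eqref{B_formula} that the integrand lies in $C^0\big([0,T];H^{s-1}(\R^n;\R^n)\big)$. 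For continuity of the solution map $u_0\mapsto u(\cdot;u_0)$ into $C^0\big([0,T];H^s(\R^n;\R^n)\big)$, I would invoke continuity of $u_0 \mapsto \varphi(\cdot;u_0)$ into $C^0\big([0,T];\Ds^s(\R^n)\big)$ and of $u_0 \mapsto \partial_t\varphi(\cdot;u_0)$ into $C^0\big([0,T];H^s(\R^n;\R^n)\big)$, together with the continuity of composition and inversion on $\Ds^s(\R^n)$ (Lemma \ref{lemma_continuous_right_translation} and the cited results from \cite{composition}), since $u(t) = \partial_t\varphi(t)\circ\varphi(t)^{-1}$ is built from these operations and these compose to a continuous map $C^0\big([0,T];\Ds^s(\R^n)\times H^s\big) \to C^0\big([0,T];H^s\big)$.

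Finally I would address uniqueness: if $u \in C^0\big([0,T];H^s(\R^n;\R^n)\big)$ solves \eqref{RE_int} with $u(0)=u_0$, then its flow $\varphi$, obtained by integrating the (time-dependent, $H^s$) vector field $u$ — here the result on integration of $H^s$-vector fields from Appendix \ref{integration} is used to know $\varphi(t)\in\Ds^s(\R^n)$ and $t\mapsto\varphi(t)$ is $C^1$ into $\Ds^s(\R^n)$ — satisfies the second-order problem \eqref{want_to_solve} by the reverse of the computation \eqref{derivation_geodesic}; by the uniqueness in Lemma \ref{lemma_exp} this $\varphi$ must coincide with $\exp(tu_0)$ on $[0,T]$, whence $u$ is determined. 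I expect the main obstacle to be the careful bookkeeping in this last step: one must show that a merely $C^0$-in-time $H^s$-solution $u$ of \eqref{RE_int} actually has a flow $\varphi$ that is $C^2$ in time with values in $\Ds^s$ (so that \eqref{want_to_solve} even makes sense), which requires bootstrapping regularity in $t$ — $u\in C^0$ and the integral equation give $u\in C^1$ in $t$, hence $\varphi\in C^2$ in $t$ — and then matching this with the analytic flow; the spatial regularity is handled by the Banach-algebra and composition lemmas already available, but the interplay between temporal regularity classes $C^0\big([0,T];\cdot\big)$ and the pointwise-in-$x$ identities needs to be managed with some care.
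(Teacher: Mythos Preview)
Your proposal is correct and follows essentially the same route as the paper: existence and continuity via the analytic geodesic flow on $\Ds^s(\R^n)\times H^s(\R^n;\R^n)$ and the continuity of composition/inversion, and uniqueness by integrating an $H^s$-solution of \eqref{RE} to its flow (Proposition \ref{prop_integration}), showing that flow solves \eqref{geo_eq}, and invoking ODE uniqueness. Two small remarks: the scaling detour through $U^s_{\exp}$ is unnecessary since Proposition \ref{prop_analytic_flow} already yields a common $T>0$ on a neighborhood of any $w$ directly; and for the bootstrapping you flag, the paper avoids asserting $\varphi\in C^2$ a priori by first deriving $\partial_t^2\varphi=\Gamma_\varphi(\partial_t\varphi,\partial_t\varphi)$ pointwise from $u\in C^1([0,T]\times\R^n;\R^n)$, then observing that the integrated identity $\partial_t\varphi=u_0+\int_0^t\Gamma_\varphi(\partial_\tau\varphi,\partial_\tau\varphi)\,d\tau$ holds in $H^s$ because the integrand lies in $C^0\big([0,T];H^s\big)$, which is exactly what is needed to compare with the ODE solution.
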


\begin{proof}
Let $w \in H^s(\R^n;\R^n)$ be an arbitrary initial value. By Theorem \ref{th_lagrangian}(i), proved in Section \ref{section_exponential_map}, there is a $T>0$ and a neighborhood $W \subseteq H^s(\R^n;\R^n)$ of $w$ such that for any $u_0 \in W$, there exists a solution $\varphi(\cdot;u_0)$ of \eqref{E_ivp} on $[0,T]$. By the proof of Theorem \ref{th_lagrangian}(ii)
\[
 u(t;u_0):=\partial_t \varphi(t;u_0) \circ \varphi^{-1}(t;u_0)
\]
is a solution to \eqref{RE}. Moreover we know by the continuity of the group operations in $\Ds^s(\R^n)$ that
\[
 U \to C^0\big([0,T];H^s(\R^n;\R^n)\big),\quad u_0 \mapsto u(\cdot;u_0)
\]
is continuous. To prove local well-posedness of \eqref{RE} it thus remains to show the uniqueness of solutions of \eqref{RE}. Assume that we have for some $T > 0$ a solution $u \in C^0\big([0,T];H^s(\R^n;\R^n)\big)$ to problem \eqref{RE} with $u(0)=u_0 \in H^s(\R^n;\R^n)$. By Proposition \ref{prop_integration} there exists a unique flow $\varphi \in C^1\big([0,T];\Ds^s(\R^n)\big)$ solving
\begin{equation}\label{phi_ode}
 \partial_t \varphi = u \circ \varphi;\quad \varphi(0)=\operatorname{id}.
\end{equation}
Note that in particular $\varphi \in C^1([0,T] \times \R^n;\R^n)$. We claim that $\varphi$ is a geodesic, i.e. it solves \eqref{geo_eq}. As $u$ is a solution to \eqref{RE} we get by the Sobolev imbedding \eqref{sobolev_imbedding} 
\[
 u \in C^1([0,T] \times \R^n;\R^n).
\]
As $\partial_t \varphi = u \circ \varphi$ we also have $\partial_t \varphi \in C^1([0,T] \times \R^n;\R^n)$. Taking the $t$-derivative in \eqref{phi_ode} we get pointwise
\[
 \partial_t^2 \varphi = \partial_t u \circ \varphi + [ (u \cdot \nabla) u ] \circ \varphi = R_\varphi [ \partial_t u + (u \cdot \nabla) u]
\]
Using that $u$ solves \eqref{RE}, one gets $\partial_t^2 \varphi = R_\varphi \nabla B(u)$ or by \eqref{christoffel_map}
\begin{equation}\label{get_geodesic}
 \partial_t^2 \varphi = \Gamma_\varphi(\partial_t \varphi,\partial_t \varphi).
\end{equation}
From the fundamental lemma of calculus we thus have pointwise
\[
 \partial_t \varphi = \operatorname{id} + \int_0^t \Gamma_\varphi(\partial_t \varphi,\partial_t \varphi) \,d\tau, \quad t \in [0,T].
\]
The integrand is in $C^0\big([0,T];H^s(\R^n;\R^n)\big)$. This shows that \eqref{get_geodesic} is actually an identity in $H^s$. Hence $\varphi$ is a geodesic with $\varphi(0)=\operatorname{id}$ and $\partial_t \varphi(0)=u_0$. Now assume that $u_1$ and $u_2$ are two solutions on the interval $[0,T]$ with $u_1(0)=u_2(0)=u_0$. Denote by $\varphi_1$ and $\varphi_2$ the corresponding flows introduced above. By the uniqueness theorem for ODE's one concludes that $\varphi_1(t)=\varphi_2(t)$ for any $0 \leq t \leq T$. As
\[
 u_i = \left(\partial_t \varphi_i\right) \circ \varphi_i^{-1}, \quad i=1,2
\]
it thus follows that $u_1(t)=u_2(t)$ for all $t \in [0,T]$. 
\end{proof}

\noindent
By Lemma \ref{lemma_exp} and by the proof of Theorem \ref{th_general} we get

\begin{Coro}\label{flow_exp}
Let $u$ be a solution of equation \eqref{RE} on $[0,1]$ with $u(0)=u_0 \in H^s(\R^n;\R^n)$. Then $u_0 \in U^s_{\exp}$ and moreover the flow of $u$ on $[0,1]$ is given by
\[
 \varphi(t)=\exp(tu_0), \qquad 0 \leq t \leq 1
\]
where $\exp$ is the exponential map and $U^s_{\exp}$ its domain defined as in \eqref{def_exp}.
\end{Coro}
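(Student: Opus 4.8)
The plan is to run the argument already assembled in the proof of Theorem~\ref{th_general} and then read off the conclusion from Lemma~\ref{lemma_exp}. Given the solution $u \in C^0\big([0,1];H^s(\R^n;\R^n)\big)$ of \eqref{RE} with $u(0)=u_0$, I would first invoke Proposition~\ref{prop_integration} to obtain the unique flow $\varphi \in C^1\big([0,1];\Ds^s(\R^n)\big)$ with $\partial_t \varphi = u \circ \varphi$ and $\varphi(0)=\operatorname{id}$; this is by definition the flow of $u$ on $[0,1]$, and one has $\partial_t\varphi(0) = u(0)\circ\operatorname{id} = u_0$. As in the proof of Theorem~\ref{th_general}, the Sobolev imbedding \eqref{sobolev_imbedding} gives $u,\partial_t\varphi \in C^1([0,1]\times\R^n;\R^n)$, so differentiating $\partial_t\varphi = u\circ\varphi$ in $t$ yields pointwise
\[
 \partial_t^2 \varphi = R_\varphi\big(\partial_t u + (u\cdot\nabla)u\big) = R_\varphi \nabla B(u) = \Gamma_\varphi(\partial_t\varphi,\partial_t\varphi),
\]
the middle equality because $u$ solves \eqref{RE} and the last by the definition \eqref{christoffel_map} of $\Gamma$. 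Since the right-hand side lies in $C^0\big([0,1];H^s(\R^n;\R^n)\big)$, the fundamental theorem of calculus upgrades this to an identity in $H^s$, so $\varphi$ solves the second order initial value problem \eqref{want_to_solve} on $[0,1]$ with $\varphi(0)=\operatorname{id}$ and $\partial_t\varphi(0)=u_0$.

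Now I would apply Lemma~\ref{lemma_exp}: the solution of \eqref{want_to_solve} on its maximal interval of existence is $\psi(t)=\exp(tu_0)$ for $0 \le t < T^\ast$, where $T^\ast=\sup\{t \mid t u_0 \in U^s_{\exp}\}$. By the uniqueness statement of Proposition~\ref{prop_analytic_flow} applied to the first order system \eqref{first_order}, $\varphi$ agrees with $\psi$ on the common interval of existence. Since $\varphi$ is defined on the closed interval $[0,1]$ it can be continued slightly past time $1$ by the local existence theorem, so the maximal interval $[0,T^\ast)$ strictly contains $[0,1]$, i.e. $T^\ast > 1$. By the definition of $T^\ast$ (together with the scaling property \eqref{scaled_solution} used in Lemma~\ref{lemma_exp}), $T^\ast>1$ says precisely that $u_0 = 1\cdot u_0 \in U^s_{\exp}$, and then $\varphi(t)=\psi(t)=\exp(tu_0)$ for all $0 \le t \le 1$, which is the assertion.

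The only point needing care is the matching of maximal intervals: one must make sure that the flow $\varphi$ produced by Proposition~\ref{prop_integration} is genuinely the integral curve of the analytic vector field $V$ of Theorem~\ref{th_lagrangian} through $(\operatorname{id},u_0)$, so that the ODE uniqueness theorem of Proposition~\ref{prop_analytic_flow} applies and forces $T^\ast > 1$. This is exactly the identification already performed in the proof of Theorem~\ref{th_general}, so it is bookkeeping rather than a real obstacle; every other ingredient is a direct quotation of results established earlier in the chapter.
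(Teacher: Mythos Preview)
Your proposal is correct and follows essentially the same approach as the paper: the paper simply states that the corollary follows ``by Lemma~\ref{lemma_exp} and by the proof of Theorem~\ref{th_general}'', and what you have written is precisely an unpacking of those two references, including the extension-past-time-$1$ argument needed to conclude $T^\ast>1$ and hence $u_0\in U^s_{\exp}$.
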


\begin{proof}[Proof of Theorem \ref{th_kato}]
The local-wellposedness of \eqref{E} follows from Proposition \ref{prop_alternative} and Theorem \ref{th_general}. 
\end{proof}

\section{Proof of Theorem \ref{th_not_uniform} and Theorem \ref{th_not_differentiable}}\label{section_proof_main}

In this section we will prove Theorem \ref{th_not_uniform} and \ref{th_not_differentiable}.\\ \\
First introduce the following important quantity.
\begin{Def}\label{def_vorticity}
Let $u=(u_1,\ldots,u_n) \in C^1(\R^n;\R^n)$. We define the vorticity $\Omega \in C^0(\R^n;\R^{n \times n})$ of $u$ as the following $n \times n$ skew-symmetric matrix with coefficients given by
\begin{equation}\label{vorticity_formula}
 \Omega_{ij}(u)=\partial_j u_i - \partial_i u_j.
\end{equation}
\end{Def}
Note that $\Omega = du - du^\top$, where $du^\top$ denotes the transposed matrix of the Jacobian $du$. For more details on the vorticity see e.g. \cite{chemin,majda}. By the Biot-Savart law (see Lemma \ref{lemma_biot_savart}) we can recover the vector field from its vorticity. Moreover we have the following conservation law (see also \cite{arnold}).

\begin{Prop}\label{prop_conserved}
Let $s > n/2+1$ with $n \geq 2$ and $u \in C^0\big([0,T];H^s(\R^n;\R^n)\big)$ be a solution to \eqref{RE}, $\varphi \in C^\infty\big([0,T];\Ds^s(\R^n)\big)$ the corresponding flow and $\Omega(t):=\Omega\big(u(t)\big)$ the corresponding vorticity. Then
\begin{equation}\label{omega_regular}
 \Omega \text{ is in } C^0\big([0,T];H^{s-1}(\R^n;\R^{n \times n})\big) \cap C^1\big([0,T];H^{s-2}(\R^n;\R^{n \times n})\big)
\end{equation}
and
\begin{equation}\label{conserved}
d\varphi(t)^\top \cdot \Omega(t) \circ \varphi(t) \cdot d\varphi(t) = \Omega(0)
\end{equation}
for all $0 \leq t \leq T$.
\end{Prop}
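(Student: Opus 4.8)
The plan is to split the argument into the regularity assertion \eqref{omega_regular} and the conservation law \eqref{conserved}, establishing the latter by first deriving a transport equation for $\Omega$ and then differentiating the left-hand side of \eqref{conserved} in $t$. For \eqref{omega_regular}: since $u\in C^0\big([0,T];H^s(\R^n;\R^n)\big)$, the entries of $du$ and $du^\top$ lie in $C^0\big([0,T];H^{s-1}(\R^n)\big)$, so $\Omega=du-du^\top\in C^0\big([0,T];H^{s-1}(\R^n;\R^{n\times n})\big)$. For the time regularity, \eqref{RE_int} gives $\partial_t u=\nabla B(u)-(u\cdot\nabla)u$; by \eqref{B_formula} the first term lies in $C^0\big([0,T];H^{s+1}(\R^n;\R^n)\big)$ and by the Banach algebra property of $H^{s-1}(\R^n)$ the second lies in $C^0\big([0,T];H^{s-1}(\R^n;\R^n)\big)$. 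Hence $\partial_t u\in C^0\big([0,T];H^{s-1}(\R^n;\R^n)\big)$, and taking spatial derivatives, $\partial_t\Omega=d(\partial_t u)-d(\partial_t u)^\top\in C^0\big([0,T];H^{s-2}(\R^n;\R^{n\times n})\big)$ (note $s-2\geq 0$ as $s>n/2+1\geq 2$), i.e. $\Omega\in C^1\big([0,T];H^{s-2}(\R^n;\R^{n\times n})\big)$.

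Next I would derive the vorticity transport equation. Applying $d$ to $\partial_t u=\nabla B(u)-(u\cdot\nabla)u$ and using the Leibniz rule (valid in $H^{s-2}(\R^n)$ by Lemma \ref{lemma_multiplication}, first on $C_c^\infty$ and then by density) gives $d\big((u\cdot\nabla)u\big)=(u\cdot\nabla)(du)+(du)^2$, while $d\nabla B(u)$ is the Hessian of $B(u)$, hence symmetric. Writing $J:=du$ and subtracting the transpose of the resulting identity, the symmetric Hessian term cancels, $(u\cdot\nabla)(du)-(u\cdot\nabla)(du^\top)=(u\cdot\nabla)\Omega$, and, using $J^\top=J-\Omega$, one computes $(du)^2-(du^\top)^2=J^2-(J^\top)^2=J^\top\Omega+\Omega J=(du)^\top\Omega+\Omega(du)$. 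Altogether, as an identity in $C^0\big([0,T];H^{s-2}(\R^n;\R^{n\times n})\big)$,
\[
 \partial_t\Omega+(u\cdot\nabla)\Omega+(du)^\top\Omega+\Omega\,(du)=0 .
\]

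Now set $N(t):=d\varphi(t)^\top\cdot\big(\Omega(t)\circ\varphi(t)\big)\cdot d\varphi(t)$. Differentiating $\partial_t\varphi=u\circ\varphi$ in $x$ gives $\partial_t(d\varphi)=(du\circ\varphi)\,d\varphi$, and for the middle factor I would use the chain rule $\partial_t\big(\Omega\circ\varphi\big)=\big[\partial_t\Omega+(u\cdot\nabla)\Omega\big]\circ\varphi$, valid in $C^0\big([0,T];H^{s-2}(\R^n;\R^{n\times n})\big)$ because $\Omega\in C^1\big([0,T];H^{s-2}\big)\cap C^0\big([0,T];H^{s-1}\big)$ and $\partial_t\varphi=u\circ\varphi$ with $u\in C^0\big([0,T];H^s\big)$ — a standard fact following from the composition and integration results in \cite{composition} and Appendix \ref{integration}. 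Applying the product rule (all products landing in $H^{s-2}(\R^n)$ by Lemma \ref{lemma_multiplication}) and then the transport equation, all terms assemble into
\[
 \partial_t N=d\varphi^\top\cdot\Big(\big[\partial_t\Omega+(u\cdot\nabla)\Omega+(du)^\top\Omega+\Omega\,(du)\big]\circ\varphi\Big)\cdot d\varphi=0 .
\]
Since $\varphi(0)=\operatorname{id}$ we get $N(0)=\Omega(0)$, hence $N(t)\equiv\Omega(0)$, which is \eqref{conserved}.

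The main obstacle is making the two chain rules in the last paragraph rigorous at the borderline regularity actually available — $\Omega$ is only $C^0$ in $H^{s-1}$ and $C^1$ in $H^{s-2}$, and the composition map is not differentiable in the naive sense — so one must carefully track the Sobolev index of every term and appeal to the precise continuity/differentiability statements for $(f,\varphi)\mapsto f\circ\varphi$ in the regime where $f$ carries one derivative more than the target space. An alternative, cleaner route is to prove \eqref{conserved} first for smooth solutions (initial data in $C^\infty_{\sigma,c}(\R^n;\R^n)$), where all of the above are classical pointwise identities, and then pass to the general case using the density of $C^\infty_{\sigma,c}(\R^n;\R^n)$ in $H^s_\sigma(\R^n;\R^n)$ together with the continuity of the solution map (Theorem \ref{th_general}) and of composition.
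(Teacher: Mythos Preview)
Your proposal is correct and follows essentially the same route as the paper: derive the vorticity transport equation $\partial_t\Omega+(u\cdot\nabla)\Omega+(du)^\top\Omega+\Omega\,du=0$, then differentiate $d\varphi^\top\cdot(\Omega\circ\varphi)\cdot d\varphi$ in $t$ and watch everything cancel. The one place where the paper is more explicit than you is exactly the ``main obstacle'' you flag: to justify $\partial_t(\Omega\circ\varphi)=\big[\partial_t\Omega+(u\cdot\nabla)\Omega\big]\circ\varphi$ at the borderline regularity, the paper approximates $\Omega$ by a sequence $(\Omega_k)_{k\geq 1}\subset C^1\big([0,T];H^s(\R^n;\R^{n\times n})\big)$, writes the chain rule pointwise for each $\Omega_k$, and passes to the limit in $L^2$ using Lemma \ref{lemma_multiplication} and Lemma \ref{lemma_continuous_right_translation}---rather than your alternative of approximating the initial data by $C^\infty_{\sigma,c}$ and invoking continuity of the solution map.
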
 

\begin{proof}
As by assumption $u$ is a solution of \eqref{RE} in $C^0\big([0,T];H^s(\R^n;\R^n)\big)$, $\partial_t u$ is in $C^0\big([0,T];H^{s-1}(\R^n;\R^n)\big)$ and hence we have \eqref{omega_regular}. Note that the claimed identity \eqref{conserved} holds for $t=0$. Hence it holds for any $t$ if the $t$-derivative of the left-hand side vanishes. We begin by some auxiliary computations. Consider the system of equations \eqref{RE} 
\begin{equation}\label{euler_components}
\partial_t u_i + \sum_{k=1}^n u_k \partial_k u_i = \partial_i B(u)
\end{equation}
for $1 \leq i \leq n$ with $B$ as in \eqref{B_formula}. Taking the partial derivative with respect to $x_j$ in \eqref{euler_components} we get in view of $s > n/2+1$ and Lemma \ref{lemma_multiplication} in $L^2$
\[
\partial_t \partial_j u_i + \sum_{k=1}^n \partial_j u_k \partial_k u_i + \sum_{k=1}^n u_k \partial_k \partial_j u_i =  \partial_j \partial_i B(u)
\]
for all $1 \leq i,j \leq n$. Subtracting 
\[
 \partial_t \partial_i u_j + \sum_{k=1}^n \partial_i u_k \partial_k u_j + \sum_{k=1}^n u_k \partial_k \partial_i u_j =  \partial_i \partial_j B(u)
\]
from the identity above, we get the following matrix equation in $L^2$
\[
 \partial_t \Omega + du \cdot du - du^\top \cdot du^\top + (u \cdot \nabla) \Omega = 0 
\]
or
\begin{equation}\label{vorticity_equation}
 \partial_t \Omega + \Omega \cdot du + du^\top \cdot \Omega + (u \cdot \nabla) \Omega = 0
\end{equation}
where the differential operator $u \cdot \nabla$ acts componentwise. In order to compute the $t$-derivative of the left-hand side of \eqref{conserved} we now compute the one of $d\varphi$. Taking the derivative of both sides of $\partial_t \varphi=u \circ \varphi$ one gets
\begin{equation}\label{dphi_t}
 \partial_t d\varphi = du \circ \varphi \cdot d\varphi.
\end{equation}
To compute the $t$-derivative of $\Omega \circ \varphi$ we approximate $\Omega$ by a sequence $(\Omega_k)_{k \geq 1}$ in $C^1\big([0,T];H^s(\R^n;\R^{n \times n})\big)$ with 
\[
 \Omega = \lim_{k \to \infty} \Omega_k \text{ in } C^0\big([0,T];H^{s-1}(\R^n;\R^{n \times n})\big) \cap C^1\big([0,T];H^{s-2}(\R^n;\R^{n \times n})\big). 
\]
By the Sobolev imbedding \eqref{sobolev_imbedding} $\Omega_k \in C^1([0,T] \times \R^n;\R^{n \times n})$ for any $k \geq 1$. For the $t$-derivate of $\Omega_k \circ \varphi$ we get pointwise at any $(t,x) \in [0,T] \times \R^n$,
\[
 \partial_t (\Omega_k \circ \varphi) = (\partial_t \Omega_k) \circ \varphi + \big((u \cdot \nabla) \Omega_k\big) \circ \varphi.
\]
This identity holds also in $L^2$. Letting $k \to \infty$ we get by Lemma \ref{lemma_multiplication} and Lemma \ref{lemma_continuous_right_translation} the following identity in $L^2$
\begin{equation}\label{omega_phi_t}
 \partial_t (\Omega \circ \varphi) = (\partial_t \Omega) \circ \varphi + \big((u \cdot \nabla) \Omega \big) \circ \varphi.
\end{equation}
By \eqref{dphi_t}-\eqref{omega_phi_t} we have
\begin{multline*}
 \partial_t \left(d\varphi^\top \cdot \Omega \circ \varphi \cdot d\varphi\right) = d\varphi^\top \cdot du^\top \circ \varphi \cdot \Omega \circ \varphi \cdot d\varphi + d\varphi^\top \cdot \left(\partial_t \Omega\right) \circ \varphi \cdot d\varphi \\
+ d\varphi^\top \cdot \left((u \cdot \nabla) \Omega \right) \circ \varphi \cdot d\varphi + d\varphi^\top \cdot \Omega \circ \varphi \cdot du \circ \varphi \cdot d\varphi
\end{multline*}
which is equal to
\[
 d\varphi^\top \cdot R_\varphi \left(\partial_t \Omega + du^\top \cdot \Omega + \Omega \cdot du + (u \cdot \nabla) \Omega\right) \cdot d\varphi.
\]
By \eqref{vorticity_equation} the latter expression vanishes and hence \eqref{conserved} is proved. 
\end{proof}

The following corollary of Proposition \ref{prop_conserved} will be one of the key ingredients in the proof of Theorem \ref{th_not_uniform}. 

\begin{Coro}\label{coro_compact_vorticity}
Under the assumptions of Proposition \ref{prop_conserved}, the vorticity at time $t$ is given by
\begin{equation}\label{conserved_different}
 \Omega(t)=R_{\varphi(t)}^{-1}\left([d\varphi(t)^\top]^{-1} \Omega(0) [d\varphi(t)]^{-1}\right).
\end{equation}
In particular, the support of $\Omega(t,\cdot)$ remains compact if the one of $\Omega(0,\cdot)$ is.
\end{Coro}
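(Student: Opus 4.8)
The plan is to obtain \eqref{conserved_different} by a direct algebraic rearrangement of the conservation law \eqref{conserved} of Proposition \ref{prop_conserved}, and then to read off the support statement from the resulting formula. First I would observe that for each fixed $t \in [0,T]$ the map $\varphi(t)$ belongs to $\Ds^s(\R^n)$, so by definition $\det\big(d_x\varphi(t)\big) > 0$ for every $x \in \R^n$; hence the Jacobian matrix $d\varphi(t,x)$, and therefore also its transpose, is invertible at every point. Consequently one may multiply the identity \eqref{conserved} on the left by $\big[d\varphi(t)^\top\big]^{-1}$ and on the right by $\big[d\varphi(t)\big]^{-1}$ to get, pointwise on $\R^n$, $\Omega(t)\circ\varphi(t) = \big[d\varphi(t)^\top\big]^{-1}\,\Omega(0)\,\big[d\varphi(t)\big]^{-1}$. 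Composing both sides with $\varphi(t)^{-1}$, i.e.\ applying $R_{\varphi(t)}^{-1}$, which by Lemma \ref{lemma_continuous_right_translation} is a linear isomorphism of $H^{s-1}(\R^n)$ onto itself, yields exactly \eqref{conserved_different}.

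I would also include a sentence checking that the right-hand side of \eqref{conserved_different} indeed lies in $C^0\big([0,T];H^{s-1}(\R^n;\R^{n\times n})\big)$, consistently with \eqref{omega_regular}: by Cramer's rule the entries of $\big[d\varphi(t)\big]^{-1}$ are cofactors of $d\varphi(t)$ divided by $\det\big(d\varphi(t)\big)$, which stays positive and tends to $1$ at infinity, so by the lemma on division by the Jacobian determinant (Lemma \ref{analytic_det}) these entries are of the form constant plus an $H^{s-1}$-function; since $s-1 > n/2$, the space $H^{s-1}(\R^n)$ is a Banach algebra, so the matrix products and the right-translation by $\varphi(t)^{-1}$ all stay within $H^{s-1}$.

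For the final assertion, suppose $\Omega(0,\cdot)$ is supported in a compact set $K \subseteq \R^n$. Since $\big[d\varphi(t,x)^\top\big]^{-1}$ and $\big[d\varphi(t,x)\big]^{-1}$ are defined and finite at every $x$, the product $\big[d\varphi(t)^\top\big]^{-1}\,\Omega(0)\,\big[d\varphi(t)\big]^{-1}$ vanishes outside $K$; thus after applying $R_{\varphi(t)}^{-1}$ the function $x \mapsto \Omega(t,x)$ can be nonzero only when $\varphi(t)^{-1}(x) \in K$, i.e.\ only when $x \in \varphi(t)(K)$. As $\varphi(t):\R^n\to\R^n$ is continuous (indeed a $C^1$-diffeomorphism) and $K$ is compact, $\varphi(t)(K)$ is compact, so $\operatorname{supp}\Omega(t,\cdot) \subseteq \varphi(t)(K)$ is compact, as claimed.

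There is essentially no real obstacle here; the only points needing a word of care are the invertibility of $d\varphi(t)$, which is immediate from the definition of $\Ds^s(\R^n)$, the verification that the algebraic operations remain within the function spaces in which \eqref{conserved} was established, and the elementary fact that $\operatorname{supp}\big(f\circ\varphi(t)^{-1}\big) = \varphi(t)\big(\operatorname{supp} f\big)$ for the homeomorphism $\varphi(t)$ of $\R^n$.
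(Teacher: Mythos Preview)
Your proposal is correct and matches the paper's approach: the paper states this result as an immediate corollary of Proposition~\ref{prop_conserved} without giving a separate proof, so the algebraic rearrangement of \eqref{conserved} followed by composition with $\varphi(t)^{-1}$ is exactly what is intended. Your additional remarks on regularity and the explicit identification $\operatorname{supp}\Omega(t,\cdot)\subseteq\varphi(t)(K)$ are more than the paper bothers to spell out, but they are all correct and appropriate.
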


We need to estimate the right-hand side of \eqref{conserved_different}.

\begin{Lemma}\label{vorticity_estimate}
Let $s > n/2+1$ with $n \geq 2$ and $\varphi_\bullet \in \Ds^s(\R^n)$. Then there is $C > 0$ and there is a neighborhood $U \subseteq \Ds^s(\R^n)$ of $\varphi_\bullet$ such that
\[
 \frac{1}{C} ||f||_{s-1} \leq ||R_\varphi^{-1} \left([d\varphi^\top]^{-1} f [d\varphi]^{-1}\right)||_{s-1} \leq C ||f||_{s-1}
\]
for all $f \in H^{s-1}(\R^n;\R^{n \times n})$ and for all $\varphi \in U$. 
\end{Lemma}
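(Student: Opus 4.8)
The plan is to reduce the estimate to the Banach algebra property of $H^{s-1}(\R^n)$ — available since $s-1>n/2$ — together with uniform bounds on right translation near $\varphi_\bullet$. First I would rewrite the operator: since $R_\varphi^{-1}=R_{\varphi^{-1}}$ (Lemma \ref{lemma_continuous_right_translation}) and composition with a fixed map commutes with pointwise matrix multiplication,
\[
 R_\varphi^{-1}\bigl([d\varphi^\top]^{-1}\,f\,[d\varphi]^{-1}\bigr)=\bigl(([d\varphi]^{-1})^\top\!\circ\varphi^{-1}\bigr)\cdot\bigl(f\circ\varphi^{-1}\bigr)\cdot\bigl([d\varphi]^{-1}\!\circ\varphi^{-1}\bigr).
\]
Setting $A_\varphi:=[d\varphi]^{-1}\!\circ\varphi^{-1}-I$, the two outer factors are $I+A_\varphi^\top$ and $I+A_\varphi$. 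The two structural facts I need are: (i) $A_\varphi\in H^{s-1}(\R^n;\R^{n\times n})$ with $\varphi\mapsto A_\varphi$ continuous from $\Ds^s(\R^n)$ to $H^{s-1}$; and (ii) $\varphi\mapsto d\varphi-I$ continuous $\Ds^s(\R^n)\to H^{s-1}(\R^n;\R^{n\times n})$ (it is the bounded linear differentiation map applied to $\varphi-\operatorname{id}\in H^s$). For (i): the entries of $[d\varphi]^{-1}-I$ are polynomial expressions in the entries of $d\varphi-I\in H^{s-1}$ divided by $\det(d\varphi)$, so the Banach algebra property of $H^{s-1}$ and the continuity of division by $\det(d\varphi)$ (Lemma \ref{analytic_det}, cf.\ the discussion around \eqref{conjugation}) make $\varphi\mapsto[d\varphi]^{-1}-I$ continuous into $H^{s-1}$; composing with the continuous inversion map $\varphi\mapsto\varphi^{-1}$ of the topological group $\Ds^s(\R^n)$ and with the continuity of $(g,\psi)\mapsto g\circ\psi$ on $H^{s-1}\times\Ds^s(\R^n)$ (see \cite{composition}) gives (i).

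Next I would fix a neighborhood $U\subseteq\Ds^s(\R^n)$ of $\varphi_\bullet$ on which all relevant quantities are controlled uniformly: by (i) and (ii) there is $M>0$ with $\|A_\varphi\|_{s-1},\ \|d\varphi-I\|_{s-1}\le M$ for $\varphi\in U$, and, shrinking $U$ if necessary, Lemma \ref{lemma_linear_growth} (whose proof applies verbatim at Sobolev index $s-1$ in place of $s$) provides $C_1>0$ with
\[
 \tfrac{1}{C_1}\|g\|_{s-1}\le\|g\circ\varphi\|_{s-1}\le C_1\|g\|_{s-1}\quad\text{and}\quad \tfrac{1}{C_1}\|g\|_{s-1}\le\|g\circ\varphi^{-1}\|_{s-1}\le C_1\|g\|_{s-1}
\]
for all $g\in H^{s-1}(\R^n)$ and $\varphi\in U$. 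For the upper bound, the Banach algebra property of $H^{s-1}(\R^n)$ (Lemma \ref{lemma_multiplication} with $s'=s-1$) yields a constant $c_n$ (depending only on $n$ and the algebra constant) with $\|(I+a)\,h\,(I+b)\|_{s-1}\le(1+c_n\|a\|_{s-1})(1+c_n\|b\|_{s-1})\|h\|_{s-1}$ for matrix-valued $a,b,h$; applying this with $a=A_\varphi^\top$, $b=A_\varphi$, $h=f\circ\varphi^{-1}$ (note $\|A_\varphi^\top\|_{s-1}=\|A_\varphi\|_{s-1}$) and $\|f\circ\varphi^{-1}\|_{s-1}\le C_1\|f\|_{s-1}$ gives $\|R_\varphi^{-1}([d\varphi^\top]^{-1}f[d\varphi]^{-1})\|_{s-1}\le(1+c_nM)^2C_1\|f\|_{s-1}$.

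For the lower bound I would invert the relation: with $w:=R_\varphi^{-1}([d\varphi^\top]^{-1}f[d\varphi]^{-1})$, applying $R_\varphi$ gives $w\circ\varphi=[d\varphi^\top]^{-1}f[d\varphi]^{-1}$, hence $f=d\varphi^\top\,(w\circ\varphi)\,d\varphi$. The same algebra estimate with $a=d\varphi^\top-I$, $b=d\varphi-I$, $h=w\circ\varphi$, together with $\|w\circ\varphi\|_{s-1}\le C_1\|w\|_{s-1}$, gives $\|f\|_{s-1}\le(1+c_nM)^2C_1\|w\|_{s-1}$, i.e.\ $\|w\|_{s-1}\ge\frac{1}{(1+c_nM)^2C_1}\|f\|_{s-1}$. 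Taking $C:=(1+c_nM)^2C_1$ establishes both inequalities. The argument is elementary; the only point requiring care is the uniformity of the constants over $U$, which rests on the continuity of matrix inversion / division by $\det(d\varphi)$, of $\varphi\mapsto\varphi^{-1}$, and of composition at the level $H^{s-1}$ — so there is no genuinely hard step beyond organizing this bookkeeping.
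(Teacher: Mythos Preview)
Your argument is correct. It differs from the paper's proof mainly in style: the paper gives a soft argument, while you give an explicit one. The paper simply observes that both
\[
 \phi(\varphi,f)=R_\varphi^{-1}\bigl([d\varphi^\top]^{-1}f[d\varphi]^{-1}\bigr)
 \quad\text{and}\quad
 \psi(\varphi,g)=d\varphi^\top\,(R_\varphi g)\,d\varphi
\]
are continuous on $\Ds^s(\R^n)\times H^{s-1}(\R^n;\R^{n\times n})$ (by the results of \cite{composition}), linear in the second argument, vanish at $(\varphi_\bullet,0)$, and satisfy $\psi(\varphi,\phi(\varphi,f))=f$. Continuity plus linearity immediately gives locally uniform operator bounds $\|\phi(\varphi,\cdot)\|\le 1/\rho_1$ and $\|\psi(\varphi,\cdot)\|\le 1/\rho_2$ on some neighborhood of $\varphi_\bullet$, and the inverse relation converts the second bound into the lower estimate. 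No algebra is written out. Your route instead unpacks the continuity: you factor the operator as $(I+A_\varphi^\top)(f\circ\varphi^{-1})(I+A_\varphi)$ and bound each piece via the Banach algebra property of $H^{s-1}$ and the uniform composition bounds of Lemma~\ref{lemma_linear_growth}. This buys an explicit constant $C=(1+c_nM)^2C_1$, at the cost of verifying by hand that $A_\varphi\in H^{s-1}$ depends continuously on $\varphi$. One small remark: Lemma~\ref{lemma_linear_growth} already covers the index $s-1$ (it is stated for all $0\le s'\le s$), so your parenthetical about repeating its proof is unnecessary.
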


\begin{proof}
By \cite{composition}, the maps
\begin{eqnarray*}
 \phi:\Ds^s(\R^n) \times H^{s-1}(\R^n;\R^{n \times n}) &\to& H^{s-1}(\R^n;\R^{n \times n})\\
 (\varphi,f) &\mapsto& R_\varphi^{-1}\left([d\varphi^\top]^{-1} f [d\varphi]^{-1}\right)
\end{eqnarray*}
and
\begin{eqnarray*}
\psi:\Ds^s(\R^n) \times H^{s-1}(\R^n;\R^{n \times n}) &\to& H^{s-1}(\R^n;\R^{n \times n}) \\
(\varphi,g) &\mapsto & d\varphi^\top (R_\varphi g) d\varphi
\end{eqnarray*}
are continuous. Note that we have
\begin{equation}\label{phi_inverse}
 \psi\big(\varphi,\phi(\varphi,f)\big)=f
\end{equation}
for all $\varphi \in \Ds^s(\R^n)$ and for all $f \in H^{s-1}(\R^n;\R^{n \times n})$. As $\phi$ is continuous with $\phi(\varphi_\bullet,0)=0$ there exist $\rho_1 > 0$ and a neighborhood $U_1 \subseteq \Ds^s(\R^n)$ of $\varphi_\bullet$ with 
\[
 ||\phi(\varphi,f)||_{s-1} \leq 1
\]
for all $\varphi \in U_1$ and $f \in B_{\rho_1}(0)$, where $B_{\rho_1}(0)$ denotes the ball in $H^{s-1}(\R^n;\R^{n \times n})$ of radius $\rho_1$ centered at $0$. By the linearity of $\phi(\varphi,f)$ with respect to $f$
\[
 ||\phi(\varphi,f)||_{s-1} \leq \frac{1}{\rho_1} ||f||_{s-1}
\]
for all $\varphi \in U_1$ and for all $f \in H^{s-1}(\R^n;\R^{n \times n})$. The same arguments lead to a similar estimate for $\psi$, i.e. there exist $\rho_2 > 0$ and a neighborhood $U_2 \subseteq \Ds^s(\R^n)$ of $\varphi_\bullet$ such that
\[
 ||\psi(\varphi,g)||_{s-1} \leq \frac{1}{\rho_2} ||g||_{s-1}
\]
for all $\varphi \in U_2$ and for all $g \in H^{s-1}(\R^n;\R^{n \times n})$. Thus we get from \eqref{phi_inverse}
\[
 ||f||_{s-1} = ||\psi\big(\varphi,\phi(\varphi,f)\big)||_{s-1} \leq \frac{1}{\rho_2} ||\phi(\varphi,f)||_{s-1}
\]
for all $\varphi \in U_2$ and for all $f \in H^{s-1}(\R^n;\R^{n \times n})$. Then the choice $U=U_1 \cap U_2$ and $C=\max\{1/\rho_1,1/\rho_2\}$ shows the claim. 
\end{proof}

A key ingredient into the proof of Theorem \ref{th_not_uniform} is Proposition \ref{prop_nonuniform} below, which treats the special case $T=1$ of Theorem \ref{th_not_uniform}. We denote by $U \equiv U_1 \subseteq H_\sigma^s(\R^n;\R^n)$ the domain of definition of $\phi\equiv E_1$ of \eqref{ET_def},
\begin{equation}\label{time_one_map}
 \phi:U \to H_\sigma^s(\R^n;\R^n),\quad u_0 \mapsto u(1;u_0)
\end{equation}
where $u(1;u_0)$ denotes the time 1 value of the solution $u$ of \eqref{RE}. Note that by \eqref{def_exp} and Theorem \eqref{th_lagrangian}(ii), $U^s_{\exp} \cap H^s_\sigma(\R^n;\R^n) \subseteq U$ and that by Corollary \ref{flow_exp}, $U \subseteq U^s_{\exp} \cap H^s_\sigma(\R^n;\R^n)$. Altogether we have
\begin{equation}\label{u_eq_uexp}
 U=U^s_{\exp} \cap H^s_\sigma(\R^n;\R^n).
\end{equation}

\begin{Prop}\label{prop_nonuniform}
Assume that $s >n/2+1$ with $n \geq 2$. Then the map $\phi$ is at no point of $U$ locally uniformly continuous.
\end{Prop}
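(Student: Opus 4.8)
\textit{Overall approach and first reductions.} The plan is to fix $u_\bullet \in U$ and prove that $\phi$ fails to be uniformly continuous on the ball $B_R(u_\bullet) \subseteq U$ for every sufficiently small $R>0$; since uniform continuity on some neighbourhood is exactly what local uniform continuity at a point means, this gives the claim. First, by density of $C^\infty_{\sigma,c}(\R^n;\R^n)$ in $H^s_\sigma(\R^n;\R^n)$ (Lemma \ref{lemma_dense}) and openness of $U$, any ball around $u_\bullet$ contains a smaller ball centred at a point of $U$ with compactly supported vorticity, so it suffices to treat $u_\bullet$ with $\Omega_\bullet := \Omega(u_\bullet)$ having compact support $K$. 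Second, since $w \mapsto \Omega(w) = dw - dw^\top$ is a bounded linear map $H^s(\R^n;\R^n) \to H^{s-1}(\R^n;\R^{n\times n})$, one has $\|\phi(u_0)-\phi(\tilde u_0)\|_s \geq \tfrac{1}{2n}\|\Omega(\phi(u_0))-\Omega(\phi(\tilde u_0))\|_{s-1}$, so it is enough to keep the time-$1$ vorticities a definite $H^{s-1}$-distance apart. By Corollary \ref{flow_exp} the flow of the time-$1$ solution issuing from $u_0$ is $\varphi := \exp(u_0)$, and by \eqref{conserved_different} in Corollary \ref{coro_compact_vorticity} the time-$1$ vorticity equals $\Phi_\varphi(\Omega(u_0))$, where $\Phi_\varphi(f) := R_\varphi^{-1}\big([d\varphi^\top]^{-1} f\, [d\varphi]^{-1}\big)$ is the map, linear in $f$, whose two-sided bound near a fixed $\varphi_\bullet$ is Lemma \ref{vorticity_estimate}.

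\textit{The two sequences.} Put $\varphi_\bullet := \exp(u_\bullet)$; since $\exp$ is real analytic (Proposition \ref{exp_analytic}), fix $R_\ast>0$ with $B_{R_\ast}(u_\bullet) \subseteq U$, with $\exp$ of bounded derivative on $B_{R_\ast}(u_\bullet)$, and with $\exp\big(B_{R_\ast}(u_\bullet)\big)$ contained in a neighbourhood $U_0$ of $\varphi_\bullet$ on which Lemma \ref{vorticity_estimate} holds (with constant $C_0$) and whose elements are all $L$-Lipschitz. Fix $R \in (0,R_\ast]$. Choose $y^\ast$ far from $K$ (so $\varphi_\bullet(y^\ast)$ is far from $\varphi_\bullet(K)$, using $|\varphi_\bullet(x)|\to\infty$, and $u_\bullet$ is small near $y^\ast$), and a divergence-free $v \in C^\infty_{\sigma,c}(\R^n;\R^n)$ with $v(y^\ast)\neq 0$. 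By the Biot--Savart law (Lemma \ref{lemma_biot_savart}) let $\delta v_k \in H^s_\sigma(\R^n;\R^n)$ have vorticity $\delta\omega_k := \Omega(\delta v_k)$ supported in a shrinking ball $B_{\delta_k}(y^\ast)$ with $\|\delta\omega_k\|_{s-1} = \rho R$ ($\rho$ small and fixed; $\delta_k\to 0$ chosen below); then $\|\delta v_k\|_s$ and $\|\delta v_k\|_{C^1}$ are uniformly bounded (Sobolev embedding), while $\delta\omega_k \rightharpoonup 0$ together with $\|\delta v_k\|_s \geq \tfrac{1}{2n}\rho R > 0$ forces $(\delta v_k)$ not to converge in $H^s$. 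Set
\[
 u_0^k := u_\bullet + \delta v_k + \tfrac1k v, \qquad \tilde u_0^k := u_\bullet + \delta v_k .
\]
For $\rho$ small and $k$ large both lie in $B_R(u_\bullet) \subseteq U$, neither converges, while $\|u_0^k - \tilde u_0^k\|_s = \tfrac1k\|v\|_s \to 0$. Write $\varphi_k := \exp(u_0^k)$, $\tilde\varphi_k := \exp(\tilde u_0^k) \in U_0$; by the mean value theorem for the analytic map $\exp$, $\|\varphi_k - \tilde\varphi_k\|_s \leq C/k \to 0$.

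\textit{The estimate and conclusion.} By \eqref{conserved_different} and linearity of $f\mapsto \Phi_\varphi(f)$,
\[
 \Omega(\phi(u_0^k)) - \Omega(\phi(\tilde u_0^k)) = \big[\Phi_{\varphi_k}(\Omega_\bullet) - \Phi_{\tilde\varphi_k}(\Omega_\bullet)\big] + \Phi_{\varphi_k}\big(\tfrac1k\Omega(v)\big) + \big[\Phi_{\varphi_k}(\delta\omega_k) - \Phi_{\tilde\varphi_k}(\delta\omega_k)\big].
\]
The middle term has $H^{s-1}$-norm $\leq \tfrac{C_0}{k}\|\Omega(v)\|_{s-1} \to 0$ by Lemma \ref{vorticity_estimate}. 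The first bracket is supported in $\varphi_k(K)\cup\tilde\varphi_k(K)$, hence in a fixed compact set near $\varphi_\bullet(K)$; the last bracket is supported in $B_{L\delta_k}(\varphi_k(y^\ast)) \cup B_{L\delta_k}(\tilde\varphi_k(y^\ast))$, which lies near $\varphi_\bullet(y^\ast)$, a fixed distance from the first region. Granting the flow-displacement bound $|\varphi_k(y^\ast)-\tilde\varphi_k(y^\ast)| \geq c_v/k$ for $k$ large, the choice $\delta_k := c_v/(3Lk)$ makes the two pieces of the last bracket have disjoint supports, and then, arguing as in the disjoint-support step of the proof of Theorem \ref{lemma_composition_not_uniform} and using the lower bound in Lemma \ref{vorticity_estimate}, $\|\Phi_{\varphi_k}(\delta\omega_k) - \Phi_{\tilde\varphi_k}(\delta\omega_k)\|_{s-1} \geq c\,\|\delta\omega_k\|_{s-1} = c\rho R$ for a fixed $c>0$. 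Since the first bracket is supported a fixed distance from the last while $\operatorname{supp}\delta\omega_k$ shrinks, it changes this lower bound only by a fixed factor (again by the support estimates of Theorem \ref{lemma_composition_not_uniform}), and the middle term is $o(1)$; hence $\liminf_k \|\phi(u_0^k)-\phi(\tilde u_0^k)\|_s \geq c'\rho R > 0$ while $\|u_0^k - \tilde u_0^k\|_s \to 0$. Thus $\phi$ is not uniformly continuous on $B_R(u_\bullet)$; as $R$ and $u_\bullet$ (over the dense set of admissible base points) are arbitrary, $\phi$ is at no point of $U$ locally uniformly continuous.

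\textit{The main obstacle.} The crux is the flow-displacement bound $|\varphi_k(y^\ast)-\tilde\varphi_k(y^\ast)| \geq c_v/k$: one must see that the localized perturbation $\tfrac1k v$ of the data moves the time-$1$ Lagrangian trajectory through $y^\ast$ by a non-negligible multiple of $\tfrac1k$, uniformly as the concentrating bump $\delta v_k$ (common to both data) varies and despite $\|u_\bullet\|_s$ possibly being large. I would prove this by a Gronwall argument for $Y(t) := \varphi_k(t,y^\ast)-\tilde\varphi_k(t,y^\ast)$, which satisfies $\dot Y = A_k(t)Y + \delta u^k\big(t,\tilde\varphi_k(t,y^\ast)\big)$ with $\delta u^k := u^k-\tilde u^k$ the corresponding difference of solutions: choosing $y^\ast$ far from $K$ and $\rho$ small makes $u^k$ small in $C^1$ near the trajectory (so $\|A_k(t)\|$ is small), while the pressure-mediated long-range coupling between the region near $y^\ast$ and the region near $K$ is weak (Calder\'on--Zygmund decay), so that $\delta u^k(t)$ stays within $o(1/k)$ of $\tfrac1k v$ along the trajectory for $t\in[0,1]$; integrating gives $Y(1) = \tfrac1k v(y^\ast) + o(1/k)$, whence the bound with $c_v := \tfrac12|v(y^\ast)|$. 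The remaining ingredients — uniform-in-$u_0$ bounds on the solutions over $B_R(u_\bullet)$, the $H^{s-1}$ disjoint-support inequality, and the density/ball-shrinking reduction — are routine or already present in the paper.
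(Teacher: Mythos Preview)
Your overall architecture matches the paper's proof almost exactly: both reduce to a base point with compactly supported vorticity, rewrite the time-$1$ vorticity via the conservation law \eqref{conserved_different}, invoke the two-sided bound of Lemma \ref{vorticity_estimate}, build two sequences differing by a perturbation of size $O(1/k)$ while both carry a concentrating divergence-free bump, and then separate the images of the bump under the two flows to produce a lower bound proportional to $R$. Your identification of the flow-displacement inequality $|\varphi_k(y^\ast)-\tilde\varphi_k(y^\ast)|\ge c_v/k$ as the crux is also exactly right.

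The genuine gap is in your Gronwall sketch for that inequality. You need the forcing $\delta u^k(t,\tilde\varphi_k(t,y^\ast))$ to equal $\tfrac1k v(y^\ast)+o(1/k)$ uniformly in $t\in[0,1]$ and in $k$; but $\delta u^k$ is (to leading order) governed by the Euler equation linearized about $\tilde u^k$, whose coefficients near $y^\ast$ are dominated not by the far-away $u_\bullet$ but by the bump $\delta v_k$ itself. Thus neither ``$u^k$ small in $C^1$ near the trajectory'' nor ``Calder\'on--Zygmund decay of the coupling with $K$'' addresses the interaction of $\tfrac1k v$ with $\delta v_k$, and this interaction changes with $k$. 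Making your heuristic rigorous would amount to redoing, at each $k$ and for each bump, the analysis the paper isolates once. The paper avoids this by exploiting the analyticity of $\exp$: one Taylor-expands $\tilde\varphi_k-\varphi_k=d_{\bar u_0+w_k}\exp(v_k)+O(\|v_k\|_s^2)$ and then uses the \emph{uniform} continuity of $v\mapsto d_v\exp$ on the small ball (inequality \eqref{near_differential}) to replace $d_{\bar u_0+w_k}\exp$ by $d_{\bar u_0}\exp$ with an error $\le\tfrac{m}{4}\|v_k\|_s$. All the work is thereby pushed onto a single $k$-independent quantity $|d_{\bar u_0}\exp(v)(x^\ast)|$, and Lemma \ref{lemma_dexp} shows this is $\ge m>0$ by writing $\varphi^{(\varepsilon)}(1,x^\ast)$ through Biot--Savart and the conservation law and differentiating in $\varepsilon$: the contribution from $\Omega(\bar u_0)$ is small because $x^\ast$ is far from its support, while the contribution from $\Omega(v)$ is, up to errors controlled by Lemma \ref{lemma_almost_identity}, the Biot--Savart integral reproducing $v(x^\ast)\neq 0$. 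This decoupling of the displacement estimate from the concentrating bump is the substantive idea your sketch is missing.
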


\noindent
Before proving the proposition we need some lemmas.

\begin{Lemma}\label{lemma_almost_identity}
Let $\varphi \in C^0\big([0,1];\Ds^s(\R^n)\big)$. Then for any $\varepsilon > 0$ there is $R > 0$ such that
\[
 |\varphi(t,y) - y| < \varepsilon \quad \mbox{and} \quad |d\varphi(t,y) - I_n| < \varepsilon
\]
for any $t \in [0,1]$ and for any $y \in \R^n$ with $|y| \geq R$. Here $|\cdot|$ denotes the euclidean norm and $I_n$ the $n \times n$ identity matrix. 
\end{Lemma}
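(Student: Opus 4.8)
The plan is to reduce the uniformity in $t$ to a finite problem by exploiting compactness of the curve $t\mapsto\varphi(t)$. First I would set $g(t):=\varphi(t)-\operatorname{id}$, so that by the definition of $\Ds^s(\R^n)$ one has $g\in C^0\big([0,1];H^s(\R^n;\R^n)\big)$, and observe that $\varphi(t,y)-y=g(t,y)$ and $d\varphi(t,y)-I_n=d_yg(t,y)$. Since $s>n/2+1$, the Sobolev imbedding \eqref{sobolev_imbedding} with $k=1$ gives a continuous inclusion $H^s(\R^n;\R^n)\hookrightarrow C^1_0(\R^n;\R^n)$; composing it with $g$ shows that $g$ is also continuous as a map $[0,1]\to C^1_0(\R^n;\R^n)$. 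Hence the image $\mathcal K:=\{g(t)\,|\,t\in[0,1]\}$ is a compact, in particular totally bounded, subset of the Banach space $C^1_0(\R^n;\R^n)$.

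Next, fixing $\varepsilon>0$, I would use total boundedness to find finitely many parameters $t_1,\ldots,t_N\in[0,1]$ such that every $g(t)$ lies within $C^1$-distance $\varepsilon/2$ of some $g(t_j)$. Each $g(t_j)\in C^1_0(\R^n;\R^n)$, so $g(t_j,y)$ and $d_yg(t_j,y)$ both tend to $0$ as $|y|\to\infty$; pick $R_j>0$ with $|g(t_j,y)|<\varepsilon/2$ and $|d_yg(t_j,y)|<\varepsilon/2$ for all $|y|\ge R_j$, and put $R:=\max_{1\le j\le N}R_j$. Given any $t\in[0,1]$, choose $j$ with $\|g(t)-g(t_j)\|_{C^1}<\varepsilon/2$; then for $|y|\ge R$,
\[
 |g(t,y)|\le |g(t,y)-g(t_j,y)|+|g(t_j,y)|<\tfrac{\varepsilon}{2}+\tfrac{\varepsilon}{2}=\varepsilon,
\]
and likewise $|d_yg(t,y)|\le \|g(t)-g(t_j)\|_{C^1}+|d_yg(t_j,y)|<\varepsilon$. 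In view of $\varphi(t,y)-y=g(t,y)$ and $d\varphi(t,y)-I_n=d_yg(t,y)$, this is exactly the asserted estimate.

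The only delicate point is the uniformity in $t$: a naive pointwise application of the Sobolev imbedding to each $\varphi(t)$ separately would yield a radius $R=R(t)$ with no obvious control as $t$ ranges over $[0,1]$. Compactness of the curve $t\mapsto g(t)$ in $C^1_0(\R^n;\R^n)$ — which is precisely where the hypothesis $\varphi\in C^0([0,1];\Ds^s(\R^n))$ together with the imbedding $H^s\hookrightarrow C^1_0$ is used — removes this difficulty. (Alternatively, one could argue by contradiction: a failure of the claim would produce $\varepsilon>0$, parameters $t_k\in[0,1]$ and points $y_k$ with $|y_k|\to\infty$ and, say, $|g(t_k,y_k)|\ge\varepsilon$; passing to a subsequence with $t_k\to t^\ast$ yields $g(t_k)\to g(t^\ast)$ in $C^1_0(\R^n;\R^n)$, and since $g(t^\ast)$ vanishes at infinity this forces $|g(t_k,y_k)|<\varepsilon$ for $k$ large, a contradiction.)
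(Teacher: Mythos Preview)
Your proof is correct and follows essentially the same approach as the paper: both use the Sobolev imbedding $H^s\hookrightarrow C^1_0$ together with compactness to reduce to finitely many reference times $t_j$, find a radius $R_j$ for each via the vanishing-at-infinity property, and then take the maximum. The only cosmetic difference is that the paper phrases the compactness step as a finite open cover of $[0,1]$ using continuity of $t\mapsto\varphi(t)$ in $H^s$, whereas you phrase it as total boundedness of the image in $C^1_0$.
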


\begin{proof}[Proof of Lemma \ref{lemma_almost_identity}]
Note that by the Sobolev imbedding \eqref{sobolev_imbedding}, there exists $C > 0$ such that for any $f \in H^s(\R^n;\R^n)$,
\begin{equation}\label{imbedding_constant}
||f||_{C^1} \leq C ||f||_s.
\end{equation} 
As for any element $\varphi \in \Ds^s(\R^n)$, $\varphi(y)-y$ is in $H^s(\R^n;\R^n)$, it then follows that for any $t_0 \in [0,1]$ there is $R_{t_0} > 0$ such that
\[
 |\varphi(t_0,y) - y| < \varepsilon/2 \quad \mbox{and} \quad |d\varphi(t_0,y) - I_n| < \varepsilon/2
\]
for all $|y| \geq R_{t_0}$. Choose $\delta > 0$ in such a way that for any $t$ in $(t_0-\delta,t_0+\delta) \cap [0,1]$,
\[
 ||\varphi(t)-\varphi(t_0)||_s < \frac{\varepsilon}{2C}
\]
where $C > 0$ is the imbedding constant in \eqref{imbedding_constant}. Then 
\[
 |\varphi(t,y)-y| < \varepsilon \quad \mbox{and} \quad |d\varphi(t,y) - I_n| < \varepsilon
\]
for any $t \in (t_0-\delta,t_0+\delta) \cap [0,1]$ and $|y| \geq R_{t_0}$. Since we can cover $[0,1]$ with finitely many of such intervals we get the claim.
\end{proof}

\begin{Lemma}\label{lemma_dexp} 
Let $U$ be the domain of $\phi$ of \eqref{time_one_map} and $u_0 \in U \cap C^\infty_c(\R^n;\R^n)$. Consider the restriction of the differential of $\exp$ at $u_0$ to $H^s_\sigma(\R^n;\R^n)$, 
\[
 d_{u_0}\exp:H_\sigma^s(\R^n;\R^n) \to H^s(\R^n;\R^n),\quad v_0 \mapsto \left. \partial_\varepsilon \right|_{\varepsilon=0} \exp(u_0 + \varepsilon v_0).
\]
Then there exists $m > 0$ with the following property: For any $R > 0$ there exists $v \in C^\infty_{\sigma,c}(\R^n;\R^n)$ with $|\big(d_{u_0}\exp(v)\big)(x^\ast)| \geq m$, $||v||_s=1$ and support in the ball $B_1(x^\ast)=\{ x \in \R^n \;|\; |x-x^\ast| < 1 \}$ for some $x^\ast \in \R^n$ with $|x^\ast| \geq R$.
\end{Lemma}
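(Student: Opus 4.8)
The plan is to reduce the statement to the single assertion that the differential $d_{u_0}\exp$ behaves, at points far from $\operatorname{supp}u_0$, asymptotically like $d_0\exp=\operatorname{id}$ (Lemma \ref{d_exp}). Fix once and for all a field $v_0\in C_{\sigma,c}^\infty(\Rn;\Rn)$ with $\operatorname{supp}v_0\subseteq B_1(0)$, $\|v_0\|_s=1$ and $v_0(0)\neq 0$ (such a field exists: e.g. take $h\in C_c^\infty(B_1(0))$ with $\partial_2 h(0)\neq 0$ and set $v_0$ proportional to $(\partial_2 h,-\partial_1 h,0,\dots,0)$). Put $m:=|v_0(0)|/2>0$. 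For $x^\ast\in\Rn$ let $v:=v_0(\cdot-x^\ast)$, so that $v\in C_{\sigma,c}^\infty(\Rn;\Rn)$, $\operatorname{supp}v\subseteq B_1(x^\ast)$, $\|v\|_s=\|v_0\|_s=1$ by translation invariance of the $H^s$-norm, and $v(x^\ast)=v_0(0)$. Thus it suffices to show that there is $R_0=R_0(u_0)>0$ such that $\bigl|d_{u_0}\exp(v)(x^\ast)-v(x^\ast)\bigr|\le m$ whenever $|x^\ast|\ge R_0$; given $R>0$ one then picks any $x^\ast$ with $|x^\ast|\ge\max\{R,R_0\}$ and the asserted estimate $|d_{u_0}\exp(v)(x^\ast)|\ge |v(x^\ast)|-m=m$ follows.

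Next, set up the variation. Write $\varphi(t):=\exp(tu_0)$ for $0\le t\le 1$ (using $u_0\in U=U^s_{\exp}\cap H^s_\sigma(\Rn;\Rn)$ and star-shapedness of $U^s_{\exp}$) and $u(t):=\partial_t\varphi(t)\circ\varphi(t)^{-1}$, the corresponding Euler solution. For $|\varepsilon|$ small put $\varphi^\varepsilon(t):=\exp\bigl(t(u_0+\varepsilon v)\bigr)$, $u^\varepsilon(t):=\partial_t\varphi^\varepsilon(t)\circ\varphi^\varepsilon(t)^{-1}$. Since the spray of \eqref{first_order} is real analytic (Proposition \ref{prop_analytic_spray}) its flow is jointly analytic, so $\delta\varphi(t):=\partial_\varepsilon\big|_{\varepsilon=0}\varphi^\varepsilon(t)=t\,d_{tu_0}\exp(v)$ is $H^s$-valued and $\sup_{t\in[0,1]}\|\delta\varphi(t)\|_s\le C$ with $C=C(u_0)$ independent of the translate (continuity of $w\mapsto d_w\exp$ on the compact set $\{tu_0\}$). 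One checks likewise that $\delta u(t):=\partial_\varepsilon\big|_{\varepsilon=0}u^\varepsilon(t)$ is well defined with $\sup_t\|\delta u(t)\|_s\le C$, and $\delta\Omega(t):=\Omega(\delta u(t))=\partial_\varepsilon\big|_{\varepsilon=0}\Omega(u^\varepsilon(t))$ with $\sup_t\|\delta\Omega(t)\|_{s-1}\le C$. Differentiating $\partial_t\varphi^\varepsilon=u^\varepsilon\circ\varphi^\varepsilon$ in $\varepsilon$ and integrating gives, pointwise,
\[
 d_{u_0}\exp(v)(x^\ast)=\delta\varphi(1,x^\ast)=\int_0^1\Bigl(\delta u\bigl(t,\varphi(t,x^\ast)\bigr)+Du\bigl(t,\varphi(t,x^\ast)\bigr)\cdot\delta\varphi(t,x^\ast)\Bigr)\,dt .
\]
By Lemma \ref{lemma_almost_identity} applied to $\varphi\in C^0([0,1];\Ds^s(\Rn))$, and since $\{u(t):t\in[0,1]\}$ is compact in $H^s\hookrightarrow C^1_0(\Rn;\Rn)$ and hence has uniformly small tails, there is $\eta(R)\to 0$ $(R\to\infty)$ with $|\varphi(t,y)-y|\le\eta(|y|)$ and $|Du(t,y)|\le\eta(|y|)$ for all $t\in[0,1]$ and $|y|$ large. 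Combining this with $\|\delta\varphi(t)\|_{C^0}\le C$, $\|\delta u(t)\|_{C^1}\le C$ and $|\varphi(t,x^\ast)-x^\ast|\le\eta(|x^\ast|)$, the displayed formula reduces the whole claim to
\[
 \sup_{t\in[0,1]}\bigl|\delta u(t,x^\ast)-v(x^\ast)\bigr|\longrightarrow 0\qquad(|x^\ast|\to\infty),\ \text{uniformly over the translates }v.
\]

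The vorticity now does the work. By Corollary \ref{coro_compact_vorticity}, $\Omega(u(t))$ is supported in $\varphi(t)\bigl(\operatorname{supp}\Omega(u_0)\bigr)$, which, as $\|\varphi(t)-\operatorname{id}\|_{C^0}$ is bounded on $[0,1]$, lies in a fixed ball $B_{C_0}(0)$; for $|x^\ast|\ge C_0+10$ the ball $B_{10}(x^\ast)$ is thus disjoint from $\operatorname{supp}\Omega(u(t))$ for every $t$. Differentiating the vorticity equation \eqref{vorticity_equation} in $\varepsilon$ and restricting to $B_{10}(x^\ast)$ kills every term carrying a factor $\Omega$ or $\nabla\Omega$ — in particular the only term involving the nonlocally reconstructed $\delta u$ — so on this ball $\delta\Omega$ obeys the pure transport equation $\partial_t\delta\Omega=-(u\cdot\nabla)\delta\Omega-\delta\Omega\,Du-Du^\top\delta\Omega$ with coefficients of size $\eta(|x^\ast|)$; moreover $\delta\Omega(t)$ is there as regular as $\delta\Omega(0)=\Omega(v)=:\omega_v\in C_c^\infty$, notwithstanding any global loss of regularity. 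Since $\omega_v$ is supported in $B_1(x^\ast)$ and is transported by the small velocity $u$, the near-$x^\ast$ part of $\delta\Omega(t)$ stays in $B_2(x^\ast)$; cutting off with $\chi_{x^\ast}\in C_c^\infty(B_5(x^\ast))$, $\chi_{x^\ast}\equiv 1$ on $B_2(x^\ast)$ (so that the localization commutator vanishes), and running an $H^{s-1}$ energy estimate (using $\operatorname{div}u=0$, $\|\omega_v\|_{s-1}\le C$, and the $H^s$-smallness of $u$ near $x^\ast$) one gets via Gronwall $\sup_{t\in[0,1]}\|\chi_{x^\ast}\delta\Omega(t)-\omega_v\|_{s-1}\le C\,\eta(|x^\ast|)$, uniformly over the translates. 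Finally, recover velocities from vorticities by the Biot--Savart law (Lemma \ref{lemma_biot_savart}): $v=K\ast\omega_v$ (because $v$ is a divergence-free $L^2$ field with vorticity $\omega_v$, and Biot--Savart reconstructs the unique such field), and $\delta u(t)=K\ast\delta\Omega(t)$. Splitting $\delta u(t,x^\ast)-v(x^\ast)=\int K(x^\ast-y)\bigl(\delta\Omega(t,y)-\omega_v(y)\bigr)\,dy$ at $|y-x^\ast|=5$: on $\{|y-x^\ast|<5\}$ the integrand equals $K(x^\ast-y)(\chi_{x^\ast}\delta\Omega(t)-\omega_v)(y)$, whose integral is $\le C\,\|\chi_{x^\ast}\delta\Omega(t)-\omega_v\|_{s-1}\int_{|z|<5}|z|^{1-n}\,dz\le C\,\eta(|x^\ast|)$; on $\{|y-x^\ast|\ge 5\}$ one has $\omega_v(y)=0$ and $\delta\Omega(t,y)$ supported in $B_{C_0+1}(0)$, so this contribution is $O\bigl((|x^\ast|-C_0-1)^{1-n}\bigr)$. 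Hence $\sup_t|\delta u(t,x^\ast)-v(x^\ast)|\to 0$, which completes the proof.

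The main obstacle is the localized vorticity estimate: one must detach the $O(1)$ dynamics of $\delta\Omega$ near the origin from its dynamics near $x^\ast$, and a priori the linearized vorticity equation couples the two nonlocally through $\delta u=K\ast\delta\Omega$; the key point that unblocks this is that the only such coupling terms carry a factor of $\Omega$ (or $\nabla\Omega$), which vanishes identically on $B_{10}(x^\ast)$, so on that ball $\delta\Omega$ solves a genuine transport equation with small coefficients. A secondary point requiring care is to keep every constant uniform over the family of translates $v=v_0(\cdot-x^\ast)$ and over $t\in[0,1]$, which is why all bounds are phrased through $\|v_0\|_s=1$ and the tail function $\eta$.
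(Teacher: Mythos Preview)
Your approach is correct in outline and reaches the same conclusion as the paper, but by a genuinely different route. The paper works entirely with the integral representation: it writes $\varphi^{(\varepsilon)}(1,x^\ast)=x^\ast+I^{(\varepsilon)}(x^\ast)$ by combining Biot--Savart with the vorticity conservation law \eqref{conserved}, splits $I^{(\varepsilon)}=I_1^{(\varepsilon)}+\varepsilon I_2^{(\varepsilon)}$ according to whether the vorticity comes from $\Omega(u_0)$ or from $\Omega(w_{x^\ast})$, differentiates each integrand in $\varepsilon$ pointwise, and estimates the resulting pieces $J_1,J_2$ directly at the single point $x^\ast$ using only that $\varphi,d\varphi$ are close to the identity there (Lemma \ref{lemma_almost_identity}) together with crude kernel bounds. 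You instead set up the linearized system $(\delta\varphi,\delta u,\delta\Omega)$, observe that on $B_{10}(x^\ast)$ the linearized vorticity equation decouples from the nonlocal term $\delta u$ (since $\Omega(u)$ and $\nabla\Omega(u)$ vanish there), run a localized $H^{s-1}$ energy estimate for $\chi_{x^\ast}\delta\Omega-\omega_v$, and only then invoke Biot--Savart. Your argument is more PDE-flavoured and modular; the paper's is more elementary because it never needs any Sobolev-level smallness of $u$ near $x^\ast$, only $C^1$-smallness.

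Two places in your sketch deserve care. First, the support assertion that the near-$x^\ast$ piece of $\delta\Omega(t)$ stays in $B_2(x^\ast)$ (so that the cutoff commutator $(u\cdot\nabla\chi_{x^\ast})\delta\Omega$ vanishes) cannot be read off the \emph{local} transport equation alone, since that equation is not closed on $B_{10}(x^\ast)$. The clean justification is to differentiate the conservation law \eqref{conserved_different} in $\varepsilon$, which exhibits $\delta\Omega(t)$ explicitly as a sum of a term supported in $\varphi(t)\bigl(\operatorname{supp}\omega_v\bigr)\subset B_2(x^\ast)$ and terms supported in $\varphi(t)\bigl(\operatorname{supp}\Omega(u_0)\bigr)$. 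Second, your Gronwall step needs the forcing $F=-(u\cdot\nabla)\omega_v-\omega_v\,du-du^\top\omega_v$ to be small in $H^{s-1}$; since the Gronwall constant involves the \emph{global} $\|u\|_s$ (which is not small), the smallness must come from $\|\tilde\chi\,u(t)\|_{s-1}\to 0$ uniformly in $t$ for a cutoff $\tilde\chi$ supported near $x^\ast$. This holds (density of $C_c^\infty$ in $H^{s-1}$, translation-invariant operator bound for multiplication by $\tilde\chi(\cdot-x^\ast)$, and compactness of $\{u(t):t\in[0,1]\}$ in $H^s$), but it is not the $C^1_0$-decay you actually quote and should be argued. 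Alternatively, running the characteristic argument in $L^\infty$ rather than $H^{s-1}$ avoids this issue entirely and brings your proof very close to the paper's pointwise computation.
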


\begin{proof}[Proof of Lemma \ref{lemma_dexp}]
Take $w \in C^\infty_{\sigma,c}(\R^n;\R^n)$ with support in the ball $B_1(0)$ and with the properties $||w||_s=1$ and $w(0) \neq 0$. Choose $\delta > 0$ so that $u_0 + \varepsilon w(\cdot - x^\ast) \in U$ for any $|\varepsilon| \leq \delta$ and any $x^\ast \in \R^n$. Now define $w_{x^\ast}:=w(\cdot - x^\ast)$ where $x^\ast$ will be conveniently chosen at the end of the proof. For $\varepsilon \in [-\delta,\delta]$ we denote by $u^{(\varepsilon)}$ the solution of \eqref{RE} with initial data $u_0 + \varepsilon w_{x^\ast}$ and by $\varphi^{(\varepsilon)}$ the corresponding flow. By Corollary \ref{flow_exp}, for $(\varepsilon,t) \in [-\delta,\delta] \times [0,1]$, $\varphi^{(\varepsilon)}(t)$ is given by
\begin{equation}\label{phi_exp}
 \varphi^{(\varepsilon)}(t)=\exp\big(t(u_0+\varepsilon w_{x^\ast})\big).
\end{equation}
Thus by Proposition \ref{exp_analytic}
\begin{equation}\label{exp_c1}
\varphi^{(\cdot)}(\cdot):[-\delta,\delta] \times [0,1] \to \Ds^s(\R^n)
\end{equation}
is $C^1$. Hence, denoting by $I_n$ the $n \times n$-identity matrix,
\begin{equation}\label{dexp_c1}
 d \varphi^{(\cdot)}(\cdot)-I_n:[-\delta,\delta] \times [0,1] \to H^{s-1}(\R^n;\R^{n \times n})
\end{equation}
is $C^1$ as well. By Corollary \ref{coro_compact_vorticity}, the vorticity $\Omega\big(u^{\varepsilon}(t)\big)$ has compact support. Hence by Lemma \ref{lemma_biot_savart} (Biot-Savart law) we have for all $x \in \R^n$ and $t \in [0,1]$, $|\varepsilon| \leq \delta$
\[
 u^{(\varepsilon)}(t,x) = \frac{1}{\omega_n} \int_{\R^n} \Omega\big(u^{(\varepsilon)}(t)\big)(y) \frac{x-y}{|x-y|^n} \,dy.
\]
Now using the conservation law \eqref{conserved} we get
\small
\[
 u^{(\varepsilon)}(t,x) = \frac{1}{\omega_n} \int_{\R^n} R_{\varphi^{(\varepsilon)}(t,y)}^{-1} \Big([d\varphi^{(\varepsilon)}(t,\cdot)^\top]^{-1} \big(\Omega(u_0)+\varepsilon \Omega(w_{x^\ast})\big)(\cdot) [d\varphi^{(\varepsilon)}(t,\cdot)]^{-1}\Big) \frac{x-y}{|x-y|^n}\,dy.
\]
\normalsize
Using that $\varphi^{(\varepsilon)}(t)$ is volume-preserving we have
\small
\[
 u^{(\varepsilon)}(t,x)=\frac{1}{\omega_n} \int_{\R^n} [d\varphi^{(\varepsilon)}(t,y)^\top]^{-1} \big(\Omega(u_0)+\varepsilon \Omega(w_{x^\ast})\big)(y) [d\varphi^{(\varepsilon)}(t,y)]^{-1} \frac{x-\varphi^{(\varepsilon)}(t,y)}{|x-\varphi^{(\varepsilon)}(t,y)|^n} \,dy.
\]
\normalsize
The relation $\varphi^{(\varepsilon)}(1) = \operatorname{id} + \int_0^1 u^{(\varepsilon)}(t) \circ \varphi^{(\varepsilon)}(t) \,dt$ then leads to 
\begin{equation}\label{varphi_representation}
\varphi^{(\varepsilon)}(1,x)=x+I^{(\varepsilon)}(x)
\end{equation}
where $I^{(\varepsilon)}(x)$ is given by
\small
\[
  \frac{1}{\omega_n} \int_0^1 \int_{\R^n} [d\varphi^{(\varepsilon)}(t,y)^\top]^{-1} \big(\Omega(u_0)+\varepsilon \Omega(w_{x^\ast})\big)(y) [d\varphi^{(\varepsilon)}(t,y)]^{-1} \frac{\varphi^{(\varepsilon)}(t,x)-\varphi^{(\varepsilon)}(t,y)}{|\varphi^{(\varepsilon)}(t,x)-\varphi^{(\varepsilon)}(t,y)|^n} \,dy dt.
\]
\normalsize
Write $I^{(\varepsilon)}(x)$ as the sum $I_1^{(\varepsilon)}(x) + \varepsilon I_2^{(\varepsilon)}(x)$, where $I_1^{(\varepsilon)}(x)$ is defined by
\small
\[
\frac{1}{\omega_n} \int_0^1 \int_{y \in \operatorname{supp}u_0} [d\varphi^{(\varepsilon)}(t,y)^\top]^{-1} \Omega(u_0)(y) [d\varphi^{(\varepsilon)}(t,y)]^{-1} \frac{\varphi^{(\varepsilon)}(t,x)-\varphi^{(\varepsilon)}(t,y)}{|\varphi^{(\varepsilon)}(t,x)-\varphi^{(\varepsilon)}(t,y)|^n} \,dy dt
\]
\normalsize
and
\small
\[
I_2^{(\varepsilon)}(x):= \frac{1}{\omega_n} \int_0^1 \int_{\R^n} [d\varphi^{(\varepsilon)}(t,y)^\top]^{-1} \Omega(w_{x^\ast})(y) [d\varphi^{(\varepsilon)}(t,y)]^{-1} \frac{\varphi^{(\varepsilon)}(t,x)-\varphi^{(\varepsilon)}(t,y)}{|\varphi^{(\varepsilon)}(t,x)-\varphi^{(\varepsilon)}(t,y)|^n} \,dy dt.
\]
\normalsize
Next we need to get an expression for 
\[
 \big(d_{u_0}\exp(w_{x^\ast})\big)(x)=\left. \partial_\varepsilon \right|_{\varepsilon=0} \varphi^{(\varepsilon)}(1,x).
\]
This will be accomplished by taking the $\varepsilon$-derivative of the right-hand side of \eqref{varphi_representation}. But first we have to make some preparations.\\
Consider the curves $\varphi:=\left. \varphi^{(\varepsilon)} \right|_{\varepsilon=0}:[0,1] \to \Ds^s(\R^n)$ and $\varphi^{-1}:[0,1] \to \Ds^s(\R^n)$. As by Proposition \ref{exp_analytic}, $U \to C^0\big([0,1];\Ds^s(\R^n)\big)$, $v_0 \to [t \mapsto \exp(t v_0)]$ is continuous, there exist $Q > 0$ and $\delta' > 0$ so that for any $\tilde u_0$ in the $\delta'$-ball $B_{\delta'}(u_0)$ in $H^s_\sigma(\R^n;\R^n)$ centered at $u_0$ one has
\[
 \max_{0 \leq t \leq 1} ||\exp(t \tilde u_0)-\operatorname{id}||_s \quad \mbox{and} \quad \max_{0 \leq t \leq 1} ||(\exp(t \tilde u_0)^{-1}-\operatorname{id}||_s < Q.
\]
As $||\varepsilon w_{x^\ast}||_s=\varepsilon$ for any $x^\ast \in \R^n$, it follows that $u_0 + \varepsilon w_{x^\ast} \in B_\varepsilon(u_0)$ for all $|\varepsilon| \leq \delta$ and hence, by choosing $\delta$ smaller if necessary, so that $\delta \leq \delta'$, one has
\begin{equation}\label{uniform_bound}
\sup_{\mbox{\scriptsize $\begin{array}{c} 0 \leq t \leq 1 \\ x^\ast \in \R^n \\ |\varepsilon| \leq \delta \end{array}$}} ||\varphi^{(\varepsilon)}(t)-\operatorname{id}||_s < Q,\qquad \sup_{\mbox{\scriptsize $\begin{array}{c} 0 \leq t \leq 1 \\ x^\ast \in \R^n \\ |\varepsilon| \leq \delta \end{array}$}} ||\big(\varphi^{(\varepsilon)}(t)\big)^{-1}-\operatorname{id}||_s < Q.
\end{equation}
By \eqref{uniform_bound} and the Sobolev imbedding \eqref{sobolev_imbedding} there exists a constant $M > 0$ with
\begin{equation}\label{uniform_M_bound}
 |\varphi^{(\varepsilon)}(t,x)-x|,\;|d\varphi^{(\varepsilon)}(t,x)| < M
\end{equation}
for any $(\varepsilon,t) \in [-\delta,\delta]\times [0,1]$, $x \in \R^n$ and $x^\ast \in \R^n$. For $N \geq 1$ choose $R_N > 0$ so large that for any $z \in \R^n$ with $|z| \geq R_N$ 
\[
 d(z,\operatorname{supp}u_0):=\inf_{y \in \operatorname{supp} u_0} |z - y| > (N+2) M.
\]
For any $y \in \operatorname{supp} u_0$ and any $x^\ast \in \R^n$ with $|x^\ast| \geq R_N$ we then get
\begin{multline}
 |\varphi^{(\varepsilon)}(t,x^\ast)-\varphi^{(\varepsilon)}(t,y)| = |(\varphi^{(\varepsilon)}(t,x^\ast)-x^\ast) + (x^\ast - y) + (y-\varphi^{(\varepsilon)}(t,y))| \\
 \label{large_denominator}
 \geq |x^\ast - y| - |\varphi^{(\varepsilon)}(t,x^\ast) - x^\ast| - |y - \varphi^{(\varepsilon)}(t,y)| \geq N M.
\end{multline}
It then follows from \eqref{exp_c1}-\eqref{dexp_c1} that by the Leibniz rule, $\left. \partial_\varepsilon \right|_{\varepsilon=0} I_1^{(\varepsilon)}$ can be computed by differentiating the integrand of $I_1^{(\varepsilon)}$ with respect to $\varepsilon$. Before doing this introduce for $0 \leq t \leq 1$ and $y \in \R^n$
\[
 J(t,y):=\left. \partial_\varepsilon \right|_{\varepsilon=0} \varphi^{(\varepsilon)}(t,y)=\big(d_{tu_0}\exp(t w_{x^\ast})\big)(y).
\]
In particular we then have by \eqref{dexp_c1}
\begin{equation}\label{dexp_derivative}
 \left. \partial_\varepsilon \right|_{\varepsilon = 0} d \varphi^{(\varepsilon)}(t,y) = dJ(t,y)
\end{equation}
and thus by the standard formula for the derivative of the inverse of a matrix
\begin{equation}\label{dexp_derivative_inv}
 \left. \partial_\varepsilon \right|_{\varepsilon=0} \big(d\varphi^{(\varepsilon)}(t,y)\big)^{-1} = -\big(d\varphi(t,y)\big)^{-1} dJ(t,y) \big(d\varphi(t,y)\big)^{-1}.
\end{equation}
Note that for any $0 \leq t \leq 1$,
\[
 ||J(t)||_s \leq ||d_{tu_0}\exp|| \, ||t w_{x^\ast}||_s \leq \max_{0 \leq t \leq 1} ||d_{tu_0}\exp||
\]
where $||d_{tu_0}\exp||$ is the operator norm of $d_{t u_0}\exp:H^s_\sigma(\R^n;\R^n) \to H^s(\R^n;\R^n)$ and hence by the Sobolev imbedding \eqref{sobolev_imbedding}, $J(t,x)$ and $dJ(t,x)$ are uniformly bounded with respect to $0 \leq t \leq 1$, $x \in \R^n$ and $x^\ast \in \R^n$. For convenience let $\Delta_{x,y} \varphi(t)=\varphi(t,x)-\varphi(t,y)$. Now taking the derivative of the integrand of $I_1^{(\varepsilon)}(x^\ast)$ with respect to $\varepsilon$ we get by the product rule 
\[
J_1(x^\ast):=\left. \partial_\varepsilon \right|_{\varepsilon=0} I_1^{(\varepsilon)}(x^\ast)=J_{1,1}(x^\ast) + J_{1,2}(x^\ast) + J_{1,3}(x^\ast)
\]
where $J_{1,1}(x^\ast), J_{1,2}(x^\ast)$ and $J_{1,3}(x^\ast)$ are given by, respectively,
\small
\begin{multline*}
\frac{1}{\omega_n} \int_0^1 \int_{\operatorname{supp}u_0} -[d\varphi(t,y)^\top]^{-1} dJ(t,y)^\top [d\varphi(t,y)^\top]^{-1} \Omega(u_0)(y) [d\varphi(t,y)]^{-1} \frac{\Delta_{x^\ast,y} \varphi(t)}{|\Delta_{x^\ast,y} \varphi(t)|^n}\,dy\,dt,
\end{multline*}
\begin{multline*}
\frac{1}{\omega_n} \int_0^1 \int_{\operatorname{supp}u_0} -[d\varphi(t,y)^\top]^{-1} \Omega(u_0)(y) [d\varphi(t,y)]^{-1} dJ(t,y) [d\varphi(t,y)]^{-1} \frac{\Delta_{x^\ast,y} \varphi(t)}{|\Delta_{x^\ast,y} \varphi(t)|^n}\,dy\,dt,
\end{multline*}
\begin{multline*}
\frac{1}{\omega_n} \int_0^1 \int_{\operatorname{supp}u_0} [d\varphi(t,y)^\top]^{-1} \Omega(u_0)(y) [d\varphi(t,y)]^{-1} \cdot \\
\left\{ \frac{\Delta_{x^\ast,y}J(t)}{|\Delta_{x^\ast,y} \varphi(t)|^n} - n \frac{\Delta_{x^\ast,y}\varphi(t) \cdot \Delta_{x^\ast,y} J(t) \Delta_{x^\ast,y}\varphi(t)}{|\Delta_{x^\ast,y} \varphi(t)|^{n+2}}\right\} \,dy\,dt
\end{multline*}
\normalsize
Note that the domain of integration $[0,1]\times \operatorname{supp}u_0$ of the integrals $J_{1,1}(x^\ast),J_{1,2}(x^\ast)$ and $J_{1,3}(x^\ast)$ is compact. The integrands are uniformly bounded, independet of the choice of $x^\ast$ with $|x^\ast| \geq R_N$. Moreover from \eqref{large_denominator} we see that the denominators go to infinity as $|x^\ast| \to \infty$. Thus for any $\rho > 0$ there exists $R'_{\rho} \geq R_N$ such that 
\begin{equation}\label{j1_small}
|J_1(x^\ast)| < \rho \quad \forall x^\ast \in \R^n \mbox{ with } |x^\ast| \geq R'_\rho. 
\end{equation}
Now consider $I_2^{(\varepsilon)}(x^\ast)$. As a consequence of \eqref{uniform_bound} and the Sobolev imbedding \eqref{sobolev_imbedding},
\[
 \sup_{\mbox{\scriptsize $\begin{array}{c} 0 \leq t \leq 1 \\ x^\ast \in \R^n \\ |\varepsilon| \leq \delta \end{array}$}} ||\varphi^{(\varepsilon)}(t)-\operatorname{id}||_{C^1} \quad \mbox{and} \quad \sup_{\mbox{\scriptsize $\begin{array}{c} 0 \leq t \leq 1 \\ x^\ast \in \R^n \\ |\varepsilon| \leq \delta \end{array}$}} ||\big(\varphi^{(\varepsilon)}(t)\big)^{-1}-\operatorname{id}||_{C^1}
\]
are finite. Hence there is $L > 0$, independent of $x^\ast$, with
\begin{equation}\label{lipschitz_up_down}
 \frac{1}{L} |x-y| \leq |\varphi^{(\varepsilon)}(t,x)-\varphi^{(\varepsilon)}(t,y)| \leq L |x-y| \qquad \forall x,y \in \R^n
\end{equation}
for any $(\varepsilon,t) \in [-\delta,\delta]\times [0,1]$. When combined with \eqref{uniform_M_bound} we get that the integrand of $I_2^{(\varepsilon)}(x^\ast)$ can be estimated uniformly for $|\varepsilon| \leq \delta$ and $0 \leq t \leq 1$ by
\begin{equation}\label{dominated}
 C \frac{L |x^\ast-y|}{(\frac{1}{L}|x^\ast-y|)^n} = C L^{n+1} \frac{1}{|x^\ast-y|^{n-1}} 
\end{equation}
for some constant $C > 0$. As the latter bound is independent of $\varepsilon$ one can take the limit $\varepsilon \to 0$ under the integral to get
\begin{eqnarray*}
 J_2(x^\ast)&:=&\left. \partial_\varepsilon \right|_{\varepsilon=0} (\varepsilon I_2^{(\varepsilon)}(x^\ast)) \\
&=& \frac{1}{\omega_n} \int_0^1 \int_{B_1(x^\ast)} [d\varphi(t,y)^\top]^{-1} \Omega(w_{x^\ast})(y) [d\varphi(t,y)]^{-1} \frac{\Delta_{x^\ast,y} \varphi(t)}{|\Delta_{x^\ast,y} \varphi(t)|^n} \,dy\,dt.
\end{eqnarray*}
Now the key idea is to show that the latter expression differs by a small error from
\[
 \frac{1}{\omega_n} \int_0^1 \int_{B_1(x^\ast)} \Omega(w_{x^\ast})(y) \frac{x^\ast-y}{|x^\ast-y|^n} \, dy\,dt
\]
which by the Biot-Savart law equals $w_{x^\ast}(x^\ast)=w(0)$ and hence does not vanish. To prove that the difference of $J_2(x^\ast)$ with the latter integral is indeed small, write $B_1(x^\ast)$ as the union of $B_1(x^\ast) \setminus B_\theta(x^\ast)$ and $B_\theta(x^\ast)$ with $0 < \theta < 1$ to be chosen at the end of the proof and write $J_2(x^\ast)$ as a sum of the corresponding integrals $J_2(x^\ast)=J_{2,1}^{(\theta)}(x^\ast)+J_{2,2}^{(\theta)}(x^\ast)$. First note that by the Sobolev imbedding \eqref{sobolev_imbedding} and the condition $s > n/2+1$, for any $y \in \R^n$ and for any $x^\ast \in \R^n$
\begin{equation}\label{bounded_vorticity}
 |\Omega(w_{x^\ast})(y)| \leq C ||w_{x^\ast}||_s = C.
\end{equation}
Using \eqref{dominated}-\eqref{bounded_vorticity} we get
\begin{equation}\label{small_error}
 |J_{2,2}^{(\theta)}(x^\ast)| \leq C' \int_0^1 \int_{B_\theta(x^\ast)} \frac{1}{|x^\ast-y|^{n-1}} \,dy\,dt \leq C'' \theta
\end{equation}
for some constants $C',C''$ independent of $x^\ast$. Note that the denominator of the integrand of $J_{2,1}^{(\theta)}(x^\ast)$ is bounded away from $0$. Indeed, by \eqref{lipschitz_up_down}
\[
 |\Delta_{x^\ast,y} \varphi(t)| = |\varphi(t,x^\ast)-\varphi(t,y)| \geq \frac{1}{L} |x^\ast-y| \geq \frac{\theta}{L}
\]
for all $y \in B_1(x^\ast) \setminus B_{\theta}(x^\ast)$. So by Lemma \ref{lemma_almost_identity} and \eqref{bounded_vorticity}, for any fixed $0 < \theta < 1$, and any $\rho > 0$ there exists a constant $R^{(\theta)}_\rho > 0$ such that for any $x^\ast \in \R^n$ with $|x^\ast| \geq R^{(\theta)}_\rho$
\begin{equation}\label{j21_diff_small}
 \left| J_{2,1}^{(\theta)}(x^\ast) - \frac{1}{\omega_n} \int_{B_1(x^\ast)\setminus B_{\theta}(x^\ast)} \Omega(w_{x^\ast})(y) \frac{x^\ast-y}{|x^\ast-y|^n} \,dy\right| < \rho.
\end{equation}
We now choose $x^\ast$ and $0 < \theta < 1$ according to our needs. Write 
\[
 w_{x^\ast}(x^\ast)=\frac{1}{\omega_n} \int_{B_1(x^\ast)} \Omega(w_{x^\ast})(y) \frac{x^\ast-y}{|x^\ast-y|^n} \,dy= w_1^{(\theta)}(x^\ast) + w_2^{(\theta)}(x^\ast)
\]
where
\[
 w_1^{(\theta)}(x^\ast) = \frac{1}{\omega_n} \int_{B_1(x^\ast) \setminus B_\theta(x^\ast)} \Omega(w_{x^\ast})(y) \frac{x^\ast-y}{|x^\ast-y|^n} \,dy
\]
and
\[
 w_2^{(\theta)}(x^\ast) = \frac{1}{\omega_n} \int_{B_\theta(x^\ast)} \Omega(w_{x^\ast})(y) \frac{x^\ast-y}{|x^\ast-y|^n} \,dy.
\]
First choose $0 < \theta < 1$ in such a way that we have for any choice of $x^\ast \in \R^n$
\begin{equation}\label{theta_choice}
 |J_{2,2}^{(\theta)}(x^\ast)| < a/8 \quad \mbox{and} \quad |w_2^{(\theta)}(x^\ast)| < a/8
\end{equation}
where $a=|w(0)|$. Due to \eqref{small_error} this is possible. Then for any $R > 0$ choose $x^\ast \in \R^n$ with $|x^\ast| \geq \max(R'_{a/8},R^{(\theta)}_{a/8},R)$ but otherwise arbitrary and let $v:=w_{x^\ast}$. Then by \eqref{j1_small},\eqref{j21_diff_small} and \eqref{theta_choice}
\begin{multline*}
 |\big(d_{u_0}\exp(v)\big)(x^\ast)-w_{x^\ast}(x^\ast)| = |J_1(x^\ast) + J_2(x^\ast) - w_{x^\ast}(x^\ast)| \\
 =|J_1(x^\ast) + J_{2,1}^{(\theta)}(x^\ast) + J_{2,2}^{(\theta)}(x^\ast) - w_1^{(\theta)}(x^\ast) - w_2^{(\theta)}(x^\ast)| \\
\leq |J_1(x^\ast)| + |J_{2,1}^{(\theta)}(x^\ast)-w_1^{(\theta)}(x^\ast)| + |J_{2,2}^{(\theta)}(x^\ast)| + |w_2^{(\theta)}(x^\ast)| \leq a/8 + a/8 + a/8 + a/8 = 
a/2.
\end{multline*}
Thus we see that $|\big(d_{u_0}\exp(v)\big)(x^\ast)| \geq a/2$ showing the claim with the choice $m=a/2$. 
\end{proof}

\noindent
Now we can prove Proposition \ref{prop_nonuniform}.

\begin{proof}[Proof of Proposition \ref{prop_nonuniform}]
It suffices to show that for any $u_0$ in the domain $U \subseteq H^s_\sigma(\R^n;\R^n)$ of $\phi$ there exists $R_\ast > 0$ with $B_{R_\ast}(u_0) \subseteq U$ so that $\phi$ is not uniformly continuous on $B_R(u_0)$ for any $0 < R \leq R_\ast$. As  $s > n/2+1$, $H^s(\R^n;\R^n) \hookrightarrow C^1_0(\R^n;\R^n)$. We denote by $C > 0$ the constant of this imbedding
\begin{equation}\label{sobolev_constant}
||f||_{C^1} \leq C ||f||_s.
\end{equation}
By the continuity of the exponential map (Proposition \ref{exp_analytic}), there exists $R_0 > 0$ so that $B_{R_0}(u_0) \subseteq U$ and for any $\varphi,\psi \in \exp\big(B_{R_0}(u_0)\big)$
\[
 ||\varphi - \psi||_s < \frac{1}{C}.
\]
Hence by \eqref{sobolev_constant} there is a constant $L > 0$ so that for any $\varphi,\psi \in \exp\big(B_{R_0}(u_0)\big)$
\begin{equation}\label{less_one}
 |\varphi(x)-\psi(x)| < 1 \quad \mbox{and} \quad |\varphi(x)-\varphi(y)| < L |x-y|, \quad \forall x,y \in \R^n.
\end{equation}
By the smoothness of the exponential map (Proposition \ref{exp_analytic}) and Taylor's theorem, for any $v,v+h$ in an arbitrary convex subset $V \subseteq U$,
\[
 \exp(v+h)=\exp(v)+d_v\exp(h) + \frac{1}{2} \int_0^1 (1-t) d^2_{v+th}\exp(h,h)\,dt.
\]
By choosing $0 < R_1 \leq R_0$, smaller if necessary, we can ensure that for some $C_1 > 0$
\begin{equation}\label{remainder}
||\exp(v+h)-\exp(v) - d_v \exp(h)||_s \leq C_1 ||h||_s^2, \qquad \forall v \in B_{R_1}(u_0), h \in B_{R_1}(0)
\end{equation}
As $v \mapsto d_v \exp$ is continuous we get for some $0 < R_2 \leq R_1$
\begin{equation}\label{near_differential}
||d_{v_1} \exp(h) - d_{v_2}\exp(h)||_s \leq \frac{m}{4C} ||h||_s, \qquad \forall v_1,v_2 \in B_{R_2}(u_0), h \in H^s_\sigma(\R^n;\R^n)
\end{equation}
where $m > 0$ is the constant in the statement of Lemma \ref{lemma_dexp} and $C > 0$ given by \eqref{sobolev_constant}. Finally by choosing $0 < R_3 \leq R_2$, sufficiently small, Lemma \ref{vorticity_estimate} implies that there exists $C_2 > 0$ so that
\begin{equation}\label{vort_estimate}
 \frac{1}{C_2} ||f||_{s-1} \leq ||R_\varphi^{-1}\left([d\varphi^\top]^{-1} f [d\varphi]^{-1}\right)||_{s-1} \leq C_2 ||f||_{s-1}
\end{equation}
for any $f \in H^{s-1}(\R^n;\R^{n\times n})$ and any $\varphi \in \exp\big(B_{R_3}(u_0)\big)$. Now set $R_\ast=R_3$ and take any $0 < R \leq R_\ast$. By the density of $C^\infty_{\sigma,c}(\R^n;\R^n)$ in $H^s_\sigma(\R^n;\R^n)$ (Lemma \ref{lemma_dense}), there exists $\bar u_0 \in C^\infty_{\sigma,c}(\R^n;\R^n) \cap B_{R/4}(u_0)$. Let $\varphi_\bullet:=\exp(\bar u_0)$ and introduce $K:=\operatorname{supp} \bar u_0$ and
\[
 K'=\{ y \in \R^n \, | \, \operatorname{dist}\big(y,\varphi_\bullet(K)\big) \leq 1 \}
\]
where $\operatorname{dist}\big(y,\varphi_\bullet(K)\big) = \inf_{x \in K} |y - \varphi_\bullet(x)|$ is the distance of $y$ to the set $\varphi_\bullet(K)$. By \eqref{less_one} we see that $K'$ has the property
\begin{equation}\label{in_k_strich}
 \varphi(K) \subseteq K', \qquad \forall \varphi \in \exp\big(B_R(u_0)\big)
\end{equation}
Note that $\lim_{|x| \to \infty} |\varphi_\bullet(x)|=\infty$. By Lemma \ref{lemma_dexp} we then can choose $x^\ast \in \R^n \setminus K'$ and $v \in C^\infty_{\sigma,c}(\R^n;\R^n)$ with $||v||_s=1$ in such a way that
\begin{equation}\label{v_property}
\operatorname{dist}\big(\varphi_\bullet(x^\ast),K'\big) > L + 1 \quad \mbox{and} \quad |\big(d_{\bar u_0} \exp(v)\big)(x^\ast)| \geq m.
\end{equation}
We set $M:=|\big(d_{\bar u_0}\exp(v)\big)(x^\ast)|$ and define $v_k=\frac{R}{4k}v$, $k \geq 1$. As $||v||_s=1$
\begin{equation}\label{v_k_in_ball}
 ||v_k||_s = \frac{R}{4k} < R/3.
\end{equation}
By the definition of $v_k$ we have $|\big(d_{\bar u_0}\exp(v_k)\big)(x^\ast)|=\delta_k:=M\frac{R}{4k}$. By \eqref{less_one} for any $k \geq 1$ there is 
\begin{equation}\label{rho_k_def}
0 < \rho_k < \min(\delta_k/4,1)=\min(\frac{MR}{16k},1)
\end{equation}
such that
\begin{equation}\label{disjoint_support}
 \varphi\big(B_{\rho_k}(x^\ast)\big) \subseteq B_{\delta_k/4}\big(\varphi(x^\ast)\big) \quad \forall \varphi \in \exp\big(B_R(u_0)\big).
\end{equation}
Now choose for each $k \geq 1$, a $w_k \in C^\infty_{\sigma,c}(\R^n;\R^n)$ with 
\begin{equation}\label{w_k_support}
 \operatorname{supp} w_k \subseteq B_{\rho_k}(x^\ast) \quad \mbox{and} \quad ||w_k||_s=R/4
\end{equation}
and define for $k \geq 1$ the pair of initial values
\[
 u_{0,k}=\bar u_0 + w_k \quad \mbox{and} \quad \tilde u_{0,k}=u_{0,k} + v_k.
\]
By our choices $(u_{0,k})_{k \geq 1},(\tilde u_{0,k})_{k \geq 1} \subseteq B_R(u_0)$ and $||u_{0,k} - \tilde u_{0,k}||_s = ||v_k||_s \to 0$ as $k \to \infty$. Denote the diffeomorphims corresponding to $u_{0,k},\tilde u_{0,k}$ by $\varphi_k,\tilde \varphi_k \in \Ds^s(\R^n)$, 
\[
 \varphi_k = \exp(u_{0,k}) \quad \mbox{and} \quad \tilde \varphi_k =\exp(\tilde u_{0,k})
\]
and the solutions of \eqref{RE} corresponding to the initial values $u_{0,k},\tilde u_{0,k}$ by $u_k,\tilde u_k:[0,1] \to H_\sigma^s(\R^n;\R^n)$. The corresponding vorticities at time $t=0$, $\Omega_{0,k}$ and $\tilde \Omega_{0,k}$, and $t=1$, $\Omega_{1,k}$ and $\tilde \Omega_{1,k}$, are then given by 
\begin{equation}\label{vorticities}
\begin{array}{ccccc}
 \Omega_{0,k} & = &\Omega(u_{0,k})&=&\Omega(\bar u_0)+\Omega(w_k) \\
 \tilde \Omega_{0,k} &= &\Omega_{0,k} + \Omega(v_k)&=&\Omega(\bar u_0)+ \Omega(w_k + v_k)
\end{array}
\end{equation}
and
\[
 \Omega_{1,k}=\Omega\big(u_k(1)\big);\quad \tilde \Omega_{1,k} = \Omega\big(\tilde u_k(1)\big).
\]
Note that we have for some $C' > 0$
\begin{equation}\label{u_omega_ineq}
||\phi(u_{0,k})-\phi(\tilde u_{0,k})||_s = ||u_k(1) - \tilde u_k(1)||_s \geq \frac{1}{C'} ||\Omega_{1,k}-\tilde \Omega_{1,k}||_{s-1}.
\end{equation}
We aim at estimating $||\Omega_{1,k}-\tilde \Omega_{1,k}||_{s-1}$ from below. By the conservation law \eqref{conserved} we have
\begin{equation}\label{vorticity_reexpressed}
 \Omega_{1,k}=R_{\varphi_k}^{-1} \left([d\varphi_k^\top]^{-1} \Omega_{0,k} [d\varphi_k]^{-1}\right) \quad \mbox{and} \quad
 \tilde \Omega_{1,k} = R_{\tilde \varphi_k}^{-1} \left([d\tilde \varphi_k^\top]^{-1} \tilde \Omega_{0,k} [d\tilde \varphi_k]^{-1}\right).
\end{equation}
By \eqref{v_property} the distance of $\varphi_\bullet(x^\ast)$ to $K'$ is bigger than $L+1$ and hence by \eqref{less_one} 
\[
\operatorname{dist}\big(\varphi(x^\ast),K') > L \quad  \mbox{for any } \varphi \in \exp\big(B_R(u_0)\big).
\]
On the other hand by \eqref{less_one} and $\rho_k < 1$ one has 
\[
 |\varphi(x^\ast)-\varphi(x)| \leq L |x^\ast - x| \leq L \quad \forall x \in \operatorname{supp} w_k.
\]
Combining the two latter displayed inequalities one concludes that
\begin{equation}\label{support_contained1}
 \varphi\big(\operatorname{supp}(w_k)\big) \cap K' =\emptyset, \quad \forall \varphi \in \exp\big(B_R(u_0)\big).
\end{equation}
As $\operatorname{supp}(w_k + v_k) \subseteq B_1(x^\ast)$ the same argument gives
\begin{equation}\label{support_contained2}
 \varphi\big(\operatorname{supp}(w_k + v_k)\big) \cap K' = \emptyset, \quad \forall \varphi \in \exp\big(B_R(u_0)\big).
\end{equation}
By \eqref{in_k_strich},
\[
 \operatorname{supp} R_{\varphi_k}^{-1}\left((d\varphi_k^\top)^{-1} \Omega(\bar u_0) (d\varphi_k)^\top \right) \subseteq K'
\]
and
\[
 \operatorname{supp} R_{\tilde \varphi_k}^{-1}\big((d \tilde \varphi_k^\top)^{-1} \Omega(\bar u_0) (d\tilde \varphi_k)^\top \big) \subseteq K'.
\]
From \eqref{support_contained1}-\eqref{support_contained2},
\[
  \varphi_k\big(\operatorname{supp} \Omega(w_k)\big) \subseteq \R^n\setminus K'
\quad \mbox{and} \quad 
 \tilde \varphi_k \big(\operatorname{supp} \Omega(w_k+v_k)\big) \subseteq \R^n \setminus K'.
\]
By \eqref{vorticities}-\eqref{vorticity_reexpressed} it then follows that
\begin{multline}\label{from_below}
||\Omega_{1,k}-\tilde \Omega_{1,k}||_{s-1} = ||R_{\varphi_k}^{-1}\left((d\varphi_k^\top)^{-1} \Omega(\bar u_0) (d\varphi_k)^{-1}\right)- R_{\tilde \varphi_k}^{-1}\left((d\tilde \varphi_k^\top)^{-1} \Omega(\bar u_0) (d\tilde \varphi_k)^{-1}\right)||_{s-1} \\
+ ||R_{\varphi_k}^{-1} \left((d\varphi_k^\top)^{-1} \Omega(w_k) (d\varphi_k)^{-1}\right) - R_{\tilde \varphi_k}^{-1} \left((d\tilde \varphi_k^\top)^{-1} \Omega(w_k+v_k) (d\tilde \varphi_k)^{-1}\right)||_{s-1} \\
\geq ||R_{\varphi_k}^{-1} \left((d\varphi_k^\top)^{-1} \Omega(w_k) (d\varphi_k)^{-1}\right) - R_{\tilde \varphi_k}^{-1} \left((d\tilde \varphi_k^\top)^{-1} \Omega(w_k+v_k) (d\tilde \varphi_k)^{-1}\right)||_{s-1}
\end{multline}
We claim that, for large $k$,
\begin{equation}\label{show_disjoint_support}
\varphi_k\big(\operatorname{supp}(w_k)\big) \cap \tilde \varphi_k\big(\operatorname{supp}(w_k)\big) = \emptyset.
\end{equation}
Indeed by the Taylor formula 
\[
 \tilde \varphi_k - \varphi_k = \exp(\bar u_0 + w_k + v_k) - \exp(\bar u_0 + w_k) =
 d_{\bar u_0 + w_k}\exp(v_k) + \mathcal R_k
\]
where $\mathcal R_k$ is the remainder term. Thus we can write
\begin{equation}\label{diffeo_difference}
\tilde \varphi_k - \varphi_k = d_{\bar u_0} \exp(v_k) + \left( d_{\bar u_0+ w_k}\exp(v_k) - d_{\bar u_0} \exp(v_k)\right) + \mathcal R_k.
\end{equation}
We want to estimate $\tilde \varphi(x^\ast)-\varphi(x^\ast)$ by estimating the three terms on the right-hand side of the latter identity individually. By the Sobolev imbedding \eqref{sobolev_constant} and \eqref{remainder} we get the following estimate for $\mathcal R_k(x^\ast) \in \R^n$
\[
 |\mathcal R_k(x^\ast)| \leq C ||\mathcal R_k||_s \leq C C_1 ||v_k||_s^2 = C C_1 \frac{R^2}{16k^2}.
\]
For $k$ sufficiently large it then follows that
\[
 |\mathcal R_k(x^\ast)| < \frac{\delta_k}{4}.
\]
Furthermore, using \eqref{sobolev_constant} and \eqref{near_differential}, together with $m \leq M$ (cf \eqref{v_property})
\begin{multline*}
 \big| \big(d_{\bar u_0 + w_k}\exp(v_k)\big)(x^\ast) - \big(d_{\bar u_0}\exp(v_k)\big)(x^\ast) \big| \\
\leq C ||d_{\bar u_0 + w_k}\exp(v_k) - d_{\bar u_0}\exp(v_k)||_s  \leq \frac{m}{4} ||v_k||_s \leq \frac{M R}{16k} = \frac{\delta_k}{4}.
\end{multline*}
Finally, for the first term on the right-hand side of \eqref{diffeo_difference} one has by definition,
\[
 \big|d_{\bar u_0} \exp(v_k)(x^\ast)\big| = \delta_k.
\]
Combining the estimates above, \eqref{diffeo_difference} yields for $k$ large enough
\[
 |\tilde \varphi_k(x^\ast) - \varphi_k(x^\ast)| > \frac{\delta_k}{2}.
\]
By \eqref{disjoint_support} we get for large $k$
\[
 \varphi_k\big(B_{\rho_k}(x^\ast)\big) \cap \tilde \varphi_k\big(B_{\rho_k}(x^\ast)\big) = \emptyset
\]
showing \eqref{show_disjoint_support}. It leads by the triangle inequality to the estimate
\begin{multline}\label{limsup1}
 ||R_{\varphi_k}^{-1} \left([d\varphi_k^\top]^{-1} \Omega(w_k) [d\varphi_k]^{-1} \right) - R_{\tilde \varphi_k}^{-1} \left([d\tilde \varphi_k]^{-1} \Omega(w_k + v_k) [d\tilde \varphi_k]^{-1} \right)||_{s-1} \\
 \geq ||R_{\varphi_k}^{-1} \left([d\varphi_k^\top]^{-1} \Omega(w_k) [d\varphi_k]^{-1}\right)||_{s-1} + ||R_{\tilde \varphi_k}^{-1} \left([d\tilde\varphi_k^\top]^{-1} \Omega(w_k) [d\tilde\varphi_k]^{-1}\right)||_{s-1}\\
 - ||R_{\tilde \varphi_k}^{-1} \left([d\tilde\varphi_k^\top]^{-1} \Omega(v_k) [d\tilde\varphi_k]^{-1}\right)||_{s-1}.
\end{multline}
The latter term we can be estimated using \eqref{vort_estimate} by
\begin{equation}\label{limsup2}
 ||R_{\tilde \varphi_k}^{-1} \left([d\tilde \varphi_k^\top]^{-1} \Omega(v_k) [d\tilde \varphi_k]^{-1}\right)||_{s-1} \leq C_2 ||\Omega(v_k)||_{s-1} \leq C_2 C' ||v_k||_s
\end{equation}
which by \eqref{v_k_in_ball} goes to $0$ for $k \to \infty$. For the first two terms on the right-hand side of the inequality \eqref{limsup1} we have again by \eqref{vort_estimate}
\begin{equation}\label{limsup3}
||R_{\varphi_k}^{-1}\left([d\varphi_k^\top]^{-1} \Omega(w_k) [d\varphi_k]^{-1}\right)||_{s-1} \geq \frac{1}{C_2} ||\Omega(w_k)||_{s-1}
\end{equation}
and
\begin{equation}\label{limsup4}
 ||R_{\tilde \varphi_k}^{-1}\left([d\tilde \varphi_k^\top]^{-1} \Omega(w_k) [d\tilde \varphi_k]^{-1}\right)||_{s-1} \geq \frac{1}{C_2} ||\Omega(w_k)||_{s-1}.
\end{equation}
Combining \eqref{limsup1}-\eqref{limsup4}, the inequality \eqref{from_below} then leads to
\[
 \limsup_{k \geq 1} ||\Omega_{1,k}-\tilde \Omega_{1,k}||_{s-1} \geq \limsup_{k \geq 1} \frac{2}{C_2} ||\Omega(w_k)||_{s-1}.
\]
We will get the result by showing that $\limsup_{k \geq 1} ||\Omega(w_k)||_{s-1}$ is bounded away from $0$. In $H^s(\R^n;\R^n)$ the following norm
\[
 |||f|||_s := ||f||_{L^2} + ||df||_{s-1}
\]
is equivalent to the norm $||\cdot||_s$. In particular there exists $C_3 > 0$ so that for any $f \in H^s(\R^n;\R^n)$
\begin{equation}\label{norms_equiv}
\frac{1}{C_3} ||f||_s \leq |||f|||_s \leq C_3 ||f||_s.
\end{equation}
By \eqref{norms_equiv} we thus get $|||w_k|||_s \geq \frac{1}{C_3} \frac{R}{4}$ for all $k \geq 1$. By \eqref{rho_k_def} and \eqref{w_k_support}
\begin{multline}\label{l2_zero}
 ||w_k||_{L^2} \leq ||w_k||_{L^\infty} \operatorname{vol}\big(B_{\rho_k}(x^\ast)\big) \leq C ||w_k||_s \operatorname{vol}\big(B_{\rho_k}(x^\ast)\big) \\
  \leq C \frac{R}{4} \operatorname{vol}\big(B_1(0)\big) \left(\frac{MR}{16k}\right)^n.
\end{multline}
Hence $||w_k||_{L^2}$ goes to $0$ for $k \to \infty$ implying that
\[
 \limsup_{k \geq 1} ||d w_k||_{s-1} \geq \frac{1}{C_3} \frac{R}{4}.
\]
By Lemma \ref{lemma_gradient_velocity} 
\[
 \limsup_{k \geq 1} ||\Omega(w_k)||_{s-1} \geq \limsup_{k \geq 1} \frac{1}{C_4} ||d w_k||_{s-1} \geq \frac{1}{C_3 C_4} \frac{R}{4}
\]
for some constant $C_4 >0$. By \eqref{u_omega_ineq} we then conclude
\begin{equation}\label{indep_of_R}
 \limsup_{k \geq 1} ||\phi(u_{0,k})-\phi(\tilde u_{0,k})||_s \geq \limsup_{k \geq 1} \frac{1}{C'} ||\Omega_{1,k}-\tilde \Omega_{1,k}||_{s-1} \geq \frac{1}{4C_3 C_4} R
\end{equation} 
whereas $||u_{0,k} - \tilde u_{0,k}||_s \to 0$. As $(u_{0,k}),(\tilde u_{0,k})$ are in $B_R(u_0)$ this shows that $\phi$ is not uniformly continuous on $B_R(u_0)$.
\end{proof}

\begin{proof}[Proof of Theorem \ref{th_not_uniform}]
Recall that we want to prove that for $T>0$ the time $T$ solution map of \eqref{E}
\[
 E_T:U_T \to H^s_\sigma(\R^n;\R^n),\quad u_0 \to u(T;u_0)
\]
is nowhere locally uniformly continuous. By the scaling property described in \eqref{scaling} we have for any $u_0 \in H^s_\sigma(\R^n,\R^n)$
\begin{equation}\label{E_T}
E_T(u_0) = T \phi(T u_0).
\end{equation}
Thus by Proposition \ref{prop_nonuniform} we get that $E_T$ is also nowhere locally uniformly continuous.
\end{proof}

\noindent
Finally we can give the proof of Theorem \ref{th_not_differentiable}

\begin{proof}[Proof of Theorem \ref{th_not_differentiable}]
By \eqref{E_T} it suffices to consider the case $T=1$, i.e. to prove that for any given $s > n/2+1$ and $U\equiv U_1$ as in \eqref{time_one_map}
\[
 \phi:U \to H^s_\sigma(\R^n;\R^n),\quad u_0 \mapsto u(1;u_0)
\]
is nowhere differentiable. The key ingredient is inequality \eqref{indep_of_R}. Let us reformulate it in a convenient way. Let $w \in U$. Then by the last part of the proof of Proposition \ref{prop_nonuniform} there are $R_\ast,C_\ast > 0$ with $B_{R_\ast}(w) \subseteq U$ satisfying the following property: for any $0 < R \leq R_\ast$ there are sequences $(u_{0,k})_{k \geq 1},(\tilde u_{0,k})_{k \geq 1} \subseteq B_{R}(w)$ with
\begin{equation}\label{zero_seq}
\lim_{k \to \infty} ||u_{0,k}-\tilde u_{0,k}||_s =0
\end{equation}
and 
\begin{equation}\label{below_by_R}
||\phi(u_{0,k})-\phi(\tilde u_{0,k})||_s \geq C_\ast R,\qquad \forall k \geq 1.
\end{equation}
Assume now that $\phi$ is differentiable in $w$. For any $h \in H^s_\sigma(\R^n;\R^n)$ with $w+h \in B_{R_\ast}(w)$
\begin{equation}\label{diff_remainder}
\mathcal R(w,h):= \phi(w+h)-\phi(w)+d_w \phi(h).
\end{equation}
By the definition of differentiability there is $0 < R \leq R_\ast$ with
\begin{equation}\label{remainder_estimate}
||\mathcal R(w,h)||_s \leq \frac{C_\ast}{4} ||h||_s
\end{equation}
for any $h \in H^s_\sigma(\R^n;\R^n)$ with $||h||_s \leq R$. Take sequences $(u_{0,k})_{k \geq 1},(\tilde u_{0,k})_{k \geq 1} \subseteq B_R(w)$ satisfying \eqref{zero_seq}-\eqref{below_by_R}. We then get by \eqref{diff_remainder}
\[
 \phi(u_{0,k})=\phi(w+(u_{0,k}-w)) = \phi(w) + d_w \phi(u_{0,k}-w) + \mathcal R(w,u_{0,k}-w)
\]
and a similar expression for $\phi(\tilde u_{0,k})$. Hence
\[
 \phi(u_{0,k})-\phi(\tilde u_{0,k}) = d_w \phi (u_{0,k}-\tilde u_{0,k}) +\mathcal R(w,u_{0,k}-w) - \mathcal R(w,\tilde u_{0,k}-w).
\]
and thus by \eqref{zero_seq}, $||d_w \phi (u_{0,k}-\tilde u_{0,k})||_s \underset{k \to \infty}{\to} 0$ yielding
\begin{multline*}
 \limsup_{k \geq 1} ||\phi(u_{0,k})-\phi(\tilde u_{0,k})||_s \\
\leq \limsup_{k \geq 1} ||\mathcal R(w,u_{0,k}-w)||_s + \limsup_{k \geq 1} ||\mathcal R(w,\tilde u_{0,k}-w)||_s \leq \frac{C_\ast}{2} \bar R
\end{multline*}
where the last inequality follows from \eqref{remainder_estimate}. This is a contradiction to \eqref{below_by_R}. Hence $\phi$ is not differentiable in $w$. As $w$ was arbitrary the claim follows.
\end{proof}

\section{The submanifold $\Ds^s_\mu(\R^n)$}\label{section_submanifold}

Throughout this section we assume as usual $s > n/2+1$ with $n \geq 2$. We will prove Theorem \ref{th_submanifold} saying that $\Ds^s_\mu(\R^n)$ is a closed analytic submanifold of $\Ds^s(\R^n)$. The most natural way to prove this statement is to consider the analytic map
\begin{equation}\label{map_F}
 \varphi \mapsto \left[F(\varphi):\R^n \to \R,\quad x \mapsto \det(d_x \varphi) - 1\right]
\end{equation}
for $\varphi$ in $\Ds^s(\R^n)$ -- see the proof of the corresponding result for $\Ds^s(M)$, $M$ a compact manifold, of Ebin and Marsden \cite{ebin}. We clearly have $\Ds^s_\mu(\R^n)=F^{-1}(0)$. Using the Banach algebra property of $H^{s-1}(\R^n)$ one shows that $F$ takes values in $H^{s-1}(\R^n)$ and is analytic. In particular $\Ds_\mu^s(\R^n)$ is a closed subset of $\Ds^s(\R^n)$ and it remains to show that $0 \in H^{s-1}(\R^n)$ is a regular value of $F$. The differential of $F$ at $\operatorname{id} \in \Ds^s_\mu(\R^n)$ is given by 
\[
 d_{\operatorname{id}}F:H^s(\R^n;\R^n) \to H^{s-1}(\R^n),\quad f \mapsto \operatorname{div} f
\]
which is however not surjective.

\begin{Lemma}\label{lemma_not_surjective}
The map
\[
 \operatorname{div}:H^s(\R^n;\R^n) \to H^{s-1}(\R^n),\quad f \mapsto \operatorname{div} f
\]
is not surjective.
\end{Lemma}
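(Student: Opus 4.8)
The plan is to isolate a low-frequency obstruction that every element of $\image(\operatorname{div})$ must satisfy, and then to write down an element of $H^{s-1}(\R^n)$ that violates it.

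First I would record the obstruction. Suppose $g=\operatorname{div}f$ for some $f=(f_1,\dots,f_n)\in H^s(\R^n;\R^n)$; since $s>n/2+1\ge 1$ this makes sense, and passing to the Fourier transform,
\[
 \hat g(\xi)=i\sum_{k=1}^n \xi_k\,\hat f_k(\xi)=i\,\xi\cdot\hat f(\xi),\qquad \xi\in\R^n .
\]
By the Cauchy--Schwarz inequality in $\mathbb C^n$ this yields the pointwise bound $|\hat g(\xi)|^2\le |\xi|^2\sum_{k=1}^n|\hat f_k(\xi)|^2$ for $\xi\neq 0$. Integrating over the closed unit ball $\Xi=\{|\xi|\le 1\}$ and using the continuous inclusion $H^s(\R^n)\hookrightarrow L^2(\R^n)$,
\[
 \int_{\Xi}\frac{|\hat g(\xi)|^2}{|\xi|^2}\,d\xi \;\le\; \int_{\Xi}\sum_{k=1}^n|\hat f_k(\xi)|^2\,d\xi \;\le\; \|f\|_0^2 \;<\;\infty .
\]
Hence $\image(\operatorname{div})$ is contained in $\{\,g\in H^{s-1}(\R^n):\int_{\Xi}|\hat g(\xi)|^2|\xi|^{-2}\,d\xi<\infty\,\}$.

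Next I would exhibit a $g$ outside this set. Let $g$ be the function whose Fourier transform is $\hat g(\xi)=|\xi|^{-(n-1)/2}$ for $0<|\xi|\le 1$ and $\hat g(\xi)=0$ for $|\xi|>1$. Passing to polar coordinates, $\int_{\R^n}|\hat g(\xi)|^2\,d\xi=c_n\int_0^1 r^{\,n-1-(n-1)}\,dr=c_n<\infty$ for a dimensional constant $c_n>0$, so $\hat g\in L^2(\R^n)$ and hence $g\in L^2(\R^n)$ by Plancherel; since $\hat g$ is real and even, $g$ is real-valued. Moreover $\operatorname{supp}\hat g\subseteq\Xi$, so $g\in H_\Xi^\infty(\R^n)$ and by Lemma~\ref{lemma_hb_space} $g\in H^{s'}(\R^n)$ for every $s'\ge 0$; in particular $g\in H^{s-1}(\R^n)$. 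On the other hand, again in polar coordinates,
\[
 \int_{\Xi}\frac{|\hat g(\xi)|^2}{|\xi|^2}\,d\xi=c_n\int_0^1 r^{\,n-1-(n-1)-2}\,dr=c_n\int_0^1 r^{-2}\,dr=\infty .
\]
By the obstruction above, $g$ cannot be of the form $\operatorname{div}f$ with $f\in H^s(\R^n;\R^n)$, so $\operatorname{div}$ is not surjective.

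I do not expect a genuine obstacle here; the only points requiring (minor) care are that the constructed $g$ be real-valued -- arranged by taking $\hat g$ real and even -- and the choice of exponent: $(n-1)/2$ is designed so that $\hat g$ is square-integrable near the origin while $|\xi|^{-1}\hat g$ is not, which holds for every $n\ge 1$. Any exponent $\alpha\in[\,(n-2)/2,\ n/2\,)$ in place of $(n-1)/2$ would serve equally well.
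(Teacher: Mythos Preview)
Your proof is correct and takes a genuinely different route from the paper's. You isolate the Fourier-side obstruction directly: any $g\in\image(\operatorname{div})$ must satisfy $\int_{|\xi|\le 1}|\hat g(\xi)|^2|\xi|^{-2}\,d\xi<\infty$, and you then exhibit an explicit $g\in H_\Xi^\infty(\R^n)\subseteq H^{s-1}(\R^n)$ violating this. The paper instead argues by contradiction through the open mapping theorem: assuming surjectivity, it obtains a bounded right inverse $\Phi$, derives the inequality $\|w\|_{s-2}^2\le C\|w\|_{s-1}\|\nabla w\|_{s-2}$ via integration by parts and Cauchy--Schwarz, and then violates that inequality with a dilation family $w_k(x)=w(x/k)$ whose Fourier transforms concentrate at the origin. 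Both arguments ultimately exploit the same low-frequency phenomenon --- that $\widehat{\operatorname{div}f}(\xi)=i\xi\cdot\hat f(\xi)$ forces a vanishing factor at $\xi=0$ --- but yours is more elementary (no open mapping theorem) and constructive (an explicit element outside the image), while the paper's scaling argument, though less direct here, is a reusable technique for defeating putative Sobolev inequalities.
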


\begin{proof}
Assume that $\operatorname{div}$ is surjective. As $H^s_\sigma(\R^n;\R^n)$ is by definition the null space of $\operatorname{div}$, the map
\[
 \Psi:H^s_\sigma(\R^n;\R^n)^\perp \to H^{s-1}(\R^n),\quad f \mapsto \operatorname{div} f
\]
is then a bijection. Here $H^s_\sigma(\R^n;\R^n)^\perp$ is the orthogonal complement of $H^s_\sigma(\R^n;\R^n)$ in $H^s(\R^n;\R^n)$ with respect to the inner product $\langle \cdot,\cdot \rangle_s$. By the open mapping theorem $\Psi$ has a continuous inverse denoted by $\Phi$
\[
 \Phi:H^{s-1}(\R^n) \to H^s_\sigma(\R^n;\R^n)^\perp.
\]
In particular it means that there is a constant $C > 0$ so that
\begin{equation}\label{continuous_inverse}
||\Phi(w)||_s \leq C ||w||_{s-1}, \qquad \forall w \in H^{s-1}(\R^n).
\end{equation}
We then get for any $w \in H^{s-1}(\R^n)$ by integration by parts
\[
 \langle  w, w \rangle_{s-2} = \langle \Psi \Phi(w), w \rangle_{s-2} = -\sum_{j=1}^n \langle \Phi_j(w),\partial_j w \rangle_{s-2}.
\]
Applying Cauchy-Schwarz we get for any $w \in H^{s-1}(\R^n)$
\begin{eqnarray}
\nonumber
 ||w||_{s-2}^2 &\leq& ||\Phi(w)||_{s-2} ||\nabla w||_{s-2} \\
\label{wrong_inequality}
&\leq& C ||w||_{s-1} ||\nabla w||_{s-2}
\end{eqnarray}
where we used \eqref{continuous_inverse}. We claim that the inequality \eqref{wrong_inequality} cannot hold. To see it take an element $w \in H^{s-1}(\R^n)$ with $||w||_{L^2}=1$ whose Fourier transform $\hat w$ is supported in the unit ball, $\operatorname{supp} \hat w \subseteq B_1(0)$. Define for $k \geq 1$ and $x \in \R^n$
\[
w_k(x) = w\left(\frac{x}{k}\right).
\]
Using that $\widehat{w_k}(\xi)=k^n \hat w(k \xi)$, one has
\begin{eqnarray*}
 ||w_k||_{s-2}^2 &=& \int_{\R^n} (1+|\xi|^2)^{s-2} k^{2n} |\hat w(k\xi)|^2 \,d\xi \\
\noalign{\noindent and by the change of variable $\eta:=k\xi$}
 &=& \int_{|\eta| \leq 1} (1+|\frac{\eta}{k}|^2)^{s-2} k^n |\hat w(\eta)|^2 \,d\eta \geq k^n ||w||^2_{L^2}=k^n.
\end{eqnarray*}
Analogously we have
\[
 ||w_k||_{s-1}^2 = \int_{|\eta| \leq 1} (1+|\frac{\eta}{k}|^2)^{s-1} k^n |\hat w(\eta)|^2 \,d\eta \leq 2^{s-1} k^n ||w||_{L^2}^2=2^{s-1}k^n.
\]
Similarly we have for $1 \leq j \leq n$
\begin{eqnarray*}
 ||\partial_j w_k||_{s-2}^2 &=& \int_{\R^n} (1+|\xi|^2)^{s-2} \xi_j^2 k^{2n} |\hat w(k \xi)|^2 \,d\xi \\
&=& \int_{|\eta|\leq 1} (1+|\frac{\eta}{k}|^2)^{s-2} \frac{\eta_j^2}{k^2} k^n |\hat w(\eta)|^2 \,d\eta\\
&\leq& 2^{s-2} k^{n-2} ||w||_{L^2}^2=2^{s-2}k^{n-2}.
\end{eqnarray*}
So 
\[
 ||w_k||_{s-1} ||\nabla w||_{s-2} \leq (2^{s-1}  k^n)^{1/2} \cdot (2^{s-2}k^{n-2})^{1/2} =2^{s-3/2} k^{n-1} \mbox{ and } ||w||_{s-2}^2 \geq k^n.
\]
Thus for $k$ large the inequality \eqref{wrong_inequality} cannot hold. This shows that the assumption that $\operatorname{div}$ is surjetive is wrong. 
\end{proof}

Lemma \ref{lemma_not_surjective} shows that $0$ is not a regular value of $F$. To prove Theorem \ref{th_submanifold} we therefore have to argue differently then Ebin and Marsden in \cite{ebin}. The key idea is to use the exponential map as a parametrization of $\Ds^s_\mu(\R^n)$. We want to show that near $\operatorname{id} \in \Ds^s_\mu(\R^n)$ there exists a neighborhood $V$ of $0$ in $H^s_\sigma(\R^n;\R^n)$ which $\exp$ maps bijectively onto a neighborhood of $\operatorname{id}$ in $\Ds^s_\mu(\R^n)$. Recall from \eqref{uexp_def} that $U^s_{\exp} \subseteq H^s(\R^n;\R^n)$ denotes the domain of the exponential map. 

\begin{Prop}\label{prop_local_manifold}
There is a neighborhood $\tilde U \subseteq U^s_{\exp}$ of $0$ such that
\[
 \exp\big(\tilde U \cap H^s_\sigma(\R^n;\R^n)\big) = \exp(\tilde U) \cap \Ds^s_\mu(\R^n).
\]
Moreover $\left. \exp \right|_{\tilde U}$ is an analytic diffeomorphism onto its image.
\end{Prop}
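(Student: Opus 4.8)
The plan is to use the exponential map to furnish, near $\operatorname{id}$, a chart of $\Ds^s(\R^n)$ in which $\Ds^s_\mu(\R^n)$ is identified with the closed linear subspace $H^s_\sigma(\R^n;\R^n)$. First I would produce $\tilde U$ together with the ``moreover'' assertion in one step: by Proposition \ref{exp_analytic} the map $\exp$ is real analytic on $U^s_{\exp}$, and by Lemma \ref{d_exp} its differential at $0$ is the identity of $H^s(\R^n;\R^n)$, so the inverse function theorem in Banach spaces gives an open neighborhood $\tilde U\subseteq U^s_{\exp}$ of $0$ such that $\exp|_{\tilde U}$ is an analytic diffeomorphism onto an open neighborhood $\tilde V:=\exp(\tilde U)$ of $\operatorname{id}$ in $\Ds^s(\R^n)$; restricting this analytic diffeomorphism to the closed subspace $H^s_\sigma(\R^n;\R^n)$ yields the last sentence of the Proposition. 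Since $\exp|_{\tilde U}$ is a bijection onto $\tilde V$, the asserted identity $\exp\big(\tilde U\cap H^s_\sigma(\R^n;\R^n)\big)=\tilde V\cap\Ds^s_\mu(\R^n)$ is equivalent to
\[
 \{\,v_0\in\tilde U\mid\exp(v_0)\in\Ds^s_\mu(\R^n)\,\}=\tilde U\cap H^s_\sigma(\R^n;\R^n),
\]
and everything reduces to this set equality (shrinking $\tilde U$ further whenever convenient, which keeps $\tilde V$ a neighborhood of $\operatorname{id}$ and, using that $U^s_{\exp}$ is star-shaped about $0$, keeps $\exp(tv_0)$ defined for $t\in[0,1]$ whenever $v_0\in\tilde U$).

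The inclusion ``$\supseteq$'' is the easy direction. If $v_0\in\tilde U\cap H^s_\sigma(\R^n;\R^n)$, put $\varphi(t)=\exp(tv_0)$; by Theorem \ref{th_lagrangian}(ii) together with Proposition \ref{prop_alternative}, the Eulerian velocity $u(t)=\partial_t\varphi(t)\circ\varphi(t)^{-1}$ is a solution of \eqref{E}, so $\operatorname{div}u(t)=0$ for all $t$ by Lemma \ref{lemma_closedness}. By Lemma \ref{det_derivative}, $\theta(t):=\det\big(d\varphi(t)\big)$ satisfies $\partial_t\theta=\big(\operatorname{div}u(t)\circ\varphi(t)\big)\,\theta$, which vanishes, and $\theta(0)\equiv1$; hence $\det\big(d\exp(v_0)\big)=\theta(1)\equiv1$, i.e. $\exp(v_0)\in\Ds^s_\mu(\R^n)$.

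The inclusion ``$\subseteq$'' is the core of the Proposition, and I expect it to be the main obstacle: one must show that, for $v_0$ in a sufficiently small neighborhood of $0$, $\det\big(d\exp(v_0)\big)\equiv1$ forces $\operatorname{div}v_0=0$. The handle I would use is that along the geodesic $\varphi(t)=\exp(tv_0)$ the Eulerian velocity $u(t)=\partial_t\varphi(t)\circ\varphi(t)^{-1}$ solves \eqref{RE} on $[0,1]$ (Theorem \ref{th_lagrangian}), so by the computation in the proof of Lemma \ref{lemma_closedness} the scalar field $c(t):=\operatorname{div}u(t)\in H^{s-1}(\R^n)$ satisfies the evolution equation $\partial_t c=\chi(D)\big(2(u\cdot\nabla)c+c^2\big)-(u\cdot\nabla)c$ together with a two-sided a priori bound $|\partial_t\|c(t)\|_{L^2}^2|\le C\,\|c(t)\|_{L^2}^2$ with $C$ depending only on $\sup_{[0,1]}\|u(t)\|_s<\infty$; Gronwall's inequality in both time directions then shows that if $c(t_0)\equiv0$ for a single instant $t_0\in[0,1]$ then $c\equiv0$ on $[0,1]$, and in particular $\operatorname{div}v_0=c(0)=0$. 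It therefore suffices to produce one instant at which the divergence vanishes identically. Since $\theta(t,x):=\det\big(d\varphi(t,x)\big)=\exp\!\big(\int_0^t c(\tau,\varphi(\tau,x))\,d\tau\big)$ and, by hypothesis, $\theta(0)\equiv\theta(1)\equiv1$, one knows $\int_0^1 c(\tau,\varphi(\tau,x))\,d\tau=0$ for every $x\in\R^n$; the remaining task is to upgrade this, say by analysing the analytic map $v_0\mapsto\det\big(d\exp(v_0)\big)-1$ along the Leray splitting $H^s(\R^n;\R^n)=H^s_\sigma(\R^n;\R^n)\oplus(1-P)H^s(\R^n;\R^n)$ — its differential at $0$ is $\operatorname{div}$, with kernel exactly $H^s_\sigma(\R^n;\R^n)$, so it restricts to an injective map on any complement of $H^s_\sigma(\R^n;\R^n)$, and the point is to promote this to local injectivity of the nonlinear map using the reversibility of the $c$-equation above and the explicit polynomial form of $\det(d\exp(\cdot))$. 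This last step is the genuinely delicate one, reflecting the fact — recorded in Lemma \ref{lemma_not_surjective} — that $\operatorname{div}\colon H^s(\R^n)\to H^{s-1}(\R^n)$ is not surjective on $\R^n$, so that $\Ds^s_\mu(\R^n)$ cannot simply be exhibited as the level set of a submersion and the implicit function theorem is not available. I would also note that once ``$\subseteq$'' is obtained the Proposition follows, and that the ``closed'' half of the set equality — $\exp\big(\tilde U\cap H^s_\sigma(\R^n;\R^n)\big)$ is closed in $\exp(\tilde U)\cap\Ds^s_\mu(\R^n)$ — is immediate from continuity of $\big(\exp|_{\tilde U}\big)^{-1}$ and closedness of $H^s_\sigma(\R^n;\R^n)$, so that only its ``open'' half genuinely requires the argument above.
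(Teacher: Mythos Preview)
Your setup is fine: the inverse function theorem gives the diffeomorphism statement, and the inclusion $\exp\big(\tilde U\cap H^s_\sigma\big)\subseteq\Ds^s_\mu$ is exactly Corollary \ref{exp_restricted}. The gap is in the converse inclusion, and your two proposed routes to close it both fail.

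The first route --- use the two-sided Gronwall bound on $c(t)=\operatorname{div}u(t)$ to conclude $c\equiv 0$ once you find a single instant $t_0$ with $c(t_0)\equiv 0$ --- is logically sound but you cannot produce such a $t_0$. The hypothesis $\det(d\exp(v_0))\equiv 1$ only gives $\int_0^1 c(\tau,\varphi(\tau,x))\,d\tau=0$ for each $x$ separately; the zero of $\tau\mapsto c(\tau,\varphi(\tau,x))$ guaranteed by the mean value theorem depends on $x$, and there is no mechanism forcing these zeros to coincide. The second route --- promote injectivity of $d_0F=\operatorname{div}$ on a complement of $H^s_\sigma$ to local injectivity of $F(v_0)=\det(d\exp(v_0))-1$ --- is exactly where Lemma \ref{lemma_not_surjective} bites: since $\operatorname{div}$ is not onto $H^{s-1}$, its restriction to a complement of its kernel has no bounded inverse, so neither the implicit function theorem nor the standard local-injectivity argument applies. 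Your sentence ``the point is to promote this to local injectivity'' is precisely the step that cannot be carried out with the tools you list.

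The paper bypasses both obstacles by working directly with the quantity you already wrote down, $\int_0^1 c(t)\circ\varphi(t)\,dt$, and estimating it in $H^{s-1}$ rather than trying to extract a pointwise-in-time conclusion. Integrating the Lagrangian form of the divergence evolution \eqref{t_derivative_explicit} once gives $c(t)\circ\varphi(t)=\operatorname{div}v_0+\int_0^t R_{\varphi(\tau)}\chi(D)\big(2(u\cdot\nabla)c+c^2\big)\,d\tau$, and integrating again in $t$ over $[0,1]$ yields
\[
 \int_0^1 c(t)\circ\varphi(t)\,dt \;=\; \operatorname{div}v_0 \;+\; \text{(double integral of quadratic terms)}.
\]
The crucial observation is that the double integral is \emph{small relative to} $\|\operatorname{div}v_0\|_{s-1}$: Lemma \ref{lemma_linear_u} gives $\|c(\tau)\|_{s-1}\leq C\|\operatorname{div}v_0\|_{s-1}$, and Lemma \ref{lemma_small_u} lets you make $\|u(\tau)\|_s$ as small as you like by shrinking $\tilde U$, so each term in the integrand is bounded by a small multiple of $\|\operatorname{div}v_0\|_{s-1}$. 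Hence $\big\|\int_0^1 c(t)\circ\varphi(t)\,dt\big\|_{s-1}\geq\tfrac12\|\operatorname{div}v_0\|_{s-1}$, and if $\operatorname{div}v_0\neq 0$ the left side is nonzero, contradicting $\det(d\exp(v_0))\equiv 1$. This is a quantitative perturbation argument around the linear map $v_0\mapsto\operatorname{div}v_0$, not an inverse-function-theorem argument, and it is exactly what replaces the unavailable submersion structure.
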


First we have to make some preparations for the proof of Proposition \ref{prop_local_manifold}. In the following we denote as usual by $u(t;u_0)$ the solution of \eqref{RE} at time $t$ with initial value $u_0$. By the proof of Theorem \ref{th_lagrangian}(ii), $\; u(t;u_0)$ is well-defined on $[0,1] \times U^s_{\exp}$. We state without proof the following quite well-known lemma (see e.g. \cite{majda})

\begin{Lemma}\label{det_derivative}
Let $u:[0,1] \times \R^n \to \R^n$ be a $C^1$-vector field admitting a flow $\varphi:[0,1] \times \R^n \to \R^n$, i.e. a $C^1$-map satisfying
\[
 \partial_t \varphi(t,x)=u(t,\varphi(t,x)) \quad \mbox{and} \quad \varphi(0,x)=x
\]
for any $(t,x) \in [0,1] \times \R^n$. Then for all $(t,x) \in [0,1] \times \R^n$
\[
 \partial_t \det\big(d_x \varphi(t,x)\big) = (\operatorname{div} u)\big(t,\varphi(t,x)\big) \cdot \det\big(d_x\varphi(t,x)\big)
\]
or, in integrated form,
\begin{equation}\label{det_formula}
\det\big(d_x\varphi(t,x)\big) = e^{\int_0^t (\operatorname{div} u)(\tau,\varphi(\tau,x)) \,d\tau}.
\end{equation}
\end{Lemma}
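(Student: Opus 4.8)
The plan is to reduce the statement to Liouville's formula for linear systems of ODEs. First I would fix $x \in \R^n$ and abbreviate $A(t) := d_x\varphi(t,x) \in \mathbb R^{n\times n}$. Since $u$ is $C^1$ in $(t,y)$, the classical theorem on $C^1$-dependence of solutions of ODEs on the initial conditions shows that $y \mapsto \varphi(t,y)$ is $C^1$ and that the matrix-valued map $A$ is $C^1$ in $t$ and solves the variational equation obtained by applying $d_x$ to $\partial_t\varphi(t,x) = u(t,\varphi(t,x))$, namely
\[
 \partial_t A(t) = (d_y u)\big(t,\varphi(t,x)\big)\,A(t), \qquad A(0) = I_n,
\]
where $d_y u$ denotes the spatial Jacobian of $u$. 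In particular $A(t)$ is the value at time $t$ of the fundamental matrix of a linear system, hence invertible for every $t \in [0,1]$.

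Next I would invoke Jacobi's formula: for any $C^1$ curve $t \mapsto A(t)$ of invertible matrices one has $\frac{d}{dt}\det A(t) = \det A(t)\,\operatorname{tr}\!\big(A(t)^{-1}\partial_t A(t)\big)$; this follows in one line from the multilinearity of the determinant in the columns of $A$, differentiating column by column. Substituting the variational equation gives $A(t)^{-1}\partial_t A(t) = A(t)^{-1}(d_y u)(t,\varphi(t,x))A(t)$, whose trace equals $\operatorname{tr}(d_y u)(t,\varphi(t,x)) = (\operatorname{div} u)(t,\varphi(t,x))$ by the similarity-invariance of the trace. Therefore
\[
 \partial_t\det\big(d_x\varphi(t,x)\big) = (\operatorname{div} u)\big(t,\varphi(t,x)\big)\cdot\det\big(d_x\varphi(t,x)\big),
\]
which is the first assertion.

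Finally, to obtain the integrated form I would fix $x$ and set $g(t) := \det\big(d_x\varphi(t,x)\big)$ and $c(t) := (\operatorname{div} u)\big(t,\varphi(t,x)\big)$. Then $c$ is continuous on $[0,1]$, $g$ is $C^1$, $g(0) = \det I_n = 1$, and $g$ solves the scalar linear ODE $g'(t) = c(t)g(t)$; by uniqueness for this ODE $g(t) = \exp\!\big(\int_0^t c(\tau)\,d\tau\big)$, which is exactly \eqref{det_formula} (and incidentally reconfirms $g(t) > 0$).

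The main obstacle is not the determinant computation, which is purely formal, but the regularity bookkeeping in the first step: for a vector field that is merely $C^1$ one has to justify that the flow is $C^1$ in the space variable and that the interchange of $\partial_t$ and $d_x$ producing the variational equation is legitimate. This is standard ODE theory (smooth dependence on initial data) and is the only point that requires care.
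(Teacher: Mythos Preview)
Your argument is correct and is the standard Liouville-formula proof. The paper itself does not prove this lemma at all: it is stated explicitly ``without proof'' as well-known, with a reference to Majda--Bertozzi, so there is nothing to compare against.
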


As a consequence of Lemma \ref{det_derivative}, Lemma \ref{lemma_exp} and Theorem \ref{th_lagrangian}(ii) we get

\begin{Coro}\label{exp_restricted}
The exponential map $\exp$ maps the divergence free vector fields into the volume-preserving diffeomorphisms, i.e. 
\[
 \exp\big(U^s_{\exp} \cap H^s_\sigma(\R^n;\R^n)\big) \subseteq \Ds^s_\mu(\R^n).
\]
\end{Coro}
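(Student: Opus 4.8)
The plan is to fix $u_0 \in U^s_{\exp} \cap H^s_\sigma(\R^n;\R^n)$ and show directly that $\det\big(d_x \exp(u_0)\big) = 1$ for every $x \in \R^n$, which is precisely the condition for $\exp(u_0)$ to belong to $\Ds^s_\mu(\R^n)$. The argument is a short chain: produce the Lagrangian flow from the exponential map, read off that the associated Eulerian velocity is divergence free, and then feed $\operatorname{div} u \equiv 0$ into the Jacobian formula \eqref{det_formula}.

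First I would invoke Lemma \ref{lemma_exp}: the curve $\psi(t) := \exp(t u_0)$ solves the second order initial value problem \eqref{want_to_solve} with $\psi(0) = \operatorname{id}$ and $\partial_t \psi(0) = u_0$ on its maximal interval of existence, and since $u_0 \in U^s_{\exp}$ this interval contains $[0,1]$. Setting $u(t) := \partial_t \psi(t) \circ \psi(t)^{-1}$ as in the proof of Theorem \ref{th_lagrangian}(ii), one obtains a solution of \eqref{RE} on $[0,1]$ with $u(0) = u_0 \in H^s_\sigma(\R^n;\R^n)$. Then, by Lemma \ref{lemma_closedness} (equivalently, by the conclusion of Theorem \ref{th_lagrangian}(ii)), $u(t) \in H^s_\sigma(\R^n;\R^n)$ for all $t \in [0,1]$, i.e. $\operatorname{div} u(t,\cdot) \equiv 0$ throughout.

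Next, note that by construction $\psi$ is the flow of $u$, namely $\partial_t \psi(t,x) = u(t,\psi(t,x))$ and $\psi(0,x) = x$. As already used in the proof of Theorem \ref{th_lagrangian}(ii), the Sobolev imbedding \eqref{sobolev_imbedding} together with $s > n/2+1$ shows that $u \in C^1([0,1] \times \R^n;\R^n)$ and $\psi \in C^1([0,1] \times \R^n;\R^n)$. Hence Lemma \ref{det_derivative} applies and gives, for all $(t,x) \in [0,1] \times \R^n$,
\[
 \det\big(d_x \psi(t,x)\big) = e^{\int_0^t (\operatorname{div} u)(\tau,\psi(\tau,x))\,d\tau} = e^{0} = 1.
\]
Evaluating at $t = 1$ yields $\det\big(d_x \exp(u_0)\big) = 1$ for every $x \in \R^n$, so $\exp(u_0) \in \Ds^s_\mu(\R^n)$. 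Since $u_0 \in U^s_{\exp} \cap H^s_\sigma(\R^n;\R^n)$ was arbitrary, the claimed inclusion follows.

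The only point requiring care is the regularity bookkeeping — verifying that $u$ and its flow $\psi$ are genuinely $C^1$ in $(t,x)$ so that Lemma \ref{det_derivative} is applicable — but this is exactly the reasoning already carried out in the proof of Theorem \ref{th_lagrangian}(ii) and needs no new ingredient; everything else is a direct substitution of $\operatorname{div} u \equiv 0$ into the determinant formula \eqref{det_formula}.
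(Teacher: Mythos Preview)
Your proof is correct and follows exactly the route the paper indicates: the paper states the corollary as an immediate consequence of Lemma~\ref{det_derivative}, Lemma~\ref{lemma_exp}, and Theorem~\ref{th_lagrangian}(ii), and you have simply written out the chain of implications in full, including the regularity check that justifies applying Lemma~\ref{det_derivative}.
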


\begin{Lemma}\label{lemma_small_u}
For any $\varepsilon > 0$ there is a neighborhood $\tilde U \subseteq U^s_{\exp}$ of $0$ such that
\[
 ||u(t;u_0)||_s < \varepsilon
\]
for all $t \in [0,1]$ and for all $u_0 \in \tilde U$.
\end{Lemma}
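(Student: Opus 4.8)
The plan is to combine continuity of the exponential-map flow with a compactness argument in the time variable. The statement to be proved is that the family of solution curves $t \mapsto u(t;u_0)$ can be made uniformly small in $H^s$ over all $t \in [0,1]$ by shrinking the neighborhood of $0$ in which $u_0$ lives. The first observation is that $u(0;u_0) = u_0$, so at $t=0$ the claim is trivial: take $u_0$ in the ball of radius $\varepsilon$. The content of the lemma is that this control persists up to time $1$.

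First I would recall the relevant continuity statements established earlier. By Proposition \ref{exp_analytic} (and Proposition \ref{prop_analytic_flow}), the solution map $(t,u_0) \mapsto \big(\varphi(t;u_0), v(t;u_0)\big)$ of the first-order system \eqref{first_order} is continuous (indeed real analytic) on its domain, which is an open subset of $[0,1] \times H^s(\R^n;\R^n)$ containing $\{0\} \times U^s_{\exp}$; in particular, since $U^s_{\exp}$ is an open neighborhood of $0$ and $v(t;0) \equiv 0$, there is $\tau_0 \in (0,1]$ and a ball $B_{\delta_0}(0)$ such that the flow exists on $[0,\tau_0]$ for all initial data in $B_{\delta_0}(0)$. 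Then, as in the proof of Theorem \ref{th_general} and by the continuity of composition and inversion in $\Ds^s(\R^n)$, the Eulerian solution $u(t;u_0) = \partial_t\varphi(t;u_0)\circ\varphi(t;u_0)^{-1}$ depends continuously on $(t,u_0)$ wherever it is defined. The key point is that $u(t;0) = 0$ for all $t$, because $\varphi(t;0)=\operatorname{id}$ is the unique solution with zero initial velocity.

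Next I would run the compactness argument. Fix $\varepsilon > 0$. For each $t_0 \in [0,1]$, continuity of $(t,u_0)\mapsto u(t;u_0)$ at the point $(t_0,0)$, together with $u(t_0;0)=0$, gives an open neighborhood of $(t_0,0)$ of the form $(t_0-\eta_{t_0}, t_0+\eta_{t_0}) \times B_{r_{t_0}}(0)$ (intersected with $[0,1]\times U^s_{\exp}$) on which $\|u(t;u_0)\|_s < \varepsilon$ and on which the solution is guaranteed to exist. The intervals $(t_0-\eta_{t_0},t_0+\eta_{t_0})$ cover the compact set $[0,1]$, so finitely many of them, say with centers $t_1,\dots,t_N$, already cover $[0,1]$. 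Set $\tilde U := \bigcap_{j=1}^N B_{r_{t_j}}(0)$, which is an open neighborhood of $0$ contained in $U^s_{\exp}$. For any $u_0 \in \tilde U$ and any $t \in [0,1]$, we have $t \in (t_j - \eta_{t_j}, t_j + \eta_{t_j})$ for some $j$, and then $\|u(t;u_0)\|_s < \varepsilon$, which is exactly the claim. One should also note in passing that shrinking $\tilde U$ this way keeps all the relevant solutions defined on all of $[0,1]$, since $\tilde U \subseteq U^s_{\exp}$ and each $B_{r_{t_j}}(0)$ was chosen inside the domain of existence.

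The main obstacle — really the only subtle point — is making sure the continuity of $u(t;u_0)$ in the pair $(t,u_0)$, jointly, is available and that the domain of the solution map is genuinely open in $[0,1]\times H^s$, so that the neighborhoods $(t_0-\eta_{t_0},t_0+\eta_{t_0})\times B_{r_{t_0}}(0)$ can be taken inside it; this is what lets the compactness covering argument go through uniformly. This is guaranteed by the ODE existence/uniqueness theory in Banach spaces (Proposition \ref{prop_analytic_flow}) applied to the real-analytic vector field of Proposition \ref{prop_analytic_spray}, combined with the continuity of the group operations on $\Ds^s(\R^n)$ used to pass from the Lagrangian pair $(\varphi,\partial_t\varphi)$ back to the Eulerian velocity $u$. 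Once joint continuity and openness of the domain are in hand, the rest is the routine finite-subcover argument sketched above.
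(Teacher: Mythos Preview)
Your proof is correct and follows essentially the same approach as the paper: joint continuity of $(t,u_0)\mapsto u(t;u_0)$ together with $u(t;0)=0$ and compactness of $[0,1]$. The paper compresses this into one sentence, while you spell out the finite-subcover argument in detail, but the underlying idea is identical.
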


\begin{proof}[Proof of Lemma \ref{lemma_small_u}]
Since by the proof of Theorem \ref{th_lagrangian}(ii), given at the end of Section \ref{section_exponential_map},
\[
 [0,1] \times U^s_{\exp} \to H^s(\R^n;\R^n), \quad (t,u_0) \mapsto u(t;u_0)
\]
is continuous and $u(t;0)=0$ for all $t \in [0,1]$, the claim follows by the compactness of $[0,1]$.
\end{proof}

\begin{Lemma}\label{lemma_linear_u}
There is a neighborhood $\tilde U \subseteq U^s_{\exp}$ of $0$ and a constant $C>0$ such that we have for any $0 \leq t \leq 1$ and any $u_0 \in \tilde U$
\[
 ||\operatorname{div} u(t;u_0)||_{s-1} \leq C ||\operatorname{div} u_0||_{s-1}.
\]
\end{Lemma}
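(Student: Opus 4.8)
The plan is to reproduce the $L^2$-energy estimate for $\operatorname{div} u$ carried out in the proof of Lemma \ref{lemma_closedness}, but this time keeping track of the constants uniformly in a neighborhood of $0$ and applying Gronwall's inequality in the $H^{s-1}$-norm rather than concluding that $\operatorname{div} u$ vanishes. Recall that for a solution $u$ of \eqref{RE} we derived in \eqref{u_derivative} the evolution equation
\[
 \partial_t \operatorname{div} u = \chi(D)\big( 2 (u \cdot \nabla)\operatorname{div} u + (\operatorname{div} u)^2\big) - (u \cdot \nabla) \operatorname{div} u,
\]
which holds in $L^2$; since $s>n/2+1$, all the terms on the right-hand side actually lie in $H^{s-1}(\R^n)$ by the Banach-algebra property of $H^{s-1}(\R^n)$ and Lemma \ref{lemma_multiplication}, and the identity holds in $H^{s-1}(\R^n)$. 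First I would fix, using Lemma \ref{lemma_small_u}, a neighborhood $\tilde U \subseteq U^s_{\exp}$ of $0$ and a constant $A>0$ such that $\sup_{0 \le t \le 1} \|u(t;u_0)\|_s < A$ for all $u_0 \in \tilde U$; by the Sobolev imbedding \eqref{sobolev_imbedding} this also bounds $\|u(t;u_0)\|_{C^1}$ uniformly, and in particular $\|\operatorname{div} u(t;u_0)\|_{L^\infty}$ and $\|u(t;u_0)\|_{L^\infty}$ are uniformly bounded on $\tilde U \times [0,1]$.

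Next I would estimate $\partial_t \|\operatorname{div} u(t)\|_{s-1}^2$. Writing $d:=\operatorname{div} u(t)$, one has
\[
 \tfrac{1}{2}\partial_t \|d\|_{s-1}^2 = \big\langle d,\ \chi(D)\big(2(u\cdot\nabla)d + d^2\big) - (u\cdot\nabla)d \big\rangle_{s-1}.
\]
Each of the three pairings is controlled as follows. For the term $\langle d,(u\cdot\nabla)d\rangle_{s-1}$ one uses a commutator/integration-by-parts estimate of Kato--Ponce type (equivalently the tame estimates for $H^{s-1}$, available since $H^{s-1}$ is a Banach algebra and $u \in H^s$), giving a bound $\lesssim \|u\|_s \|d\|_{s-1}^2$. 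For $\langle d, \chi(D)((u\cdot\nabla)d)\rangle_{s-1}$ one first uses that $\chi(D)$ is bounded on $H^{s-1}$ (Lemma \ref{properties_chi}) and smoothing — in fact $\|\chi(D)g\|_{s-1} \le 2^{1/2}\|g\|_{s-2} \le 2^{1/2}\|g\|_{s-1}$ — together with the multiplication estimate $\|(u\cdot\nabla)d\|_{s-1} \lesssim \|u\|_s \|d\|_{s-1}$ to obtain again $\lesssim \|u\|_s \|d\|_{s-1}^2$. For $\langle d, \chi(D)(d^2)\rangle_{s-1}$ one uses $\|\chi(D)(d^2)\|_{s-1} \le 2^{1/2}\|d^2\|_{s-2} \lesssim \|d\|_{L^\infty}\|d\|_{s-1}$ (Banach-algebra / tame product), which is $\lesssim \|u\|_s \|d\|_{s-1}^2$ after using the Sobolev imbedding $\|d\|_{L^\infty} \le \|d\|_{C^0} \le C\|u\|_s$. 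Collecting these, there is a constant $C_1$ (depending only on $s,n$) with
\[
 \partial_t \|\operatorname{div} u(t;u_0)\|_{s-1}^2 \le C_1 \|u(t;u_0)\|_s \,\|\operatorname{div} u(t;u_0)\|_{s-1}^2 \le C_1 A\, \|\operatorname{div} u(t;u_0)\|_{s-1}^2
\]
for all $0 \le t \le 1$ and $u_0 \in \tilde U$.

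Finally, Gronwall's inequality (see e.g. \cite{majda}) gives
\[
 \|\operatorname{div} u(t;u_0)\|_{s-1}^2 \le e^{C_1 A t}\,\|\operatorname{div} u_0\|_{s-1}^2 \le e^{C_1 A}\,\|\operatorname{div} u_0\|_{s-1}^2
\]
for all $0 \le t \le 1$ and $u_0 \in \tilde U$, so the claim holds with $C := e^{C_1 A/2}$. I expect the main obstacle to be the commutator estimates in the $H^{s-1}$-pairings — in particular handling $\langle d,(u\cdot\nabla)d\rangle_{s-1}$ and the $\chi(D)$-twisted analogue, where one must integrate by parts at the level of the $H^{s-1}$-inner product and control the Kato--Ponce-type commutator $[\langle D\rangle^{s-1}, u\cdot\nabla]$; everything else is bookkeeping with the uniform bound from Lemma \ref{lemma_small_u} and the mapping properties of $\chi(D)$ from Lemma \ref{properties_chi}. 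Alternatively, one can avoid commutators entirely by working at the $L^2$ level as in Lemma \ref{lemma_closedness} but applied to each spatial derivative $\partial^\alpha \operatorname{div} u$ with $|\alpha| \le s-1$ (when $s-1$ is an integer) or by a Littlewood--Paley argument in general, summing the resulting estimates to recover the $H^{s-1}$-bound; the uniform constants then come as before from Lemma \ref{lemma_small_u}.
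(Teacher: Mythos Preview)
Your approach is correct in outline but takes a genuinely different route from the paper. You work in Eulerian coordinates and estimate $\partial_t\|\operatorname{div} u\|_{s-1}^2$ directly, which forces you to confront the transport term $\langle d,(u\cdot\nabla)d\rangle_{s-1}$ and hence to invoke a Kato--Ponce commutator estimate not developed anywhere in the paper. The paper instead passes to Lagrangian coordinates: it computes $\partial_t\big((\operatorname{div} u)\circ\varphi\big)=R_\varphi\big[\partial_t\operatorname{div} u+(u\cdot\nabla)\operatorname{div} u\big]$, and after substituting \eqref{u_derivative} the bare transport term $-(u\cdot\nabla)\operatorname{div} u$ cancels, leaving only $R_\varphi\,\chi(D)\big(2(u\cdot\nabla)\operatorname{div} u+(\operatorname{div} u)^2\big)$. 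Because $\chi(D)$ gains a full derivative (Lemma~\ref{properties_chi}), one has $\|\chi(D)\big((u\cdot\nabla)d\big)\|_{s-1}\lesssim\|(u\cdot\nabla)d\|_{s-2}\lesssim\|u\|_s\|d\|_{s-1}$ by Lemma~\ref{lemma_multiplication}, with no commutator needed; the composition with $\varphi$ and $\varphi^{-1}$ is handled by Lemma~\ref{lemma_linear_growth}. Integrating and applying Gronwall then gives the claim using only tools already built in the paper. Your approach is more standard and would work just as well once Kato--Ponce is available; the paper's trick is more self-contained and exploits the Lagrangian framework that is the thesis's theme. One small slip in your write-up: $\|(u\cdot\nabla)d\|_{s-1}\lesssim\|u\|_s\|d\|_{s-1}$ is not true in general (you only have $\nabla d\in H^{s-2}$), but since you immediately precede it with the $\chi(D)$-smoothing you only need the $H^{s-2}$ bound, so the argument survives.
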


\begin{proof}[Proof of Lemma \ref{lemma_linear_u}]
Choose $\tilde U$ to be a small ball around $0$ contained in $U^s_{\exp}$ so that on the one hand by Lemma \ref{lemma_small_u}
\begin{equation}\label{assumption1}
 ||u(t;u_0)||_s \leq 1, \quad \forall 0 \leq t \leq 1,\; \forall u_0 \in \tilde U
\end{equation}
and on the other hand, by Lemma \ref{lemma_linear_growth}, for some $C_1 >0$, for any $\varphi \in \exp(\tilde U)$
\begin{equation}\label{assumption2}
 ||R_\varphi f||_{s-1} \leq C_1 ||f||_{s-1} \mbox{ and } ||R_\varphi^{-1} f||_{s-1} \leq C_1 ||f||_{s-1},\; \forall f \in H^{s-1}(\R^n).
\end{equation}
Denote by $\varphi(\cdot;u_0)$ the flow corresponding to $u(\cdot;u_0)$. By the chain rule for $s$ sufficiently large one has
\begin{equation}\label{t_derivative_div}
 \partial_t \big((\operatorname{div} u)\circ \varphi\big) = R_\varphi\left( \partial_t \operatorname{div}u + (u \cdot \nabla)\operatorname{div}u\right).
\end{equation}
Approximate $u(\cdot;u_0)$ by $(u_k)_{k \geq 1} \subseteq C^1\big([0,1];H^{s+1}(\R^n;\R^n))$ in the norm of the space
\[
C^0\big([0,1];H^s(\R^n;\R^n)\big) \cap C^1\big([0,1];H^{s-1}(\R^n;\R^n)\big).
\]
Then for any $k \geq 1$, one has in $C^0\big([0,1];H^{s-1}(\R^n;\R^n)\big)$
\[
 \partial_t \big((\operatorname{div} u_k)\circ \varphi\big) = R_\varphi\left( \partial_t \operatorname{div}u_k + (u \cdot \nabla)\operatorname{div}u_k\right)
\]
In particular the identity holds in $C^0\big([0,1];H^{s-2}(\R^n;\R^n)\big)$. Letting $k \to \infty$ on both sides of the latter identity leads to
\[
 \partial_t \big((\operatorname{div} u)\circ \varphi\big) = R_\varphi\left( \partial_t \operatorname{div}u + (u \cdot \nabla)\operatorname{div}u\right).
\]
Substituting formula \eqref{u_derivative} for $\partial_t \operatorname{div} u$ one gets
\begin{equation}\label{t_derivative_explicit}
 \partial_t \big((\operatorname{div} u) \circ \varphi\big) = R_\varphi \left( \chi(D) \big( 2(u \cdot \nabla) \operatorname{div} u + (\operatorname{div} u)^2 \big) \right).
\end{equation}
Integrating \eqref{t_derivative_explicit} with respect to $t$ yields
\[
 \operatorname{div} u(t) = R_{\varphi(t)}^{-1} \left(\operatorname{div} u_0 + \int_0^t R_{\varphi(\tau)} \left(\chi(D)\Big(2(u(\tau) \cdot \nabla) \operatorname{div} u(\tau) + (\operatorname{div} u(\tau))^2\Big)\right) \,d\tau \right).
\]
Using \eqref{assumption2} we get
\begin{multline}
\label{divu_triangle}
 ||\operatorname{div} u(t)||_{s-1} \leq C_1 ||\operatorname{div} u_0||_{s-1} \\
 + C_1^2 \int_0^t ||\chi(D)\big(2(u(\tau) \cdot \nabla) \operatorname{div} u(\tau)\big)||_{s-1}  + ||\chi(D)(\operatorname{div} u(\tau))^2||_{s-1} \,d\tau
\end{multline}
For the first expression under the integral sign we have by Lemma \ref{properties_chi} for all $0 \leq \tau \leq 1$
\[
 ||\chi(D)\big(2(u(\tau) \cdot \nabla) \operatorname{div} u(\tau)\big)||_{s-1} \leq 2\sqrt{2} ||\big(u(\tau) \cdot \nabla\big) \operatorname{div} u(\tau)||_{s-2}.
\]
Lemma \ref{lemma_multiplication} implies that there exists a constant $C_2 > 0$ such that for all $0 \leq \tau \leq 1$
\[
 \||\big(u(\tau) \cdot \nabla\big) \operatorname{div} u(\tau)||_{s-2} \leq C_2 ||u(\tau)||_s ||\operatorname{div} u(\tau)||_{s-1}.
\]
Combined with \eqref{assumption1} we thus have proved that
\[
  ||\chi(D)\big(2(u(\tau) \cdot \nabla) \operatorname{div} u(\tau)\big)||_{s-1} \leq 2 \sqrt{2} C_2 ||\operatorname{div} u(\tau)||_{s-1}.
\]
For the second expression in the integrand in \eqref{divu_triangle} the Banach algebra property of $H^{s-1}(\R^n)$ says that there exists an absolute constant $C_3 > 0$ so that
\[
 ||\chi(D) \big(\operatorname{div} u(\tau)\big)^2||_{s-1} \leq ||\big(\operatorname{div}u(\tau)\big)^2||_{s-1}\leq C_3 ||\operatorname{div} u(\tau)||_{s-1}^2,\; \forall 0 \leq \tau \leq 1,
\]
or using that $||\operatorname{div} u(\tau)||_{s-1} \leq ||u(\tau)||_s \leq 1$ one concludes that
\[
 ||\chi(D) \big(\operatorname{div} u(\tau)\big)^2||_{s-1} \leq C_3 ||\operatorname{div} u(\tau)||_{s-1},\; \forall 0 \leq \tau \leq 1.
\]
Substituting the obtained inequalities into \eqref{divu_triangle} there is an absolute constant $C_4 > 0$ such that 
\[
 ||\operatorname{div} u(t) ||_{s-1} \leq C_1 ||\operatorname{div} u_0||_{s-1} + C_4 \int_0^t ||\operatorname{div} u(\tau)||_{s-1} \,d\tau,\; \forall 0 \leq t \leq 1.
\]
By Gronwall's inequality we then have for any $0 \leq t \leq 1$
\[
 ||\operatorname{div} u(t)||_{s-1} \leq C_1 ||\operatorname{div} u_0||_{s-1} (1+e^{C_4 t}).
\] 
By choosing $\tilde U$ as described above and $C=C_1 (1+e^{C_4})$ we get the claim.
\end{proof}

\noindent
Now we can prove Proposition \ref{prop_local_manifold}.

\begin{proof}[Proof of Proposition \ref{prop_local_manifold}]
By Corollary \ref{exp_restricted}, 
\[
 \exp\big(U^s_{\exp} \cap H^s_\sigma(\R^n;\R^n)\big) \subseteq \Ds^s_\mu(\R^n).
\]
Lemma \ref{d_exp}, together with the inverse function theorem implies that there exists a neighborhood $U' \subseteq U^s_{\exp}$ of $0$ so that 
\[
 \exp:U' \to \Ds^s(\R^n)
\]
is a diffeomorphism onto its image. In particular
\[
 \exp:U' \cap H^s_\sigma(\R^n;\R^n) \to \Ds^s_\mu(\R^n)
\]
is $1-1$. It remains to show that there exists a neighborhood $\tilde U \subseteq U'$ of $0$ so that $\exp(\tilde U) \cap \Ds^s_\mu(\R^n)$ is contained in $\exp\big(\tilde U \cap H^s_\sigma(\R^n;\R^n)\big)$. Arguing by contraposition we show that there exists a neighborhood $\tilde U$ so that any $u_0 \in \tilde U$ with $\exp(u_0) \not\in \Ds^s_\mu(\R^n)$ is an element in $H^s(\R^n;\R^n)\setminus H^s_\sigma(\R^n;\R^n)$. By the formula \eqref{det_formula}, the condition $\exp(u_0) \not\in \Ds^s_\mu(\R^n)$, $u_0 \in U^s_{\exp}$, means for the corresponding solution $u(t)\equiv u(t;u_0)$ and the corresponding flow $\varphi(t)\equiv \varphi(t;u_0)$
\begin{equation}\label{verify_condition}
 \int_0^1 (\operatorname{div} u) (t,\varphi(t,x)) \,dt \neq 0 \mbox{ for some } x \in \R^n.
\end{equation}
In a first step we want to express $\int_0^1 (\operatorname{div} u(t)) \circ \varphi(t)\,dt$ in a convenient way. Integrating \eqref{t_derivative_explicit} gives
\[
 (\operatorname{div} u(t)) \circ \varphi(t) = \operatorname{div} u_0 + \int_0^t R_{\varphi(\tau)} \left(\chi(D)\big(2 (u(\tau) \cdot \nabla) \operatorname{div} u(\tau) + (\operatorname{div} u(\tau))^2 \big) \right) \,d\tau.
\] 
Integrating again we arrive at
\begin{multline}
 \label{integral_div}
 \int_0^1 (\operatorname{div} u(t)) \circ \varphi(t) \,dt = \operatorname{div} u_0 \\
 + \int_0^1 \int_0^t R_{\varphi(\tau)} \left(\chi(D)\big(2 (u(\tau) \cdot \nabla) \operatorname{div} u(\tau) + (\operatorname{div} u(\tau))^2 \big) \right) \,d\tau dt.
\end{multline}
The aim is to bound the $H^{s-1}$-norm of the left-hand side of the latter identity away from $0$. By Lemma \ref{lemma_linear_growth} there exists a ball $\tilde U \subseteq U'$, with $U'$ as above, centered at $0$ and $C_1 > 0$ such that for any $f \in H^{s-1}(\R^n;\R^n)$
\begin{equation}\label{composition_linear}
 ||R_\psi f||_{s-1} \leq C_1 ||f||_{s-1},\quad \forall\psi \in \exp(\tilde U).
\end{equation}
Thus we get for any $u_0 \in \tilde U$
\begin{multline*}
\big|\big| \int_0^1 \int_0^t R_{\varphi(\tau)} \left(\chi(D)\big(2 (u(\tau) \cdot \nabla)\operatorname{div} u(\tau) + (\operatorname{div} u(\tau))^2 \big) \right) \,d\tau dt \big|\big|_{s-1} \\ 
\leq \int_0^1 \int_0^t ||R_{\varphi(\tau)} \left(\chi(D)\big(2 (u(\tau) \cdot \nabla)\operatorname{div} u(\tau) + (\operatorname{div} u(\tau))^2 \big) \right)||_{s-1} \,d\tau dt\\
\leq C_1 \int_0^1 \int_0^t ||\chi(D)\big(2 (u(\tau) \cdot \nabla)\operatorname{div} u(\tau) + (\operatorname{div} u(\tau))^2 \big)||_{s-1} \,d\tau dt
\end{multline*}
where in the last inequality we used \eqref{composition_linear}. By Lemma \ref{properties_chi} there is an absolute constant $C_2>0$ such that for any $0 \leq \tau \leq 1$
\begin{multline*}
 ||\chi(D)\big(2 (u(\tau) \cdot \nabla)\operatorname{div} u(\tau) + (\operatorname{div} u(\tau))^2 \big)||_{s-1} \\
 \leq C_2 \big(||(u(\tau) \cdot \nabla) \operatorname{div} u(\tau)||_{s-2} + ||(\operatorname{div} u(\tau))^2||_{s-2} \big).
\end{multline*} 
By Lemma \ref{lemma_multiplication} there exists $C_3 >0$ such that for any $0 \leq \tau \leq 1$
\[
 || (\operatorname{div} u(\tau))^2||_{s-2} \leq C_3 ||u(\tau)||_s ||\operatorname{div} u(\tau)||_{s-1}
\]
and
\[
 ||2 (u(\tau) \cdot \nabla)\operatorname{div} u(\tau)||_{s-2} \leq C_3 ||u(\tau)||_s ||\operatorname{div} u(\tau)||_{s-1}.
\]
From the last two inequalities we conclude that there is an absolute constant $C_4 > 0$ such that for any $0 \leq \tau \leq 1$ and for any $u_0 \in \tilde U$
\[
 ||\chi(D)\big(2 (u(\tau) \cdot \nabla)\operatorname{div} u(\tau) + (\operatorname{div} u(\tau))^2 \big)||_{s-1} \leq C_4 ||u(\tau)||_s ||\operatorname{div} u(\tau)||_{s-1}.
\]
By Lemma \ref{lemma_small_u} -- Lemma \ref{lemma_linear_u} and after shrinking $\tilde U$, if necessary, we get for any $0 \leq \tau \leq 1$ and for any $u_0 \in \tilde U$
\[
 ||\chi(D)\big(2 (u(\tau) \cdot \nabla)\operatorname{div} u(\tau) + (\operatorname{div} u(\tau))^2 \big)||_{s-1} \leq \frac{1}{2} ||\operatorname{div} u_0||_{s-1}.
\]
Thus we get from \eqref{integral_div} for any $u_0 \in \tilde U$
\begin{equation}\label{div_ineq}
 ||\int_0^1 (\operatorname{div} u(t)) \circ \varphi(t) \,dt||_{s-1} \geq \frac{1}{2} ||\operatorname{div} u_0||_{s-1}.
\end{equation}
In particular we see from \eqref{div_ineq}, that for any $u_0 \in \tilde U$ with $\operatorname{div} u_0 \neq 0$ the statement \eqref{verify_condition} holds.
\end{proof}

\noindent
Now we can prove Theorem \ref{th_submanifold}

\begin{proof}[Proof of Theorem \ref{th_submanifold}]
It is to show that the property described in Definition \ref{def_submanifold} holds for $\Ds^s_\mu(\R^n)$. Let $\tilde U \subseteq H^s(\R^n;\R^n)$ be as in the statement of Proposition \ref{prop_local_manifold}. Then
\[
 \exp\big(\tilde U \cap H^s_\sigma(\R^n;\R^n)\big) = \exp(\tilde U) \cap \Ds^s_\mu(\R^n)
\]
and hence $\exp(\tilde U) \cap \Ds^s_\mu(\R^n)$ is a submanifold of $\exp(\tilde U)$ with
\[
 \left. \exp \right|_{\tilde U \cap H^s_\sigma(\R^n;\R^n)}
\]
being a parametrization. To show the analog conclusion for an arbitrary $\psi \in \Ds^s_\mu(\R^n)$ instead of $\operatorname{id} \in \Ds^s_\mu(\R^n)$ we use the group structure of $\Ds^s(\R^n)$ and $\Ds^s_\mu(\R^n)$. We claim that for any $\psi \in \Ds^s(\R^n)$
\[
 R_\psi:\Ds^s(\R^n) \to \Ds^s(\R^n), \quad \varphi \mapsto \varphi \circ \psi
\]
is real analytic. Indeed using the identification of $\Ds^s(\R^n)$ with $\Ds^s(\R^n) - \operatorname{id} \subseteq H^s(\R^n;\R^n)$, one has with $g=\psi - \operatorname{id}$,
\[
 R_\psi:f \mapsto g + f \circ \psi
\]
which is affine and hence real analytic. The map $R_\psi$ is invertible with inverse $R_\psi^{-1}$. Now as $\Ds^s_\mu(\R^n) \subseteq \Ds^s(\R^n)$ is a subgroup one has for any $\psi \in \Ds^s(\R^n)$
\[
 R_\psi\big(\exp(\tilde U \cap H^s_\sigma(\R^n;\R^n))\big) = R_\psi\big(\exp(\tilde U)\big) \cap \Ds^s_\mu(\R^n).
\]
Note that $R_\psi\big(\exp(\tilde U)\big)$ is a neighborhood of $\psi$ in $\Ds^s(\R^n)$. Hence 
\[
 \left. R_\psi \circ \exp \right|_{\tilde U \cap H^s_\sigma(\R^n;\R^n)}
\]
is a real analytic parametrization of $R_\psi\big(\exp(\tilde U)\big) \cap \Ds^s_\mu(\R^n)$. As $\psi \in \Ds^s_\mu(\R^n)$ is arbitrary we get by Definition \ref{def_submanifold} that $\Ds^s_\mu(\R^n)$ is a real analytic submanifold of $\Ds^s(\R^n)$. 
\end{proof}

By Theorem \ref{th_submanifold} we get a differential structure for $\Ds^s_\mu(\R^n)$. An immediate corollary is the following one.

\begin{Coro}\label{coro_exp_restricted}
The exponential map restricts to a real analytic map
\[
 \exp:U^s_{\exp} \cap H^s_\sigma(\R^n;\R^n) \to \Ds^s_\mu(\R^n).
\]
Moreover it is a diffeomorphism around $0$.
\end{Coro}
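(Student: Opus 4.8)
The plan is to assemble three ingredients already in place: Corollary \ref{exp_restricted}, which says $\exp$ sends divergence-free vector fields to volume-preserving diffeomorphisms; Proposition \ref{exp_analytic}, giving real analyticity of $\exp$ as a map into $\Ds^s(\R^n)$; and Theorem \ref{th_submanifold} together with Proposition \ref{prop_local_manifold}, which endow $\Ds^s_\mu(\R^n)$ with the structure of a real analytic submanifold of $\Ds^s(\R^n)$ whose local parametrizations near $\operatorname{id}$ are restrictions of $\exp$ itself (and near an arbitrary $\psi\in\Ds^s_\mu(\R^n)$ are obtained by composing with the analytic right translation $R_\psi$).

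First I would record that the map is well defined: by Corollary \ref{exp_restricted} one has $\exp\big(U^s_{\exp}\cap H^s_\sigma(\R^n;\R^n)\big)\subseteq\Ds^s_\mu(\R^n)$. Since $H^s_\sigma(\R^n;\R^n)$ is a closed linear subspace of $H^s(\R^n;\R^n)$, the set $U^s_{\exp}\cap H^s_\sigma(\R^n;\R^n)$ is open in it, hence is itself an analytic Hilbert manifold, and the restriction of $\exp$ to it takes values in $\Ds^s_\mu(\R^n)$.

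Next, analyticity into the submanifold. Restricting the real analytic map $\exp:U^s_{\exp}\to\Ds^s(\R^n)$ of Proposition \ref{exp_analytic} to the open subset $U^s_{\exp}\cap H^s_\sigma(\R^n;\R^n)$ keeps it real analytic as a map into $\Ds^s(\R^n)$. To see it is analytic into $\Ds^s_\mu(\R^n)$, fix $u_0$ in the domain and put $\psi:=\exp(u_0)\in\Ds^s_\mu(\R^n)$. By the proof of Theorem \ref{th_submanifold}, $R_\psi\circ\exp\big|_{\tilde U\cap H^s_\sigma(\R^n;\R^n)}$ is an analytic parametrization of a neighborhood of $\psi$ in $\Ds^s_\mu(\R^n)$, and $R_\psi\circ\exp\big|_{\tilde U}$ is an analytic diffeomorphism of $\tilde U$ onto an open neighborhood of $\psi$ in $\Ds^s(\R^n)$ (Proposition \ref{prop_local_manifold}), with $\exp\big(\tilde U\cap H^s_\sigma(\R^n;\R^n)\big)=\exp(\tilde U)\cap\Ds^s_\mu(\R^n)$. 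Reading $\exp$ near $u_0$ in the ambient chart $\big(R_\psi\circ\exp\big|_{\tilde U}\big)^{-1}$ thus exhibits it, on a neighborhood of $u_0$, as $\big(\exp\big|_{\tilde U}\big)^{-1}\circ R_{\psi^{-1}}\circ\exp$, a composition of real analytic maps with values in $H^s_\sigma(\R^n;\R^n)$ — here one uses that $R_{\psi^{-1}}=R_\psi^{-1}$ is analytic (shown in that proof), that $\big(\exp\big|_{\tilde U}\big)^{-1}$ is analytic (inverse function theorem), and that the restriction of an analytic map to an analytic submanifold of its domain is analytic. Hence $\exp:U^s_{\exp}\cap H^s_\sigma(\R^n;\R^n)\to\Ds^s_\mu(\R^n)$ is real analytic.

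Finally, the local diffeomorphism claim is a direct reformulation of the second half of Proposition \ref{prop_local_manifold}: with $\tilde U$ as there, $\exp\big|_{\tilde U}$ is an analytic diffeomorphism onto an open subset of $\Ds^s(\R^n)$, and $\exp\big(\tilde U\cap H^s_\sigma(\R^n;\R^n)\big)=\exp(\tilde U)\cap\Ds^s_\mu(\R^n)$ is an open neighborhood of $\operatorname{id}$ in $\Ds^s_\mu(\R^n)$; thus $\exp\big|_{\tilde U\cap H^s_\sigma(\R^n;\R^n)}$ is a bijection between open sets whose inverse is the restriction of the analytic map $\big(\exp\big|_{\tilde U}\big)^{-1}$ to the submanifold $\exp(\tilde U)\cap\Ds^s_\mu(\R^n)$ (taking values in $H^s_\sigma(\R^n;\R^n)$), hence analytic, so $\exp$ is an analytic diffeomorphism around $0$. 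I expect the only point needing care to be this ``upgrade'' from analyticity into $\Ds^s(\R^n)$ to analyticity into $\Ds^s_\mu(\R^n)$; it causes no trouble here precisely because Theorem \ref{th_submanifold} supplies the submanifold structure through $\exp$ itself, which makes the chart computation transparent, while the remaining steps are routine.
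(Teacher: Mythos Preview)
Your argument is correct and follows exactly the line the paper intends: the paper states Corollary~\ref{coro_exp_restricted} as ``an immediate corollary'' of Theorem~\ref{th_submanifold} without further proof, and your write-up simply spells out why---namely that the submanifold charts on $\Ds^s_\mu(\R^n)$ are by construction $R_\psi\circ\exp\big|_{\tilde U\cap H^s_\sigma}$, so reading the restricted $\exp$ in these charts is automatic. The only thing you could trim is the middle paragraph: once the charts are given by $\exp$ itself, analyticity into the submanifold is literally the identity in the chart near $0$ and a composition of analytic maps near other points, which is what you say.
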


\begin{Rem}
The tangent space of $\Ds^s_\mu(\R^n)$ at $\operatorname{id} \in \Ds^s_\mu(\R^n)$, as a subspace of $T_{\operatorname{id}}\Ds^s(\R^n)\equiv H^s(\R^n;\R^n)$, is given by
\[
T_{\operatorname{id}}\Ds^s_\mu(\R^n) = H^s_\sigma(\R^n;\R^n).
\]
Indeed the tangent space at $\operatorname{id} \in \Ds^s_\mu(\R^n)$ is by Corollary \ref{coro_exp_restricted} spanned by the vectors
\[
 \left. \partial_\varepsilon \right|_{\varepsilon=0} \exp(\varepsilon v)=v
\]
for $v \in H^s_\sigma(\R^n;\R^n)$. The tangent space at an arbitrary $\psi \in \Ds^s_\mu(\R^n)$ is the right translate of $H^s_\sigma(\R^n;\R^n)$ by $\psi$, i.e. $\tilde v$ is in $T_\psi \Ds^s_\mu(\R^n)$ iff it is of the form
\[
 \tilde v = v \circ \psi
\]
for some $v \in H^s_\sigma(\R^n;\R^n)$.
\end{Rem}
\clearpage
\appendix

\chapter{Analyticity in real Banach spaces}\label{appendix_analyticity}

The references for this section are \cite{mujica, analyticity}. For differential calculus in Banach spaces see e.g. \cite{dieudonne}. In the following $X$, $Y$, $Z$ will denote {\em real} Banach spaces with the corresponding norms $||\cdot||_X$, $||\cdot||_Y$, $||\cdot||_Z$. We denote by $L^k(X;Y)$ the space of continuous $k$-linear forms on $X \times \ldots \times X$ ($k$-times) with values in $Y$. For any symmetric $\tilde Q \in L^k(X;Y)$ denote by $Q$ the restriction of $\tilde Q$ onto the diagonal. $Q$ is referred to as the homogeneous polynomial associated to $\tilde Q$. For a sequence of symmetric $k$-linear forms $(\tilde Q_k)_{k \geq 0}$, $\tilde Q_k \in L^k(X;Y)$, with the corresponding homogeneouos polynomials $(Q_k)_{k \geq 0}$ consider the power series around $x_0 \in X$
\begin{equation}\label{formal_power}
f(x) = \sum_{k \geq 0} Q_k (x-x_0):=\sum_{k \geq 0} \tilde Q_k(x-x_0,\ldots,x-x_0).
\end{equation}
Following \cite{mujica, analyticity} we call the convergence radius of the power series
\[
 \sum_{k \geq 0} ||Q_k|| t^k, \quad t \in \R
\]
the radius (of convergence) of the series given in \eqref{formal_power}, where we denote by $||Q_k||$ the norm of the homogeneous polynomial $Q_k$, i.e.
\begin{equation}\label{radius_condition}
 ||Q_k|| := \sup_{||x||_X \leq 1} ||Q_k (x)||_Y.
\end{equation}
Thus by the Cauchy-Hadamard formula (see e.g \cite{mujica}) the radius $R$ of the series \eqref{formal_power} is given by
\begin{equation}\label{hadamard_formula}
 1/R = \limsup_{k \to \infty} ||Q_k||^{1/k}.
\end{equation}
We will use this in the following form: If the power series \eqref{formal_power} has radius $R > 0$ we then have
\begin{equation}\label{alternative_description}
\sup_{k \geq 0} ||Q_k|| r^k < \infty
\end{equation}
for any $0 \leq r < R$. On the other hand, if \eqref{alternative_description} holds for any $0 \leq r < R$ then the power series has (at least) radius $R$.\\
Now to the notion of real analyticity.

\begin{Def}\label{def_analytic}
We say that $f:U \subseteq X \to Y$ is real analytic in the open set $U$ if for all $x_0 \in U$ the map $f$ can be represented in a ball around $x_0$ of radius $r > 0$ as a power series of the form \eqref{formal_power} with radius $R \geq r$, i.e. we have
\[
 f(x) = \sum_{k \geq 0} Q_k (x-x_0), \quad ||x-x_0||_X < r.
\]
\end{Def}

As is shown in \cite{analyticity} a power series of the form \eqref{formal_power} with radius $R > 0$ defines a real analytic map in the ball $||x-x_0||_X < R$. There it is also shown that a real analytic map is $C^\infty$ and that composition of real analytic maps is again real analytic. These properties allow the notion of submanifold and the corresponding notion of real analytic maps in the category of real analytic objects. We will use the following form of the definition of a submanifold.

\begin{Def}\label{def_submanifold}
Let $X$ be a real Hilbert space and $U \subseteq X$ a non-empty open subset. We say that $M \subseteq U$,$M \neq \emptyset$, is a real analytic submanifold of $U$ if there is some closed subspace $V \subseteq X$ such that for all $m \in M$ there is some neighborhood $W \subseteq X$ of $m$, an open neighborhood $G$ of $0$ and a real analytic diffeomorphism $\Phi$ ($\Phi^{-1}$ is also real analytic) 
\[
 \Phi:G \to W
\]
such that we have
\[
 \Phi(G \cap V) = W \cap M.
\]
One calls $\left. \Phi \right|_{G \cap V}$ a parametrization of $W \cap M$.
\end{Def}

An existence and uniqueness theorem for analytic ODE's can be found in \cite{dieudonne}. Actually in \cite{dieudonne} they just discuss the situation for complex Banach spaces. But by complexification one immediately gets the analog result for real Banach spaces which reads as

\begin{Prop}\label{prop_analytic_flow}
Let $V:O \subseteq X \to X$ be a real analytic map (vector field) on the open set $O$. For every $w \in O$ there is a $T >0$ and $\delta > 0$ with $B_\delta(w) \subseteq O$ such that for any $u_0 \in B_\delta(w)$ the initial value problem
\begin{equation}\label{analytic_IVP}
 \dot \gamma(t) = V\big(\gamma(t)\big); \quad \gamma(0)=u_0
\end{equation}
has a unique solution $\gamma(t)=\Psi(t,u_0)$. Moreover the flow
\[
 \Psi : (-T,T) \times B_\delta(u_0) \to O
\]
is real analytic.
\end{Prop}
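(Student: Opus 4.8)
The plan is to deduce the statement from its complex-analytic counterpart, which is available in \cite{dieudonne}, by complexification. First I would fix a complexification $X_{\mathbb{C}} = X \oplus iX$ of $X$, equipped with a norm making it a complex Banach space for which the conjugation $\sigma : X_{\mathbb{C}} \to X_{\mathbb{C}}$, $\sigma(x+iy) = x-iy$, is a $\mathbb{C}$-antilinear topological isomorphism with fixed-point set $X$, and similarly for $Y=X$. Since $V$ is real analytic on $O$, around each $x_0 \in O$ it is given by a power series $V(x) = \sum_{k \geq 0} Q_k(x-x_0)$ of positive radius of convergence, where the symmetric continuous multilinear forms $\tilde Q_k \in L^k(X;X)$ extend canonically to continuous complex multilinear forms $\tilde Q_k^{\mathbb{C}} \in L^k(X_{\mathbb{C}};X_{\mathbb{C}})$ whose norms are bounded by $C^k \|\tilde Q_k\|$ for a fixed constant $C$ depending only on the chosen complexification norm. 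Hence the complexified series $\sum_{k\geq 0} Q_k^{\mathbb{C}}(z-x_0)$ still has positive radius of convergence and defines a holomorphic extension of $V$ to an open set $O_{\mathbb{C}} \subseteq X_{\mathbb{C}}$ which we may arrange to be $\sigma$-invariant with $O_{\mathbb{C}} \cap X = O$; since the coefficients $\tilde Q_k$ are real, the extension $V_{\mathbb{C}}$ satisfies the equivariance $\sigma \circ V_{\mathbb{C}} = V_{\mathbb{C}} \circ \sigma$.

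Next I would invoke the existence and uniqueness theorem for holomorphic ordinary differential equations on complex Banach spaces from \cite{dieudonne}: for $w \in O \subseteq O_{\mathbb{C}}$ there are $T > 0$ and $\delta > 0$ with the complex ball $B^{\mathbb{C}}_\delta(w) \subseteq O_{\mathbb{C}}$ such that for each $z_0 \in B^{\mathbb{C}}_\delta(w)$ the problem $\dot\gamma = V_{\mathbb{C}}(\gamma)$, $\gamma(0) = z_0$, has a unique solution $\gamma(\zeta) = \Psi_{\mathbb{C}}(\zeta, z_0)$ on the complex disc $\{|\zeta| < T\}$, the map $\Psi_{\mathbb{C}}$ being holomorphic jointly in $(\zeta, z_0)$. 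Set $B_\delta(w) := B^{\mathbb{C}}_\delta(w) \cap X$ and restrict: for real $u_0 \in B_\delta(w)$ and real $t \in (-T,T)$, the curve $t \mapsto \sigma\big(\Psi_{\mathbb{C}}(t,u_0)\big)$ again solves $\dot\gamma = V_{\mathbb{C}}(\gamma)$ with $\gamma(0)=u_0$, using that $\sigma$ is continuous and $\mathbb{R}$-linear together with the equivariance of $V_{\mathbb{C}}$; by uniqueness of real-time integral curves this curve equals $\Psi_{\mathbb{C}}(t,u_0)$, so $\Psi_{\mathbb{C}}(t,u_0) \in X$. Thus $\Psi := \Psi_{\mathbb{C}}\big|_{(-T,T) \times B_\delta(w)}$ takes values in $O_{\mathbb{C}} \cap X = O$ and solves \eqref{analytic_IVP}, and uniqueness of the real solution follows because a real solution is in particular a solution of the complexified equation in $X \subseteq X_{\mathbb{C}}$.

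Finally, real analyticity of $\Psi$ would follow from the standard fact that the restriction of a holomorphic map between complex Banach spaces to the real points of a real form is real analytic: expanding $\Psi_{\mathbb{C}}$ around a real point $(t_0,u_0)$ and using the equivariance $\sigma\big(\Psi_{\mathbb{C}}(\bar\zeta,\sigma z_0)\big) = \Psi_{\mathbb{C}}(\zeta,z_0)$ (itself obtained from the uniqueness statement applied with complex time and analytic continuation), one sees that the Taylor coefficients of $\Psi_{\mathbb{C}}$ at a real point are real, so the power series restricts to a convergent series on the real slice. I expect the main obstacle to be the bookkeeping of the complexification step, namely choosing the norm on $X_{\mathbb{C}}$ so that complexified multilinear forms keep comparable norms (hence positive radii of convergence) and arranging a $\sigma$-invariant $O_{\mathbb{C}}$ with $O_{\mathbb{C}} \cap X = O$; once this is set up, the equivariance arguments and the appeal to \cite{dieudonne} are routine.
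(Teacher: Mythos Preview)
Your approach is exactly what the paper does: it does not give a detailed proof but simply remarks that \cite{dieudonne} treats the complex case and that the real statement follows by complexification. Your write-up spells out precisely this complexification argument (extend $V$ holomorphically, apply the holomorphic ODE theorem, and use conjugation equivariance plus uniqueness to show the flow preserves the real slice and is real analytic there), so it matches the paper's intended route.
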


The following criterion (see also \cite{alex2} for a more general result) is used in Chapter \ref{section_lagrangian_formulation} to prove that a given map is real analytic.

\begin{Prop}\label{prop_weak_analytic}
Let $\big(X,\langle \cdot,\cdot \rangle_X\big)$ and $\big(Y,\langle \cdot, \cdot \rangle_Y\big)$ be real Hilbert spaces. Let $\phi:U \subseteq X \to Y$ be a map on the open subset $U \subseteq X$. Assume that the following property holds for some $x_0 \in U$ and $R>0$: For any $y \in Y$ we have that the map 
\[
 \langle \phi(\cdot),y \rangle_Y : U \to \R
\]
admits a power series representation with radius $R$ around $x_0$. Then $\phi:U \to Y$ admits a power series representation of radius $R$ around $x_0$.
\end{Prop}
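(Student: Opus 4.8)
The plan is to build the $Y$-valued power series of $\phi$ at $x_0$ term by term: each homogeneous term will be produced from the scalar coefficients of $\langle\phi(\cdot),y\rangle$ by the Riesz representation theorem, and the radius will be controlled by two applications of the uniform boundedness principle. Write $x=x_0+h$. By hypothesis there is, for every $y\in Y$, a sequence of symmetric $k$-linear forms $\tilde Q_k^y\in L^k(X;\R)$ with associated homogeneous polynomials $Q_k^y$ such that
\[
 \langle\phi(x_0+h),y\rangle_Y=\sum_{k\ge0}Q_k^y(h),\qquad \|h\|_X<r_0,
\]
where $r_0>0$ may be taken independent of $y$ and the series has radius $\ge R$. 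First I would record that the $Q_k^y$ are unique and depend linearly on $y$: for fixed $h_0$ the map $t\mapsto\sum_kQ_k^y(h_0)t^k$ is an ordinary real power series, whose coefficients $Q_k^y(h_0)$ are therefore determined by the function $t\mapsto\langle\phi(x_0+th_0),y\rangle$; hence uniqueness, and additivity and homogeneity of $y\mapsto\langle\phi(\cdot),y\rangle$ force $Q_k^{y_1+y_2}=Q_k^{y_1}+Q_k^{y_2}$ and $Q_k^{\lambda y}=\lambda Q_k^y$, and likewise for the $\tilde Q_k^y$ via the polarization formula.

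The crucial step is to show that for each fixed $k$ and each fixed $h\in X$ the linear functional $y\mapsto Q_k^y(h)$ is \emph{bounded} on $Y$ — this is the one point where not yet knowing $\phi$ to be continuous must be sidestepped. For this I would use divided differences: since $t\mapsto g_y(t):=\langle\phi(x_0+th),y\rangle$ is real analytic near $0$ with $k$-th Taylor coefficient $Q_k^y(h)$, one has
\[
 k!\,Q_k^y(h)=\lim_{\varepsilon\to0^+}\frac{1}{\varepsilon^{k}}\sum_{j=0}^{k}(-1)^{k-j}\binom{k}{j}\,g_y(j\varepsilon).
\]
For each small $\varepsilon$ the right-hand side is, as a function of $y$, a finite linear combination of the bounded functionals $\langle\phi(x_0+j\varepsilon h),\cdot\rangle_Y$, hence an element of $Y^\ast$; and since for every $y$ these values converge, the family indexed by $\varepsilon$ is pointwise bounded, so by the uniform boundedness principle it is norm bounded, and therefore its pointwise limit $y\mapsto k!\,Q_k^y(h)$ lies in $Y^\ast$. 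By the Riesz representation theorem there is a unique $Q_k(h)\in Y$ with $\langle Q_k(h),y\rangle_Y=Q_k^y(h)$ for all $y$; linearity of $y\mapsto\tilde Q_k^y$ in each of its $k$ arguments (polarization again) shows that $h\mapsto Q_k(h)$ is a homogeneous polynomial of degree $k$, associated to a symmetric $\tilde Q_k\in L^k(X;Y)$.

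It then remains to estimate $\|Q_k\|$. Fixing $r$ with $0<r<R$, the criterion \eqref{alternative_description} applied to each scalar series gives $\sup_k\|Q_k^y\|r^k<\infty$ for every $y$; since $\|Q_k^y\|=\sup_{\|h\|\le1}|Q_k^y(h)|$, the family $\{\,y\mapsto r^kQ_k^y(h):k\ge0,\ \|h\|_X\le1\,\}\subseteq Y^\ast$ is pointwise bounded, so by the uniform boundedness principle there is $C_r<\infty$ with $|r^kQ_k^y(h)|\le C_r\|y\|_Y$ for all $k$, all $\|h\|\le1$ and all $y$. Taking the supremum over $\|y\|\le1$ gives $\|Q_k\|\le C_r r^{-k}$, whence $\limsup_k\|Q_k\|^{1/k}\le1/r$; letting $r\uparrow R$ shows the $Y$-valued series $\sum_kQ_k(h)$ has radius $\ge R$ (and $\|\tilde Q_k\|\le\frac{k^k}{k!}\|Q_k\|$ shows the $\tilde Q_k$ are continuous). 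Finally, for $\|h\|<r$ the series converges absolutely in $Y$, and pairing with an arbitrary $y$ and using continuity of $\langle\cdot,y\rangle_Y$ gives $\langle\sum_kQ_k(h),y\rangle_Y=\sum_kQ_k^y(h)=\langle\phi(x_0+h),y\rangle_Y$; as $y$ is arbitrary this is $\phi(x_0+h)=\sum_kQ_k(h)$, the asserted power series representation of radius $\ge R$. The main obstacle is the boundedness-in-$y$ of the coefficient functionals treated in the second paragraph; the rest is elementary bookkeeping with power series together with two routine appeals to Banach--Steinhaus.
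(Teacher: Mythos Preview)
Your proof is correct and follows essentially the same route as the paper's: both arguments obtain the $Y$-valued coefficients as weak limits of finite differences (giving pointwise bounded families of functionals, hence norm bounded by Banach--Steinhaus), and then invoke the uniform boundedness principle a second time to transfer the radius bound from the scalar series to the $Y$-valued one. The only organisational difference is that the paper first isolates the case $X=\R$ as a separate lemma and cites \cite{mujica} for the fact that a weakly continuous polynomial is continuous, whereas you handle the polynomial continuity directly via polarization and the bound $\|\tilde Q_k\|\le\frac{k^k}{k!}\|Q_k\|$.
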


\begin{Rem}\label{rem_weakly_analytic}
This is somehow the version of ''weakly holomorphic implies holomorphic'' suitable for real Hilbert spaces. For complex Hilbert spaces the situation is much easier (see e.g. \cite{mujica}).
\end{Rem}

To prove this proposition we need the following lemma (see also \cite{alex} for a more general formulation). It is just Proposition \ref{prop_weak_analytic} for the case $X=\R$.

\begin{Lemma}\label{lemma_weak_curve}
Let $Y$ be as in Proposition \ref{prop_weak_analytic} and $\gamma:(-R,R) \to Y$ a curve such that for every $y \in Y$ the map
\[
 \langle \gamma(\cdot),y \rangle_Y :(-R,R) \to \R
\]
has a convergent power series with radius $R$ around $0$. Then $\gamma:(-R,R) \to Y$ admits a power series representation
\[
 \gamma(t) = \sum_{k \geq 0} \frac{1}{k!} a_k t^k, \quad(a_k)_{k \geq 0} \subseteq Y
\]
with radius $R$.
\end{Lemma}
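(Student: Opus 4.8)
The plan is to recover the Taylor coefficients of $\gamma$ as genuine elements of $Y$ via duality, controlling their norms with two applications of the Banach--Steinhaus theorem. Fix $y \in Y$ and write $f_y(t) := \langle \gamma(t), y \rangle_Y = \sum_{k \geq 0} \frac{1}{k!} c_k(y) t^k$ for $|t| < R$, where $c_k(y) = f_y^{(k)}(0)$ (recall $f_y$ is real analytic, hence $C^\infty$, on $(-R,R)$). Since $f_{\alpha y_1 + \beta y_2} = \alpha f_{y_1} + \beta f_{y_2}$ and Taylor coefficients depend linearly on the function, each $c_k : Y \to \R$ is a linear functional, and the first point is to show it is bounded. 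I would realize $c_k$ as a limit of functionals already known to be continuous: for $h > 0$ small enough that $jh \in (-R,R)$ for $0 \leq j \leq k$, set
\[
 \xi_{h,k} := \frac{1}{h^k} \sum_{j=0}^{k} (-1)^{k-j} \binom{k}{j} \gamma(jh) \in Y,
\]
so that $\langle \xi_{h,k}, y \rangle_Y$ is the $k$-th finite-difference quotient of $f_y$ at $0$, and hence converges to $f_y^{(k)}(0) = c_k(y)$ as $h \downarrow 0$. In particular $\{ \langle \xi_{h,k}, y \rangle_Y \}_h$ is bounded for each $y$; since the $\xi_{h,k}$ are elements of $Y$, Banach--Steinhaus applied to the family $\{ \langle \xi_{h,k}, \cdot \rangle_Y \}_h$ gives $\sup_h \| \xi_{h,k} \| < \infty$, whence $|c_k(y)| \leq \big( \sup_h \| \xi_{h,k} \| \big) \| y \|$. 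So $c_k$ is bounded, and by Riesz representation there is $a_k \in Y$ with $c_k(y) = \langle a_k, y \rangle_Y$ for all $y$.

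Next I would control the growth of $\| a_k \|$. Fix $0 < r < R$; convergence of the scalar series at $t = r$ forces $\tfrac{1}{k!} c_k(y) r^k \to 0$, so $\sup_k \big| \langle \tfrac{1}{k!} r^k a_k, y \rangle_Y \big| < \infty$ for every $y$. Since the $\tfrac{1}{k!} r^k a_k$ are now bona fide bounded functionals, a second application of Banach--Steinhaus yields $\sup_k \tfrac{1}{k!} \| a_k \| r^k < \infty$. As this holds for every $r < R$, the series $\sum_{k \geq 0} \frac{1}{k!} a_k t^k$ has radius at least $R$ by the Cauchy--Hadamard description \eqref{alternative_description}, and for $|t| < r < R$ one has $\tfrac{1}{k!} \| a_k t^k \| \leq \big( \sup_k \tfrac{1}{k!} \| a_k \| r^k \big) (|t|/r)^k$, so the series converges absolutely in $Y$ for all $|t| < R$. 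Writing $\tilde\gamma(t)$ for its sum, continuity of $\langle \cdot, y \rangle_Y$ gives $\langle \tilde\gamma(t), y \rangle_Y = \sum_k \tfrac{1}{k!} \langle a_k, y \rangle_Y t^k = \sum_k \tfrac{1}{k!} c_k(y) t^k = \langle \gamma(t), y \rangle_Y$ for every $y$, hence $\tilde\gamma = \gamma$. This establishes $\gamma(t) = \sum_{k \geq 0} \frac{1}{k!} a_k t^k$ with radius $R$.

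The only genuinely delicate step is the boundedness of the coefficient functionals $c_k$: the hypothesis supplies only weak (pointwise-in-$y$) information, and ``weakly bounded $\Rightarrow$ bounded'' via Banach--Steinhaus is not available until $c_k$ has been exhibited as a pointwise limit of functionals that are already continuous --- which is exactly what the finite-difference representation accomplishes. Everything else is routine book-keeping with the Cauchy--Hadamard formula and the Riesz representation theorem.
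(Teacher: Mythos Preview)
Your proof is correct and follows essentially the same route as the paper: realize the Taylor coefficients via finite-difference quotients of $\gamma$ (so that each coefficient functional is a pointwise limit of continuous functionals), invoke the uniform boundedness principle to obtain $a_k \in Y$, and then apply uniform boundedness a second time to the family $\{\tfrac{1}{k!}r^k a_k\}_k$ to get the radius $R$. The paper phrases the first step as ``$\tfrac{\Delta_h^k\gamma(0)}{h^k}\rightharpoonup a_k$'' without spelling out the Banach--Steinhaus argument you make explicit, but the substance is identical.
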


\begin{proof}
For a function $f:(-R,R) \to \R$ we define for $k \geq 0$ the finite differences recursively by
\[
 \Delta_h f(t) = f(t+h)-f(t), \ldots, \Delta_h^{k+1} f(t) = \Delta_h^k f(t+h) - \Delta_h^k f(t).
\]
For a fixed $k \geq 0$ these expressions make sense for $t \in (-R,R)$ and $h$ small enough. These finite differences are defined in the same way for $Y$-valued $f$. Furthermore we have for smooth $f$
\begin{equation}\label{derivatives}
  \partial_t^k f(t) = \lim_{h \to 0} \frac{\Delta_h^k f(t)}{h^k}.
\end{equation}
By assumption we have for every  $y \in Y$
\[
 f^{(y)}(t):= \langle \phi(t),y \rangle = \sum_{k \geq 0} \frac{1}{k!} a_k^{(y)} t^k, \quad (a_k^{(y)})_{k \geq 0} \subseteq \R
\]
a power series expansion with radius $R$. By linearity we have
\[
 \frac{\Delta_h^k f^{(y)}(0)}{h^k} = \langle \frac{\Delta_h^k \phi(0)}{h^k},y \rangle_Y.
\]
From \eqref{derivatives} we get 
\[
 \frac{\Delta_h^k f^{(y)}(0)}{h^k} \to a_k^{(y)}
\]
as $h \to 0$. As this holds for every $y \in Y$, we get for some $a_k \in Y$
\[
 \frac{\Delta_h^k \phi(0)}{h^k} \rightharpoonup a_k
\]
i.e., it converges weakly to $a_k$ in $Y$. This $a_k$ has the property
\[
 a_k^{(y)} = \langle a_k,y \rangle_Y
\]
for all $y \in Y$. As $f^{(y)}(t)$ has convergence radius $R$ we have
\[
 \sup_{k \geq 0} \frac{1}{k!} |a_k^{(y)}| r^k < \infty \quad \mbox{ or equivalently } \quad \sup_{k \geq 0} \frac{1}{k!} |\langle a_k,y \rangle_Y| r^k < \infty 
\]
for all $y \in Y$ and $r < R$. By the uniform boundedness principle we then have
\[
 \sup_{k \geq 0} \frac{1}{k!} ||a_k||_Y r^k < \infty
\]
for all $r < R$. This means that $\tilde \phi:(-R,R) \to Y$ defined by
\[
 \tilde \phi(t) := \sum_{k \geq 0} \frac{1}{k!} a_k t^k
\]
is a power series with radius $R$. We have for any $y \in Y$ and $t \in (-R,R)$
\[
 \langle \tilde \phi(t),y \rangle_Y = \sum_{k \geq 0} \frac{1}{k!} \langle a_k, y \rangle_Y t^k = \langle \phi(t), y \rangle_Y
\]
which means $\tilde \phi(t) = \phi(t)$. This shows the lemma.
\end{proof}

\noindent
With the help of this lemma we can prove the proposition.

\begin{proof}[Proof of Proposition \ref{prop_weak_analytic}]
Without loss of generality we assume $x_0=0$. Let $v \in X \setminus \{0\}$. Consider the curve $t \mapsto \phi(t v)$. By assumption $\langle \phi(tv),y \rangle_Y$ has a convergent power series around $0$ with radius $R/||v||_X$. As $R$ does not depend on $y$ we can apply Lemma \ref{lemma_weak_curve} to $t \mapsto \phi(tv)$ and we get that it is a smooth curve. In particular
\[
 Q_k(v):= \left. \partial_t^k \right|_{t=0} \phi(tv)
\]
is well-defined. On the other hand we have by assumption, for any fixed $y \in Y$,
\[
 \langle \phi(v),y \rangle_Y = \sum_{k \geq 0} \frac{1}{k!} Q_k^{(y)}(v)
\]
for some $\R$-valued homogeneuos polynomial $Q_k^{(y)}$ of order $k$, $k \geq 0$. As we have
\[
 \left. \partial_t^k \right|_{t=0} \langle \phi(tv),y\rangle_Y = Q_k^{(y)}(v)
\]
we get for all $y \in Y$
\[
 \langle Q_k(v),y \rangle_Y = Q_k^{(y)}(v).
\]
By \cite{mujica} we know that a weakly continuouos polynomial is a continuous polynomial, i.e. $Q_k(v)$ is a homogeneuous $Y$-valued polynomial in $X$ of order $k$. As the power series with $Q_k^{(y)}$ has radius $R$, we have
\[
 \sup_{k \geq 0} \frac{1}{k!} ||Q_k^{(y)}|| r^k < \infty
\]
for all $y \in Y$ and $r < R$ where $||Q_k^{(y)}||$ is the norm of the $\R$-valued homogeneous polynomial $Q_k^{(y)}$. Again by the uniform boundedness principle we conclude that
\[
 \sup_{k \geq 0} \frac{1}{k!} ||Q_k|| r^k < \infty
\]
for all $r < R$, where here $||Q_k||$ is the norm of the $Y$-valued homogeneous polynomial $Q_k$. Therefore $\tilde \phi:B_R(0) \to Y$ defined by 
\[
 \tilde \phi(v) := \sum_{k \geq 0} \frac{1}{k!} Q_k(v)
\]
is a power series with radius $R$. Now we have for all $y \in Y$ and for all $v \in B_R(0)$
\[
 \langle \tilde \phi(v),y \rangle_Y = \sum_{k \geq 0} \frac{1}{k!} \langle Q_k(v),y \rangle_Y = \sum_{k \geq 0} \frac{1}{k!} Q_k^{(y)}(v) = \langle \phi(v),y \rangle_Y.
\]
Therefore $\tilde \phi(v) = \phi(v)$. Hence the claim.
\end{proof}

\noindent
Sometimes we have to deal with maps which are linear in one entry, i.e. maps of the form
\[
 \phi:(Y \times O) \subseteq Y \times X \to Z
\]
where $\phi(\cdot,x)$, $x \in O$, is linear in the first entry, i.e. $\phi(\cdot,x) \in L(Y;Z)$. For such maps we have the following lemma

\begin{Lemma}\label{lemma_linear}
Assume that
\[
 \phi : Y \times O \to Z
\]
is real analytic and linear in the first entry and has a power series expansion around $(0,x_0) \in Y \times X$ with radius $R$ where $O \subseteq X$ is open and $x_0 \in X$. Then
\begin{eqnarray*}
 \tilde \phi : O &\to& L(Y;Z) \\
 x &\mapsto& \big( y \mapsto \phi(y,x)\big)
\end{eqnarray*}
has a power series expansion with radius $R$ around $x_0$.
\end{Lemma}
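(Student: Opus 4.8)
The plan is to extract from the power series of $\phi$ around $(0,x_0)$ the sub-series consisting of the terms that are of bidegree one in the $Y$-variable; linearity of $\phi(\cdot,x)$ forces every other bidegree component to vanish, and the surviving terms assemble, with the same radius, into a power series for $\tilde\phi$.

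First I would write the given expansion as $\phi(y,x)=\sum_{k\ge0}P_k(y,\,x-x_0)$, where $P_k$ is the homogeneous polynomial associated to a symmetric $\tilde P_k\in L^k(Y\times X;Z)$ and, by hypothesis together with \eqref{alternative_description}, $\sup_{k\ge0}\|P_k\|\,r^k<\infty$ for every $0\le r<R$. Using symmetry of $\tilde P_k$, I decompose $P_k$ by bidegree: $P_k=\sum_{j=0}^{k}P_{k,j}$, with $P_{k,j}(y,w)=\binom{k}{j}\,\tilde P_k\big((y,0),\dots,(y,0),(0,w),\dots,(0,w)\big)$ homogeneous of degree $j$ in $y$ and of degree $k-j$ in $w$, and continuous because $\tilde P_k$ is.

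The key step is to show $P_k=P_{k,1}$ for every $k$ (in particular $P_0=0$). Fix $w$ with $\|w\|$ small and $x_0+w\in O$. Since $y\mapsto\phi(y,x_0+w)$ is linear, expanding $\phi(ty,x_0+w)=t\,\phi(y,x_0+w)$ as a power series in $t$ and matching coefficients yields $\sum_{k\ge j}P_{k,j}(y,w)=0$ for every $j\ne1$ and all small $w$; then, fixing $y$ and reading $w\mapsto\sum_{k\ge j}P_{k,j}(y,w)$ as a power series in $w$ that vanishes on a ball, its homogeneous coefficients $P_{k,j}(y,\cdot)$ must all vanish, and letting $y$ vary gives $P_{k,j}\equiv0$ for $j\ne1$. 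All the regroupings of the double series used here are justified by the bound $\sup_k\|P_k\|\,r^k<\infty$.

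Finally I would set $R_m\colon X\to L(Y;Z)$, $R_m(w)(y):=P_{m+1,1}(y,w)$; this is the homogeneous polynomial of degree $m$ associated to the continuous symmetric $m$-linear form $(w_1,\dots,w_m)\mapsto\big[y\mapsto(m+1)\,\tilde P_{m+1}\big((y,0),(0,w_1),\dots,(0,w_m)\big)\big]$, and, choosing the max-norm on $Y\times X$, one checks $\|R_m\|=\|P_{m+1,1}\|=\|P_{m+1}\|$ using the vanishing just proved. Hence $\sup_m\|R_m\|\,r^m=r^{-1}\sup_m\|P_{m+1}\|\,r^{m+1}<\infty$ for $0<r<R$, so $\sum_{m\ge0}R_m(x-x_0)$ has radius at least $R$; and for $\|x-x_0\|<R$ and any $y\in Y$, $\sum_m R_m(x-x_0)(y)=\sum_m P_{m+1,1}(y,x-x_0)=\sum_{k\ge0}P_k(y,x-x_0)=\phi(y,x)=\tilde\phi(x)(y)$, using $P_0=0$ and $P_k=P_{k,1}$. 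This identifies $\tilde\phi$ with a power series of radius $\ge R$ around $x_0$, as claimed. The main obstacle is the vanishing step: one must be careful with the two successive ``a homogeneous power series vanishing on a ball has zero coefficients'' arguments (first in $t$, then in $w$) and with the legitimacy of regrouping the double series; fixing the max-norm on $Y\times X$ at the outset keeps the polynomial-norm bookkeeping clean — any other equivalent product norm only gives $\|R_m\|\le C\|P_{m+1}\|$, which is still enough.
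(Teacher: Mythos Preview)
Your proof is correct and proceeds along a genuinely different route from the paper's. The paper fixes $y$ with $\|y\|_Y<\delta<R$, Taylor-expands $x\mapsto\phi(y,x)$ around $x_0$ (so the coefficients are the partial derivatives $\tfrac{1}{k!}d^k_{2,(y,x_0)}\phi$, automatically linear in $y$ by the formula $d^k_{2,(y,x_0)}\phi\,x^k=\partial_t^k|_{t=0}\phi(y,x_0+tx)$), observes that this one-variable series has radius at least $R-\delta$, and then applies the uniform boundedness principle to the family of continuous linear maps $y\mapsto \tfrac{r^k}{k!}d^k_{2,(y,x_0)}\phi$ to upgrade the pointwise-in-$y$ bound to an operator-norm bound, finally letting $\delta\to 0$. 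You instead bi-grade the joint power series, use linearity to kill all components $P_{k,j}$ with $j\neq 1$, and read off the radius directly via $\|R_m\|=\|P_{m+1}\|$ in the max-norm. Your route avoids Banach--Steinhaus and the $\delta\to 0$ device, at the price of the vanishing step and its regrouping---which is rigorous once one invokes the polarization inequality $\|\tilde P_k\|\le\tfrac{k^k}{k!}\|P_k\|$ to get absolute convergence of the double sum on a ball of radius $R/e$, after which homogeneity propagates the vanishing $P_{k,j}\equiv 0$ to all of $Y\times X$. The paper's argument is shorter; yours makes the structure of the coefficients of $\tilde\phi$ fully explicit and shows, in particular, that the coefficient polynomials of $\tilde\phi$ are exactly the $P_{m+1}$'s read through the identification $L(Y;Z)$-valued.
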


\begin{proof}
Without loss of generality we assume $x_0=0$. We have for any fixed $y \in Y$ with $||y||<R$, by Taylor's theorem the following expansion around $x_0=0$
\begin{equation}\label{desired_expansion}
 \phi(y,x) = \sum_{k \geq 0} \frac{1}{k!} d_{2,(y,0)}^k \phi \, x^k
\end{equation}
where $d_2$ denotes the partial derivative in the second entry, i.e. 
\begin{equation}\label{formula1}
 d_{2,(y,0)}^k \phi \,x^k = d^k_{(y,0)} \phi \,(0,x)^k.
\end{equation}
Here we use for a Banach space $W$ and $w \in W$ the notation $w^k$ for $(w,\ldots,w) \in W \times \cdots \times W$ ($k$-times) and $d^k_p \phi \, w^k$ stands for the $k$'th order differential at the point $p$ evaluated in $w^k$. One has 
\begin{equation}\label{formula2}
d_{2,(y,0)}^k \phi \,x^k = \left. \partial_t^k \right|_{t=0} \phi(y,tx)
\end{equation}
We see from \eqref{formula2} that
\[
 y \to d_{2,(y,0)}^k \phi(y,0) \,x^k
\]
is linear. Recall that we have the canonical isomorphism $L^{k+1}(Y \times X\times \cdots \times X;Z) \simeq L^k(X \times \cdots \times X;L(Y;Z))$. Therefore we can look at $d_{2,(\cdot,0)}^k$ as a polynomial in $X$ with values in $L(Y;Z)$. Thus \eqref{desired_expansion} will be the desired expansion. But we have to estimate the corresponding norms to ensure that it has radius $R$. Take $0 < \delta < R$. For any fixed $y \in Y$ with $||y||_Y < \delta$ we have a power series for $x \mapsto \phi(y,x)$ with radius $R-\delta$ -- see \cite{analyticity}. Thus we have
\[
 \sup_{k \geq 0} \frac{1}{k!} \left(\sup_{||x||_X \leq 1} ||d^k_{(y,0)} \phi \,(0,x)^k||_Z \right)r^k < \infty
\]
for all $r < R-\delta$. By the linearity of $d^k_{(y,0)}$ in $y$ this extends to all $y \in Y$. Hence by the uniform boundedness principle 
\[
 \sup_{k \geq 0} \frac{1}{k!} \left(\sup_{||y||_Y \leq 1} \sup_{||x||_X \leq 1} ||d^k_{(y,0)} \phi \,(0,x)^k||_Z \right) r^k < \infty
\]
for all $r < R - \delta$. Thus the expansion \eqref{desired_expansion} has radius $R-\delta$. By letting $\delta \to 0$ we get the claim. 
\end{proof}

Finally we give an example of a real analytic operation which will be needed.

\begin{Lemma}\label{analytic_det}
Let $s > n/2+1$. The map
\[
 \phi:H^{s-1}(\R^n) \times \Ds^s(\R^n) \to H^{s-1}(\R^n),\quad (f,\varphi) \mapsto \frac{f}{\det(d\varphi)}
\]
is real analytic.
\end{Lemma}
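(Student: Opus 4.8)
The plan is to reduce the claim to two facts already available in the paper: that multiplication by a fixed $H^{s-1}$-function is continuous and analytic in the factors (Lemma \ref{lemma_multiplication} and the Banach-algebra property of $H^{s-1}(\R^n)$, which holds since $s-1 > n/2$), and that the map $\varphi \mapsto \det(d\varphi) - 1$ is real analytic from $\Ds^s(\R^n)$ to $H^{s-1}(\R^n)$, being a polynomial in the entries of $d\varphi \in H^{s-1}(\R^n;\R^{n \times n})$. Writing $g := \det(d\varphi)$, the map in question is $(f,\varphi) \mapsto f \cdot g^{-1}$, where the pointwise reciprocal $g^{-1}$ makes sense because $\det(d_x\varphi) > 0$ everywhere and, by the Sobolev imbedding $H^{s-1}(\R^n) \hookrightarrow C^0_0(\R^n)$, $g - 1$ vanishes at infinity so $g$ is bounded below by a positive constant on any fixed $\varphi$ (and, locally uniformly, on a neighborhood of $\varphi$).

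First I would isolate the genuinely new ingredient: the reciprocal map $R \mapsto 1/(1+R)$ on a suitable open subset of $H^{s-1}(\R^n)$. Concretely, set $h := g - 1 = \det(d\varphi) - 1 \in H^{s-1}(\R^n)$. For $\varphi$ in a fixed neighborhood $U$ of a given $\varphi_\bullet \in \Ds^s(\R^n)$ one has $\inf_{x} g(x) \geq c > 0$ with $c$ independent of $\varphi \in U$ (using $H^{s-1} \hookrightarrow C^0_0$ and continuity of $\varphi \mapsto \det(d\varphi)$), hence $\|h\|_{L^\infty} < 1$ is not quite what we want directly; instead one writes $1/g = 1/(1+h)$ and expands as the Neumann-type series $\sum_{k \geq 0} (-1)^k h^k$. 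The point is to show this series converges in $H^{s-1}(\R^n)$ with positive radius around $h_\bullet := \det(d\varphi_\bullet) - 1$. Shifting, put $h = h_\bullet + \eta$; then $1/g = 1/(g_\bullet + \eta) = \frac{1}{g_\bullet}\sum_{k\geq 0}(-1)^k (\eta/g_\bullet)^k$ formally, and one needs $1/g_\bullet \in H^{s-1}_{\mathrm{loc}}$ together with the Banach-algebra estimate $\|a b\|_{s-1} \le C\|a\|_{s-1}\|b\|_{s-1}$. Since $1/g_\bullet - 1 = -h_\bullet/g_\bullet$ and $h_\bullet \in H^{s-1}$, $g_\bullet$ bounded below, one checks $1/g_\bullet - 1 \in H^{s-1}(\R^n)$ (again multiplication plus the fact that $g_\bullet^{-1}$ is bounded), so $1/g_\bullet \in 1 + H^{s-1}(\R^n)$ and the geometric series in $\eta$ has radius at least $\bigl(C\,\|1/g_\bullet\|_{*}\bigr)^{-1}$ for an appropriate algebra-norm constant; this gives real analyticity of $\varphi \mapsto 1/\det(d\varphi)$ as a map $\Ds^s(\R^n) \to 1 + H^{s-1}(\R^n)$ by composing with the analytic polynomial map $\varphi \mapsto \det(d\varphi)$ and invoking that compositions of real analytic maps are real analytic (Appendix \ref{appendix_analyticity}).

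Having established that $\varphi \mapsto 1/\det(d\varphi)$ is real analytic into $H^{s-1}(\R^n)$ (strictly, into the affine space $1 + H^{s-1}$, which is harmless), the final step is to compose with the bilinear multiplication map $H^{s-1}(\R^n) \times H^{s-1}(\R^n) \to H^{s-1}(\R^n)$, $(f, w) \mapsto f \cdot w$, which is continuous and bilinear, hence real analytic. Thus $(f,\varphi) \mapsto f \cdot (1/\det(d\varphi))$ is real analytic as a composition of real analytic maps, which is the assertion. I expect the main obstacle to be the careful bookkeeping in the second step: verifying that $g_\bullet^{-1}$ genuinely lies in $1 + H^{s-1}(\R^n)$ (not merely in $H^{s-1}_{\mathrm{loc}}$) and pinning down a positive radius of convergence uniform over a neighborhood of $\varphi_\bullet$, i.e. making the Neumann series argument quantitative in the $H^{s-1}$-norm rather than just the sup-norm — everything else is a routine invocation of results already in the paper.
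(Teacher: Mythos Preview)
Your approach is correct and would go through, but the paper takes a shorter route that sidesteps precisely the obstacle you flag. Rather than inverting the \emph{function} $g = \det(d\varphi)$ in the affine algebra $1 + H^{s-1}(\R^n)$ via a geometric series, the paper inverts the \emph{multiplication operator} $\Psi(\varphi)\colon f \mapsto f\cdot\det(d\varphi)$ in $\mathcal L\big(H^{s-1}(\R^n),H^{s-1}(\R^n)\big)$. The map $\varphi \mapsto \Psi(\varphi)$ is real analytic (polynomial in the entries of $d\varphi$ followed by the continuous bilinear multiplication), and the paper simply cites \cite{composition} for the fact that $(f,\varphi)\mapsto f/\det(d\varphi)$ is well-defined and continuous, which forces $\Psi(\varphi)$ to land in $GL\big(H^{s-1}(\R^n),H^{s-1}(\R^n)\big)$. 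Since operator inversion $T\mapsto T^{-1}$ is real analytic on $GL$ (Neumann series), the composition $\varphi \mapsto \Psi(\varphi)^{-1}$ is real analytic, and evaluating at $f$ finishes the proof in two lines.

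The two arguments are close cousins: your geometric series $\sum_k(-1)^k(\eta/g_\bullet)^k$ is exactly the Neumann series for $\Psi(\varphi)^{-1}$ written out at the level of functions. What the operator formulation buys you is that you never need to verify by hand that $1/g_\bullet - 1 \in H^{s-1}(\R^n)$; this is absorbed into the single statement ``$\Psi(\varphi_\bullet)$ is invertible,'' which follows from the cited continuity result by the open mapping theorem. Your parenthetical justification ``multiplication plus the fact that $g_\bullet^{-1}$ is bounded'' is not sufficient on its own (an $L^\infty$ bound does not make a function a multiplier on $H^{s-1}$ for $s-1>0$), and to close that gap you would end up invoking either the same continuity result from \cite{composition} or a Moser-type composition estimate --- at which point the operator-inversion packaging is strictly less work.
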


\begin{proof}
Consider the map
\[
 \Psi:\Ds^s(\R^n) \to \mathcal L\big(H^{s-1}(\R^n),H^{s-1}(\R^n)\big),\quad \varphi \mapsto \left[ f \mapsto f \cdot \det(d\varphi)\right]
\]
which is real analytic. From \cite{composition} we know that $\phi$ is welldefined (and continuous). This means that $\Psi$ maps into the invertible linear maps. Since the inversion map 
$\operatorname{inv}:T \mapsto T^{-1}$ is real analytic (cf. Neumann series), we see that
\[
 \phi(f,\varphi)=\operatorname{inv}(\Psi) (f)
\]
is real analytic.
\end{proof}

\chapter{Integration of $H^s$-vector fields}\label{integration}

The goal of this section is to prove that we can integrate a $H^s$-vector field to a flow in $\Ds^s(\R^n)$. More precisely

\begin{Prop}\label{prop_integration}
Let $s > n/2+1$ and $T>0$. For a given $u \in C\big([0,T];H^s(\R^n;\R^n)\big)$ there is a unique $\varphi \in C^1\big([0,T];\Ds^s(\R^n)\big)$ solving
\[
 \partial_t \varphi = u \circ \varphi \mbox{ on } [0,T];\quad \varphi(0)=\operatorname{id} \in \Ds^s(\R^n).
\]
\end{Prop}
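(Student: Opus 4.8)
The plan is to reduce the problem to an ODE in the Banach space $H^s(\R^n;\R^n)$ and apply the standard Picard--Lindelöf existence/uniqueness theory together with the regularity results for the composition map recalled in Section \ref{functional_setting}. First, I would observe that a curve $\varphi\in C^1\big([0,T];\Ds^s(\R^n)\big)$, written as $\varphi=\operatorname{id}+g$ with $g\in C^1\big([0,T];H^s(\R^n;\R^n)\big)$, solves $\partial_t\varphi=u\circ\varphi$, $\varphi(0)=\operatorname{id}$, if and only if $g$ solves the fixed-point equation $g(t)=\int_0^t u(\tau)\circ(\operatorname{id}+g(\tau))\,d\tau$. So everything hinges on understanding the map $(\tau,g)\mapsto u(\tau)\circ(\operatorname{id}+g)$ as a time-dependent vector field on the open set $\Ds^s(\R^n)-\operatorname{id}\subseteq H^s(\R^n;\R^n)$.

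The key step is a local Lipschitz estimate. Fix $t_0\in[0,T]$ and $\varphi_0\in\Ds^s(\R^n)$; I want to show that the map $F:[0,T]\times W\to H^s(\R^n;\R^n)$, $F(\tau,\varphi)=u(\tau)\circ\varphi$, is continuous and locally Lipschitz in $\varphi$ uniformly in $\tau$, on a suitable neighbourhood $W$ of $\varphi_0$ in $\Ds^s(\R^n)$. Continuity is exactly the continuity of the composition map $H^s\times\Ds^s\to H^s$ combined with continuity of $\tau\mapsto u(\tau)$. For the Lipschitz bound I would use that $u(\tau)$ ranges over a compact subset $\{u(\tau):\tau\in[0,T]\}$ of $H^s(\R^n;\R^n)$, hence (by density) can be uniformly approximated by $C_c^\infty$ vector fields, and for fixed smooth $v$ the map $\varphi\mapsto v\circ\varphi$ is $C^1$ with $d_\varphi(v\circ\varphi)(h)=(dv\circ\varphi)\cdot h$, which by Lemma \ref{lemma_multiplication} is bounded in $\mathcal L(H^s;H^s)$ locally uniformly in $\varphi$; the uniform approximation then transfers a locally uniform Lipschitz estimate to the whole family $u(\tau)$. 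With this in hand, Picard iteration in $C^0\big([t_0,t_0+\delta];H^s(\R^n;\R^n)\big)$ (using that $\Ds^s(\R^n)-\operatorname{id}$ is open, so a small ball around $\varphi_0$ stays inside) yields a unique local solution $\varphi\in C^1$, and continuity of $F$ gives $\partial_t\varphi=u\circ\varphi\in C^0$, i.e. $\varphi\in C^1\big([t_0,t_0+\delta];\Ds^s(\R^n)\big)$.

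The remaining point is to upgrade this local solution to one defined on all of $[0,T]$, i.e. to rule out blow-up or escape of $\varphi(t)$ from $\Ds^s(\R^n)$ before time $T$. Here I would exploit that the norm $\|g(t)\|_s=\|\varphi(t)-\operatorname{id}\|_s$ satisfies a Gronwall-type bound: from $g(t)=\int_0^t u(\tau)\circ(\operatorname{id}+g(\tau))\,d\tau$ and a linear-growth estimate for the composition map (of the type $\|v\circ\varphi\|_s\le C(1+\|\varphi-\operatorname{id}\|_s)^N\|v\|_s$, available from \cite{composition} and used elsewhere in the paper, e.g. Lemma \ref{lemma_linear_growth}), one controls $\|g(t)\|_s$ on $[0,T]$; together with a lower bound on $\det(d\varphi(t))$ coming from Lemma \ref{det_derivative}, namely $\det(d_x\varphi(t,x))=e^{\int_0^t(\operatorname{div}u)(\tau,\varphi(\tau,x))\,d\tau}>0$ with the exponent bounded since $\operatorname{div}u\in C^0\big([0,T];H^{s-1}\big)\hookrightarrow C^0\big([0,T];C^0_0\big)$, this shows $\varphi(t)$ stays in a fixed bounded, positive-Jacobian region of $\Ds^s(\R^n)$. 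Hence the local solutions patch together to a global one on $[0,T]$, and uniqueness is inherited from the local uniqueness. The main obstacle I anticipate is making the local-uniform Lipschitz estimate for $\varphi\mapsto u(\tau)\circ\varphi$ genuinely uniform over the time interval; the compactness/density trick above is the way I would handle it, but it is the one place requiring care rather than a routine citation.
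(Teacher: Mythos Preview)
Your proposal has a genuine gap at the step you yourself flag as ``the one place requiring care''. The map $\varphi\mapsto v\circ\varphi$, viewed as a map $\Ds^s(\R^n)\to H^s(\R^n;\R^n)$, is \emph{not} locally Lipschitz for $v$ merely in $H^s$: its formal derivative $h\mapsto (dv\circ\varphi)\cdot h$ requires $dv\circ\varphi$ to act as a multiplier on $H^s$, which by Lemma~\ref{lemma_multiplication} needs $dv\in H^s$, i.e.\ $v\in H^{s+1}$. Your density trick therefore fails: for a smooth approximant $v_k$ the Lipschitz constant of $\varphi\mapsto v_k\circ\varphi$ in $H^s$ is controlled by $\|v_k\|_{s+1}$, not by $\|v_k\|_s$, and blows up as $v_k\to u(\tau)$ in $H^s$. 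This loss of one derivative is precisely the obstruction that makes the composition map merely continuous (not $C^1$) at matching regularity, and it cannot be removed by approximation.

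The paper circumvents this by never attempting a Lipschitz estimate at the top regularity level. It first treats the easier case $s>n/2+2$ (Lemma~\ref{lemma_integration}), where one can run Picard in $\Ds^{s-1}$ because composition with an $H^s$ vector field \emph{is} $C^1$ there, and then bootstraps to $\Ds^s$ via the linear ODE \eqref{ode_df} for $d\varphi$. For the general case $s>n/2+1$ it regularizes $u$ to $u_k=\chi_k(D)u\in C^0([0,T];H^{s+1})$, obtains flows $\varphi_k\in C^1([0,T];\Ds^{s+1})$, and shows that $(\varphi_k)$ is Cauchy in $C^0([0,\delta];H^{s-1})$ using the one-derivative-loss estimate of Lemma~\ref{lemma_lipschitz_type}, namely $\|f\circ\varphi-f\circ\psi\|_{s-1}\le\tilde C\,\|f\|_s\,\|\varphi-\psi\|_{s-1}$. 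Interpolation (Lemma~\ref{lemma_interpolation}) then yields convergence in some $H^{s'}$ with $n/2+1<s'<s$, and a final bootstrap via the linear ODE \eqref{ode_dg} for $d\varphi$ recovers $\varphi\in C^1([0,T];\Ds^s)$. The essential point is that every Lipschitz or Cauchy argument is carried out one level below the target regularity, where the derivative loss is harmless.
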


\begin{Rem}
Proposition \ref{prop_integration} was proved in \cite{fischer}. The idea there is the following. If we write $\varphi^{-1}=\operatorname{id}+f$, where $f \in C^0\big([0,T];H^s(\R^n;\R^n)\big)$, we get by differentiating $\varphi^{-1} \circ \varphi = \operatorname{id}$
\[
 \partial_t f \circ \varphi + (I_n + df) \circ \varphi \cdot \partial_t \varphi = 0 
\]
or the following transport equation for $f$
\[
 \partial_t f + u + df \cdot u=0.
\]
with coefficients in $H^s$. Now one can use the theory for linear symmetric hyperbolic systems developed in \cite{fischer} to solve this problem. But we will give a more ''dynamical systems''-proof.
\end{Rem}

The uniqueness part of the proposition is an easy task. Indeed by the Sobolev imbedding \eqref{sobolev_imbedding} we see that $u$ is a uniformly Lipschitz vector field $u:[0,T] \times \R^n \to \R^n$ with respect to the spatial variable, because we have
\[
 |u(t,x)-u(t,y)| \leq C ||u(t)||_s |x-y| \leq CM |x-y|
\]
where $M=\max_{0 \leq t \leq T} ||u(t)||_s$. Thus we have a unique flow $\tilde \varphi:[0,T] \times \R^n \to \R^n$.\\ \\
\noindent
Before proving the proposition we will make some preparation. Since the composition map is linear in the first entry we can get the following local linear growth estimate.

\begin{Lemma}\label{lemma_linear_growth}
Let $s > n/2+1$, $0 \leq s' \leq s$ and $\varphi_\bullet \in \Ds^s(\R^n)$ be given. Then there is a neighborhood $\mathcal G$ of $\varphi_\bullet$ in $\Ds^s(\R^n)$ and a $C > 0$ with
\[
 \frac{1}{C} ||f||_{s'} \leq ||f \circ \varphi||_{s'} \leq C ||f||_{s'}
\]
for all $f \in H^{s'}(\R^n)$ and $\varphi \in \mathcal G$.
\end{Lemma}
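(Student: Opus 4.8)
The plan is to exploit that the right--translation (composition) map $\mu\colon H^{s'}(\R^n)\times\Ds^s(\R^n)\to H^{s'}(\R^n)$, $(f,\varphi)\mapsto f\circ\varphi$, is continuous (by \cite{composition}; cf.\ Lemma~\ref{lemma_continuous_right_translation}) and is \emph{linear} in its first argument, so that $\mu(0,\varphi)=0$ for every $\varphi$. This linearity is exactly what converts continuity of $\mu$ at the single point $(0,\varphi_\bullet)$ into a uniform operator bound on a whole neighborhood of $\varphi_\bullet$. I would prove the upper inequality this way, and then deduce the lower one by applying the upper bound at the point $\varphi_\bullet^{-1}$, using that inversion on $\Ds^s(\R^n)$ is continuous.

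\textbf{Step 1 (upper bound).} Using joint continuity of $\mu$ at $(0,\varphi_\bullet)$ together with $\mu(0,\varphi_\bullet)=0$, I would pick $\delta>0$ and a neighborhood $\mathcal G_1\subseteq\Ds^s(\R^n)$ of $\varphi_\bullet$ so that $\|f\circ\varphi\|_{s'}\le1$ whenever $\|f\|_{s'}\le\delta$ and $\varphi\in\mathcal G_1$. Since $R_\varphi:=\mu(\cdot,\varphi)$ is linear, rescaling $f\mapsto\delta f/\|f\|_{s'}$ gives
\[
\|f\circ\varphi\|_{s'}\le\tfrac1\delta\,\|f\|_{s'}\qquad\text{for all }f\in H^{s'}(\R^n),\ \varphi\in\mathcal G_1,
\]
so I set $C_1:=1/\delta$.

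\textbf{Step 2 (lower bound).} The inversion map $\iota\colon\varphi\mapsto\varphi^{-1}$ is continuous on $\Ds^s(\R^n)$ (\cite{cantor_thesis}; see also \cite{composition}). Applying Step~1 at $\varphi_\bullet^{-1}$ in place of $\varphi_\bullet$ produces $C_2>0$ and a neighborhood $\mathcal H$ of $\varphi_\bullet^{-1}$ with $\|g\circ\psi\|_{s'}\le C_2\|g\|_{s'}$ for all $g\in H^{s'}(\R^n)$, $\psi\in\mathcal H$. Put $\mathcal G_2:=\iota^{-1}(\mathcal H)$, a neighborhood of $\varphi_\bullet$. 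For $\varphi\in\mathcal G_2$ and $f\in H^{s'}(\R^n)$, Lemma~\ref{lemma_continuous_right_translation} gives $f=(f\circ\varphi)\circ\varphi^{-1}$ with $\varphi^{-1}\in\mathcal H$, whence
\[
\|f\|_{s'}=\|(f\circ\varphi)\circ\varphi^{-1}\|_{s'}\le C_2\,\|f\circ\varphi\|_{s'},
\]
i.e.\ $\|f\circ\varphi\|_{s'}\ge C_2^{-1}\|f\|_{s'}$. Finally I would take $\mathcal G:=\mathcal G_1\cap\mathcal G_2$ and $C:=\max\{C_1,C_2\}$, so both inequalities hold simultaneously on $\mathcal G$.

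I do not expect a genuine obstacle here; the argument is soft. The only points that need care are that local boundedness of $\varphi\mapsto R_\varphi$ in operator norm must be extracted purely from continuity of $\mu$ plus linearity in the first slot (the stronger assertion that $\varphi\mapsto R_\varphi$ is norm-continuous is not needed), and that $R_\varphi$ is an honest linear isomorphism $H^{s'}(\R^n)\to H^{s'}(\R^n)$ with inverse $R_{\varphi^{-1}}$, which is precisely the content of Lemma~\ref{lemma_continuous_right_translation}.
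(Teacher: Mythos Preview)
Your argument is correct and is essentially the same as the paper's: both extract the upper bound from joint continuity of $(f,\varphi)\mapsto f\circ\varphi$ at $(0,\varphi_\bullet)$ together with linearity in $f$, and then obtain the lower bound by applying the same estimate to composition with $\varphi^{-1}$ and substituting $g=f\circ\varphi$. The only cosmetic difference is that the paper invokes continuity of $(g,\varphi)\mapsto g\circ\varphi^{-1}$ directly, whereas you first apply Step~1 at the point $\varphi_\bullet^{-1}$ and then pull the neighborhood back via continuity of inversion; this amounts to the same thing.
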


\begin{proof}
Consider the composition map
\[
 \mu:H^{s'}(\R^n) \times \Ds^s(\R^n) \to H^{s'}(\R^n),\quad (f,\varphi) \mapsto f \circ \varphi
\]
which by \cite{composition} is continuous. As we have $\mu(0,\varphi_\bullet)=0$ there exist, by the continuity of $\mu$, $R > 0$ and a neighborhood $\mathcal G$ of $\varphi_\bullet$ such that we have
\[
 ||f \circ \varphi||_{s'} \leq 1
\]
for all $f \in H^{s'}(\R^n)$ with $||f||_{s'} \leq R$ and for all $\varphi \in \mathcal G$. By linearity we thus get
\[
 ||f \circ \varphi||_{s'} \leq \frac{1}{R} ||f||_{s'}
\]
for all $f \in H^{s'}(\R^n)$ and for all $\varphi \in \mathcal G$. The same reasoning gives, by shrinking $R$ and $\mathcal G$ if necessary,
\[
 ||g \circ \varphi^{-1}||_{s'} \leq \frac{1}{R} ||g||_{s'}
\]
for all $g \in H^{s'}(\R^n;\R^n)$ and $\varphi \in \mathcal G$. Replacing $g$ by $f \circ \varphi$ we get the claim.
\end{proof}

\noindent
The following lemma is a special case of Proposition \ref{prop_integration}. The proof follows the one given in \cite{ebin}.

\begin{Lemma}\label{lemma_integration}
Assume $s > n/2+2$. Then the claim of Proposition \ref{prop_integration} holds.
\end{Lemma}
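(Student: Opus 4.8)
The plan is to follow Ebin--Marsden \cite{ebin}: combine the elementary existence of a classical $C^1$-flow with a Picard iteration run one derivative below the target regularity, so that the loss of a derivative in the composition map is harmless. Since $s>n/2+2$, the imbedding \eqref{sobolev_imbedding} gives $u(t)\in C^1_0(\R^n;\R^n)$ uniformly in $t$, so $u:[0,T]\times\R^n\to\R^n$ is continuous and uniformly (in $t$) Lipschitz in $x$; by Picard--Lindel\"of in $\R^n$ there is a unique classical flow $\varphi$, of class $C^1$ in $(t,x)$, with $\varphi(0,\cdot)=\operatorname{id}$, and by Lemma \ref{det_derivative} $\det d_x\varphi(t,x)=e^{\int_0^t(\operatorname{div}u)(\tau,\varphi(\tau,x))\,d\tau}>0$. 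Uniqueness in $C^1([0,T];\Ds^s(\R^n))$ is then automatic, since any such solution is in particular a classical $C^1$-flow of $u$. So it remains to show $\varphi(t)-\operatorname{id}\in H^s(\R^n;\R^n)$ for every $t$ and that $t\mapsto\varphi(t)$ is $C^1$ into $\Ds^s(\R^n)$. Because $\Ds^s(\R^n)$ is a topological group (\cite{composition}) and, for any $t_0$, the curve $t\mapsto\varphi(t)\circ\varphi(t_0)^{-1}$ solves the same flow equation with value $\operatorname{id}$ at $t_0$, it suffices to prove a \emph{local} statement: there is $\tau>0$, depending only on $M:=\sup_{t\in[0,T]}\|u(t)\|_s$, and a solution $\varphi\in C^1([0,\tau];\Ds^s(\R^n))$ with $\varphi(0)=\operatorname{id}$. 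Covering $[0,T]$ by $\lceil T/\tau\rceil$ intervals of length $\tau$, solving on each starting from $\operatorname{id}$, and recomposing (right translation being affine, hence smooth, on $\Ds^s(\R^n)$) then produces the global solution, which by uniqueness equals the classical flow.

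\emph{Local existence with loss of one derivative.} For the local statement I would run the iteration $\varphi_0\equiv\operatorname{id}$, $\varphi_{j+1}(t)=\operatorname{id}+\int_0^t u(\sigma)\circ\varphi_j(\sigma)\,d\sigma$. Fix $\rho>0$ with $B_\rho(\operatorname{id})\subseteq\Ds^s(\R^n)$ and so small that Lemma \ref{lemma_linear_growth}, applied at $\varphi_\bullet=\operatorname{id}$ at the levels $s$ and $s-1$, yields $C>0$ with $\|g\circ\varphi\|_{s'}\le C\|g\|_{s'}$ for all $g\in H^{s'}(\R^n)$, $\varphi\in B_\rho(\operatorname{id})$, $s'\in\{s-1,s\}$. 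Then for $\tau\le\rho/(CM)$ one gets inductively $\|\varphi_j(t)-\operatorname{id}\|_s\le\rho$ on $[0,\tau]$, and each $\varphi_j$ lies in $C^1([0,\tau];\Ds^s(\R^n))$ by continuity of the composition map $H^s\times\Ds^s\to H^s$ (\cite{composition}). To get a contraction I would estimate the differences only in $H^{s-1}$: writing $u\circ\varphi_j-u\circ\varphi_{j-1}=\int_0^1(du\circ\varphi_\theta)\cdot(\varphi_j-\varphi_{j-1})\,d\theta$ with $\varphi_\theta:=\varphi_{j-1}+\theta(\varphi_j-\varphi_{j-1})\in B_\rho(\operatorname{id})$, and using that $H^{s-1}(\R^n)$ is a Banach algebra (as $s-1>n/2$), Lemma \ref{lemma_continuous_right_translation}, and the level-$(s-1)$ bound above, one obtains $\|u\circ\varphi_j-u\circ\varphi_{j-1}\|_{s-1}\le c\,M\,\|\varphi_j-\varphi_{j-1}\|_{s-1}$ and hence $\|\varphi_{j+1}-\varphi_j\|_{C^0([0,\tau];H^{s-1})}\le cM\tau\,\|\varphi_j-\varphi_{j-1}\|_{C^0([0,\tau];H^{s-1})}$. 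Shrinking $\tau$ so that $cM\tau\le1/2$ gives a limit $\varphi\in C^0([0,\tau];\Ds^{s-1}(\R^n))$.

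\emph{Recovering the top regularity.} The uniform bound $\|\varphi_j(t)-\operatorname{id}\|_s\le\rho$ together with strong $H^{s-1}$-convergence and weak compactness of balls in $H^s$ shows $\varphi(t)-\operatorname{id}\in H^s$, $\|\varphi(t)-\operatorname{id}\|_s\le\rho$, for every $t$; positivity of $\det d\varphi$ (Lemma \ref{det_derivative}, applied to the classical flow with which $\varphi$ coincides) gives $\varphi(t)\in\Ds^s(\R^n)$. Passing to the limit in the Picard identity and using $\|u(\sigma)\circ\varphi(\sigma)\|_s\le CM$, one sees $\varphi(t)-\operatorname{id}=\int_0^t u(\sigma)\circ\varphi(\sigma)\,d\sigma$ as a Bochner integral in $H^s$, the integrand being bounded and, being $H^{s-1}$-continuous, weakly $H^s$-continuous, hence strongly measurable; thus $t\mapsto\varphi(t)$ is Lipschitz into $H^s$, in particular continuous into $\Ds^s(\R^n)$. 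Continuity of the composition map then forces $\sigma\mapsto u(\sigma)\circ\varphi(\sigma)$ to be continuous into $H^s$, whence $\varphi\in C^1([0,\tau];\Ds^s(\R^n))$ with $\partial_t\varphi=u\circ\varphi$. Together with the reduction above this finishes the proof.

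\emph{Main obstacle.} The crux is that $\varphi\mapsto u\circ\varphi$ is continuous but not locally Lipschitz on $\Ds^s(\R^n)$ --- differentiating it costs one derivative on $u$ --- so a direct fixed-point argument in $H^s$ cannot be closed. The two-speed device above (contract in $H^{s-1}$, propagate an a priori $H^s$-bound, then upgrade by weak compactness and the integral equation) is exactly what circumvents this; the hypothesis $s>n/2+2$ provides comfortable room, making $H^{s-1}(\R^n)$ a Banach algebra imbedded in $C^1_0(\R^n)$ so that $du\in H^{s-1}$ and all the composition and product estimates go through.
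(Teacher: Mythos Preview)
Your argument is correct and follows the same overarching strategy as the paper (work one derivative below and then upgrade), but the execution differs in two places. First, to get local existence in $\Ds^{s-1}(\R^n)$ the paper simply quotes that, for $s>n/2+2$, the vector field $(t,\varphi)\mapsto u(t)\circ\varphi$ is $C^1$ in $\varphi$ as a map $\Ds^{s-1}(\R^n)\to H^{s-1}(\R^n;\R^n)$ (a result from \cite{composition}) and then invokes the standard ODE existence theorem on $\Ds^{s-1}(\R^n)$; your explicit Picard scheme with uniform $H^s$-bounds and an $H^{s-1}$-contraction accomplishes the same thing more self-containedly, at the cost of writing out the Lipschitz estimate (which is essentially Lemma \ref{lemma_lipschitz_type}) and a little measure-theoretic care. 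Second, for the upgrade from $\Ds^{s-1}$ to $\Ds^s$ the paper differentiates the flow equation to obtain the linear inhomogeneous ODE $\partial_t\,df=(du)\circ\varphi+(du)\circ\varphi\cdot df$ with coefficients in $C^0([0,T];H^{s-1})$, which forces $df\in C^1([0,T];H^{s-1})$ directly; your route via weak $H^s$-compactness of the iterates, Bochner integrability of the integrand, and a bootstrap through continuity of composition is a legitimate alternative. The paper's version is a bit shorter because it leverages ready-made results from \cite{composition}; yours is more hands-on and avoids citing the $C^1$-smoothness of composition, using only its continuity together with the explicit derivative-loss Lipschitz bound.
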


\begin{proof}
Note that for $s>n/2+2$ the space $\Ds^{s-1}(\R^n)$ is defined. In a neighborhood of $\operatorname{id} \in \Ds^{s-1}(\R^n)$, let's say
\[
 \mathcal G^{s-1}_\varepsilon:=\big\{ \varphi \in \Ds^{s-1}(\R^n) \; \big| \; ||\varphi - \operatorname{id}||_{s-1} < \varepsilon \big\}
\]
we have by Lemma \ref{lemma_linear_growth} for some constant $C > 0$ 
\[
 || f \circ \varphi||_{s-1} \leq C ||f||_{s-1}
\]
for all $f \in H^{s-1}(\R^n)$ and for all $\varphi \in \mathcal G^{s-1}_\varepsilon$. By shrinking $\varepsilon$ we can assume that $\operatorname{id} + g \in \Ds^{s-1}(\R^n)$ for all $g \in H^{s-1}(\R^n;\R^n)$ with $||g||_{s-1} < \varepsilon$. Now consider for the given $u \in C\big([0,T];H^s(\R^n;\R^n)\big)$ the map
\[
 V: [0,T] \times \Ds^{s-1}(\R^n) \to \Ds^{s-1}(\R^n),\quad (t,\varphi) \mapsto u(t) \circ \varphi.
\]
From \cite{composition} we know that $V$ is a time-dependent vector field on $\Ds^{s-1}(\R^n)$, which is continuous in the time variable and $C^1$ in the $\varphi$ variable. By the existence theory for ODE's (see e.g. \cite{dieudonne}) we know that there is some $\delta > 0$ and a $\psi \in C^1\big([0,\delta],\Ds^{s-1}(\R^n)\big)$ with
\[
 \partial_t \psi = u \circ \psi \mbox{ on } [0,\delta];\quad \psi(0)=\operatorname{id}.
\]
Assume now that we have for some $0 < \delta' \leq \delta$
\[
 ||\psi(t) - \operatorname{id}||_{s-1} < \varepsilon
\]
for $0 \leq t \leq \delta'$. Note that by continuity such a $\delta'$ exists. Recall that we have
\[
 \psi(t) = \operatorname{id} + \int_0^t u(\tau) \circ \psi(\tau) \,d\tau.
\]
for all $0 \leq t \leq \delta'$. Thus we get for any $t \in [0,\delta']$
\[
 ||\psi(t) - \operatorname{id}||_{s-1} \leq C \int_0^t ||u(\tau)||_{s-1} \,d\tau \leq CM\delta'
\]
where $M=\max_{0 \leq \tau \leq T} ||u(\tau)||_s$. In particular by choosing $\delta' \leq \varepsilon/(2CM)$ we get
\begin{equation}\label{stays_in_ball}
 ||\psi(t)-\operatorname{id}||_{s-1} \leq \varepsilon/2
\end{equation}
for $0 \leq t \leq \delta'$. As $C$ is fixed, this choice of $\delta'$ just depends on $M$ and not on the particular values of $u$. Thus we see that $\forall t_0 \in [0,T]$ the ODE
\[
 \partial_t \psi = u \circ \psi;\quad \psi(t_0)=\operatorname{id}
\]
has a solution on $[t_0,t_0 + \delta'] \cap [t_0,T]$ as for these values of $t$ the condition \eqref{stays_in_ball} is preserved. Now we proceed as follows: We solve
\[
 \partial_t \psi_1 = u \circ \psi_1;\quad \psi_1(0)=\operatorname{id}
\]
on $[0,\delta']$. Then we solve
\[
 \partial_t \psi_2 = u \circ \psi_2;\quad \psi_2(\delta')=\operatorname{id}
\]
on $[\delta',2\delta']$ (without loss we can assume $2\delta' \leq T$) and define $\varphi:[0,2\delta'] \to \Ds^{s-1}(\R^n)$ by
\[
 \varphi(t) = \begin{cases} \psi_1(t), \quad & t \in [0,\delta') \\ \psi_2(t) \circ \psi_1(\delta'), \quad & t \in [\delta',2\delta'] \end{cases}.
\]
From the definition it is clear that $\varphi \in C\big([0,2\delta'];D^{s-1}(\R^n)\big)$. From the properties of $\psi_1,\psi_2$ we have
\[
 \varphi(t) = \operatorname{id} + \int_0^t u(\tau) \circ \varphi(\tau) \,d\tau
\]
for all $t \in [0,2\delta']$. Indeed on $[0,\delta']$ this is clear. For $t \in [\delta',2\delta']$ we have
\[
 \psi_2(t) = \operatorname{id} + \int_{\delta'}^t u(\tau) \circ \psi_2(\tau) \,d\tau
\]
or 
\[
 \psi_2(t)-\operatorname{id} = \int_{\delta'}^t u(\tau) \circ \psi_2(\tau) \,d\tau.
\]
Applying the continuous linear operator $R_{\psi_1(\delta')}$ to this equation we get
\[
 \psi_2(t) \circ \psi_1(\delta') = \psi_1(\delta') + \int_{\delta'}^t u(\tau) \circ \psi_2(\tau) \circ \psi_1(\delta') \,d\tau
\]
which is by definition
\[
 \varphi(t) = \varphi(\delta') + \int_{\delta'}^t u(\tau) \circ \varphi(\tau) \,d\tau
\]
showing the claim. Iterating this procedure we can construct a solution 
\[
\varphi \in C^1\big([0,T];\Ds^{s-1}(\R^n)\big).
\]
Next we want to show that we have actually 
\[
\varphi \in C^1\big([0,T];\Ds^s(\R^n)\big).
\]
Writing $\varphi=\operatorname{id} + f$ where $f \in C^1\big([0,T];H^{s-1}(\R^n)\big)$ we get by taking the differential of $\partial_t \varphi = u \circ \varphi$
\begin{equation}\label{ode_df}
 \partial_t df = du \circ \varphi \cdot (I_n + df)
\end{equation}
where $df \in C^1\big([0,T];H^{s-2}(\R^n;\R^{n \times n})\big)$ denotes the Jacobian of $f$ and $I_n$ the $n \times n$-identity matrix. As we have by the results for the composition map given in \cite{composition}
\[
 du \circ \varphi \in C^0\big([0,T];H^{s-1}(\R^n;\R^{n \times n})\big)
\]
we can view \eqref{ode_df} as a inhomogenous linear ODE with coefficients in $H^{s-1}$. By uniqueness of solutions this means that $df$ lies actually in 
\[
C^1\big([0,T];H^{s-1}(\R^n;\R^{n \times n})\big). 
\]
This show that $\varphi \in C^1\big([0,T];\Ds^s(\R^n)\big)$. Hence the claim. 
\end{proof}

\noindent
To prove Proposition \ref{prop_integration} we will need the following well-known lemmas.

\begin{Lemma}\label{lemma_interpolation}
Let $f \in H^s(\R^n)$, $s \geq 0$. Then we have the following interpolation inequality for $0 \leq s' \leq s$ and $\lambda \in (0,1)$
\begin{equation}\label{interpolation_ineq}
||f||_{\lambda s' + (1-\lambda) s} \leq ||f||_{s'}^\lambda ||f||_s^{1-\lambda}.
\end{equation}
\end{Lemma}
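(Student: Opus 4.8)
The plan is to reduce everything to Hölder's inequality applied on the Fourier side, using the definition of the Sobolev norm $\|f\|_\sigma^2 = \int_{\R^n} (1+|\xi|^2)^\sigma |\hat f(\xi)|^2 \,d\xi$. First I would record that since $f \in H^s(\R^n)$ and $0 \le s' \le s$, the monotonicity $(1+|\xi|^2)^{s'} \le (1+|\xi|^2)^s$ gives $\|f\|_{s'} \le \|f\|_s < \infty$, so $f \in H^{s'}(\R^n)$ and all quantities appearing in \eqref{interpolation_ineq} are finite; in particular if either $\|f\|_{s'}$ or $\|f\|_s$ vanishes then $f = 0$ and the inequality is trivial, so we may assume both are positive.

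Next, set $\sigma := \lambda s' + (1-\lambda)s$ and write the integrand of $\|f\|_\sigma^2$ as the product
\[
 (1+|\xi|^2)^\sigma |\hat f(\xi)|^2 = \Big[(1+|\xi|^2)^{s'} |\hat f(\xi)|^2\Big]^{\lambda} \cdot \Big[(1+|\xi|^2)^{s} |\hat f(\xi)|^2\Big]^{1-\lambda},
\]
which is valid pointwise in $\xi$ because $\lambda s' + (1-\lambda)s = \sigma$ and $\lambda + (1-\lambda) = 1$. Then I would apply Hölder's inequality with conjugate exponents $p = 1/\lambda$ and $q = 1/(1-\lambda)$, noting $1/p + 1/q = 1$, to obtain
\[
 \int_{\R^n} (1+|\xi|^2)^\sigma |\hat f(\xi)|^2 \,d\xi \le \left(\int_{\R^n} (1+|\xi|^2)^{s'} |\hat f(\xi)|^2 \,d\xi\right)^{\lambda} \left(\int_{\R^n} (1+|\xi|^2)^{s} |\hat f(\xi)|^2 \,d\xi\right)^{1-\lambda},
\]
that is, $\|f\|_\sigma^2 \le \|f\|_{s'}^{2\lambda}\,\|f\|_s^{2(1-\lambda)}$. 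Taking square roots yields \eqref{interpolation_ineq}.

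There is essentially no serious obstacle here; the only point worth a sentence of care is the finiteness/degenerate-case bookkeeping mentioned above, and the verification that the factorization of the integrand really does reproduce the correct exponent $\sigma$. Everything else is the textbook Hölder argument.
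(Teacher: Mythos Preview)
Your proof is correct and follows essentially the same approach as the paper: factor the integrand on the Fourier side and apply H\"older's inequality with exponents $1/\lambda$ and $1/(1-\lambda)$. The only difference is that you add a sentence of finiteness/degenerate-case bookkeeping, which the paper omits.
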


\begin{proof}
We have by definition
\begin{eqnarray*}
||f||^2_{\lambda s' + (1-\lambda) s} &=& \int_{\R^n} (1+|\xi|^2)^{\lambda s' + (1-\lambda) s} |\hat f(\xi)|^2 d\xi \\
&=& \int_{\R^n} (1+|\xi|^2)^{\lambda s'} |\hat f(\xi)|^{2\lambda} (1+|\xi|^2)^{(1-\lambda)s} |\hat f(\xi)|^{2(1-\lambda)} d\xi \\
\noalign{\noindent and using the H\"older inequality}\\
&\leq& ||(1+|\xi|^2)^{\lambda s'} |\hat f(\xi)|^{2\lambda}||_{L^\frac{1}{\lambda}} ||(1+|\xi|^2)^{(1-\lambda)s} |\hat f(\xi)|^{2(1-\lambda)} ||_{L^\frac{1}{1-\lambda}} \\
&=& ||f||_{s'}^{2\lambda} ||f||_s^{2(1-\lambda)} 
\end{eqnarray*}
which shows the claim.
\end{proof}

\noindent
For approximating functions by regular ones we have

\begin{Lemma}\label{lemma_approximation}
Let $f \in H^s(\R^n)$, $s \geq 0$. Let $\chi_k(D)$, $k \geq 1$, be the Fourier multiplier with symbol $\chi_k$ given by 
\[
 \chi_k(\xi)=\begin{cases} 1, \quad &|\xi| \leq k \\ 0, \quad & |\xi| > k \end{cases}
\]
Then we have $\chi_k(D) f \in H^{s+1}(\R^n)$ and
\[
 \chi_k(D) f \to f \quad \mbox{ in } H^s(\R^n)
\]
as $k \to \infty$.
\end{Lemma}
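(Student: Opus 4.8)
The plan is to work entirely on the Fourier side, where $\chi_k(D)$ acts by the explicit truncation $\widehat{\chi_k(D)f}(\xi)=\chi_k(\xi)\hat f(\xi)$, i.e. by restricting $\hat f$ to the closed ball $\{|\xi|\le k\}$. Everything then reduces to elementary estimates for the weight $(1+|\xi|^2)^s$ against the integrable density $|\hat f(\xi)|^2$.

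First I would verify that $\chi_k(D)f\in H^{s+1}(\R^n)$. Since $\widehat{\chi_k(D)f}$ is supported in $\{|\xi|\le k\}$, on that set one has the crude bound $(1+|\xi|^2)^{s+1}\le(1+k^2)(1+|\xi|^2)^s$, so that
\[
 ||\chi_k(D)f||_{s+1}^2=\int_{|\xi|\le k}(1+|\xi|^2)^{s+1}|\hat f(\xi)|^2\,d\xi\le(1+k^2)\int_{\R^n}(1+|\xi|^2)^s|\hat f(\xi)|^2\,d\xi=(1+k^2)\,||f||_s^2<\infty.
\]
Thus $\chi_k(D)f$ does lie in $H^{s+1}(\R^n)$ (with a norm that of course blows up in $k$, which is harmless).

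For the convergence, I would note that $\hat f(\xi)-\widehat{\chi_k(D)f}(\xi)=\big(1-\chi_k(\xi)\big)\hat f(\xi)$ is supported in $\{|\xi|>k\}$, hence by Plancherel's theorem
\[
 ||f-\chi_k(D)f||_s^2=\int_{|\xi|>k}(1+|\xi|^2)^s|\hat f(\xi)|^2\,d\xi.
\]
The integrand $g(\xi):=(1+|\xi|^2)^s|\hat f(\xi)|^2$ is a fixed nonnegative function in $L^1(\R^n)$ precisely because $f\in H^s(\R^n)$, and the characteristic functions of the sets $\{|\xi|>k\}$ decrease pointwise to $0$ as $k\to\infty$. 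Dominating by $g$ itself, the dominated convergence theorem gives that the right-hand side tends to $0$, i.e. $\chi_k(D)f\to f$ in $H^s(\R^n)$.

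There is essentially no obstacle in this lemma; the only point that deserves a word is the passage to the limit, and that is immediate from dominated convergence once one observes that $(1+|\xi|^2)^s|\hat f(\xi)|^2$ is integrable, which is exactly the hypothesis $f\in H^s(\R^n)$. (Equivalently, one may phrase it as the absolute continuity of the finite measure $A\mapsto\int_A g\,d\xi$ with respect to Lebesgue measure, applied to the shrinking sets $A_k=\{|\xi|>k\}$.)
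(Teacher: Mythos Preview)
Your proof is correct and follows essentially the same route as the paper's own argument: work on the Fourier side, use the compact support of $\chi_k$ to bound the $H^{s+1}$-norm, and apply dominated convergence to the tail integral $\int_{|\xi|>k}(1+|\xi|^2)^s|\hat f(\xi)|^2\,d\xi$. The only cosmetic difference is in the first step, where the paper bounds $(1+|\xi|^2)^{s+1}$ by $(1+k^2)^{s+1}$ and uses the $L^2$-norm of $f$, whereas you split off one factor of $(1+|\xi|^2)$ and use the $H^s$-norm; both estimates are equally valid.
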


\begin{proof}
That $\chi_k(D) f \in H^{s+1}(\R^n)$ follows from
\begin{multline*}
 ||\chi_k(D)f||_{s+1}^2 = \int_{|\xi| \leq k} (1+|\xi|^2)^{s+1} |\hat f(\xi)|^2 d\xi \leq (1+k^2)^{s+1} \int_{\R^n} |\hat f(\xi)|^2 d\xi < \infty.
\end{multline*}
One has actually $\chi_k(D)f \in H^\infty(\R^n)=\cap_{s \geq 0} H^s(\R^n)$, but this is not needed here. For the second claim we write
\[
 ||\chi_k(D) f - f||_s^2 = \int_{|\xi| > k} (1+|\xi|^2)^s |\hat f(\xi)|^2 d\xi.
\]
Now by Lebesgue's dominated convergence we get
\[
 \int_{|\xi| > k} (1+|\xi|^2)^s |\hat f(\xi)|^2 d\xi \to 0
\]
as $k \to \infty$. Hence the claim.
\end{proof}

\noindent
We even have that this convergence is uniform on compact curves.

\begin{Coro}\label{coro_uniform}
Let $u \in C^0\big([0,T];H^s(\R^n)\big)$ for some $T > 0$. Then $\chi_k(D) u \in C^0\big([0,T];H^{s+1}(\R^n)\big)$ and
\[
 \sup_{0 \leq t \leq T} ||\chi_k(D) u(t) - u(t)||_s \to 0
\]
as $k \to 0$.
\end{Coro}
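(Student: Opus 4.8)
The plan is to exploit that $\chi_k(D)$ is a bounded linear operator on the relevant Sobolev scale, together with the compactness of the image of the curve $u$. First I would record the two elementary bounds for $\chi_k(D)$: for every $f \in H^s(\R^n)$ one has
\[
 \|\chi_k(D)f\|_s \le \|f\|_s \qquad\text{and}\qquad \|\chi_k(D)f\|_{s+1} \le (1+k^2)^{1/2}\|f\|_s,
\]
both immediate from the Plancherel identity and the facts $0 \le \chi_k(\xi) \le 1$ and $|\xi| \le k$ on $\operatorname{supp}\chi_k$. The second bound shows that $\chi_k(D)\colon H^s(\R^n) \to H^{s+1}(\R^n)$ is a continuous linear map, so composing it with the continuous curve $u\colon [0,T] \to H^s(\R^n)$ yields $\chi_k(D)u \in C^0\big([0,T];H^{s+1}(\R^n)\big)$, which is the first assertion.

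For the uniform convergence, the key observation is that $A:=\{u(t)\mid t\in[0,T]\}$ is a compact subset of $H^s(\R^n)$, being the continuous image of the compact interval $[0,T]$. Fix $\varepsilon>0$. By compactness choose finitely many points $t_1,\dots,t_N\in[0,T]$ with $A\subseteq\bigcup_{i=1}^N B_{\varepsilon/3}\big(u(t_i)\big)$. By Lemma \ref{lemma_approximation} applied to each $u(t_i)$ there is $k_0$ so that $\|\chi_k(D)u(t_i)-u(t_i)\|_s<\varepsilon/3$ for all $k\ge k_0$ and all $i=1,\dots,N$. Now for arbitrary $t\in[0,T]$ pick $i$ with $\|u(t)-u(t_i)\|_s<\varepsilon/3$; then for $k\ge k_0$ the triangle inequality together with $\|\chi_k(D)g\|_s\le\|g\|_s$ gives
\begin{multline*}
\|\chi_k(D)u(t)-u(t)\|_s \le \|\chi_k(D)\big(u(t)-u(t_i)\big)\|_s \\ + \|\chi_k(D)u(t_i)-u(t_i)\|_s + \|u(t_i)-u(t)\|_s < \varepsilon .
\end{multline*}
Since $t$ was arbitrary, $\sup_{0\le t\le T}\|\chi_k(D)u(t)-u(t)\|_s\le\varepsilon$ for all $k\ge k_0$, which is the claim.

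I do not expect any serious obstacle: the whole argument is the standard principle that pointwise convergence of uniformly bounded operators is uniform on compact sets, and the only inputs are Lemma \ref{lemma_approximation} and the two Fourier-side estimates for $\chi_k(D)$, all elementary. The one point to be careful about is to use the \emph{uniform} operator bound $\|\chi_k(D)\|_{\mathcal L(H^s,H^s)}\le 1$ (rather than the $k$-dependent $H^s\to H^{s+1}$ bound) when estimating the first term above, so that the choice of $k_0$ depends only on the finite net $\{u(t_i)\}$ and not on $t$.
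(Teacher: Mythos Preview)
Your proof is correct and follows essentially the same approach as the paper: both use compactness of the image of $u$ to extract a finite $\varepsilon$-net, invoke Lemma~\ref{lemma_approximation} on the finitely many points, and close with the triangle inequality together with the uniform bound $\|\chi_k(D)\|_{\mathcal L(H^s,H^s)}\le 1$. Your version is in fact slightly more explicit in justifying the first assertion, since you record the $H^s\to H^{s+1}$ boundedness of $\chi_k(D)$ and deduce continuity of $t\mapsto \chi_k(D)u(t)$ from it, whereas the paper leaves this implicit.
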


\begin{proof}
We will prove a slightly stronger result. We will prove that for a compact set $K \subseteq H^s(\R^n)$ we have
\[
 \chi_k(D) f \to f \quad \mbox{ in } H^s(\R^n)
\]
as $k \to \infty$ uniformly in $f \in K$. First note that $||\chi_k(D) f||_s \leq ||f||_s$. Let $\varepsilon > 0$. As $K$ is compact we have a finite set of points (let's say $M$ points) $(f_m)_{1 \leq m \leq M} \subseteq H^s(\R^n)$ such that
\[
 K \subseteq \cup_{m=1}^M B_\varepsilon(f_m)
\] 
where
\[
 B_\varepsilon(f) = \big\{ g \in H^s(\R^n) \,\big|\, ||g-f||_s < \varepsilon \big\}
\]
is the $\varepsilon$-ball in $H^s(\R^n)$ around $f$ with radius $\varepsilon$. By Lemma \ref{lemma_approximation} there is a $N$ such that
\[
 ||\chi_k(D) f_m - f_m||_s < \varepsilon 
\]
for all $k \geq N$ and $1 \leq m \leq M$. For an arbitrary $f \in K$ take a $f_j$, $1 \leq j \leq M$, with $f \in B_\varepsilon(f_j)$. With this choice we have
\begin{multline*}
 ||\chi_k(D) f - f||_s \leq ||\chi_k(D)f - \chi_k(D) f_j||_s + ||\chi_k(D) f_j - f_j ||_s + ||f_j - f||_s < 3 \varepsilon
\end{multline*}
for all $k \geq N$. This proves the claim for the compact set $K$. Now as the image of the curve $u$ is compact we get the desired result.
\end{proof}

\noindent
We know that there is some $\varepsilon > 0$ such that for all $g \in H^s(\R^n;\R^n)$ with $||g||_s < \varepsilon$ we have $\operatorname{id}+g \in \Ds^s(\R^n)$. Denote this set by $\mathcal G_\varepsilon^s$, i.e.
\[
 \mathcal G_\varepsilon^s = \big\{ \varphi \in \Ds^s(\R^n) \, \big| \, ||\varphi - \operatorname{id}||_s < \varepsilon \big\}.
\] 
By Lemma \ref{lemma_linear_growth} we get (by shrinking $\varepsilon$ if necessary) for all $\varphi \in \mathcal G_\varepsilon^s$ 
\begin{equation}\label{uniform_estimate1}
 ||f \circ \varphi||_{s-1} \leq C ||f||_{s-1}, \quad \forall f \in H^{s-1}(\R^n;\R^n)
\end{equation}
and
\begin{equation}\label{uniform_estimate2}
 ||f \circ \varphi||_s \leq C ||f||_s, \quad \forall f \in H^s(\R^n;\R^n)
\end{equation}
for some $C>0$. We further assume by making $0 < \varepsilon < 1$ small enough that we have $\det(d_x \varphi) > \varepsilon$ for all $x \in \R^n$ and for all $\varphi \in \mathcal G_\varepsilon^s$. Because of the Sobolev imbedding \eqref{sobolev_imbedding} this is possible. Now with this choice of $\varepsilon$ resp. $\mathcal G_\varepsilon^s$ we prove the following Lipschitz type estimate.

\begin{Lemma}\label{lemma_lipschitz_type}
There is $\tilde C > 0$ such that for any $\varphi,\psi \in \mathcal G_\varepsilon^s$
\[
 ||f \circ \varphi - f \circ \psi||_{s-1} \leq \tilde C ||f||_s ||\varphi - \psi||_{s-1},\quad \forall f \in H^s(\R^n).
\]
\end{Lemma}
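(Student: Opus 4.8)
The plan is to prove the Lipschitz-type estimate
\[
\|f\circ\varphi - f\circ\psi\|_{s-1} \le \tilde C\,\|f\|_s\,\|\varphi-\psi\|_{s-1}, \qquad \forall f\in H^s(\R^n),\ \varphi,\psi\in\mathcal G_\varepsilon^s,
\]
by first reducing to the dense subclass of regular $f$, then writing the difference as a line integral of the derivative of $f$ along the segment joining $\psi(x)$ and $\varphi(x)$, and finally controlling the resulting composition and product terms with the uniform estimates \eqref{uniform_estimate1}--\eqref{uniform_estimate2} and Lemma \ref{lemma_multiplication}. Throughout I will freely use that $\mathcal G_\varepsilon^s$ is convex (it is an intersection of $\Ds^s(\R^n)$ with an open ball in the affine space $\operatorname{id}+H^s(\R^n;\R^n)$), and that for $\varphi\in\mathcal G_\varepsilon^s$ the straight-line homotopy $\varphi_\tau:=\psi+\tau(\varphi-\psi)$, $\tau\in[0,1]$, stays in $\mathcal G_\varepsilon^s$.

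First I would reduce to $f\in C_c^\infty(\R^n)$: by density of $C_c^\infty$ in $H^s$ and continuity of the composition map (from \cite{composition}), together with the uniform bound \eqref{uniform_estimate1}, both sides of the claimed inequality are continuous in $f$ with respect to the $H^s$-norm, uniformly for $\varphi,\psi$ in $\mathcal G_\varepsilon^s$; hence it suffices to prove the estimate for smooth compactly supported $f$, where all manipulations below are pointwise legitimate. For such $f$ write, pointwise in $x$,
\[
f\big(\varphi(x)\big) - f\big(\psi(x)\big) = \int_0^1 \frac{d}{d\tau} f\big(\varphi_\tau(x)\big)\,d\tau = \int_0^1 \big(\nabla f\big)\big(\varphi_\tau(x)\big)\cdot\big(\varphi(x)-\psi(x)\big)\,d\tau,
\]
so that
\[
f\circ\varphi - f\circ\psi = \int_0^1 \sum_{j=1}^n \big((\partial_j f)\circ\varphi_\tau\big)\cdot\big(\varphi_j - \psi_j\big)\,d\tau
\]
as an identity in $H^{s-1}(\R^n)$ (the integrand is continuous in $\tau$ with values in $H^{s-1}$, by continuity of composition and Lemma \ref{lemma_multiplication}).

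Next I would estimate the $H^{s-1}$-norm. Taking norms under the integral and using that $H^{s-1}(\R^n)$ is a Banach algebra for $s-1>n/2$ (or, more precisely, Lemma \ref{lemma_multiplication} with $s'=s-1\le s$), one gets
\[
\|f\circ\varphi - f\circ\psi\|_{s-1} \le \int_0^1 \sum_{j=1}^n c\,\big\|(\partial_j f)\circ\varphi_\tau\big\|_{s-1}\,\|\varphi_j-\psi_j\|_{s-1}\,d\tau
\]
for the Banach-algebra constant $c>0$. Now $\varphi_\tau\in\mathcal G_\varepsilon^s$ for every $\tau\in[0,1]$ by convexity, so \eqref{uniform_estimate1} gives $\|(\partial_j f)\circ\varphi_\tau\|_{s-1}\le C\,\|\partial_j f\|_{s-1}\le C\,\|f\|_s$, with $C$ independent of $\tau,\varphi,\psi$. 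Since $\|\varphi_j-\psi_j\|_{s-1}\le\|\varphi-\psi\|_{s-1}$, summing over $j$ and integrating in $\tau$ yields the claim with $\tilde C = n\,c\,C$.

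The main obstacle is essentially bookkeeping rather than a genuine difficulty: one must make sure the line-integral identity holds as an $H^{s-1}$-valued identity (not merely pointwise), which requires checking that $\tau\mapsto ((\partial_j f)\circ\varphi_\tau)\cdot(\varphi_j-\psi_j)$ is a continuous map $[0,1]\to H^{s-1}(\R^n)$ — this follows from the continuity of the composition map $(g,\phi)\mapsto g\circ\phi$ on $H^{s-1}\times\Ds^s$ (from \cite{composition}) and from Lemma \ref{lemma_multiplication}, using that $\tau\mapsto\varphi_\tau$ is continuous (indeed affine) into $\Ds^s(\R^n)$ and that $\partial_j f\in H^{s-1}$. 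Once that is in place, the remaining estimates are immediate applications of the Banach-algebra property and the uniform bound \eqref{uniform_estimate1}, and the constant $\tilde C$ depends only on $n$, $s$, and the fixed $\varepsilon$ defining $\mathcal G_\varepsilon^s$.
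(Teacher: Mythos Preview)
Your proof is correct and follows essentially the same approach as the paper: write the difference as a line integral of $\nabla f$ along the segment $\varphi_\tau=\psi+\tau(\varphi-\psi)$ in the convex set $\mathcal G_\varepsilon^s$, then estimate using the Banach-algebra property of $H^{s-1}$ and the uniform composition bound \eqref{uniform_estimate1}. The only difference is that you add a preliminary reduction to $f\in C_c^\infty(\R^n)$ by density, which the paper omits as unnecessary (since $\nabla f\in H^{s-1}$ for general $f\in H^s$, the integrand is already a continuous $H^{s-1}$-valued curve and the identity holds directly in $H^{s-1}$).
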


\begin{proof}
By the fundamental lemma of calculus we have pointwise
\begin{eqnarray}
\nonumber
 f \circ \varphi - f \circ \psi &=& \int_0^1 \partial_t \left( f\big(\psi + t(\varphi - \psi)\big) \right) dt \\
\label{fundamental_lemma}
&=& \int_0^1 \nabla f\big(\psi + t (\varphi - \psi)\big) (\varphi - \psi) dt
\end{eqnarray}
As $t \mapsto \psi +t (\varphi - \psi)$ is a continuous curve in $\mathcal G_\varepsilon^s$ we see that the integrand is a continuous curve $H^{s-1}(\R^n;\R^n)$. Indeed $\varphi - \psi \in H^s(\R^n;\R^n)$ and $H^{s-1}$ is a Banach algebra. Thus we see that \eqref{fundamental_lemma} is an identity in $H^{s-1}(\R^n;\R^n)$. Therefore we have for some $\tilde C >0$
\begin{eqnarray*}
||f \circ \varphi - f \circ \psi||_{s-1} &\leq& \int_0^1 \tilde C ||\nabla f \big(\psi + t (\varphi - \psi)\big)||_{s-1} ||\varphi - \psi||_{s-1} dt \\
&\leq& \tilde C ||f||_s ||\varphi - \psi||_{s-1}
\end{eqnarray*}
where we used \eqref{uniform_estimate1} implying
\[
 ||\nabla f \big(\psi + t (\varphi - \psi)\big)||_{s-1} \leq C ||\nabla f||_{s-1} \leq C ||f||_s
\] 
and the Banach algebra property of $H^{s-1}(\R^n)$. This finishes the proof. 
\end{proof}

\noindent
Now we can prove the main proposition. We will do this using some ''energy'' estimates. We take $\mathcal G_\varepsilon^s$ as in Lemma \ref{lemma_lipschitz_type} 

\begin{proof}[Proof of Proposition \ref{prop_integration}]
Let $u \in C^0\big([0,T];H^s(\R^n;\R^n)$ be the given continuous vector field. We define $u_k=\chi_k(D)u$. We know by Corollary \ref{coro_uniform} that $u_k(t) \to u(t)$ in $H^s$ uniformly in $t \in [0,T]$. By Lemma we know that $u_k \in C^0\big([0,T];H^{s+1}(\R^n;\R^n)\big)$. Now Lemma \ref{lemma_integration} gives us corresponding flows $\varphi_k \in C^1\big([0,T];\Ds^{s+1}(\R^n)\big)$ solving
\[
 \partial_t \varphi_k = u_k \circ \varphi_k \mbox{ on } [0,T];\quad \varphi_k(0)=\operatorname{id}.
\]
We will show first that $\varphi_k$ converges at least on some short time interval $[0,\delta]$ to the desired solution. Consider the integral relation
\[
 \varphi_k(t) = \operatorname{id} + \int_0^t u_k(\tau) \circ \varphi_k(\tau) \,d\tau, \quad t \in [0,T].
\]
We reason as in the proof of Lemma \ref{lemma_integration}. For $k \geq 1$ fixed, assume that $\varphi_k(t) \in \mathcal G_\varepsilon^s$ for all $0 \leq t \leq \delta'$, for some $\delta' > 0$. Then we have for $t \in [0,\delta']$
\[
 ||\varphi_k(t)-\operatorname{id}||_s \leq \int_0^t ||u_k(\tau) \circ \varphi_k(\tau)||_s \,d\tau \leq C \int_0^t ||u_k(\tau)||_s \, d\tau
\]
where we used \eqref{uniform_estimate2}. Now as we have $u_k \to u$ uniformly in $t \in [0,T]$ there is some $M >0$ with
\[ 
 ||u_k(t)||_s < M
\]
for all $t \in [0,T]$ and for all $k \geq 1$. Thus we see that for $\delta \leq \frac{\varepsilon}{2CM}$ we have $||\varphi_k(t) - \operatorname{id}||_s < \varepsilon$ for all $t \in [0,\delta]$, i.e. we have $\varphi_k(t) \in \mathcal G_\varepsilon^s$. Now we will show that $\varphi_k$ converges on $[0,\delta]$. We have for $t \in [0,\delta]$
\begin{eqnarray*}
 \varphi_k(t)-\varphi_j(t) &=& \int_0^t u_k \circ \varphi_k - u_j \circ \varphi_j \,d\tau \\
&=& \int_0^t u_k \circ \varphi_k - u_j \circ \varphi_k \,d\tau + \int_0^t u_j \circ \varphi_k - u_j \circ \varphi_j \,d\tau.
\end{eqnarray*}
Taking the $H^{s-1}$-norm we get for any $t \in [0,\delta]$
\[
||\varphi_k(t)- \varphi_j(t)||_{s-1} \leq C \int_0^t ||u_k - u_j||_s \,d\tau + C \int_0^t ||u_j||_s ||\varphi_k- \varphi_j||_{s-1} \,d\tau 
\]
where we used Lemma \ref{lemma_lipschitz_type}. Thus from Gronwall's lemma we get
\[
 ||\varphi_k(t)-\varphi_j(t)||_{s-1} \leq \left[C \int_0^\delta ||u_k-u_j||_s\right] e^{\delta C M}
\]
for all $t \in [0,\delta]$. Thus we see that $\varphi_k - \varphi_j$ is Cauchy in $C^0\big([0,\delta];H^{s-1}(\R^n;\R^n)\big)$. Now take a $\lambda \in (0,1)$ with
\[
 s'=\lambda (s-1) + (1-\lambda) s > n/2+1.
\]
As we have $s > n/2+1$ such a $\lambda$ exists. We then have by Lemma \ref{lemma_interpolation}
\begin{eqnarray*}
 ||\varphi_k(t)-\varphi_j(t)||_{s'} &\leq& ||\varphi_k(t)-\varphi_j(t)||_{s-1}^\lambda ||\varphi_k(t)-\varphi_j(t)||_s^{1-\lambda} \\
&\leq& ||\varphi_k(t)-\varphi_j(t)||_{s-1}^\lambda \left(||\varphi_k(t)-\operatorname{id}||_s + ||\varphi_j(t)-\operatorname{id}||_s\right)^{1-\lambda} \\
&\leq& ||\varphi_k(t)-\varphi_j(t)||_{s-1}^\lambda (2\varepsilon)^{1-\lambda}
\end{eqnarray*}  
showing that $\varphi_k$ converges in $H^{s'}$ on $[0,\delta]$. Thus there exists a $\varphi$ with $\varphi- \operatorname{id} \in C^0\big([0,\delta];H^{s'}(\R^n;\R^n)\big)$ such that we have 
\[
 ||\varphi_k(t) - \varphi(t)||_{s'} \to 0
\]
uniformly in $t \in [0,\delta]$. As $s' > n/2 +1$ we have by the Sobolev imbedding \eqref{sobolev_imbedding} for all $x \in \R^n$
\[
 \det(d_x \varphi) = \lim_{k \to \infty} \det(d_x \varphi_k) \geq \varepsilon > 0.
\]
Hence $\varphi \in C^0\big([0,\delta];\Ds^{s'}(\R^n)\big)$. We claim that $\varphi=\tilde \varphi$ on $[0,\delta]$. Recall that we denote by $\tilde \varphi$ the flow of the vector field $u:[0,T]\times \R^n \to \R^n$. To show that $\varphi$ and $\tilde \varphi$ agree on $[0,\delta]$ consider for $x \in \R^n$ and $t \in [0,\delta]$
\begin{equation}\label{integral_limit}
\varphi_k(t,x) = x + \int_0^t u_k\big(\tau,\varphi_k(\tau,x)\big) \,d\tau.
\end{equation}
By the Sobolev imbedding \eqref{sobolev_imbedding} we have (denoting by $|\cdot|$ the euclidean norm in $\R^n$)
\begin{multline*}
\left| u_k\big(t,\varphi_k(t,x)\big) - u\big(t,\varphi(t,x)\big)\right| \leq \left|u_k\big(t,\varphi_k(t,x)\big) - u\big(t,\varphi_k(t,x)\big)\right| \\
+ \left|u\big(t,\varphi_k(t,x)\big) - u\big(t,\varphi(t,x)\big)\right| \leq C ||u_k(t) - u(t)||_{s'} + C ||u||_{s'} ||\varphi_k(t) - \varphi(t)||_{s'}
\end{multline*}
which goes to $0$ uniformly in $t \in [0,\delta]$. Thus taking the limit in \eqref{integral_limit} we arrive at
\begin{equation}\label{limit_integral}
 \varphi(t,x) = x + \int_0^t u\big(\tau,\varphi(\tau,x)\big) \,d\tau.
\end{equation}
By continuity of the composition in $H^{s'}$ we see that the identity \eqref{limit_integral} holds in $H^{s'}$, i.e. we have
\begin{equation}\label{hs_identity}
\varphi(t) = \operatorname{id} + \int_0^t u(\tau) \circ \varphi(\tau) \,d\tau.
\end{equation}
Taking the differential in \eqref{limit_integral} and denoting by $I_n$ the $n \times n$ identity matrix, we have
\[
 d\varphi(t,x) = I_n + \int_0^t du\big(\tau,\varphi(\tau,x)\big) d\varphi(\tau,x) \,d\tau.
\]
Thus $dg:=d\varphi - I_n$ solves for fixed $x \in \R^n$ the ODE
\begin{equation}\label{ode_dg}
\partial_t dg = du \circ \varphi + du \circ \varphi \cdot dg.
\end{equation}
From \cite{composition}, as $s' > n/2+1$ and $s' \geq s-1$, we know that
\[
du \circ \varphi \in C^0\big([0,\delta];H^{s-1}(\R^n;\R^{n \times n})\big). 
\]
Since $H^{s-1}$ is an algebra, we can view \eqref{ode_dg} as a linear inhomogeneous ODE with coefficients in $H^{s-1}$. Thus $dg$ lies actually in 
\[
 C^1\big([0,\delta];H^{s-1}(\R^n;\R^{n \times n})\big).
\]
Thus we get $\varphi \in C^1\big([0,\delta];\Ds^s(\R^n)\big)$ and the identity \eqref{hs_identity} holds in $\Ds^s(\R^n)$. To get $\varphi$ on the whole time interval $[0,T]$ we proceed as in the proof of Lemma \ref{lemma_integration}. As $\delta$ just depends on $M$ we can extend $\varphi$ by $\delta$-steps. After finitely many steps we end up with the desired flow $\varphi \in C^1\big([0,T];\Ds^s(\R^n)\big)$ solving
\[
\partial_t \varphi = u \circ \varphi \mbox{ on } [0,T];\quad \varphi(0)=\operatorname{id}
\]
and this proves the proposition.
\end{proof}

\chapter{Auxiliary lemmas}\label{auxiliary}

We collected here some lemmas needed in Section \ref{section_proof_main}.\\
We denote by $C^\infty_{\sigma,c}(\R^n;\R^n)$ the space of smooth compactly supported divergence-free vector fields, i.e.
\[
 C^\infty_{\sigma,c}(\R^n;\R^n) = \big\{ \varphi \in C_c^\infty(\R^n;\R^n) \,\big| \, \operatorname{div} \varphi =0 \big\}
\]
and recall for $s \geq 0$ the space
\[
 H_\sigma^s(\R^n;\R^n) = \big\{ f \in H^s(\R^n;\R^n) \,\big| \, \operatorname{div} f = 0 \big\}
\]
where $\operatorname{div}$ is understood in the sense of distributions. Clearly we have $C^\infty_{\sigma,c}(\R^n;\R^n) \subseteq H_\sigma^s(\R^n;\R^n)$. We even have

\begin{Lemma}\label{lemma_dense}
The subspace $C^\infty_{\sigma,c}(\R^n;\R^n)$ is dense in $H_\sigma^s(\R^n;\R^n)$.
\end{Lemma}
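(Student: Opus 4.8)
The plan is to argue in two stages: first mollify to gain smoothness while staying divergence-free, then truncate to gain compact support while staying divergence-free, controlling the error introduced by the truncation. Let $u \in H^s_\sigma(\R^n;\R^n)$ be given and fix $\varepsilon > 0$. For the first stage, let $\rho_\delta$ be a standard mollifier and set $u_\delta = \rho_\delta * u$. Since convolution commutes with $\partial_k$, we have $\operatorname{div} u_\delta = \rho_\delta * \operatorname{div} u = 0$, so $u_\delta \in H^s_\sigma(\R^n;\R^n) \cap C^\infty(\R^n;\R^n)$, and $u_\delta \to u$ in $H^s$ as $\delta \to 0$ by the standard approximation property of mollifiers in Sobolev spaces. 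Hence we may assume from now on that $u$ is smooth and still in $H^s_\sigma$, and it remains to approximate such a $u$ by smooth, compactly supported, divergence-free fields.

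For the second stage, the naive cutoff $\psi_R(x) u(x)$ with $\psi_R$ a smooth bump equal to $1$ on $B_R(0)$ is not divergence-free: $\operatorname{div}(\psi_R u) = \nabla \psi_R \cdot u$. The standard fix is to correct by solving a divergence equation. Write $g_R := \nabla \psi_R \cdot u$; this is smooth, compactly supported in the annulus $R \leq |x| \leq 2R$, and has integral zero over $\R^n$ (by the divergence theorem, $\int_{\R^n} \operatorname{div}(\psi_R u) = 0$ since $\psi_R u$ has compact support). By the classical Bogovski\u{\i} operator (or an explicit construction on an annulus), there is a smooth compactly supported vector field $w_R$ with $\operatorname{div} w_R = g_R$, supported in the same annulus, satisfying an estimate of the form $\|w_R\|_{H^s} \leq C_R \|g_R\|_{H^{s-1}}$. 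Then $u_R := \psi_R u - w_R \in C^\infty_{\sigma,c}(\R^n;\R^n)$. The key point is that $\|u - u_R\|_s \leq \|u - \psi_R u\|_s + \|w_R\|_s$, where the first term goes to $0$ as $R \to \infty$ because $u \in H^s$ (so its tails are small), and the second term must also be shown to go to $0$.

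The main obstacle is controlling $\|w_R\|_s$ uniformly as $R \to \infty$, because the Bogovski\u{\i} estimate constant $C_R$ on an annulus of inner radius $R$ typically degenerates with $R$. The clean way around this is a rescaling argument: the annulus $\{R \le |x| \le 2R\}$ is the dilate by $R$ of the fixed annulus $A = \{1 \le |x| \le 2\}$, so one transplants the problem to $A$ via $x \mapsto x/R$, applies the fixed Bogovski\u{\i} operator on $A$ with its fixed constant, and scales back; tracking how the $H^s$ and $H^{s-1}$ norms transform under this dilation, one finds the $R$-dependent factors combine with the smallness of $g_R$ — note $\nabla\psi_R$ is of size $1/R$ and $g_R$ is supported on a set of volume $\sim R^n$ — to yield $\|w_R\|_s \to 0$. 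Alternatively, one can avoid Bogovski\u{\i} entirely by using the Leray/Helmholtz projection $P$ of Section~\ref{section_eulerian_formulation}: set $v_R = P(\psi_R u)$, which is divergence-free, check $v_R \to u$ in $H^s$ (since $\psi_R u \to u$ in $H^s$ and $Pu = u$, using continuity of $P$ on $H^s$), and then mollify $v_R$ slightly and multiply by a further cutoff that is $1$ on a large ball; since $v_R$ decays, a final careful truncation-plus-correction on a region where $v_R$ is already tiny produces a compactly supported smooth divergence-free field within $\varepsilon$ of $u$. Either route reduces the whole statement to the two robust ingredients — mollification preserves $\operatorname{div}=0$, and one can subtract a small divergence-correcting term — so I would present the Bogovski\u{\i}-with-rescaling version as the cleanest self-contained argument.
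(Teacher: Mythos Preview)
Your approach is correct in outline and is a genuinely different route from the paper's. You argue \emph{constructively}: mollify (which preserves $\operatorname{div}=0$), then cut off and repair the divergence defect via a Bogovski\u{\i} operator on a dilated annulus, with a scaling argument to show the correction is small. This works; the rescaling bookkeeping you gesture at does close (for integer $s$, one checks term by term that the dilation factors in $\|w_R\|_{H^s}$ and $\|\tilde g_R\|_{H^{s-1}}$ cancel to leave quantities dominated by $\|u\|_{H^{s-1}(\{R\le|x|\le 2R\})}\to 0$; non-integer $s$ then follows by interpolation). Your alternative (b) via the Leray projector is not really independent: $P(\psi_R u)$ is not compactly supported, and your ``final truncation-plus-correction'' is the Bogovski\u{\i} step again.

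The paper, by contrast, argues by \emph{duality}: it shows that any $w\in H^k_\sigma$ orthogonal (in an equivalent $H^k$ inner product) to all of $C^\infty_{\sigma,c}$ must vanish. Mollifying $w$ and using that $\langle A\,\mathcal J_\varepsilon w,\varphi\rangle_{L^2}=0$ for all divergence-free test $\varphi$ (with $A=\sum_{|\alpha|\le k}(-1)^{|\alpha|}\partial^{2\alpha}$), one invokes the lemma that a smooth field $L^2$-orthogonal to $C^\infty_{\sigma,c}$ is a gradient; since $\operatorname{div}(A\,\mathcal J_\varepsilon w)=0$, the potential is harmonic with $L^2$ gradient, hence constant, so $\mathcal J_\varepsilon w=0$ and thus $w=0$. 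General $s\ge 0$ follows by squeezing between integer levels. This is slicker---no Bogovski\u{\i}, no scaling analysis---and uses only the Poincar\'e lemma and Liouville for harmonic functions, but it is non-constructive. Your argument, on the other hand, produces an explicit approximating sequence and transplants readily to settings (bounded domains, manifolds with boundary) where the duality trick would require more care.
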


As an application we have

\begin{Coro}\label{coro_nablap}
Let $p \in H^1_{\operatorname{loc}}(\R^n)$ with $\nabla p \in L^2(\R^n;\R^n)$. Then for any $s \geq 0$
\[
 \langle w,\nabla p \rangle_{L^2} =0,\quad \forall w \in H^s_\sigma(\R^n;\R^n).
\]
\end{Coro}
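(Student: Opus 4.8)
\textbf{Proof plan for Corollary \ref{coro_nablap}.}

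The plan is to reduce the claim to a statement about smooth compactly supported test vector fields and then pass to the limit using the density result of Lemma \ref{lemma_dense}. First I would observe that for any $w \in C^\infty_{\sigma,c}(\R^n;\R^n)$ and any $p \in H^1_{\operatorname{loc}}(\R^n)$ one has the integration-by-parts identity $\langle w, \nabla p \rangle_{L^2} = -\langle \operatorname{div} w, p \rangle_{L^2} = 0$; this is legitimate because $w$ has compact support, so the boundary terms vanish and the pairing of $w$ with the locally-$L^2$ function $\nabla p$ is a genuine integral over a compact set. Thus the functional $w \mapsto \langle w, \nabla p \rangle_{L^2}$ vanishes on the dense subspace $C^\infty_{\sigma,c}(\R^n;\R^n)$ of $H^s_\sigma(\R^n;\R^n)$.

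The remaining point is continuity: I need the linear functional $w \mapsto \langle w, \nabla p \rangle_{L^2}$ to be continuous on $H^s_\sigma(\R^n;\R^n)$ so that vanishing on a dense subspace forces it to vanish identically. Since $\nabla p \in L^2(\R^n;\R^n)$ by hypothesis, Cauchy--Schwarz gives $|\langle w, \nabla p \rangle_{L^2}| \leq \|w\|_{L^2} \|\nabla p\|_{L^2} \leq \|w\|_s \|\nabla p\|_{L^2}$ for all $w \in H^s(\R^n;\R^n)$, where in the last step I use that the $H^s$-norm dominates the $L^2$-norm for $s \geq 0$. Hence the functional is bounded on $H^s_\sigma(\R^n;\R^n)$.

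Combining the two steps: given arbitrary $w \in H^s_\sigma(\R^n;\R^n)$, choose by Lemma \ref{lemma_dense} a sequence $(w_k)_{k \geq 1} \subseteq C^\infty_{\sigma,c}(\R^n;\R^n)$ with $w_k \to w$ in $H^s(\R^n;\R^n)$, hence in $L^2(\R^n;\R^n)$. Then $\langle w, \nabla p \rangle_{L^2} = \lim_{k \to \infty} \langle w_k, \nabla p \rangle_{L^2} = 0$, which is the claim. I do not expect any real obstacle here; the only mild care needed is to make sure the pairing $\langle w_k, \nabla p \rangle_{L^2}$ is well-defined as an ordinary integral (guaranteed by $w_k$ compactly supported and $\nabla p \in L^2_{\operatorname{loc}}$, which follows from $\nabla p \in L^2$) and that the integration by parts is valid at the regularity $p \in H^1_{\operatorname{loc}}$, which is standard since $w_k$ is smooth with compact support.
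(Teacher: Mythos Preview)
Your proof is correct and follows essentially the same approach as the paper: verify the identity on $C^\infty_{\sigma,c}(\R^n;\R^n)$ via integration by parts, then pass to the limit using the density Lemma \ref{lemma_dense} together with $\nabla p \in L^2$. Your version is in fact slightly more careful, making the continuity of the functional explicit via Cauchy--Schwarz, whereas the paper simply passes to the $L^2$-limit directly.
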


\begin{proof}
First let $w \in C_{c,\sigma}^\infty(\R^n;\R^n)$. We then have for any $s \geq 0$
\[
 \langle w, \nabla p \rangle_{L^2} = -\langle \operatorname{div} w,p \rangle_{L^2} = 0.
\]
Take now an arbitrary $w \in H^s_\sigma(\R^n;\R^n)$. By Lemma \ref{lemma_dense} take a sequence $(w_k)_{k \geq 1} \subseteq C_{\sigma,c}^\infty(\R^n;\R^n)$ with
\[
 w_k \to w \mbox{ in } L^2(\R^n;\R^n).
\]
We then have
\[
\langle w,\nabla p \rangle_{L^2}=\lim_{k \to \infty} \langle w_k,\nabla p \rangle_{L^2} = 0
\]
showing the claim.
\end{proof}

To prove Lemma \ref{lemma_dense} we need the following version of the Poincar\'e Lemma.

\begin{Lemma}\label{lemma_poincare}
Let $V=(V_1,\ldots,V_n) \in C^\infty(\R^n;\R^n)$ be a smooth vector field with
\begin{equation}\label{curl_free}
\partial_i V_j - \partial_j V_i = 0, \quad \forall 1 \leq i,j \leq n.
\end{equation}
Then there is a $p \in C^\infty(\R^n)$ with
\[
 V= \nabla p.
\]
\end{Lemma}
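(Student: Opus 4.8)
\textbf{Proof proposal for Lemma \ref{lemma_poincare} (the smooth Poincar\'e Lemma).}

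The plan is to produce the potential $p$ by the standard radial integration formula and then verify that its gradient recovers $V$, using only the closedness condition \eqref{curl_free}. Concretely, I would set
\[
 p(x) := \int_0^1 x \cdot V(tx)\, dt = \int_0^1 \sum_{j=1}^n x_j V_j(tx)\, dt, \qquad x \in \R^n.
\]
Since $V \in C^\infty(\R^n;\R^n)$ and the integrand is smooth in $(t,x)$ on the compact $t$-interval $[0,1]$, differentiation under the integral sign (justified by the usual dominated-convergence / uniform-continuity argument on compact subsets of $\R^n$) shows $p \in C^\infty(\R^n)$. This disposes of the regularity claim; the only real content is the identity $\nabla p = V$.

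The key computation is then: for fixed $i$,
\[
 \partial_i p(x) = \int_0^1 \Big( V_i(tx) + \sum_{j=1}^n x_j\, t\, \partial_i V_j(tx) \Big)\, dt.
\]
Here I would use the hypothesis \eqref{curl_free} in the form $\partial_i V_j = \partial_j V_i$ to rewrite $\sum_j x_j\, \partial_i V_j(tx) = \sum_j x_j\, \partial_j V_i(tx)$, which is exactly the spatial directional derivative of $y \mapsto V_i(y)$ in the direction $x$ evaluated at $y = tx$, i.e. $\frac{d}{dt} V_i(tx) = \sum_j x_j \partial_j V_i(tx)$. Thus the integrand becomes $V_i(tx) + t\, \frac{d}{dt}\big(V_i(tx)\big) = \frac{d}{dt}\big( t\, V_i(tx) \big)$, and by the fundamental theorem of calculus
\[
 \partial_i p(x) = \big[\, t\, V_i(tx) \,\big]_{t=0}^{t=1} = V_i(x).
\]
Since $i$ was arbitrary, $\nabla p = V$, which is the claim.

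The whole argument is routine; there is no serious obstacle. The only points that need a word of care are the justification of differentiating under the integral sign (standard, since everything is smooth and the $t$-domain is compact) and the clean bookkeeping in recognizing $V_i(tx) + t\frac{d}{dt}V_i(tx)$ as a total $t$-derivative, which is where the closedness condition \eqref{curl_free} enters in an essential way. I would present it in exactly the order above: define $p$, note smoothness, differentiate, apply \eqref{curl_free}, recognize the total derivative, integrate.
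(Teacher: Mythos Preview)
Your proof is correct and is essentially identical to the paper's own argument: the paper defines the same potential $p(x)=\int_0^1 \sum_j x_j V_j(tx)\,dt$, differentiates under the integral, applies \eqref{curl_free} to swap $\partial_i V_j$ for $\partial_j V_i$, and recognizes the integrand as $\partial_t\big(t\,V_i(tx)\big)$.
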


\begin{proof}
 We define for $x=(x_1,\ldots,x_n) \in \R^n$ the following $C^\infty$-function
\[
 p(x) = \sum_{j=1}^n \int_0^1 V_j(tx) x_j \,dt.
\]
Then we have
\[
 \partial_k p = \left(\sum_{j=1}^n \int_0^1 \partial_k V_j(tx) t x_j \,dt\right) + \int_0^1 V_k(tx) \,dt.
\]
Using \eqref{curl_free} we get by inserting $\partial_j V_k$ for $\partial_k V_j$
\begin{eqnarray*}
 \partial_k p &=& \int_0^1 \left(\sum_{j=1}^n t \partial_j V_k(tx) x_j\right) + V_k(tx) \, dt \\
 &=& \int_0^1 \partial_t \left(t V_k(tx)\right) \,dt = V_k(x).
\end{eqnarray*}
Thus we have $V=\nabla p$.
\end{proof}

We will also need the following well-known lemma.

\begin{Lemma}\label{lemma_orthogonal}
Let $V \in C^\infty(\R^n;\R^n)$ be a smooth vector field with the property
\[
 \langle V,\varphi \rangle_{L^2}:=\int_{\R^n} V \cdot \varphi \,dx = 0
\]
for all $\varphi \in C^\infty_{\sigma,c}(\R^n;\R^n)$. Then there is a $p \in C^\infty(\R^n)$ with $V=\nabla p$.
\end{Lemma}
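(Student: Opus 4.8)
The plan is to reduce Lemma \ref{lemma_orthogonal} to the Poincar\'e Lemma (Lemma \ref{lemma_poincare}): it suffices to show that the orthogonality hypothesis forces $\partial_i V_j - \partial_j V_i \equiv 0$ for all $1 \le i,j \le n$, after which Lemma \ref{lemma_poincare} directly produces a $p \in C^\infty(\R^n)$ with $V = \nabla p$. The condition $\partial_i V_j - \partial_j V_i = 0$ is trivial for $i = j$ and antisymmetric in the pair $(i,j)$, so I only need to handle a fixed pair $i < j$.

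The key step is to feed the hypothesis with a suitable family of divergence-free test fields. For $\psi \in C_c^\infty(\R^n)$ and a fixed pair $i < j$, I would set
\[
 \varphi := (\partial_j \psi)\, e_i - (\partial_i \psi)\, e_j ,
\]
where $e_1, \dots, e_n$ is the standard basis of $\R^n$. Then $\varphi \in C^\infty(\R^n;\R^n)$ with $\operatorname{supp}\varphi \subseteq \operatorname{supp}\psi$ compact, and $\operatorname{div}\varphi = \partial_i\partial_j\psi - \partial_j\partial_i\psi = 0$, so $\varphi \in C^\infty_{\sigma,c}(\R^n;\R^n)$. Plugging this into the hypothesis and integrating by parts — there are no boundary terms since $\psi$ is compactly supported — gives
\[
 0 = \langle V, \varphi \rangle_{L^2} = \int_{\R^n} \big( V_i \partial_j \psi - V_j \partial_i \psi \big)\, dx = \int_{\R^n} \big( \partial_i V_j - \partial_j V_i \big)\, \psi \, dx .
\]
Since $\partial_i V_j - \partial_j V_i$ is continuous and the displayed identity holds for all $\psi \in C_c^\infty(\R^n)$, the fundamental lemma of the calculus of variations (du Bois--Reymond) yields $\partial_i V_j - \partial_j V_i \equiv 0$. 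As $i < j$ was arbitrary, hypothesis \eqref{curl_free} of Lemma \ref{lemma_poincare} is verified, and that lemma gives the required $p$.

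I do not expect a genuine obstacle here. The only points needing a little care are checking that the constructed $\varphi$ really belongs to $C^\infty_{\sigma,c}(\R^n;\R^n)$ and that, ranging over all scalar $\psi$ and all pairs $i<j$, these test fields detect the whole antisymmetrized gradient of $V$ — which they do, one index pair at a time — together with the standard passage from the integrated identity to the pointwise one. This construction is available for every $n \ge 2$; in the degenerate case $n = 1$ the space $C^\infty_{\sigma,c}$ is trivial and the conclusion holds simply by integrating $V$.
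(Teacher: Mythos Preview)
Your proof is correct and follows essentially the same approach as the paper: both reduce to the Poincar\'e Lemma by testing against divergence-free fields built from derivatives of scalar bump functions, with the paper's test field $\varphi_k = \sum_{j<k} -\partial_j\psi_{jk} + \sum_{j>k}\partial_j\psi_{kj}$ specializing to exactly your $(\partial_j\psi)e_i - (\partial_i\psi)e_j$ when only one $\psi_{ij}$ is nonzero. Your pair-by-pair version is in fact a slightly cleaner presentation of the same idea.
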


\begin{proof}
We show that we have \eqref{curl_free}. We then get the claim by the Poincar\'e Lemma (Lemma \ref{lemma_poincare}). For this it will be enough to show 
\[
 \int_{\R^n} \sum_{1 \leq i < j \leq n} (\partial_i V_j - \partial_j V_i) \psi_{ij} \,dx = 0
\]
for all $\psi_{ij} \in C_c^\infty(\R^n)$, $1 \leq i < j \leq n$. Indeed we then get \eqref{curl_free} by the fundamental lemma of Calculus of Variations. Using integration by parts we get
\begin{multline}
\label{inner_product}
\int_{\R^n} \sum_{1 \leq i < j \leq n} (\partial_i V_j - \partial_j V_i) \psi_{ij}\, dx = \int_{\R^n} \sum_{1 \leq i < j \leq n} -V_j \partial_i \psi_{ij} + V_i \partial_j \psi_{ij} \, dx \\
= \int_{\R^n} \sum_{k=1}^n V_k \left(\sum_{j < k} -\partial_j \psi_{jk} + \sum_{j > k} \partial_k \psi_{kj} \right) \, dx = \langle V,\varphi \rangle_{L^2}
\end{multline}
where $\varphi=(\varphi_1,\ldots,\varphi_n)$ is given by
\[
 \varphi_k = \sum_{j < k} -\partial_j \psi_{jk} + \sum_{j>k} \partial_j \psi_{kj}
\]
for $1 \leq k \leq n$. For this $\varphi$ we have
\[
 \operatorname{div} \varphi = \sum_{k=1}^n \partial_k \varphi_k = \sum_{k=1}^n \left(\sum_{j < k} -\partial_j \partial_k \psi_{jk} + \sum_{j > k} \partial_j \partial_k \psi_{kj} \right) = 0
\]
i.e. we have $\varphi \in C^\infty_{\sigma,c}(\R^n;\R^n)$. Thus we see by assumption that \eqref{inner_product} is $0$ for all $\psi_{ij} \in C_c^\infty(\R^n)$, $1 \leq i < j \leq n$. Hence the claim.
\end{proof}

For the proof of Lemma \ref{lemma_dense} we will need the concept of mollifiers (for the details see e.g. \cite{majda}). So let $\rho \in C_c^\infty(\R^n)$ be a non-negative radial function with
\[
 \int_{\R^n} \rho(y)\,dy = 1.
\]
For $\varepsilon > 0$ we define the operator $\mathcal J_\varepsilon$ on locally integrable functions $v$ in $\R^n$ as
\[
 (\mathcal J_\varepsilon v)(x) = \frac{1}{\varepsilon^{n}} \int_{\R^n} \rho\left(\frac{x-y}{\varepsilon}\right) v(y)\,dy
\]
where $x \in \R^n$. For vector-valued functions $\mathcal J_\varepsilon$ is defined to act componentwise.

\begin{proof}[Proof of Lemma \ref{lemma_dense}]
For $k \in \N$ we introduce the following scalar product on $H^k(\R^n;\R^n)$
\[
 \langle f, g \rangle_{H^k} := \sum_{|\alpha| \leq k} (\partial^\alpha f, \partial^\alpha g)_{L^2}.
\]
This is equivalent to $\langle \cdot,\cdot \rangle_s$ for $s=k$. First we will prove that $C^\infty_{\sigma,c}(\R^n;\R^n)$ is dense in $H_\sigma^k(\R^n;\R^n)$. Assume that for $w \in H_\sigma^k(\R^n;\R^n)$ we have
\[
 \langle w,\varphi \rangle_{H^k} = 0
\]
for all $\varphi \in C^\infty_{\sigma,c}(\R^n;\R^n)$. We then have for all $\varphi \in C^\infty_{\sigma,c}(\R^n;\R^n)$ and $\varepsilon > 0$
\[
 \langle \mathcal J_\varepsilon w,\varphi \rangle_{H^k} = \langle w, \mathcal J_\varepsilon \varphi \rangle_{H^k} = 0
\]
as $\mathcal J_\varepsilon \varphi \in C^\infty_{\sigma,c}(\R^n;\R^n)$ for all $\varepsilon > 0$. Thus we have for all $\varphi \in C^\infty_{\sigma,c}(\R^n;\R^n)$
\[
 0 = \langle \mathcal J_\varepsilon w,\varphi \rangle_{H^k} = \langle A \mathcal J_\varepsilon w,\varphi \rangle_{L^2}
\]
where $A$ is the invertible elliptic operator
\[
 A=\sum_{|\alpha| \leq k} (-1)^{|\alpha|} \partial^{2\alpha}.
\]
Thus by Lemma \ref{lemma_orthogonal} there is a $p \in C^\infty(\R^n)$ such that
\[
 A \mathcal J_\varepsilon w = \nabla p.
\]
On the other hand we have as $w \in H_\sigma^k(\R^n;\R^n)$
\[
 \operatorname{div} A \mathcal J_\varepsilon w = A \mathcal J_\varepsilon \operatorname{div} w =0.
\]
Hence we have $\Delta p = \operatorname{div} A \mathcal J_\varepsilon w = 0$. This means that the components of $\nabla p$ are harmonic functions. But as we have
\[
 \nabla p = A \mathcal J_\varepsilon w \in L^2(\R^n;\R^n)
\]
we get $\nabla p =0$. Therefore we have $\mathcal J_\varepsilon w =0$ for all $\varepsilon > 0$ as $A$ is invertible. But as $\mathcal J_\varepsilon w \to w$ in $H^k$ as $\varepsilon \to 0$ we get $w=0$. This shows that $C^\infty_{\sigma,c}(\R^n;\R^n)$ is dense in $H_\sigma^k(\R^n;\R^n)$. For general $H_\sigma^s(\R^n;\R^n)$, $s \geq 0$, we take a $k \in \N$ with $k \geq s$. Now $H_\sigma^k(\R^n;\R^n)$ is dense in $H_\sigma^s(\R^n;\R^n)$. Indeed just take $\mathcal J_\varepsilon w$ to approximate $w \in H_\sigma^s(\R^n;\R^n)$ by functions in $H_\sigma^k(\R^n;\R^n)$. By the consideration above we can approximate each $w \in H_\sigma^k(\R^n;\R^n)$ by elements in $C^\infty_{\sigma,c}(\R^n;\R^n)$ in the $H^k$-norm. So in particular also in the $H^s$-norm. This proves the case for general $s \geq 0$. 
\end{proof}

\begin{Lemma}\label{lemma_biot_savart}
Let $s > n/2+1$ and let $u \in H_\sigma^s(\R^n;\R^n)$ with compactly supported vorticity $\Omega=\Omega(u)$. Then we have for all $x \in \R^n$
\[
 u(x)=\frac{1}{\omega_n} \int_{\R^n} \Omega(y) \frac{x-y}{|x-y|^n} \,dy
\]
or in components for $1 \leq i \leq n$
\begin{equation}\label{biot_savart}
u_i(x) = \frac{1}{\omega_n} \int_{\R^n} \sum_{j=1}^n \Omega_{ij}(y) \frac{x_j-y_j}{|x-y|^n} \,dy
\end{equation}
where $\omega_n$ is the surface area of a unit sphere in $\R^n$
\end{Lemma}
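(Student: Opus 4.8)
The plan is to reconstruct $u$ from its vorticity via the Newtonian potential of $\R^n$. Since $s>n/2+1$, the Sobolev imbedding \eqref{sobolev_imbedding} gives $u\in C^1_0(\R^n;\R^n)$, and hence $\Omega=\Omega(u)\in C^0_0(\R^n;\R^{n\times n})$; combined with the hypothesis, $\Omega$ is therefore a bounded continuous matrix field with compact support. First I would record that, because $\operatorname{div} u=0$, one has in the sense of (weak) derivatives the pointwise identity
\[
 \Delta u_i=\sum_{j=1}^n\partial_j\partial_j u_i=\sum_{j=1}^n\partial_j\bigl(\partial_j u_i-\partial_i u_j\bigr)+\partial_i(\operatorname{div} u)=\sum_{j=1}^n\partial_j\Omega_{ij},\qquad 1\le i\le n.
\]

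Next, let $N$ be the fundamental solution of $-\Delta$ on $\R^n$, i.e. $N(x)=\frac{1}{(n-2)\omega_n}|x|^{2-n}$ if $n\ge 3$ and $N(x)=-\frac{1}{2\pi}\log|x|$ if $n=2$, normalised so that $-\Delta N=\delta_0$; a direct computation gives $-\partial_j N(x)=\frac{1}{\omega_n}\frac{x_j}{|x|^n}$ in either case. Let $v=(v_1,\dots,v_n)$ be the vector field defined by the right-hand side of \eqref{biot_savart}, so that $v_i=-\sum_{j=1}^n\partial_j N*\Omega_{ij}$. Because the kernel $x\mapsto x_j|x|^{-n}$ is locally integrable on $\R^n$ and $\Omega_{ij}$ is bounded with compact support, each $v_i$ is a well-defined continuous function; and since $|x_j||x|^{-n}=O(|x|^{-(n-1)})$, one has $v_i(x)\to 0$ as $|x|\to\infty$. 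Using that $\Omega_{ij}$ is compactly supported, convolution commutes with differentiation and with $\Delta$, and $(\Delta N)*\Omega_{ij}=-\Omega_{ij}$, so
\[
 \Delta v_i=-\sum_{j=1}^n\partial_j\bigl((\Delta N)*\Omega_{ij}\bigr)=\sum_{j=1}^n\partial_j\Omega_{ij}.
\]
Hence $\Delta(u_i-v_i)=0$, so $w_i:=u_i-v_i$ is harmonic (and, by Weyl's lemma, smooth); since both $u_i$ (being in $C^0_0$) and $v_i$ tend to $0$ at infinity, Liouville's theorem — or the maximum principle applied on large balls — forces $w_i\equiv 0$, i.e. $u=v$, which is precisely \eqref{biot_savart}.

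The routine computations above conceal the only real work, namely the classical potential-theoretic bookkeeping: local integrability of the singular kernel near the origin, the legitimacy of differentiating under the convolution sign, the identity $-\Delta N=\delta_0$ together with its distributional meaning, and the decay $v_i(x)\to 0$. All of these are standard and may be quoted from \cite{gilbarg} or \cite{majda}. I do not expect a genuine obstacle; the one point requiring a little care is making the interchange of $\Delta$ with the convolution precise when $\Omega$ is merely $C^0$ rather than smooth, which is handled either by interpreting everything distributionally or by a routine mollification of $\Omega$ followed by passage to the limit.
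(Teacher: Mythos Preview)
Your argument is correct and essentially the same as the paper's: both use $\operatorname{div} u=0$ to get $\Delta u_i=\sum_j\partial_j\Omega_{ij}$, invoke the Newtonian potential, and move the derivative onto the kernel to obtain \eqref{biot_savart}. The only cosmetic difference is that the paper cites the Poisson representation $u_i=E\ast\bigl(\sum_j\partial_j\Omega_{ij}\bigr)$ directly from \cite{majda}, whereas you spell out the uniqueness step via Liouville's theorem for the harmonic difference $u-v$.
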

Note that the singularity at $y=x$ in \eqref{biot_savart} is integrable and that the integrand is compactly supported and bounded by the Sobolev imbedding \eqref{sobolev_imbedding}. So the integral is well-defined. 

\begin{proof}
 Recall that we have
\[
 \Omega_{ij}=\partial_j u_i - \partial_i u_j
\]
for $1 \leq i,j \leq n$. Therefore 
\begin{eqnarray}
\nonumber
\sum_{j=1}^n \partial_j \Omega_{ij} &=& \sum_{j=1}^n \partial_j \partial_j u_i - \partial_i \partial_j u_j\\
\label{laplace_ui}
&=& \sum_{j=1}^n \partial_j \partial_j u_i = \Delta u_i
\end{eqnarray}
as $\sum_{j=1}^n \partial_j u_j=\operatorname{div} u = 0$. We denote by $E$ the fundamental solution of the Laplacian, i.e. 
\[
 E(x) = \begin{cases} \frac{1}{2\pi} \log |x|, \quad & n=2 \\ \frac{1}{(2-n)\omega_n} |x|^{2-n}, \quad & n \geq 3 \end{cases}
\]
where $\omega_n$ is as above (see \cite{majda}). Note that we have $\lim_{|x| \to \infty} u(x)=0$ by the Sobolev imbedding theorem \eqref{sobolev_imbedding}. Thus we get from \eqref{laplace_ui} and the theory for the Poisson equation (see e.g. \cite{majda})
\begin{equation}\label{E_convolution}
 u_i(x) = \left[ E \ast \left(\sum_{j=1}^n \partial_j \Omega_{ij} \right) \right] (x)
\end{equation}
where $E \ast g(x):=\int_{\R^n} E(x-y)g(y) \;dy$ denotes the convolution. Note that $\sum_{j=1}^n \partial_j \Omega_{ij}$ is compactly supported. We can rewrite \eqref{E_convolution} as
\[
 u_i(x) = \sum_{j=1}^n \left[(\partial_j E) \ast \Omega_{ij} \right] (x)
\]
since $\partial_j E(x) = \frac{1}{\omega_n} \frac{x_j}{|x|^n}$ is locally integrable. In integral form this reads as \eqref{biot_savart} showing the claim. 
\end{proof}

The following lemma tells that the gradient of the velocity can be estimated by the vorticity.

\begin{Lemma}\label{lemma_gradient_velocity}
We have for some $C>0$
\[
 ||du||_{s-1} \leq C ||\Omega(u)||_{s-1}
\]
for all $u \in H_\sigma^s(\R^n;\R^n)$. Here $du$ denotes the differential of $u$.
\end{Lemma}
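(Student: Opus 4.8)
The plan is to invert, on the Fourier side, the linear relation between $du$ and the vorticity $\Omega(u)$, realizing every entry of $du$ as the image of the components of $\Omega(u)$ under a Fourier multiplier operator whose symbol is bounded by $1$; the $H^{s-1}$-estimate then drops out immediately from Plancherel's theorem.

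First I would recall, exactly as in the proof of Lemma \ref{lemma_biot_savart}, that the constraint $\operatorname{div} u = 0$ yields for each $1 \le i \le n$ the identity $\Delta u_i = \sum_{j=1}^n \partial_j \Omega_{ij}(u)$, which holds in $H^{s-2}(\R^n)$ (all terms are genuine $L^2$-Sobolev functions since $s > n/2+1 \ge 2$). Passing to Fourier transforms gives $-|\xi|^2 \hat u_i(\xi) = i \sum_j \xi_j \widehat{\Omega_{ij}}(\xi)$ for a.e.\ $\xi$, hence $\hat u_i(\xi) = -i|\xi|^{-2} \sum_j \xi_j \widehat{\Omega_{ij}}(\xi)$ for $\xi \ne 0$, and therefore
\[
 \widehat{\partial_k u_i}(\xi) = i \xi_k \hat u_i(\xi) = \sum_{j=1}^n \frac{\xi_k \xi_j}{|\xi|^2}\, \widehat{\Omega_{ij}}(\xi), \qquad \text{a.e. } \xi \in \R^n,
\]
for every $1 \le i,k \le n$.

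The second step is a pointwise bound in $\xi$: by the Cauchy--Schwarz inequality in the index $j$,
\[
 \Bigl| \sum_{j=1}^n \frac{\xi_k \xi_j}{|\xi|^2}\, \widehat{\Omega_{ij}}(\xi) \Bigr|^2 \le \Bigl( \sum_{j=1}^n \frac{\xi_k^2 \xi_j^2}{|\xi|^4} \Bigr) \sum_{j=1}^n |\widehat{\Omega_{ij}}(\xi)|^2 = \frac{\xi_k^2}{|\xi|^2} \sum_{j=1}^n |\widehat{\Omega_{ij}}(\xi)|^2 \le \sum_{j=1}^n |\widehat{\Omega_{ij}}(\xi)|^2 .
\]
Multiplying by $(1+|\xi|^2)^{s-1}$ and integrating over $\R^n$ gives $\|\partial_k u_i\|_{s-1}^2 \le \sum_j \|\Omega_{ij}(u)\|_{s-1}^2 \le \|\Omega(u)\|_{s-1}^2$, and summing over all $1 \le i,k \le n$ yields $\|du\|_{s-1} \le n\, \|\Omega(u)\|_{s-1}$, which is the asserted estimate with $C = n$.

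There is no serious obstacle here — the whole argument is a bounded-multiplier computation. The only point worth a remark is that the symbols $\xi \mapsto \xi_k\xi_j/|\xi|^2$ lie only in $L^\infty(\R^n)$ (they are discontinuous at the origin), so they are not Fourier multipliers in any refined sense; but since we work with the $L^2$-based Sobolev spaces $H^{s-1}$ this is irrelevant, as $\{\xi=0\}$ is a null set and dividing by $|\xi|^2$ is legitimate almost everywhere. If one prefers to avoid the explicit Fourier computation, the same conclusion follows from the standard boundedness of the Riesz transforms (equivalently, Calder\'on--Zygmund theory) applied to $u = (-\Delta)^{-1}\operatorname{div}\Omega$, but the elementary Plancherel argument above is shorter and entirely self-contained.
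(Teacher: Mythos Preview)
Your proof is correct and follows essentially the same approach as the paper: both derive the Fourier identity $\widehat{\partial_k u_i}(\xi) = \sum_j \frac{\xi_k\xi_j}{|\xi|^2}\,\widehat{\Omega_{ij}}(\xi)$ from the divergence-free condition and then use the boundedness of the symbols $\xi_k\xi_j/|\xi|^2$ to conclude. The only cosmetic differences are that the paper obtains the identity directly on the Fourier side (rather than via $\Delta u_i = \sum_j \partial_j\Omega_{ij}$) and bounds the sum by the triangle inequality instead of your pointwise Cauchy--Schwarz; both arguments arrive at $C=n$.
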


\begin{proof}
The fact $u \in H_\sigma^s(\R^n;\R^n)$ reads on the Fourier side as $\sum_{j=1}^n \xi_j \hat u_j (\xi) \equiv 0$. Thus we have
\begin{eqnarray*}
 \frac{1}{i} \sum_{j=1}^n \widehat \Omega_{\ell j}(\xi) \frac{\xi_k \xi_j}{|\xi|^2} &=& \sum_{j=1}^n \left(\xi_j \hat u_k(\xi) - \xi_k \hat u_j(\xi) \right) \frac{\xi_k \xi_j}{|\xi|^2} \\
&=& \xi_k \sum_{j=1}^n \frac{\xi_j^2}{|\xi|^2} \hat u_k(\xi) = \xi_k \hat u_\ell(\xi).
\end{eqnarray*}
Thus we see
\begin{eqnarray*}
||\partial_k u_\ell||_{s-1} &=& ||(1+|\xi|^2)^{\frac{s-1}{2}} \xi_k \hat u_\ell(\xi)||_{L^2} \\
&\leq& ||(1+|\xi|^2)^{\frac{s-1}{2}} \sum_{j=1}^n \widehat \Omega_{\ell j} (\xi) \frac{\xi_k \xi_j}{|\xi|^2} ||_{L^2}\\
&\leq& \sum_{j=1}^n ||\Omega_{\ell j}||_{s-1} \leq n ||\Omega||_{s-1}
\end{eqnarray*}
which shows the claim.
\end{proof}

\addcontentsline{toc}{chapter}{Bibliography}
\bibliographystyle{plain}

\end{document}